\numberwithin{equation}{section}
\newtheorem{Thm}{Theorem}
\newtheorem{Lm}{Lemma}[section]
\newtheorem{sublemma}[Lm]{Sublemma}
\newtheorem{Prop}[Lm]{Proposition}
\newtheorem{Cor}[Lm]{Corollary}
\newtheorem{Def}[Lm]{Definition}%section]
\newtheorem{Rk}[Lm]{Remark}%[section]
\newtheorem{Conjecture}{Conjecture}
\def\bdef{\begin{Def}}
\def\endef{\end{Def}}
\def\bthm{\begin{Thm}}
\def\ethm{\end{Thm}}
\def\bprop{\begin{Prop}}
\def\enprop{\end{Prop}}
\def\blm{\begin{Lm}}
\def\elm{\end{Lm}}
\def\bcor{\begin{Cor}}
\def\ecor{\end{Cor}}
\def\bfig{\begin{picture}}
\def\efig{\end{picture}}
\def\be{\begin{eqnarray}}
\def\ee{\end{eqnarray}}
\def\beal{\begin{aligned}}
\def\enal{\end{aligned}}
\newcommand{\bcon}{\begin{Conjecture}}
\newcommand{\econ}{\end{Conjecture}}
\newcommand{\sign}{{{\rm sign}}}
\newcommand{\gm}{\gamma}
\newcommand{\al}{\alpha}
\newcommand{\cR}{\mathcal{R}}
\newcommand{\cF}{\mathcal{F}}
\newcommand{\cD}{\mathcal{D}}
\newcommand{\cL}{\mathcal{L}}
\newcommand{\cS}{\mathcal{S}}
\newcommand{\cQ}{\mathcal{Q}}
\newcommand{\fh}{\mathfrak{h}}
\newcommand{\R}{\mathbb{R}}
\newcommand{\dt}{\delta}
\newcommand{\lb}{\lambda}
\newcommand{\eps}{\varepsilon}
\newcommand{\T}{\mathbb{T}}
\newcommand{\Ger}{\mathbf{G}}
\newcommand{\Loc}{\mathbb{L}}
\newcommand{\Glob}{\mathbb{G}}
\newcommand{\Id}{\mathrm{Id}}
\newcommand{\Span}{\text{span}}
\newcommand{\Ker}{\text{Ker}}
\newcommand{\lin}{{\bf l}}
\newcommand{\brlin}{{\bar\lin}}
\newcommand{\brrlin}{{\bar\brlin}}
\newcommand\ba{\boldsymbol{a}}
\newcommand\bb{\boldsymbol{b}}
\newcommand\bL{\boldsymbol{L}}
\newcommand\bx{\boldsymbol{x}}
\newcommand\by{\boldsymbol{y}}
\newcommand\bomega{{\boldsymbol{\omega}}}
\newcommand\bbV{{\mathbb{V}}}
\newcommand\bbW{{\mathbb{W}}}
\newcommand{\brc}{\bar c}
\newcommand{\brE}{\bar E}
\newcommand{\bre}{\bar e}
\newcommand{\brre}{\bar \bre}
\newcommand{\brG}{\bar G}
\newcommand{\brg}{\bar g}
\newcommand{\brrg}{\bar \brg}
\newcommand{\brK}{\bar K}
\newcommand{\brt}{\bar t}
\newcommand{\brv}{\bar{\mathbf u}}
\newcommand{\brrv}{\bar \brv}
\newcommand\brdelta{{\bar\delta}}
\newcommand\brtheta{{\bar\theta}}
\newcommand\brchi{{\bar\chi}}
\newcommand\brtau{{\bar\tau}}
\newcommand{\cK}{\mathcal{K}}
\newcommand{\cP}{\mathcal{P}}
\newcommand{\cU}{\mathcal{U}}
\newcommand{\cV}{\mathcal{V}}
\newcommand\he{\hat e}
\newcommand\hg{\hat g}
\def\hC{{\hat{C}}}
\def\hG{{\hat{G}}}
\def\hL{{\hat{L}}}
\def\hw{{\hat{w}}}
\def\hlin{{\hat{\lin}}}
\def\hGamma{{\hat{\Gamma}}}
\def\FF{{\mathbb{F}}}
\def\II{{\mathbb{I}}}
\def\ta{\tilde{a}}
\def\tb{\tilde{b}}
\def\tC{\tilde{C}}
\def\te{\tilde{e}}
\def\tG{\tilde{G}}
\def\tg{\tilde{g}}
\def\tL{\tilde{L}}
\def\tp{\tilde{p}}
\def\tV{\tilde{V}}
\def\tbbV{\tilde{\bbV}}
\def\tw{\tilde{w}}
\def\tZ{\tilde{Z}}
\def\tGamma{\tilde{\Gamma}}
\def\tdelta{\tilde{\delta}}
\title[Noncollision Singularities in the 2-Center-2-Body problem]{Non-Collision singularities in the Planar Two-Center-Two-Body problem}
\author{Jinxin Xue and Dmitry Dolgopyat}
\address{University of Chicago, Chicago, IL, 60637}
\email{jxue@math.uchicago.edu}
\address{University of Maryland, College Park, MD, 20740}
\email{email: dmitry@math.umd.edu}
\date\today
\begin{document}
\maketitle
\begin{abstract}
  In this paper, we study a restricted four-body problem called planar two-center-two-body problem. In the plane, we have two fixed centers $Q_1$ and $Q_2$ of masses 1, and two moving bodies $Q_3$ and $Q_4$ of masses $\mu\ll 1$. They interact via Newtonian potential. $Q_3$ is captured by $Q_2$, and $Q_4$ travels back and forth between two centers. Based on a model of Gerver, we prove that there is a Cantor set of initial conditions which lead to solutions of the Hamiltonian system whose velocities are accelerated to infinity within finite time avoiding all earlier
  collisions. 
This problem is a simplified model for the planar four-body problem case of the Painlev\'{e} conjecture.
\end{abstract}
\begin{spacing}{0.45}
\tableofcontents
\end{spacing}
\renewcommand\contentsname{Index}

\section{Introduction}
\subsection{Statement of the main result}

We study a two-center two-body problem. Consider two fixed centers $Q_1$ and $Q_2$ of masses $m_1=m_2=1$
located at distance $\chi$ from each other and
two small particles $Q_3$ and $Q_4$ of masses $m_3=m_4=\mu\ll 1$.
%$Q_3$ is captured by $Q_4$ and $Q_4$ is a messenger traveling between $Q_1$ and $Q_2$.
$Q_i$s interact with each other via Newtonian potential.
If we choose coordinates so that $Q_2$ is at $(0,0)$ and $Q_1$ is at $(-\chi, 0)$
then the Hamiltonian of this system can be written as
\begin{equation}H=\dfrac{|P_3|^2}{2\mu}+\dfrac{|P_4|^2}{2\mu}-\dfrac{\mu}{|Q_3|}-\dfrac{\mu}{|Q_3-(-\chi,0)|}-\dfrac{\mu}{|Q_4|}-\dfrac{\mu}{|Q_4-(-\chi,0)|}-\dfrac{\mu^2}{|Q_3-Q_4|}.\label{eq: main}\end{equation}

We assume that the total energy of the system is zero.

We want to study singular solutions of this system, that is, the solutions which can not be continued for all positive times.
We will exhibit a rich variety of singular solutions. Fix $\eps_0<\chi.$ Let $\bomega=\{\omega_j\}_{j=1}^\infty$
be a sequence of 3s and 4s.

\begin{Def}
We say that $(Q_3(t), Q_4(t))$ is a {\bf singular solution with symbolic sequence $\bomega$} if there exists a positive
increasing sequence $\{t_j\}_{j=0}^\infty$ such that
\begin{itemize}
\item $t^*=\lim_{j\to\infty} t_j<\infty.$
\item $|Q_3(t_j)-Q_2|\leq \eps_0,$ $|Q_4(t_j)-Q_2|\leq \eps_0.$
\item For $t\in [t_{j-1}, t_j]$, $|Q_{7-\omega_j}(t)-Q_2|\leq \eps_0$ and
$\{Q_{\omega_j}(t)\}_{t\in [t_{j-1}, t_j]}$ leaves the $\eps_0$ neighborhood of $Q_2$, winds around $Q_1$ exactly once then reenters the $\eps_0$ neighborhood of $Q_2$.
\item $\displaystyle \limsup_{t\uparrow t^*} |\dot{Q}_i(t)|\to \infty$ for $i=3,4.$
\end{itemize}
\end{Def}
During the time interval $[t_{j-1}, t_j]$ we refer to $Q_{\omega_j}$ as the traveling particle and
to $Q_{7-\omega_j}$ as the captured particle. Thus $\omega_j$ prescribes which particle is
the traveler during the $j$ trip. The phrase that the traveler winds around $Q_1$ exactly once means that
the angle from $Q_1$ to the traveler changes by $2\pi+O(1/\chi).$

We denote by ${\Sigma_\bomega}$ the set of initial conditions of singular orbits with symbolic sequence $\bomega.$ Note that if
$\bomega$ contains only finitely many 3s then there is a collision of $Q_3$ and $Q_2$ at time $t^*.$
If $\bomega$ contains only finitely many 4s then there is a collision of $Q_4$ and $Q_2$ at time $t^*.$
Otherwise at we have a {\it collisionless singularity} at $t^*.$

\bthm
\label{ThMain}
There exists $\mu_*\ll 1$ such that for $\mu<\mu_*$ the set $\Sigma_\bomega\neq\emptyset.$

Moreover there is an open set $U$ on the zero energy level and a foliation of $U$ by two-dimensional surfaces such that for any leaf
$S$ of our foliation $\Sigma_\bomega\cap S$ is a Cantor set.
\ethm
\begin{Rk}
By rescaling space and time variables we can assume that $\chi\gg 1.$ In the proof we shall make this assumption and
set $\eps_0=2.$
\end{Rk}
\begin{Rk}
It follows from the proof that the Cantor set described in Theorem \ref{ThMain} can be chosen to depend continuously
on $S.$ In other words $\Sigma_\bomega$ contains a set which is locally a product of a five dimensional disc and
a Cantor set. The fact that on each surface we have a Cantor set follows from the fact that we have a freedom of
choosing how many rotations the captured particle makes during $j$-th trip.
\end{Rk}
\begin{Rk}
The construction presented in this paper also works for small nonzero energies. Namely, it is sufficient that the
total energy is much smaller than the kinetic energies of the individual particles. The assumption that the total energy
is zero is made to simplify notation since then the energies of $Q_3$ and $Q_4$ have the same absolute values.
\end{Rk}
\begin{Rk}
One can ask if Theorem \ref{ThMain} holds for other choices of masses. The fact that the masses of
the fixed centers $Q_1$ and $Q_2$ are the same is not essential and is made only for convenience.
The assumption that $Q_3$ and $Q_4$ are light is important since it allows us to treat their interaction as a
perturbation except during the close encounters of $Q_3$ and $Q_4.$
The fact that the masses of $Q_3$ and $Q_4$ are equal allows us to use an explicit periodic solution
of a certain limiting map $(${\em Gerver map}$)$ which is found in \cite{G2}. It seems likely that the conclusion
of Theorem \ref{ThMain} is valid if $m_3=\mu, m_4=c\mu$ where $c$ is a fixed constant close to 1 and $\mu$ is
sufficiently small but we do not have a proof of that.
\end{Rk}

\subsection{Motivations.}
\subsubsection{Non-collision singularities in N-body problem}

Our work is motivated by the following fundamental  problem in celestial mechanics.
{\it Describe the set of initial conditions  of the Newtonian N-body problem leading to global solutions.}
The compliment to this set splits into the initial conditions leading to the collision and non-collision singularities.

It is clear that the set of initial conditions leading to collisions is non-empty for all $N>1$ and it is shown in
\cite{Sa1} that it has zero measure. Much less is known about the non-collision singularities. The main motivation
for our work is provided by
following basic problems.

\bcon
\label{ConNE}
The set of non-collision singularities is non-empty for all $N>3.$
\econ

\bcon
\label{ConZM}
The set of non-collision singularities has zero measure for all $N>3.$
\econ

Conjecture \ref{ConNE} probably goes back to Poincar\'{e} who was motivated by King Oscar II prize problem
about analytic representation of collisionless solutions of the $N$-body problem. It was explicitly mentioned
in Painlev\'{e}'s lectures \cite{Pa} where the author proved that for $N=3$ there are no non-collision singularities.
Soon after Painlev\'{e}, von Zeipel showed that if the system of $N$ bodies has a non-collision singularity then some
particle should fly off to infinity in finite time. Thus non-collision singularities seem quite counterintuitive. However
in \cite{MM} Mather and McGehee constructed a system of four bodies on the line where the particles go to infinity
in finite time after an infinite number of binary collisions (it was known since the work of Sundman \cite{Su} that
binary collisions can be regularized so that the solutions can be extended beyond the collisions). Since
Mather-McGehee example had collisions it did not solve Conjecture \ref{ConNE} but it made it plausible.
Conjecture \ref{ConNE} was proved independently by Xia \cite{X} for the spacial five-body problem and by Gerver
\cite{G1} for a planar $3N$ body problem where $N$ is sufficiently large.
The problem still remained open for $N=4$ and for small $N$ in the planar case. However in \cite{G2} (see also \cite{G3})
Gerver
sketched a scenario which may lead to a non-collision singularity in the planar four-body problem. Gerver has not
published the details of his construction due to a large amount of computations involved (it suffices to mention that
even technically simpler large $N$ case took 68 pages in \cite{G1}). The goal of this paper is to realize Gerver's
scenario in the simplified setting of two-center-two-body problem. %Some of the estimates obtained here are
%used in the companion paper \cite{Xu} which proves Conjecture \ref{ConNE} for the planar four body problem.

Conjecture \ref{ConZM} is mentioned by several authors, see e.g. \cite{Sim, Sa3, K}.
It is known that the set of initial conditions leading to the collisions has zero measure \cite{Sa1} and that the same is true
for non-collisions singularities if $N=4.$ To obtain the complete solution of this conjecture one needs to understand better
of the structure of the non-collision singularities and our paper is one step in this direction.

\subsubsection{Well-posedness in other systems} Recently the question of global well-posedness in PDE attracted a lot of attention motivated in
part by the Clay Prize problem about well-posedness of the Navier-Stokes equation.
One approach to constructing a blowup solutions for PDEs is to find a fixed point of
a suitable renormalization scheme and to prove the convergence towards this fixed point (see e.g. \cite{LS}). 
 The same scheme is
also used to analyze two-center-two-body problem and so we hope that the techniques developed in this paper
can be useful in constructing singular solutions in more complicated systems.

\subsubsection{Poincar\'{e}'s second species solution.}
In his book \cite{Po}, Poincar\'{e} claimed the existence of the so-called second species solution in three-body problem, which are periodic orbits converging to collision chains as $\mu\to 0$. %Poincar\'{e} was interested in it since it is a bifurcation phenomenon.
The concept of second species solution was generalized to the non-periodic case.
In recent years significant progress was made in understanding second species solutions in 
both restricted \cite{BM, FNS} and full \cite{BN}
three-body problem.
%In recent years, people (\cite{BM, FNS}) made
%significant progress in proving the existence of the second species solutions for restricted planar three body problem. In \cite{BN}, the authors also develop the variational techniques %to prove results for full three-body problem. But neither concrete model is given nor non-periodic infinite collision chains are considered there.
However the understanding of general second species solutions generated by infinite aperiodic collision chains is still incomplete.
Our result can be considered as a generalized version of second species solution. All masses are positive and there are infinitely many close encounters.
Therefore the techniques developed in this paper can be useful in the study of the second species solutions.

\subsection{Extension to the four-body problem}
Consider the same setting as in our main result but suppose that $Q_1$ and $Q_2$ are also free (not fixed). Then we can expect that
during each encounter light particle transfers a fixed proportion of their energy and momentum to the heavy particle. The exponential
growth of energy and momentum would cause $Q_1$ and $Q_2$ to go to infinity in finite time leading to a non-collision singularity.

Unfortunately a proof of this involves a significant amount of additional computations due to higher dimensionality of the full four-body problem.
%Indeed planar four-body problem has 16 dimensions since each particle has two position and two momentum coordinates. Removing the translation
%invariance we are left with 12 dimensions. Taking into account the rotation invariance leaves us with 10 dimensions. Energy conservation
%and taking a Poincar\'{e} section kills two more dimensions so we obtain a eight dimensional Poincar\'{e} map. 
A good news is that similarly to the problem at hand, the Poincar\'{e} map of the full four-body problem will have only two strongly expanding directions whose origin could be understood by looking at our two-center-two-body problem.
The other directions will be dominated
by the most expanding ones. This allows our strategy to extend to the full four-body problem leading to the complete solution of the Painlev\'{e} conjecture. However, due to the length of the arguments, the details are presented in a separate paper
\cite{Xu}.

\subsection{Plan of the paper}
The paper is organized as follows. Section~\ref{section: main proof}
and~\ref{section: hyperbolic} constitute the framework of the proof. In Section \ref{section: main proof}
we give a proof of the main Theorem~\ref{ThMain} based on a careful study of the hyperbolicity of the 
Poincar\'e map. In Section~\ref{section: hyperbolic}, we summarize all calculations
needed in the proof of the hyperbolicity.
All the later sections provide calculations needed in Sections \ref{section: main proof} and
~\ref{section: hyperbolic}.
We define the local map to study the local interaction between $Q_3$ and $Q_4$ and global map to cover the time interval when $Q_4$ is traveling between $Q_1$ and $Q_2$. Sections ~\ref{section: equation},~\ref{section: variational},~\ref{section: boundary} and~\ref{section: switch foci} are devoted to
the global map, while Sections \ref{section: localappr},\ref{section: localC0}, and~\ref{section: local} study
local map.
Relatively short Sections \ref{subsection: plan} and \ref{ScCons} contain some technical results pertaining to both local and global maps.
Finally, we have two appendices. Appendix~\ref{section: appendix} contains
an introduction to the Delaunay coordinates for Kepler motion, which are
used extensively in our calculations. In Appendix~\ref{section: gerver}, we summarize the 
information about Gerver's model from \cite{G2}.

\section{Proof of the main theorem}\label{section: main proof}
\subsection{Idea of the proof}
The proof of the Theorem~\ref{ThMain} is based on studying the hyperbolicity of the Poincar\'{e} map. Our system has four degrees of freedom.
We pick the zero energy surface and then consider a Poincar\'{e} section. The resulting Poincar\'{e} map is six dimensional.
%In terms of Delaunay coordinates, the action-angle coordinates for Kepler problem, we have the Poincar\'e map
%\[\mathcal P: T^*\T^3\to T^*\T^3,\quad D\mathcal P: T(T^*\T^3)\to T(T^*\T^3).\]
%After working out $D\mathcal P$,
In turns out that for orbits of interest (that is, the orbits where the captured particle rotates around $Q_2$ and the traveler moves back and forth between
$Q_1$ and $Q_2$) there is an invariant cone field which consists of vectors close to a certain two dimensional subspace
such that all vectors in the cone are strongly expanding. This expansion comes from the combination of shearing
(there are long stretches when the motion of the light particles is well approximated by the Kepler motion and so the derivatives
are almost upper triangular) and twisting caused by the close encounters between $Q_4$ and $Q_3$ and between $Q_4$ and $Q_1.$
We restrict our attention to a two dimensional surface whose tangent space belong to the invariant cone and construct on such a surface
a Cantor set of singular orbits as follows. The two parameters coming from  the two dimensionality of the surface will be used to control the
phase of the close encounter between the particles and their relative distance. The strong expansion will be used to ensure that the
choices made at the next step will have a little effect on the parameters at the previous steps. This Cantor set construction based on the
instability of near colliding orbits is also among the key ingredients of the singular orbit constructions in \cite{MM} and~\cite{X}.
%two dimensional invariant subspace, restricted on which, the action of the derivative is strongly expanding. Moreover the

\subsection{Main ingredients}
In this section we present the main steps in proving Theorem \ref{ThMain}. In Subsection \ref{SSGer}
we describe a simplified model for constructing singular solutions given by Gerver \cite{G2}.
This model is based on the following simplifying assumptions:

\begin{itemize}
\item $\mu=0,\ \chi=\infty$ so that $Q_3$(resp. $Q_4$) moves on a standard ellipse (resp. hyperbola). 
\item The particles $Q_3,Q_4$ do not interact except during a close encounter.
\item Velocity exchange during close encounters can be modeled by an elastic collision.
\item The action of $Q_1$ on light particles can be ignored except that
during the close encounters of the traveler particle with $Q_1$ the angular momentum of
the traveler with respect to $Q_2$ can be changed arbitrarily.
\end{itemize}
The main conclusion of \cite{G2} is that the energy of the captured particle can be increased by
a fixed factor while keeping the shape of its orbit unchanged. Gerver designs a two step procedure 
with collisions having the following properties:
\begin{itemize}
\item The incoming and outgoing asymptotes of the traveler are horizontal.
\item The major axis of the captured particle remains vertical.
\item After two steps of collisions, the elliptic orbit of the captured particle has the same eccentricity but smaller semimajor axis compared with the elliptic orbit before the first collision (see Fig 1 and 2).
\end{itemize}
For quantitative information, see Appendix~\ref{section: gerver}.

Since the shape is unchanged after the two trips described above the procedure
can be repeated. Then the kinetic energies of the particles grow exponentially and so the time needed
for $j$-th trip is exponentially small. Thus the particles can make infinitely many trips in finite time leading
to a singularity. Our goal therefore is to get rid of the
above mentioned simplifying assumptions.
%\\begin{center}
%\begin{figure}
%        \centering
%        \begin{subfigure}[b]{0.55\textwidth}
%                \includegraphics[width=1.1\textwidth]{1stcoll.jpg}
%                \caption{Angular momentum transfer}
%                \label{fig:coll1}
%        \end{subfigure}%
         %add desired spacing between images, e. g. ~, \quad, \qquad, \hfill etc.
          %(or a blank line to force the subfigure onto a new line)
%              \begin{subfigure}[b]{0.55\textwidth}
%                \includegraphics[width=1.1\textwidth]{2ndcoll.jpg}
%                \caption{Energy transfer}
%                \label{fig:coll2}
%        \end{subfigure}
%        \caption{Gerver's construction}\label{fig:gerver}
%\end{figure}
%\end{center}
%\subsubsection{Idea of the proof}
\begin{figure}[ht]
\begin{center}
\includegraphics[width=0.8\textwidth]{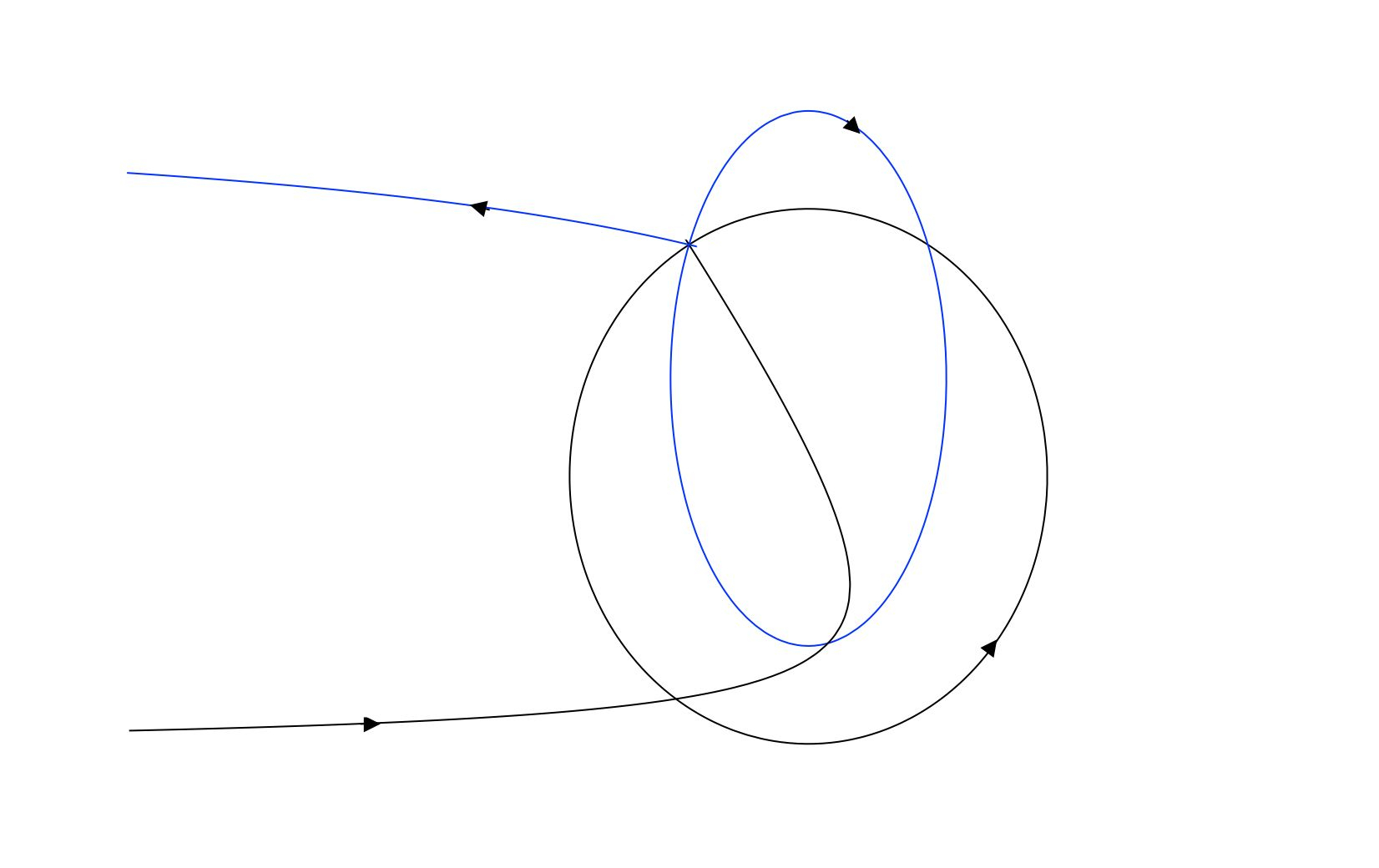}
\caption{Angular momentum transfer}
\end{center}
\label{fig:coll1}
\end{figure}

\begin{figure}[ht]
\begin{center}
\includegraphics[width=0.8\textwidth]{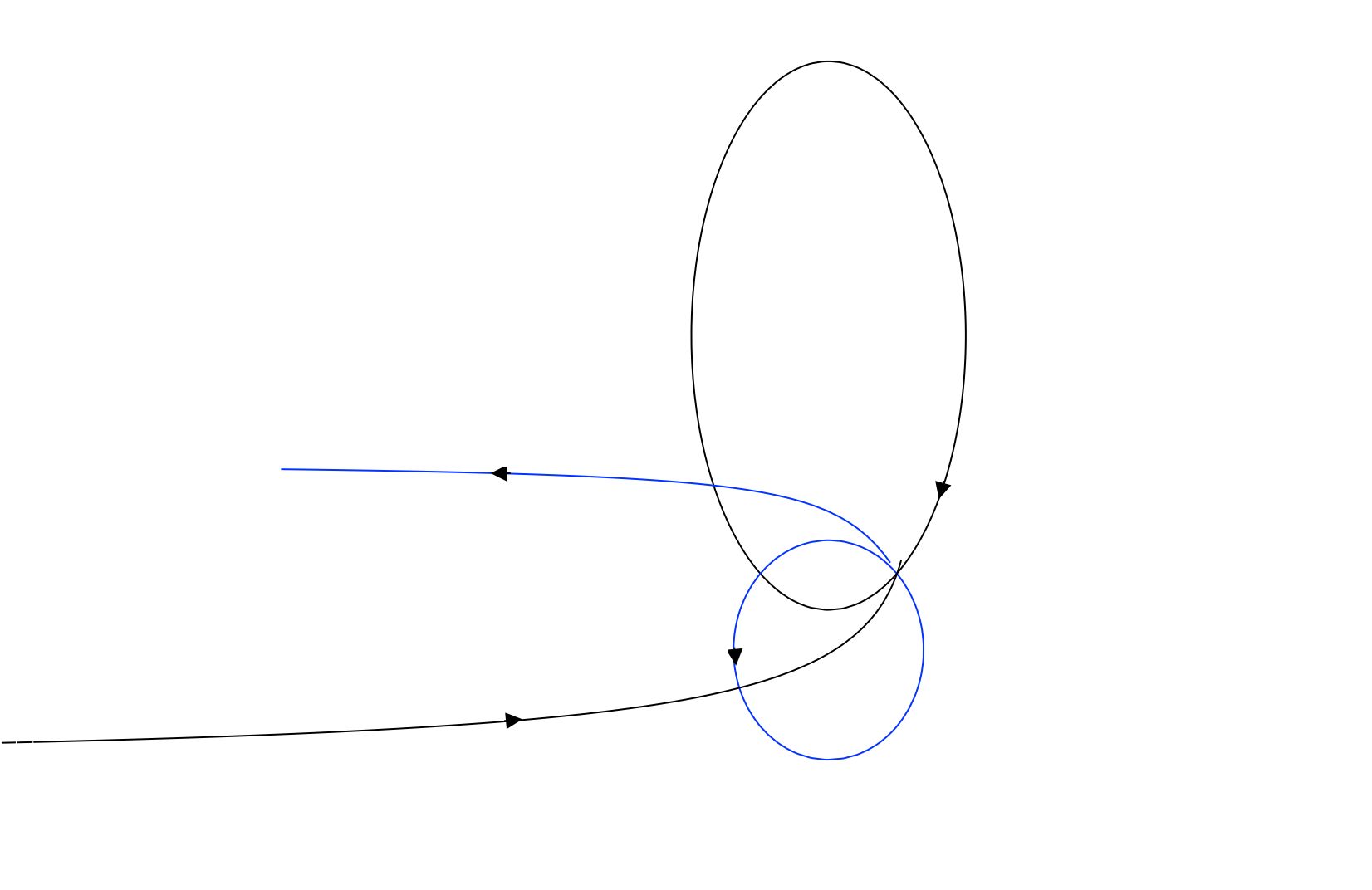}
\caption{Energy transfer}
\end{center}
\label{fig:coll2}
\end{figure}

In Subsection \ref{SSLoc} we study near collision of the light particles.
This assumption that velocity exchange can be modeled by elastic collision
is not very restrictive since both energy and momentum are conserved during the exchange
and any exchange of velocities conserving energy and momentum amounts to rotating the relative velocity
by some angle and so it can be effected
by an elastic collision. In Subsection \ref{SSGlob} we state a result saying that away from the close encounters the 
interaction between the light particles as well as the action of $Q_1$ on the particle which is captured by $Q_2$
can indeed be disregarded.
In Subsection \ref{SSAdm} we study the Poincar\'{e}
map corresponding to one trip of the traveller particle around $Q_1.$ After some technical preparations we present
the main result of that section--Lemma \ref{LmChooseAM} which says that after this trip the angular momentum of
the traveler particle indeed can change in an arbitrary way. Finally in Subsection \ref{SSProof} we show
how to combine the above ingredients to construct a Cantor set of singular orbits.

In \eqref{eq: main}, we make the change of variables $P_i=\mu v_i, i=3,4$ and divide the Hamiltonian by $\mu$. This rescaling changes the symplectic form by a conformal factor but does not change the Hamiltonian equations. The rescaled Hamiltonian, still denoted by $H$ has the following form
\begin{equation}\label{eq: hamloc}H=\frac{|v_3|^2}{2}+\frac{|v_4|^2}{2}-\frac{1}{|Q_3|}-\frac{1}{|Q_3+(\chi,0)|}-\frac{1}{|Q_4|}-\frac{1}{|Q_4+(\chi,0)|}-\frac{\mu}{|Q_3-Q_4|}.\end{equation}
We have $v_i=\dot Q_i$ and we use $x,y$ to denote the components of $Q$, $Q_i=(x_i,y_i),\ i=3,4$. 
The orbit of Kepler motion can be parametrized by four variables $(v,Q)\in \R^4$ or in Delaunay coordinates $(L,\ell,G,g)$.
The symplectic transformation between the two coordinates is given explicitly in Appendix \ref{section: appendix}.
The geometric meanings of the Delaunay variables are as follows. For elliptic motion, $L^2$ is the length of the
semi major axis, $LG$ is the length of the semi minor axis, and $g$ is the argument of periapsis (direction).
These three variables characterize the shape of the ellipse.
The variable $\ell$ called mean anomaly indicates the position of the moving body on the ellipse.
For Kepler hyperbolic motion, Delaunay coordinates can also be introduced and have similar meanings.
See Appendix \ref{section: appendix} for more details. In the following we use subscript $3,4$ to denote the corresponding variables for $Q_3$ or $Q_4$. 

\subsection{Gerver map}
\label{SSGer}

Following \cite{G2}, we discuss in this section the limit case $\mu=0,\chi=\infty$.
We assume that $Q_3$ has elliptic motion and $Q_4$ has hyperbolic motion with respect to the focus $Q_2$.
Since $\mu=0,$  $Q_3$ and $Q_4$ do not interact unless they have exact collision.
Since we assume that $Q_4$ just comes from the interaction from $Q_1$
located at $(-\infty, 0)$ and the new traveler particle is going to interact with $Q_1$ in the future,
the slope of incoming asymptote $\theta_4^-$ of $Q_4$ and that of the outgoing asymptote $\bar{\theta}^+$ of the traveler particle
should satisfy $\theta^-_4=0, \ \bar{\theta}^+=\pi$.

%Our system has four degrees of freedom. By fixing the zero energy surface and taking a Poincar\'{e} section,
%we are left with 6 variables to define the
%Poincar\'{e} map.
The Kepler motions of $Q_3$ and $Q_4$ has three first integrals $E_i, G_i$ and  $g_i$
where $E_i$ denotes the energy, $G_i$ denotes the angular momentum and $g_i$ denotes the argument of periapsis.
Since the total energy of the system is zero we have $E_4=-E_3.$ Note that
\begin{equation}
\label{En-L}  
E_3:=\frac{-1}{2L^2_3}=\frac{|v_3|^2}{2}-\frac{1}{|Q_3|}.
\end{equation}
It turns out convenient to use eccentricities
\begin{equation}
\label{Ec-GE}  
e_i=\sqrt{1+2G^2_i E_i}
\end{equation}
instead of $G_i$ since
the proof of Theorem \ref{ThMain} involves a renormalization transformation and $e_i$
are scaling invariant.
The Gerver map describes the parameters of the elliptic orbit change during the interaction of $Q_3$ and $Q_4.$
The orbits of $Q_3$ and $Q_4$ intersect in two points. We pick one of them.
{\it We label the intersection points in the reverse chronological order with respect to
  the motion of $Q_4.$} (This labeling is done so that the first intersection point is used at the first
step of the Gerver's construction and the second point is used at the second step of the Gerver
construction, see Figures 1 and 2.). Thus
we use a discrete parameter $j\in\{1, 2\}$ to describe which intersection point is selected.
%the first or the second
%collision in Gerver's construction.
%(the intersectionpoints will be numbered chronologically along the orbit of $Q_4$).

Since $Q_3$ and $Q_4$ only interact when they are at the same point the only effect of the interaction is to change their
velocities. Any such change which satisfies energy and momentum conservation can be described by an
elastic collision. That is, velocities before and after the collision are related by
\begin{equation}
\label{Ellastic}
 v_3^+=\dfrac{v_3^-+v_4^-}{2}+\left|\dfrac{v_3^--v_4^-}{2}\right| n(\al), \quad
v_4^+=\dfrac{v_3^-+v_4^-}{2}-\left|\dfrac{v_3^--v_4^-}{2}\right| n(\al),
\end{equation}
where $n(\al)$ is a unit vector making angle $\al$ with $v_3^--v_4^-.$

With this in mind we proceed to define the Gerver map
$\Ger_{e_4, j, \omega}(E_3, e_3, g_3).$
This map depends on two discrete parameters $j\in \{1, 2\}$ and
$\omega\in\{3, 4\}.$ The role of $j$ has been explained above, and $\omega$ will tell us which particle will be the traveler
after the collision.

To define $\Ger$ we assume that $Q_4$ moves along the hyperbolic orbit with parameters $(-E_3, e_4, g_4)$
where $g_4$ is fixed by requiring that the incoming asymptote of $Q_4$ is horizontal. We assume that $Q_3$ and $Q_4$
arrive to the $j$-th intersection point of their orbit simultaneously. At this point their velocities are changed by \eqref{Ellastic}.
After that the particle proceed to move independently.
Thus $Q_3$ moves on an orbit with parameters $(\brE_3, \bre_3, \brg_3)$, and
$Q_4$ moves on an orbit with parameters
$(\brE_4, \bre_4, \brg_4).$

If $\omega=4$, we choose $\al$ in \eqref{Ellastic}
so that after the exchange $Q_4$ moves on hyperbolic
orbit and $\brtheta_4^+=\pi$ and let
$$ \Ger_{e_4, j, 4}(E_3, e_3, g_3)=(\brE_3, \bre_3, \brg_3).$$
If $\omega=3$ we choose $\al$ in \eqref{Ellastic}
so that after the exchange $Q_3$ moves on hyperbolic
orbit and $\brtheta_3^+=\pi$ and let
$$ \Ger_{e_4, j, 3}(E_3, e_3, g_3)=(\brE_4, \bre_4, \brg_4).$$

\begin{Rk}
  If the index $j$ is used to define to the Gerver map then we refer to $j$-th intersection point of the orbits of
  $Q_3$ and $Q_4$ as {\bf Gerver collision point}. We refer to Appendix \ref{section: gerver} for the coordinates
  of Gerver's collision points.
  It is important in Gerver's model that if $Q_3$ and $Q_4$
  have a close encounter near the Gerver point then they do not have another close encounter before the next
  trip of the traveller particle. This fact is proven in \cite{G2}. For the reader's convenience we
  reproduce Gerver's argument in Section \ref{subsection: nocollision}.
\end{Rk}  

\emph{In the following, to fix our notation, we always call the captured particle $Q_3$ and the traveler $Q_4$.
}

Below
we denote the ideal orbit parameters in Gerver's paper \cite{G2} of $Q_3$ and $Q_4$ before the first (respectively second) collision with * (respectively **).
Thus, for example, $G_4^{**}$ will denote the angular momentum of $Q_4$ before the second collision. Moreover, the
actual values after the first (respectively, after the second) collisions are denoted with a $bar$ or $double\ bar$.

%We shall study $\Ger$ locally. In this case it is convenient to use eccentricities
%instead of angular momenta.

Note $\Ger$ has a skew product form
$$ \bre_3=f_e(e_3, g_3, e_4), \quad
\brg_3=f_g(e_3, g_3, e_4), \quad
\brE_3=E_3 f_E(e_3, g_3, e_4) . $$
This skew product structure will be crucial in the proof of Theorem \ref{ThMain} since it will allow us to iterate $\Ger$
so that $E_3$ grows exponentially while $e_3$ and $g_3$ remains almost unchanged.

The following fact plays a key role in constructing singular solutions.

\blm [\cite{G2}]
\label{LmGer}
Assume that the total energy of the $Q_2,Q_3,Q_4$ system is zero.
%Assume that the incoming and outgoing asymptotes of $Q_4$ are horizontal. 
%For $\mu=0,\ \chi=\infty$, if we start with some $0<e_3^*<\sqrt{2}/2$, $E_3^*=-1/2, g_3^*=\pi/2$, i.e. vertical semimajor,
%and fix the asymptotes of the hyperbola to be horizontal, i.e. $\theta^-=0,\ \bar{\theta}^+=\pi,\ \bar{\theta}^-=0,\
%\brrtheta^+=\pi$,
\begin{itemize}
\item[(a)]For $E_3^*=\frac{1}{2}, g_3^*=\frac{\pi}{2}$ and for any
  $\ e_3^*\in (0,\frac{\sqrt 2}{2})$, there exist $e^*_4, e^{**}_4,\lb_0>1$ such that 
$$ (e_3, g_3, E_3)^{**}=\Ger_{e_4^*, 1, 4}\left(e_3, g_3, E_3 \right)^*, \quad (e_3, -g_3, \lambda_0 E_3)^*=\Ger_{e_4^{**}, 2, 4}\left(e_3, g_3, E_3 \right)^{**}, $$
where $E_3^{**}=E_3^*=\frac{1}{2},\ g_3^{**}=g_3^*=\frac{\pi}{2}$ and $e_3^{**}=\sqrt{1-e_3^{*2}}$.
\item[(b)] There is a constant $\brdelta$ such that if $(e_3,g_3,E_3)$
  lie in a $\brdelta$ neighborhood of $(e_3^*,g_3^*,E_3^*),$  then there exist smooth functions $e_4'(e_3, g_3),$ $e_4''(e_3, g_3),$ and $\lambda(e_3, g_3,E_3)$ such that
$$ e_4'(e_3^*, g_3^*)=e_4^*, \quad e_4''(e_3^*, g_3^*)=e_4^{**}, \quad \lambda(e^*_3, g^*_3,E^*_3)=\lb_0,$$
\begin{equation}\nonumber
\begin{aligned}
(\bre_3, \brg_3, \brE_3)&=\Ger_{e_4'(e_3, g_3), 1, 4}\left(e_3, g_3, E_3 \right),\\ 
(e^*_3, -g^*_3, \lb(e_3,g_3,E_3)E_3^*)&=\Ger_{e_4''(e_3, g_3), 2, 4}\left(\bre_3, \brg_3, \brE_3 \right). \end{aligned}\end{equation}
\end{itemize}
\elm
In Section \ref{subsection: local3}, we will give a set of equations
(equations \eqref{eq: polarcollision1}-\eqref{eq: polarcollision9}) whose solutions give the map
$\Ger$, and the smoothness of $e',e''$ follows from the implicit function theorem. We remark that $e',e''$ do not depend on $E_3$ since $e_4,e_3,g_3$ are rescaling invariant, and we can always rescale $E_3$ to $E_3^*.$
Part (a) allows us to increase energy after two collisions without changing the shape of the orbit in the limit
case $\mu=0, \chi=\infty.$ Part (b) allows us to fight against the perturbation coming from the fact
that $\mu>0$ and $\chi<\infty.$ Lemma \ref{LmGer} is a slight restatement of the main result of \cite{G2}. Namely part (a) is proven in Sections 3 and 4 of \cite{G2}
and part (b) is stated in Section 5 of \cite{G2} (see equations (5-10)--(5-13)). The proof of part (b) proceeds by a routine
numerical computation. For the reader's convenience we review the proof of Lemma \ref{LmGer} in Appendix \ref{section: gerver} explaining how the numerics is done.

%\begin{Rk}
%In fact Gerver produces a one parameter family of the periodic solution. Namely one can take $e_3^*$ to be any number $(0,\frac{\sqrt{2}}{2})$ and $g_3^*=0.$ In the course of the proof of Theorem \ref{ThMain} we need to check several non-degeneracy
%conditions. This will be done numerically for $e_3^*=\frac{1}{2}.$
%\end{Rk}
\begin{Rk}
We try to minimize the use of numerics in our work. The use of numerics is always preceded by mathematical derivations. Readers can see that the numerics in this paper can also be done without using computer.
We prefer to use the computer since computers are more reliable than humans when doing routine computations.
\end{Rk}

\subsection{Asymptotic analysis, local map}
\label{SSLoc}

Starting from this section, we work on the Hamiltonian system \eqref{eq: main}. We assume that the two centers are at distance $\chi\gg 1$ and that $Q_3,Q_4$ have positive masses $0<\mu\ll 1$. We will see below that $\chi$ grows exponentially to infinity under iterates due to the renormalization, so we always assume $1/\chi\ll\mu\ll 1$ without loss of generality. Therefore the motions of $Q_3$ and $Q_4$ can be approximated by Kepler motions at least for a  short time interval if they are away from collisions.
We use the Delaunay coordinates $(L,\ell,G,g)_{3,4}$ (elliptic for 3 and hyperbolic for 4) to describe the motions of $Q_3$ and $Q_4$ when $Q_3$ and $Q_4$ are in a $O_{\chi\to\infty}(1)$ neighborhood of $Q_2$. We assume $Q_3$ is captured by $Q_2$. Namely, the energy $E_3$ of $Q_3$ is negative where the energy \eqref{En-L}   is the sum of the kinetic energy and the potential energy relative to $Q_2$. The system has four degrees of freedom. By restricting to the zeroth energy level and picking a Poincar\'e section, we get a six dimensional space as our phase space on which the Poincar\'e map is defined. The Poincar\'e section is chosen as $\{x_4=-2,\ \dot{x}_4>0 \}$. 
We choose the orbit parameters as $(E_3,\ell_3, e_3,g_3, e_4,g_4)\in \R^4\times \T^2$ which are obtained from
%converted from
the Delaunay variables using \eqref{En-L}--\eqref{Ec-GE}. 
The energy $E_4$ of $Q_4$ is eliminated using energy conservation and $\ell_4$ is treated as the new time, which is also eliminated by considering the Poincar\'e map instead of flow. 

We consider initial conditions in the following sets. We denote $$K:=\max_{\dagger=*,**}\Vert d\Ger_{e_4^\dagger, 1, 4}\left(e_3, g_3, E_3 \right)^\dagger\Vert+1,\quad K':=\max_{\dagger=*,**}\Vert d(e_4',e_4'')(e_3,g_3)^\dagger\Vert+1.$$ 
Given $\delta<\brdelta/(KK')$ where $\bar\dt$ is in Lemma \ref{LmGer}, consider open sets in the phase space (zero energy level and the Poincar\'e section $\{x_4=-2,\ \dot{x}_4>0 \}$) defined by
$$ U_1(\delta)=\left\{ \left|E_3-\left(-\dfrac{1}{2}\right)\right|,\ |e_3-e_3^*|,\ |g_3-g_3^*|,\
|\theta_4^-|<\dt,\ |e_4-e_4^*|<K'\delta \right\}, $$
$$ U_2(\delta)=\left\{ |E_3-E_3^{**}|,\ |e_3-e_3^{**}|,\ |g_3-g_3^{**}|,\
|\theta_4^-|<K\dt,\ |e_4-e_4^{**}|<KK'\delta \right\}. $$
In both $U_1(\dt)$ and $U_2(\dt)$, the angle $\ell_3$ can take any value in
$\T^1$. 

{\it Throughout the paper, we reserve the notations $K,K',\dt,\bar\dt$. }

We let particles move until one of the
particles  moving on hyperbolic orbit reaches the surface $\{x_4=-2,\ \dot{x}_4<0\}$. We measure the final orbit parameters $(\brE_3,\bar{\ell}_3, \bre_3,\brg_3,\bre_4,\brg_4)$. We call the mapping moving initial positions
of the particles to their final positions the {\bf local map} $\Loc$.
%Thus
%\[(\brE_3, \bre_3, \brg_3) =\Ger_{e_4}(E_3,e_3,g_3)+o(1).\]
In Fig. 3 of Section \ref{SSExp} the local map is to the right of the section $\{x=-2\}$. We are only interested in those initial conditions in $U_j(\delta),\ j=1,2$ which lead to close encounter between $Q_3$ and $Q_4$, since otherwise $Q_4$ moves on one slightly perturbed hyperbola with non-horizontal outgoing asymptote and will escape from the system (Sublemma \ref{KeepDirection}). To select these initial conditions of interest, we impose one more boundary condition. 
 
\blm
\label{LmLMC0} Fix any constant $C_1>0$ and $j\in \{1,2\}$. 
Suppose that  the initial orbit parameters
$(E_3,\ell_3,e_3,g_3,e_4,g_4)$ are chosen in $U_j(\dt),$ 
%stands for the first or second step in Gerver's model, 
such that the orbit passes through a $\dt$ neighborhood of the $j$-th Gerver's collision point, and
the traveler particle$($s$)$ satisfy
$|\theta_4^-|\leq C_1\mu$ and $|\bar{\theta}_4^+-\pi|\leq C_1\mu$.
Then the following asymptotics holds uniformly
\[(\brE_3, \bre_3, \brg_3) =\Ger_{e_4,j,4}(E_3,e_3,g_3)+o(1),
\mathrm{\ as\ }1/\chi\ll \mu\to 0.\]
%We have an analogous statement for the second collision.
%$$ (, (\brE_4, \brG_4, \brg_4))=\Ger_{ G_4, \al, j}(E_3, G_3, g_3)+o(1) $$
%where $j=1$ or $2$ depending on whether the close encounter of $Q_3$ and $Q_4$ happens near $A_1$ or near
%$A_2,$
%$$ \al=2 \arctan\left(\dfrac{1}{d^*}\right), \text{ where }
%d^*=\dfrac{d}{2\mu} (V_3^j-V_4^j)^2 $$
%where $d$ is the impact parameter that is a function of incoming orbit parameters, and $V_i^j$ is the velocity the particle $Q_i$ measured on the section $|Q_3-Q_4|=\mu^{1/2}$.
\elm
%The assumption $\bar{\theta}^+_4=\pi+O(\mu)$ is 
Thus the condition that the orbit parameters of $Q_4$ (in particular $\bar{\theta}_4^+$)
change significantly
%the boundary condition that we impose to 
forces $Q_3$ and $Q_4$ to have a closer encounter. 
The lemma tells us that Gerver map
is a good $C^0$ approximation of the local map $\Loc$ for the real case $0<1/\chi\ll\mu\ll 1$ for the orbits of interest.
Lemma \ref{LmLMC0} will be proven in Section \ref{ScC0Loc}.
\subsection{Asymptotic analysis, global map}
\label{SSGlob}
As before we assume that the two centers are at distance $\chi\gg 1.$ Fix a large constant~$C_2.$
We assume that initially $Q_3$ moves on an elliptic
orbit, $Q_4$ moves on hyperbolic orbit and $\{x_4(0)=-2, $ $\dot{x}_4(0)<0\}.$
We assume that $|y_4(0)|<C_2$ and that, after moving
around $Q_1,$  $Q_4$ hits the surface  $\{x_4=-2,\ \dot{x}_4>0\}$ so that $|y_4|<C_2$.
We call the mapping moving initial positions
of the particles to their final positions the (pre) {\bf global map} $\Glob$. 
In Section \ref{SSAdm} we will slightly modify the definition of the global map but it will not change the essential features 
discussed here. 
In Fig. 3 from Section \ref{SSExp}, the global map is to the left of the section $\{x=-2\}$.
We let $(E_3,\ell_3, e_3, g_3,e_4, g_4)$ denote the initial orbit parameters measured in the section $\{x_4=-2,\ \dot{x}_4<0\}$ and
$(\brE_3,\bar{\ell}_3, \bre_3, \brg_3, \bre_4, \brg_4)$ denote the final orbit parameters measured in the section $\{x_4=-2,\ \dot{x}_4>0\}$.
\blm
\label{LmGMC0}
Assume that $|y_4|<C_2$ holds both at initial and final moments and assume that we have initially
%\margem{Is $K$ here related to $K$ used to define $U_j(\dt)$?} 
$|E_3-E^{\dagger}_3|, |e_3-e_3^\dagger|,|g_3-g^\dagger_3|<2\bar\dt$ where $\dagger=*$ or $**$ 
and $(E_3^\dagger, e_3^\dagger, g_3^\dagger)$ are defined in Lemma \ref{LmGer}.
%depending on the second or first collision in Gerver's model. 
Then there exists $C_3$ such that 
uniformly in $\chi, \mu$ we have the following estimates
\begin{enumerate}
\item[(a)] $|\brE_3-E_3|\leq C_3\mu, \quad |\bar e_3-e_3|\leq C_3\mu, \quad |\brg_3-g_3|\leq C_3\mu.$
\item[(b)] $|\theta_4^+-\pi|\leq C_3\mu,\quad |\brtheta_4^-|\leq C_3\mu.$
\item[(c)]  The flow time between the initial and final moments bounded by $C_3\chi$.
\end{enumerate}
\elm
The proof of this lemma is given in Section~\ref{section: equation}. Notice that in the above two lemmas,
we control the orbit parameters $E_3,e_3,g_3,\theta_4$,
but we do not talk about $\ell_3,e_4$ (recall that $g_4$ can be solved from $\theta_4,L_4,G_4$). Most of the work of the paper is devoted to showing that there are two strongly expanding directions of the Poincar\'e map which enable us to prescribe $\ell_3,e_4$ arbitrarily. % immediately after we write down the equations of motion.

We also need the following fact which says that $Q_3$ if initially captured by $Q_2$ will always be captured. 

\begin{Lm}
  \label{Lm: boundQ3}
  Let $C_2$ be as in Lemma \ref{LmGMC0}.  Suppose the initial orbit parameters $\bx=(E_3,\ell_3,e_3,g_3,e_4,g_4)\in U_j(\dt)$ and the image
  $\Glob\circ\Loc(\bx)$ has $|y_4|\leq C_2.$  Then there are constants $\mu_0, \chi_0, D$ such that for $\mu\leq \mu_0$
  and $\chi\geq \chi_0$  we have $|Q_3(t)|\leq 2-D$ for all $t$ up to the time needed to define
  $\Glob\circ\Loc.$ %is defined where $D>0$ is a constant independent of $\mu$ and $\chi$.
\end{Lm}
The proof of this lemma is also given in Section \ref{ScC0Loc}.

\subsection{Admissible surfaces}
\label{SSAdm}
Given a sequence $\bomega$ we need to construct orbits having singularity with symbolic sequence $\bomega.$

We will study the Poincar\'{e} map $\cP=\Glob\circ\Loc$ to the surface $\{x_4=-2, $ $\dot{x_4}>0\}.$ It is a composition of the local and global maps defined in the previous sections.

We will also need the renormalization map $\cR$ defined as follows. In Cartesian coordinates, we partition our six dimensional section $\{x_4=-2, $ $\dot{x}_4>0\}$ into %\margem{need to specify which coordinate system is used here}
coordinate cubes of size $1/\sqrt{\chi}$. We next evaluate $E_3$ at the center of each cube and denote its value by $-\lb/2$, where $\lb>1$ is $\bar\dt$-close to $\lb_0$ in Lemma \ref{LmGer}. 
The locally constant map $\cR$ amounts to zooming in the configuration $Q_i=(x_i,y_i), i=3,4,$ by multiplying by $\lb$ and slowing down the velocity $v_i,\ i=3,4$ by dividing through $\sqrt{\lb}.$ 
In addition we reflect the coordinates along the $x$ axis. In Cartesian coordinates, the renormalization takes the form
\begin{equation}\label{EqRenorm}
\cR\left((v_{i,x},v_{i,y}), (x_i,y_i),H,t\right)= \left(\frac{(v_{i,x},-v_{i,y})}{\lb^{1/2}}, \lb(x_i,-y_i),\frac{H}{\lb},\lb^{3/2}t\right),\ i=3,4.
\end{equation}
% Its action to the Delaunay variables are to leave the angular variables unchanged and multiply the action variables by $\sqrt\lb$. 
Since the renormalization $\cR$ sends the section $\{x_4=-2\}$ to $\{x_4/\lb=-2\}$, we push forward each cube along the flow to the section $\{x_4=-2/\lb, $ $\dot{x_4}>0\}.$ {\it We include the piece of orbits from the section $\{x_4=-2, $ $\dot{x_4}>0\}$ to $\{x_4=-2/\lb, $ $\dot{x_4}>0\}$ to the global map and apply the $\cR$ to the section $\{x_4=-2/\lb, \ \dot{x_4}>0\}$.} This is then followed by a reflection. We have $\cR(\{x_4=-2/\lb,\ \dot{x_4}>0\})= \{x_4=-2, \ \dot{x_4}>0\},$ and \[\cR(E_3,\ell_3,e_3,g_3,e_4,g_4)=( E_3/\lb, \ell_3,e_3,-g_3,e_4,-g_4), \]
where minus signs are the effect of the reflection. 
%The behavior of the rescaling is as follows.
%Note that while the rescaling changes all variablesin Cartesian coordinates
%\[\mathcal R (x_{3,4}, y_{3,4}, \dot{x}_{3,4}, \dot{y}_{3,4})=(\lb x_{3,4}, -\lb y_{3,4},  \dot{x}_{3,4}/\sqrt{\lb}, -\dot{y}_{3,4}/\sqrt{\lb})\]
%it has a simpler form in term of integrals of motion
%\[\mathcal R(E_3,\psi_3, e_3,g_3,e_4,g_4)=(E_3/\lb,\psi_3, e_3, -g_3,e_4, -g_4)\]
%which is one of the main reasons we use these coordinates.

%To formalize the definition, in terms of Delaunay coordinates-- the action-angle coordinates for Kepler motion-- that we will use later (see Appendix~\ref{section: appendix}), the %renormalization is to stretch the fiber by $\sqrt{\lb}$ in $T^*\T^3$.
%\[\mathcal R: T^*\T^3\to T^*\T^3\]
%\[\mathcal R(L_3, \ell_3, G_3,g_3, G_4, g_4)\to (\sqrt{\lb}L_3, \ell_3, \sqrt{\lb}G_3,g_3, \sqrt{\lb}G_4, g_4). \]
 Note that the rescaling changes
(for the orbits of interest, increases) the distance between the fixed centers by sending $\chi$ to $\lb\chi$. Observe that at each step we have the
freedom of choosing the centers of the cubes. We describe how this choice is made in Section \ref{section: hyperbolic}.
In the following we give a proof of the main theorem based on the three lemmas, whose proofs are in the next section.

We need to define cone fields $\cK_1$ on $T_{U_1}(\R^4\times \T^2)$ and $\cK_2$ on $T_{U_2}(\R^4\times \T^2).$ 
Fix a small constant $\eta.$
\begin{Def}\label{DefKone}
 Let $\cK_1$ to be the set of vectors which make an angle less than a small number $\eta$ with
$\Span(d\cR w_2, \tw),$ and $\cK_2$ to be the set of vectors which make an angle less than $\eta$ with
  $\Span(w_1, \tw),$ where
  $$\tw=\frac{\partial}{\partial \ell_3} \text{ and }
  w_j=\frac{\partial e_4}{\partial G_4}\frac{\partial }{\partial e_4}-\frac{ L_4}{L_4^2+G_4^2}\frac{\partial }{\partial g_4},\  j=1,2.$$
%where the coefficients are evaluated in $U_j(\dt), j=1,2$. 
\end{Def}

\blm
\label{LmKone}
There is
a constant $c>0$
such that
for all $\bx\in U_1(\delta)$ satisfying $\mathcal{P}(\bx)\in U_2(\dt)$,  
and for all $\bx\in U_2(\delta)$ satisfying $\cR\circ\mathcal{P}(\bx)\in U_1(\dt)$, 
\begin{itemize}
\item[(a)] $d\cP (\cK_1)\subset \cK_2$, $d(\cR\circ \cP) (\cK_2)\subset \cK_1$.
\item[(b)] If  $v\in\cK_1$, then $\Vert d\cP(v)\Vert \geq c\chi \Vert v\Vert $.\\
If $v\in\cK_2$, then $\Vert d(\cR\circ \cP)(v)\Vert \geq c\chi \Vert v\Vert .$ %for some $c>0$ independent of $\chi$.
\end{itemize}
\elm

We call a two dimensional $C^1$ surface $S_1\subset U_1(\delta)$ (respectively $S_2\subset U_2(\delta)$)
{\bf admissible} if $TS_1\subset \cK_1$ (respectively $TS_2\subset \cK_2$).
Then item (a) of Lemma \ref{LmKone} implies that the image of admissible surface is also admissible.
More precisely, if $S_1$ is admissible and $\cP(S_1)\cap U_2(\dt)\neq\emptyset$, then $T_{U_2(\dt)}\cP(S_1)\subset \cK_2$.
A similar statement holds for the higher iterates.

From the explicit construction of the cones we get the following lemma. 
%Let $\Pi_1$ denote the plane spanned by $\dfrac{\partial}{\partial e_4}$ and $\dfrac{\partial}{\partial \ell_3}.$
%and $\Pi_2$ denote the space spanned by $\dfrac{\partial}{\partial E_3},$ $\dfrac{\partial}{\partial e_3},$
%$\dfrac{\partial}{\partial g_3}$ and $\dfrac{\partial}{\partial g_4}$
\blm
\label{LmAdm}
\begin{itemize}
\item[(a)] The vector $\tw=\frac{\partial}{\partial \ell_3}$ is in $\cK_i.$
\item[(b)] For any plane $\Pi$ in $\cK_i$ the projection map
$ \pi_{e_4, \ell_3}=(de_4, d\ell_3):\Pi\to\R^2$
is one-to-one.
%is a graph of a map $A:\Pi_1\to\Pi_2.$
In other words
$(e_4, \ell_3)$ can be used as coordinates on admissible surfaces.
\end{itemize}
\elm
Using the invariance of the cone fields, we can reduce the six dimensional Poincar\'e map to a two dimensional map defined on a cylinder. The reduction is done as follows.  
We introduce the following cylinder sets $$\mathcal C_1(\dt)=(e^*_4-K'\dt,e^*_4+K'\dt)\times \T^1,\quad  \mathcal C_2(\dt)= (e^{**}_4-KK'\dt,e^{**}_4+KK'\dt)\times \T^1.$$
By Lemma \ref{LmAdm}, each piece of admissible surface $S$ in $U_j(\dt)$ is a graph of a function
$\cS$ of the variables $(e_4,\ell_3)\in \mathcal C_j(\dt).$
Hence $\cP (\cS(e_4,\ell_3))$ becomes a function of two variables $(e_4,\ell_3)$.
However, $\cP(\cS(\cdot,\cdot))$ is well defined only on subsets of small measure in $\mathcal C_j(\dt),$ since 
for most points $(e_4,\ell_3)\in \mathcal C_j(\dt)$ 
the points $\cS(e_3,\ell_3)$ have orbits for which $Q_4$ escapes from the system.
The next lemma shows that certain open set $V$ can always be found in $\mathcal C_j(\dt)$ on which
$\cP(\cS(\cdot,\cdot))$ is defined and has large image where we call an admissible surface
$S$ {\bf large} if $\pi_{e_4, \ell_3}S$ contains $\mathcal C_j(\dt).$ In particular, given
$e_4\in (e^{*}_4-K'\dt,e^{*}_4+K'\dt)$ or $(e^{**}_4-KK'\dt,e^{**}_4+KK'\dt)$,  we can prescribe $\ell_3$ arbitrarily.

 Since the part of $\cP(\mathcal S)$ consisting of points which land on $U_1(\delta)$ or $U_2(\delta)$
 is also admissible by Lemma \ref{LmKone}, we can apply Lemma \ref{LmAdm} again to project the image to the 
 $(e_4,\ell_3)$ cylinder. Therefore we introduce the notation 
\[\mathcal Q_1:=\pi_{e_4, \ell_3}\cP(\cS(\cdot,\cdot)),\quad \mathcal Q_2:=\pi_{e_4, \ell_3}\cR\circ \cP(\cS(\cdot,\cdot)),\]
whenever they are defined. $\mathcal Q_j$ is a map from a subset of $\mathcal C_j(\dt)$ to 
$\mathcal C_{3-j}(\dt),\ j=1,2$. 

%We also know that the maps $\mathcal Q_1,\mathcal Q_2$ are strongly expanding by (a) of Lemma \ref{LmKone}. 

\blm
\label{LmChooseAM}
For any $0<\dt\leq\bar\dt/(KK')$, we have the following.
\begin{itemize} 
\item[(a)] Given a large admissible surface $S_1\subset U_1(\delta)$ and $\te_4\in (e^*_4-K'\dt,e^*_4+K'\dt)$
 there exists $\tilde{\ell}_3$ such that
$\cP(\cS_1(\te_4, \tilde{\ell}_3))\in U_2(\delta).$ Moreover if $|\te_4- e^*_4|<K'\dt-1/\chi$, then
there is a neighborhood $V(\te_4)\subset \mathcal C_1(\dt)$ of $(\te_4, \tilde{\ell}_3)$
such that $\mathcal Q_1$ maps $V$ surjectively to $\mathcal C_2(\dt)$.
\item[(b)] Given a large admissible surface $S_2\subset U_2(\delta)$ and $\te_4\in (e^{**}_4-KK'\dt,e^{**}_4+KK'\dt)$ there exists $\tilde{\ell}_3$ such that
$\cR\circ \cP(\cS_2(\te_4, \tilde{\ell}_3))\in U_1(\delta).$ Moreover if $|\te_4- e_4^{**}|<KK'\dt-1/\chi$, then
there is a neighborhood $V(\te_4)\subset \mathcal C_2(\dt)$ of $(\te_4, \tilde{\ell}_3)$
such that $\mathcal Q_2$ maps $V$  surjectively to $\mathcal C_1(\dt)$.
\item[(c)] For points in $V(\te_4)$ from parts $(a)$ and $(b)$, there exist $c,\mu_0,\chi_0$ such that for $\mu<\mu_0,\,\chi>\chi_0$, the particles avoid collisions before the next return and
%  \margem{Is $\delta$ here the same as in parts (a) and (b)?}
  the minimal distance $d$ between the particles satisfies
$$c\mu\leq d \leq \frac{\mu}{c}.$$
\end{itemize}
\elm

Note that by Lemma \ref{LmKone} the diameter of $V(\te_4)$ is $O(\delta/\chi).$
The proof of Lemma \ref{LmChooseAM} is given in Section \ref{SSKone}.

\subsection{Construction of the singular orbit}
\label{SSProof}

Fix a number $\eps$ which is much smaller than $\delta$ but is much larger than both $\mu$ and $1/\chi.$
Pick $(\he_3, \hg_3)$ so that
$$ |\he_3-e_3^*|\leq \frac{\delta}{2}, \quad |\hg_3-g_3^*|\leq \frac{\delta}{2}. $$
Let $S_0$ be an admissible surface such that the diameter of $S_0$ is much larger
than $1/\chi$ and such that on $S_0$ we have
$$|e_3-\he_3|<\eps, \quad |g_3-\hg_3|<\eps.$$
%where $(\he_3, \hg_3)$ is close to $(e_3^*, g_3^*).$
For example, we can pick a point $\bx\in U_1(\delta)$ and let $\hw$ be a vector in $\cK_1(\bx)$ such that
$\frac{\partial}{\partial \ell_3}(\hw)=0.$ Then let
$$ S_0=\{(E_3, \ell_3, e_3, g_3, e_4, g_4)(\bx)+a\hw+(0, b, 0, 0, 0, 0)
\text{ where } |a|\leq \eps/\brK,\ b\in \T^1\} $$
and $\brK$ is a large constant.

 We wish to construct a singular orbit in $S_0.$
We define $S_j$ inductively so that $S_j$ is a component of $\cP(S_{j-1})\cap U_2(\delta)$ if $j$ is odd
and $S_j$ is a component of $(\cR\circ\cP)(S_{j-1})\cap U_1(\delta)$ if $j$ is even (we shall show below that
such components exist). Let $\bx=\lim_{j\to\infty}(\cR\cP^2)^{-j} S_{2j}.$ We claim that $\bx$ has singular orbit.

We define $t_0=0$ and let $t_j$ be the time of $\bx$'s $2j$-th visit to the section $\{x_4=-2$, $\dot x_4>0\}$. Since the global map gives only $O(\mu)$ small oscillation to
$E_3=\frac{|v_3|^2}{2}-\frac{1}{|Q_3|}$ by Lemma \ref{LmGMC0},
and the local map is approximated by the Gerver map by Lemma \ref{LmLMC0}, we apply Lemma \ref{LmGer} to get the unscaled energy of $Q_3$ satisfies $-E_3(t_j)\geq \frac{1}{2}(\lambda_0-\tdelta)^{j/2}$ where $\tdelta\to 0$ as $\delta\to 0,\mu\to 0.$  For the local map part in the rescaled system, by part (c) of Lemma \ref{LmAdm}, $Q_3$ and $Q_4$ stay away from collision. By the continuity of the flow 
%and compactifying the domain of $\Loc$ if necessary, 
there is an upper bound $\tau$ of the flow time defining the local map for those initial values satisfying the assumption of Lemma \ref{LmLMC0}. Therefore without doing the rescalings, during the $j$-th trip the time spent during the local map part is bounded from above by $\tau/(\lb_0-\tilde\dt)^{3j/4}$ using \eqref{EqRenorm}.
For the global map part, we note that,  by \eqref{eq: hamloc}, the velocity of $Q_4$ during the trip $j$ is  $|v_4(t_j)|> \sqrt{2|E_3(t_j)|}\geq (\lambda_0-\tdelta)^{j/4}.$ 
According to the definition of the renormalization $\cR$, the rescaled distance between $Q_1$ and $Q_2$ is 
$\chi_j=|2E_3(t_j)|\chi_0$, where $\chi_0=|Q_1-Q_2|$ is the distance in the system without rescalings, and using part (c) of Lemma \ref{LmGMC0}, we have that without rescaling the time defining the global map during the $j$-th trip
is less than
$$\chi_j/|2E_3(j)|^{3/2}\leq \mathrm{const.}\chi_0  (\lambda_0-\tdelta)^{-j/4}.$$
 Therefore combining the above analysis for the local and global maps, we have $$|t_{j+1}-t_j|\leq \mathrm{const.}\chi_0 (\lambda_0-\tdelta)^{-j/4}$$
and so $t_*=\lim_{j\to\infty} t_j<\infty$ as needed. It is also clear from the estimate of $-E_3(t_j)$ and $|v_4(t_j)|$ that $\limsup_{t\to t^*}|v_i(t)|=\limsup_{t\to t^*}|\dot Q_i(t)|=\infty$, $i=3,4$. %Next we show that $|\dot Q_i(t)|\to \infty, $ as $t\to t^*$ for $i=3,4$. 
%In the renormalized system, Lemma \ref{LmLMC0} and \ref{LmGMC0} show that away from close encounter $E_3$ (hence $-E_4$ by energy conservation) stays close to two values $E_3^*,E_3^{**}$ and similarly for $e_3$. Now it follows from \eqref{DelEll} and \eqref{eq: delaunay4} that $|\dot Q_i|=\sqrt{p_1^2+p_2^2}$ bounded uniformly away from zero. Without the renormalization, there is $t_n$ during the $n$-th iteration such that $|\dot Q_i(t_n)|>c(\lb_0-\tilde\delta)^{n/2}\to\infty$.

It remains to show that for each $j$ 
we can find a component of $\cP(S_{2j})$ inside $U_2(\delta)$  and
a component of $(\cR\circ \cP(S_{2j+1}))$ inside $U_1(\delta).$  %Note that Lemma\ref{LmChooseAM} allows to choose such components inside larger sets $U_2(K\delta)$ and $U_1(K\delta).$

We proceed inductively. So we assume that the statement holds for $j'<j$ and that there exist $(\he_{3,j}, \hg_{3,j})$
such that on $S_{2j}$ we have
\begin{equation}
  \label{OscS2j}
|e_3-\he_{3,j}|\leq \eps, \quad |g_3-\hg_{3,j}|\leq \eps.   
\end{equation}
Note that due to rescaling defined in subsection \ref{SSAdm} we have that on $S_{2j}$
$$ \left|E_3-\frac{1}{2}\right|=O(\mu). $$
Since $S_{2j}$ is admissible it is a graph of a map $\cS_{2j}: \mathcal C_1(\dt)\to \R^4\times \T^2.$
Let
\begin{equation}
\label{S-2j+1}  
S_{2j+1}=\cP(\cS_{2j}(V(e_4'(\he_{3,j}, \hg_{3,j})))).
\end{equation}
We claim that $S_{2j+1}$ is a large admissible surface in $U_2(\dt).$
Indeed, by Lemma \ref{LmGMC0}(b) $\theta_4^-=O(\mu)$ on $S_{2j+1}.$ Also $e_4$ on $S_{2j+1}$ satisfies
$|e_4-e_4^{**}|\leq K K' \dt$
since $\cQ_1$ maps $V(e_4'(\he_{3,j}, \hg_{3,j}))$ onto $\mathcal C_2(\dt).$ 
Therefore we have the required control on the orbit parameters of $Q_4.$

Next, Lemmas \ref{LmLMC0} and \ref{LmGMC0} show that on $S_{2j+1}$ we have
$$ |e_3-e_3^{**}|\leq K\eps,\quad
|g_3-g_3^{**}|\leq K \eps \text{ and }
|E_3-E_3^{**}|\leq K\eps. $$
Thus $S_{2j+1}\subset U_2(\dt)$ and by Lemma \ref{LmKone}, $S_{2j+1}$ is admissible. 
In fact, it is a large admissible surface due to Lemma \ref{LmAdm}(a).

In addition, since $S_{2j+1}\subset U_2(\dt)$ it follows that $\cP: S_{2j}\to S_{2j+1}$ is strongly expanding.
We claim that this implies that the oscillations of $e_3$ and $g_3$ of $S_{2j+1}$ are less than $\eps$ if
$\mu$ is small enough. Namely, by Lemma \ref{LmKone}(b) the preimage of $S_{2j+1}$ has size $O(1/\chi).$
Hence
%Now Lemma \ref{LmKone}(b) implies that 
$e_3$ and $g_3$ have oscillations of size $O(1/\chi)$ on
$\cS_{2j}V(e_4'(\he_{3,j}, \hg_{3,j}))$ while Lemmas \ref{LmLMC0} and \ref{LmGMC0} show that the oscillations
do not increase much after application of local and global maps. Thus there are numbers $\te_{3,j}$ and $\tg_{3,j}$ such that on $S_{2j+1}$
$$ |e_3-\te_{3,j}|\leq \eps, \quad |g_3-\tg_{3,j}|\leq \eps . $$

Since $S_{2j+1}$ is admissible, it is a graph of a map $\cS_{2j+1}:\mathcal C_2(\dt)\to \R^4\times \T^2.$ Let
\begin{equation}
\label{S-2j+2}  
S_{2j+2}=\cR\circ \cP(\cS_{2j+1}(V(e_4''(\he_{3, j}, \hg_{3,j})))).
\end{equation}
The same argument as for $S_{2j+1}$ shows that $S_{2j+2}$ is a large admissible surface in $U_1(\dt)$ and that 
\eqref{OscS2j} holds on $S_{2j+2}$ (with $j$ replaced by $j+1$). The only caveat is that the surfaces $S_{2j}$ are not smooth but
only piecewise smooth since the rescaling map $\cR$ is discontinuous. 
However we can use the freedom to choose the appropriate partition in the definition of $\cR$ to ensure that
$\cR$ is continuous on the preimage of $V(e_4'(\he_{3,j}, \hg_{3,j}))$ so that $\cS_{2j} V(e_4'(\he_{3,j}, \hg_{3,j}))$ is a smooth
surface.

This completes the construction of a singular orbit.

\begin{Rk}
In fact we do not need to use exactly $e'(\he_{3,j}, \hg_{3,j})$ and $e''(\he_{3,j}, \hg_{3,j})$ in \eqref{S-2j+1} and \eqref{S-2j+2}.
Namely any $V(e_4^\dagger)$ and $V(e_4^\ddagger)$ would do provided that
$$\left|e_4^\dagger-e_4'(\he_{3,j}, \hg_{3,j})\right|<\eps, \quad \left|e_4^\ddagger-e_4''(\he_{3,j}, \hg_{3,j})\right|<\eps. $$
Different choices of $e_4^\dagger$ and $e_4^\ddagger$ allow us obtain different orbits. Since such
freedom exists at each step of our construction we have a Cantor set of singular orbits with a given
symbolic sequence $\bomega.$
\end{Rk}

\section{Hyperbolicity of the Poincar\'{e} map}\label{section: hyperbolic}

\subsection{Construction of invariant cones}\label{SSKone}
Here we derive Lemma \ref{LmKone}, \ref{LmAdm} and \ref{LmChooseAM} dealing with the
asymptotics of the derivative of local and global maps.

\blm
\label{LmDerLoc} Fix $j\in \{1,2\}$ meaning the first or second collision.\\
$(a)$ Let $\tilde \theta$ be a small constant. Consider  
$\bx\in U_j(\delta)$ satisfying \begin{itemize}
\item[(1)] the orbit with initial value $\bx$ passes through a $\dt$ neighborhood of the $j$-th Gerver's collision point. 
\item[(2)] $|\theta_4^-(\bx)|\leq C_1\mu$ where $C_1$ is as in Lemma \ref{LmLMC0}.
\item[(3)] $\by=\Loc(\bx)\in \{x_4=-2,\ \dot x_4<0\}$ satisfies $|\bar{\theta}_4^+(\by)-\pi|\leq \tilde\theta.$
% where $\tilde\theta$ is independent of $\mu,\chi$.
\end{itemize} Then there exist continuous functions $\mathbf u_j(\bx,\bar{\theta}_4^+),$ $\lin_j(\bx)$ and $B_j(\bx,\bar{\theta}_4^+)$ such that
$$ d\Loc(\bx)=\dfrac{1}{\mu}( \mathbf u_j(\bx,\bar{\theta}_4^+)+o(1))\otimes(\lin_j(\bx)+o(1)) +B_j(\bx,\bar{\theta}_4^+)+o(1),\quad \mathrm{as} \ 1/\chi\ll\mu\to0 . $$
$(b)$ Moreover there exist
a linear functional $\hlin_j$, a vector $\hat{\mathbf{u}}_j$ and a matrix $\hat B_j$ with bounded norms, such that if we take further limits $\delta\to0 $ and $\tilde\theta\to 0$, we have
$$ \lin_j(\bx)\to\hlin_{j},  \quad \mathbf{u}_j(\bx,\bar{\theta}_4^+)\to\hat{\mathbf{u}}_{j}, \quad B_j(\bx,\bar{\theta}_4^+)\to 
\hat B_j .$$
%Also there exists a matrix $B_i$ such that if $\lin_i\cdot(X_i)=0$ then$$ d\Loc(X_i)=B_i(X_i)+o(1), \text{ as } \delta\to 0, \quad \mu\to 0. $$
\elm
This lemma is proven in Section~\ref{section: local}.
\blm
\label{LmDerGlob}
Fix $j\in \{1,2\}$ meaning the first or second collision.\\
%$(a)$ For each $C$ there exists $\tC$ such that if
%$|y(\bx)|\leq C$ and $|y(\Glob(\bx))|\leq C$ then $Q_4$ passes within distance $\tC/\chi$ from $Q_1.$
Let $\bx \in \{x_4=-2,\ \dot x_4<0\}$ and $\by=\Glob(\bx) \in \{x_4=-2,\ \dot x_4>0\}$ be such that
$|y_4(\bx)|\leq C_2,$ $|y_4(\by)|\leq C_2$ where $C_2$ is as in Lemma \ref{LmGMC0}. Then
\begin{itemize}
\item[(a)]
there exist continuous linear functionals $\brlin_j(\bx)$ and $\brrlin_j(\bx)$ and vectorfields $\brv_j(\by)$ and $\brrv_j(\by),$ such that as $1/\chi\ll\mu\to0 $ 
$$ d\Glob(\bx)=\chi^2\left (\brv_j(\by)+o(1)\right) \otimes\left(\brlin_j(\bx)+o(1)\right)+\chi \left(\brrv_j(\by)+o(1)\right) \otimes\left(\brrlin_j(\bx)+o(1)\right)+O(\mu\chi). $$
%Moreover as  $\chi\to \infty,$ $\mu\to 0,$ we have
%$$ \brlin=\brlin_{2}(\bx)+o(1), \quad  \brrlin=\brrlin_{3}(\bx)+o(1) . $$
\item[(b)]If $\bx\in U_j(\dt)$ satisfies $\Glob\circ\Loc(\bx)\in U_{3-j}(\dt)$ for $j=1$ or $\cR\circ \Glob\circ\Loc( \bx)\in U_{3-j}(\dt)$ for $j=2,$
and the orbit with initial value $\bx$ passes through a $\dt$ neighborhood of the $j$-th Gerver's collision point,
then there exist vector $w_j$ and linear functionals $\hat\brlin_j, \hat\brrlin_j$
such that for $\delta\to 0$, we have
$$ \brlin_j(\bx)\to\hat\brlin_j, \quad \brrlin_j(\bx)\to\hat\brrlin_j,\quad  \Span(\brv_j(\by), \brrv_j(\by))\to \Span(w_j, \tw).$$
\item[(c)]Finally if we define in Delaunay coordinates  \begin{equation}
\label{eq: ul}
\begin{split}
\hat\brlin&=\left(\frac{ G_4/ L_4}{ L_4^2+ G_4^2}, 0,0,0,-\dfrac{1}{ L_4^2+ G_4^2} , -\frac{1}{ L_4}\right),\quad  \hat\brrlin=(1,0,0,0,0,0), \\
w&=\left(0,0,0,0,1,\frac{L_4}{ L_4^2+ G_4^2}\right)^T,
\qquad  \tilde{w}=(0,1,0,0,0,0)^T, 
\end{split}
\end{equation}
then $\hat\brlin_j$ and $\hat\brrlin_j$ are obtained from $\hat\brlin$ and $\hat\brrlin$ respectively by evaluating $G_4,L_4$ at Gerver's collision point immediately after the $j$-th collision, and $w_j$ is obtained from $w$ by evaluating $G_4,L_4$ at Gerver's collision point immediately before the $(3-j)$-th collision.
\end{itemize}
\elm
\begin{Rk}
We remark that the $w_{j},\ j=1,2$ in Definition \ref{DefKone} is the same as the $w_j$ here, but written in different coordinates. 
\end{Rk}

This lemma is proven in Section~\ref{SSExp}.
\blm
\label{LmNonDeg}
The following non degeneracy conditions are satisfied for $E_3^{*}=-1/2,e_3^*=1/2,g_3^*=\pi/2$.
\begin{itemize}
\item[(a1)] $ \Span(\hat{\mathbf{u}}_{1}, B(\hlin_{1}(\tw) d\cR w_2-\hlin_{1}(d\cR w_2)\tw))$ is transversal to $\Ker(\hat\brlin_{1})\cap \Ker(\hat\brrlin_{1}).$
%In other words
%\begin{equation} \det\left(\begin{array}{rr} \brlin_{2,i}(u_{1,i}) & \brlin_{2,i}(B_i(\lin_{1,i}(\tw)w-\lin_{1,i}(w)\tw)) \cr
%                                            \brrlin_{3,i}(u_{1,i}) & \brrlin_{3,i}(B_i(\lin_{1,i}(\tw)w-\lin_{1,i}(w)\tw))
 %                                           \end{array}\right)\neq 0. \label{eq: nondeg}\end{equation}
\item[(a2)] $de_4(\Span(d\cR w_2, d\cR \tw))\neq 0.$
%(b) If $\bx\in \hU_2(\hdelta)$ then
\item[(b1)]
$ \Span(\hat{\mathbf{u}}_{2}, B(\hlin_{2}(\tw) w_1-\hlin_{2}(w_1)\tw))$ is transversal to $\Ker(\hat\brlin_{2})\cap \Ker(\hat\brrlin_{2}).$
%In other words
%\begin{equation} \det\left(\begin{array}{rr} \brlin_{2,i}(u_{1,i}) & \brlin_{2,i}(B_i(\lin_{1,i}(\tw)w-\lin_{1,i}(w)\tw)) \cr
%                                            \brrlin_{3,i}(u_{1,i}) & \brrlin_{3,i}(B_i(\lin_{1,i}(\tw)w-\lin_{1,i}(w)\tw))
 %                                           \end{array}\right)\neq 0. \label{eq: nondeg}\end{equation}
\item[(b2)] $de_4(w_1)\neq 0.$
\end{itemize}
%In addition we have
%$(c)$ $E_3$ coordinate of $w$ is zero.
\elm
This lemma is proven in Section~\ref{SSTrans}.
\begin{proof}[Proof of Lemma \ref{LmKone}]
Consider for example the case where $\bx\in U_2(\delta).$
We claim that if $\delta, \mu$ are small enough then
$d\Loc(\Span(w_1, \tw))$ is transversal to $\Ker \brlin_{2}\cap \Ker \brrlin_{2}.$ Indeed take
$\Gamma$ such that $\lin (\Gamma)=0.$ If $\Gamma=a w_1+\ta \tw$ then
$a \lin_2 (w_1)+\ta \lin_2 (\tw)=0.$
It follows that the direction of $\Gamma$ is close to the direction of
$\hGamma=\hlin_2 (\tw) w_1-\hlin_2(w_1) \tw. $ Next take $\tGamma=bw+\tb\tw$ where
$b \lin_2 (w_1)+\tb \lin_2 (\tw)\neq 0.$
Then
the direction of  $d\Loc \tGamma $ is close to $\hat{\mathbf{u}}_{2}$ and the direction of
$d\Loc(\Gamma)$ is close to $B(\hGamma)$ so our claim follows.

Thus for any plane $\Pi$ close to $\Span(w_1, \tw)$ we have that
$d\Loc(\Pi)$ is transversal to $\Ker \brlin_{2}\cap \Ker \brrlin_{2}.$
Take any $Y\in \cK_2.$ Then either
$Y$ and $w_1$ are linearly independent  or $Y$ and $\tw$ are linearly independent.
Hence $d\Loc(\Span(Y, w_1))$ or $d\Loc(\Span(Y, \tw))$ is transversal to $\Ker \brlin_{2}\cap \Ker \brrlin_{2}.$
Accordingly either $\brlin_{2}(d\Loc(Y))\neq 0$
or $\brrlin_{2}(d\Loc(Y))\neq 0.$ If $\brlin_{2}(d\Loc(Y))\neq 0$ then the direction of $d(\Glob\circ \Loc)(Y)$
is close to $\brv.$ If $\brlin_{2}(d\Loc(Y))=0$ then the direction of $d(\Glob\circ \Loc)(Y)$
is close to $\brrv.$ In either case $d(\cR \Glob\circ\Loc)(Y)\in \cK_{1}$ and
$\Vert d(\Glob\circ\Loc)(Y)\Vert \geq c\chi \Vert Y\Vert .$
This completes the proof in the case $\bx\in U_2(\delta).$ The case where $\bx\in U_1(\delta)$ is similar.
%Finally $d\cR$ only changes $E_3$ component of the vector
%so by part (c) of Lemma \ref{LmNonDeg} a possible application of $d\cR$ does not change the direction
%of $d(\Glob\circ \Loc)(Y)$ much.
\end{proof}

%\begin{proof}[Proof of Lemma \ref{LmAdm}]
%Part (a) follows from the definition of $\cK_i.$ Also by part (b) of Lemma \ref{LmNonDeg}
%the map $\pi: \Span(w,\tw)\to \R^2$ given by
%$ \pi(\Gamma)=(d \ell_3(\Gamma), de_4(\Gamma)) $ is invertible. Namely if
%$\Gamma=aw+\ta \tw$ then
%$$ a=\dfrac{de_4(\Gamma)}{de_4 (w)}, \quad
%\ta=d\ell_3(\Gamma)-a d\ell_3(w).$$
%Accordingly $\pi$ is invertible on planes close to $\Span(w, \tw)$ proving our claim.
%\end{proof}

To prove Lemma \ref{LmChooseAM} we need two auxiliary results.
\begin{sublemma}
\label{LmHitTarget}
In the notation and setting of part $(a)$ of Lemma \ref{LmChooseAM}, given $\te_4$ there exists $\tilde{\ell}_3$ such that $\cP(S_1(\te_4, \tilde{\ell}_3))\in U_2(\delta).$ There is a corresponding statement to part $(b)$ of Lemma \ref{LmChooseAM}. 
\end{sublemma}
The proof of this sublemma is postponed to Section~\ref{SSAngMom}.
\begin{sublemma}
\label{LmCover}
Let $\cF$ be a map on $\R^2$ which fixes the origin and such that if
$|\cF(z)|<R$ then $\Vert d\cF(X)\Vert \geq \brchi \Vert X\Vert .$ Then for each $a$ such that
$|a|<R$ there exists $z$ such that $|z|<R/\brchi$ and $\cF(z)=a.$
\end{sublemma}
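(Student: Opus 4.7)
My plan is to construct the preimage $z$ by lifting a straight line segment in the target through $\cF$. First I would note that the hypothesis $\Vert d\cF(z)X\Vert\geq\brchi\Vert X\Vert$, valid whenever $|\cF(z)|<R$, forces $d\cF(z)$ to be a linear isomorphism of $\R^2$ with operator norm bound $\Vert(d\cF(z))^{-1}\Vert\leq 1/\brchi$ at every such point. In particular the inverse function theorem produces a local $C^1$ inverse of $\cF$ near any point in the open region $\{|\cF(z)|<R\}$, so path-lifting is locally well-defined starting from the origin (which lies in this region since $\cF(0)=0$).

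Fix $a$ with $|a|<R$ and consider the straight line $\gamma(t)=ta$, $t\in[0,1]$. I would lift $\gamma$ to a curve $z(t)$ in the domain with $z(0)=0$ by solving the initial value problem
\[ \dot z(t)=\bigl(d\cF(z(t))\bigr)^{-1}a, \qquad z(0)=0. \]
A direct computation shows $\tfrac{d}{dt}\bigl[\cF(z(t))-ta\bigr]\equiv 0$, so $\cF(z(t))=ta$ identically along the solution; in particular $|\cF(z(t))|=t|a|<R$, which keeps us inside the region where the expansion estimate applies. Applying that estimate with $X=\dot z(t)$ gives $|a|=\Vert d\cF(z(t))\dot z(t)\Vert\geq\brchi\Vert\dot z(t)\Vert$, whence $\Vert\dot z(t)\Vert\leq|a|/\brchi$ and therefore $|z(t)|\leq t|a|/\brchi<R/\brchi$ by integration.

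It remains to verify that the lift extends all the way to $t=1$. Let $T$ be the supremum of times on which the lift is defined. Since $\dot z$ is uniformly bounded by $|a|/\brchi$, the limit $z(T^-):=\lim_{t\to T^-}z(t)$ exists. If $T<1$, then $|\cF(z(T^-))|=T|a|<R$, so $d\cF(z(T^-))$ is invertible and the inverse function theorem lets me continue the ODE past $T$, contradicting the maximality of $T$. Hence $T=1$, and $z:=z(1)$ satisfies $\cF(z)=a$ with $|z|\leq|a|/\brchi<R/\brchi$, as required. The only potential technical point is ensuring the lift never escapes the region $\{|\cF(\cdot)|<R\}$ on which the derivative estimate is hypothesized; this is automatic by construction of the lift, so I do not expect any serious obstacle.
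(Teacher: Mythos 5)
Your proof is correct and takes essentially the same route as the paper: the paper defines a direction field $V(z)$ with $d\cF(V(z))$ parallel to $a$ and follows its integral curve from the origin, which is exactly your lift of the segment $t\mapsto ta$ up to reparameterization (arclength in the domain versus the target parameter $t$). You additionally carry out the continuation argument to $t=1$ explicitly, a step the paper compresses into ``now the statement follows easily.''
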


\begin{proof}
Without the loss of generality we may assume that $a=(r,0).$ Let $V(z)$ be the direction field defined by the condition
that the direction of $d\cF(V(z))$ is parallel to $(1,0).$ Let $\gamma(t)$ be the integral curve of $V$
passing through the origin and parameterized
by the arclength. Then $\cF(\gamma(t))$ has form $(\sigma(t), 0)$ where $\sigma(0)=0$ and
$|\dot{\sigma}(t)|\geq\brchi$ as long as $|\sigma|<R.$ Now the statement follows easily.
\end{proof}

\begin{proof}[Proof of Lemma \ref{LmChooseAM}]$(a)\ $
%Similarly to the proof of Sublemma \ref{LmHitTarget} 
We claim that it suffices to show that for each
$(\bre_4, \bar{\ell}_3)$ such that $|\bre_4-e_4^{**}|<\sqrt{\delta} $ there exist $(\he_4, \hat{\ell}_3)$ such that
\begin{equation}
\label{EqChooseAMPh}
\mathcal Q_1(\he_4, \hat{\ell}_3)=(\bre_4, \bar{\ell}_3).
\end{equation}
Indeed in that case Sublemma \ref{KeepDirection} from Section \ref{SSPrLmPos} says that the outgoing asymptote is almost horizontal.
Therefore by Lemma \ref{LmLMC0}
our orbit has $(E_3, e_3, g_3)$ close to
$\Ger_{\te_4, 2, 4}(E_3(\he_4, \hat{\ell}_3), e_3(\he_4, \hat{\ell}_3), g_3(\he_4, \hat{\ell}_3)).$
Next Lemma \ref{LmGMC0} shows that after the application of $\Glob$, $(E_3, e_3, g_3)$ change little and
$\theta_4^-$ becomes $O(\mu)$ so that $\cP(S_1(\he_4, \hat{\ell}_3))\in U_2(\delta).$

We will now prove \eqref{EqChooseAMPh}. Due to Lemma \ref{LmKone}
we can apply Sublemma \ref{LmCover}
to the covering map $\tilde{\mathcal Q}_1: \R^2\to\R^2$ with $\brchi=c\chi$
obtaining \eqref{EqChooseAMPh}. This completes the proof of part (a).

Part (b) is similar to part (a).

Part (c) follows from Lemma~\ref{Lm: landau} proven in Section \ref{ScC0Loc}.
\end{proof}
%\section{The Proof of the Lemma~\ref{LmDerGlob} and~\ref{LmNonDeg}}\label{section: hyperbolic}
%\subsection{Statement of the main technical Proposition}
\subsection{Expanding directions of the global map}
\label{SSExp}
%Proof of Lemma \ref{LmDerGlob}.}

Estimating the derivative of the global map is the longest part of the paper. It occupies Sections \ref{subsection: plan}--\ref{section: switch foci}.

It will be convenient to use the Delaunay coordinates $(L_3,\ell_3,G_3,g_3)$ for $Q_3$ and $(G_4, g_4)$ for $Q_4.$
Delaunay coordinates are action-angle coordinates for the Kepler problem. We collect some facts about the Delaunay
coordinates in Appendix \ref{section: appendix}.

We divide the plane into several pieces by lines $x_4=-2$ and $x_4=-\frac{\chi}{2}$. Those lines cut the orbit of $Q_4$ into 4 pieces:
\begin{itemize}
\item $\left\{x_4=-2,\ \dot{x}_4<0\right\}\to \left\{x_4=-\frac{\chi}{2},\ \dot{x}_4<0\right\}$. We call this piece $(I)$.
\item $\left\{x_4=-\frac{\chi}{2},\ \dot{x}_4<0\right\}\to \left\{x_4=-\frac{\chi}{2},\ \dot{x}_4>0\right\}$ turning around $Q_1$. We call it $(III)$.
\item $\left\{x_4=-\frac{\chi}{2},\ \dot{x}_4>0\right\}\to \left\{x_4=-2,\ \dot{x}_4>0\right\}$. We call it $(V)$
\item $\left\{x_4=-2,\ \dot{x}_4>0\right\} \to \left\{x_4=-2,\ \dot{x}_4<0\right\}$ turning around $Q_2$.
\end{itemize}
We composition of the first three pieces constitutes the global map. The last piece defines the local map. See Fig 3.
Notice that when we define $\cR$ in Section \ref{SSAdm}, after the second collision in Gerver's construction, the global map sends $\{x_4=-2,\ \dot x_4<0\}$ to $\{x_4=-2/\lb,\ \dot x_4>0\}$. Then $\cR$ sends $\{x_4=-2/\lb,\ \dot x_4>0\}$ to $\{x_4=-2,\ \dot x_4>0\}$ before applying local map. So without leading to confusion, when we are talking about sections after the second collision, we always talk about $\cR\circ\Glob$ so that the section $\{x_4=-2,\dot x_4<0\}$ is sent to $\{x_4=-2,\dot x_4>0\}$.
\begin{figure}[ht]
\begin{center}
\includegraphics[width=0.7\textwidth]{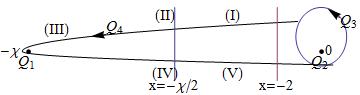}
\caption{Poincar\'e sections}
\end{center}
%\label{Fig:Sections}
\end{figure}

The line $x_4=-\frac{\chi}{2}$ is convenient because
%When performing the computation,
if $Q_4$ is moving to the right of the line $x_4=-\frac{\chi}{2}$, its motion can be treated as a hyperbolic motion focused at $Q_2$ with perturbation
caused by $Q_1$ and $Q_3$. If $Q_4$ is moving to the left of this line, its motion can be treated as a hyperbolic motion focused at $Q_1$ perturbed by $Q_2$ and $Q_3$.

Since we use different guiding centers to the left and right of the line of $x_4=-\frac{\chi}{2}$ we will need to change variables when $Q_4$ hits this line.
This will give rise to two more matrices for the derivative of the global map: $(II)$ will correspond to the change of coordinates
from right to left and $(IV)$ will correspond for the change of coordinates from left to right. Thus
$d\Glob=(V)(IV)(III)(II)(I).$ In turn, each of the matrices $(II)$ and $(IV)$ will be products of three matrices corresponding to
changing one variable at a time. Thus we will have $(II)=[(iii)(ii)](i)$ and $(IV)=(iii')[(ii')(i')]$.

The asymptotics of the above mentioned matrices is presented in the two propositions below.
%We also need a change of variables to change the foci between $Q_1$ and $Q_2$ whenever $Q_4$ passes through
%the section $x_4=-\dfrac{\chi}{2}$. We introduce two more matrices $(II), (IV)$, each of which is a produce of three matrices:\\
%$(II)$ means change of coordinates from right to the left, which is composed of three matrices $(II)=[(iii)(ii)](i)$. \\
%$(IV)$ means change of coordinates from the left to the right, which is again composed of three matrices

%\begin{Def}
To refer to a certain subblock of a matrix $(\sharp)$, we use the following convention:
\[(\sharp)=\left[\begin{array}{c|c}
(\sharp)_{33}&(\sharp)_{34}\\
\hline
(\sharp)_{43}&(\sharp)_{44}
\end{array}\right].\]
Thus $(\sharp)_{33}$ is a $4\times 4$ matrix and
$(\sharp)_{44}$ is a $2\times 2$ matrix.
To refer to the $(i,j)-th$ entry of a matrix $(\sharp)$ (in the Delaunay coordinates mentioned above) we use $(\sharp)(i,j)$.
For example, $(I)(1,3)$ means the derivative of $L_3$ with respect to $G_3$ when the orbit moves between
sections $\{x_4=-2\}$ and $\left\{x_4=-\frac{\chi}{2}\right\}.$

%\end{Def}

%We use Delaunay coordinates $(L_3,\ell_3,G_3,g_3)$ for $Q_3$ and $(G_4, g_4)$ for $Q_4$. $($see Appendix~\ref{subsection: delaunay} for the definitions and physical meanings$)$. The derivative matrices have the form $\dfrac{\partial (L_3,\ell_3,G_3,g_3,G_4, g_4)^f}{\partial(L_3,\ell_3,G_3,g_3,G_4, g_4)^i}$, where superscript $f$ ($i$) means orbit parameters measured at the final (initial) section. \\
%Under the same hypothesis as Lemma~\ref{LmDerGlob}, for the global map, \[d\Glob=(V)(iii')[(ii')(i')](III)[(iii)(ii)](i)(I). \]we have the following 7 matrices $($The $O$'s are in the limit $\mu\to 0, \chi\to\infty)$:

\begin{Prop}
\label{Thm: matrices}
Under the assumptions of Lemma \ref{LmDerGlob}
the matrices introduced above satisfy the following estimates.
\begin{equation}(I)=\mathrm{Id}+\left[
\begin{array}{cc|c}
O(\mu)& O(\mu)_{1\times 3}&O(\mu)_{1\times 2}\\
O(\chi)& O(\mu\chi)_{1\times 3}& O(\mu\chi)_{1\times 2}\\
O(\mu)_{2\times 1}&O(\mu)_{2\times 3}&O(\mu)_{2\times 2}\\
\hline
O(1)_{2\times 1}&O(\mu)_{2\times 3}&O(1)_{2\times 2}\\
\end{array}
\right],\nonumber
\end{equation}
\begin{equation}
{\small\begin{aligned}
&(i)=\left[\begin{array}{cc|cc}
1& 0&0&0\\
0&\mathrm{Id}_{3}&0_{3\times 1}&0_{3\times 1}\\
\hline
0&0_{1\times 3}&1&0\\
\frac{\tG_{4R}/k_R \tL_3}{k_R^2\tL_{3}^2+\tG_{4R}^2}+O(\frac{1}{\chi})&O(\frac{1}{\chi^2})_{1\times 3}& -\frac{1}{k_R^2 \tL_{3}^2+\tG_{4R}^2}+O(\frac{1}{\chi})& -\frac{1}{k_R\tilde L_3}+O(\frac{1}{\chi})
\end{array}
\right]
\end{aligned}
},\nonumber
\end{equation}
\begin{equation}
\begin{aligned}
&[(iii)(ii)]=\left[\begin{array}{cc|cc}
1& 0&0&0\\
0&\mathrm{Id}_{3}&0_{3\times 1}&0_{3\times 1}\\
\hline
O(1/\chi)&O(1/\chi^3)_{1\times 3} &1&-\chi\\
O(1/\chi)&O(1/\chi^3)_{1\times 3}& \frac{1}{ \tL_3}+O(1/\chi)& -\frac{\chi}{\tL_3} +O(1)
\end{array}
\right]
\end{aligned},\nonumber
\end{equation}

\begin{equation}
(III)=\mathrm{Id}+\left[
\begin{array}{cc|c}
O(1/\chi)& O(1/\chi^2)_{1\times 3}&O(\mu/\chi)_{1\times 2}\\
O(\chi)& O(1/\chi)_{1\times 3}& O(1)_{1\times 2}\\
O(1/\chi)_{2\times 1}&O(1/\chi^2)_{2\times 3}&O(\mu/\chi)_{2\times 2}\\
\hline
O(\mu)_{2\times 1}&O(\mu/\chi)_{2\times 3}&O(1)_{2\times 2}\\
\end{array}
\right],\nonumber
\end{equation}
\begin{equation}
\begin{aligned}
&[(ii')(i')]=\left[\begin{array}{cc|cc}
1& 0&0&0\\
0&\mathrm{id}_{3}&0_{3\times 1}&0_{3\times 1}\\
\hline
O(1)&O(1/\chi^2)_{1\times 3}&\frac{\chi}{\hL_3^2}+O(1)&\frac{\chi}{\hL_3}+O(1)\\
O(1/\chi)&O(1/\chi^3)_{1\times 3}& \frac{1}{\hL_3^2}+O(1/\chi)& \frac{1}{\hL_3}+O(1/\chi)
\end{array}
\right]
\end{aligned},\nonumber
\end{equation}
\begin{equation}
{\small
\begin{aligned}
&(iii')=\left[\begin{array}{cc|cc}
1& 0&0&0\\
0&\mathrm{Id}_{3}&0_{3\times 1}&0_{3\times 1}\\
\hline
0&0_{1\times 3}&1&0\\
-\frac{\hG_{4R}/(k_R)}{(k_R^2\hL_3^2+G_{4R}^2)}+O(\frac{1}{\chi})&O(\frac{1}{\chi^2})_{1\times 3}& \frac{k_R\hL_3}{k_R^2 \hL_3^2+\hG_{4R}^2}+O(\frac{1}{\chi})& k_R \hL_3+O(\frac{1}{\chi})
\end{array}
\right]
\end{aligned}
},\nonumber
\end{equation}
\begin{equation}
(V)=\mathrm{Id}+\left[
\begin{array}{cc|c}
O(\mu\chi)& O(\mu)_{1\times 3}&O(\mu)_{1\times 2}\\\
O(\chi)& O(\mu)_{1\times 3}& O(1)_{1\times 2}\\
O(\mu\chi)_{2\times 1}&O(\mu)_{2\times 3}&O(\mu)_{2\times 2}\\
\hline
O(\mu\chi)_{2\times 1}&O(\mu)_{1\times 3}&O(1)_{2\times 2}\\
\end{array}
\right].\nonumber
\end{equation}
where $k_R=1+\mu,$ $\tilde L_3,\tilde G_4$ are the initial values of $\Glob$ of $L_3,G_4$ and $\hat L_3,\hat G_4$ are the final values of $\Glob$ of $L_3,G_4$. Moreover, the matrix of the renormalization map $\mathcal R$ has the form $\mathrm{diag}\{\sqrt{\lb},1, -\sqrt{\lb},-1,-\sqrt{\lb},-1\}$, where the constant $\lb$ is the dilation rate defined in Section \ref{SSAdm} and the ``$-$" appears due to the reflection.
\end{Prop}
%For the local map. We consider two directional derivatives. As a result, derivatives along a general direction would be linear combination of the two. \\
%For the first directional derivative, in the $Q_3$ part, it has the form
%\[(\dfrac{\partial L_3^+}{\partial \ell_3^-}, \dfrac{\partial \ell_3^+}{\partial \ell_3^-},\dfrac{\partial G_3^+}{\partial \ell_3^-},\dfrac{\partial g_3^+}{\partial \ell_3^-}):=(c_L,c_\ell,c_G,c_g).\] Only $c_L\neq 0$ is important for us. Other derivatives w.r.t. $L_3,G_3,g_3$ are not important. And in the $Q_4$ part, $(i)(I)(D\mathcal{L})(V)(iii')$ sends vector of the form $(*,0)$ to a vector of the same form.\\
%The second directional derivative is along the direction $d\ell_3=0$. Only the $Q_4$ part is important, which
%has the form \[\dfrac{1}{\mu}(\dfrac{\partial G_4^+}{\partial \al},\dfrac{\partial g_4^+}{\partial \al})\otimes(\dfrac{\partial b}{\partial G_4^-},\dfrac{\partial b}{\partial g_4^-})+O(1):=\dfrac{1}{\mu}(d_G,d_g)\otimes(d'_G,d'_g)+O(1).\]

\begin{Prop}
\label{PrExact}
\begin{itemize}
\item[(a)]
The $O(\chi)$ entries in the matrices $(I),(III),(V)$ are $c_I\chi, c_{III}\chi, c_V\chi$, where $c_I,c_{III},c_V\neq 0$ and have the same sign. 

\item[(b)] The $O(1)$ blocks in Proposition \ref{Thm: matrices}
can be written as a continuous function
of $\bx$ and $\by$ plus an error which vanishes in the limit $\mu\to 0, \chi\to\infty.$
Moreover the $O(1)$ blocks have the following limits for orbits of interest.
\begin{equation*}
\begin{aligned}
&(I)_{44}=
\left[\begin{array}{cc}
1-\frac{\tilde L^2_4}{2 (\tilde L_4^2+\tilde G_4^2)} & -\frac{\tilde L_4}{2}\\
\frac{\tilde L_4^3}{2 (\tilde L_4^2+\tilde G_4^2)^2}& 1+\frac{\tilde L_4^2}{ 2(\tilde L_4^2+\tilde G_4^2)} \\
\end{array}
\right], %\text{ for map } (I) \]
\quad (III)_{44}=\left[\begin{array}{cc}
\frac{1}{2} & -\frac{ L_4}{2}\\
 \frac{3}{2 L_4}& \frac{1}{2} \\
\end{array}
\right],\\ %\text{ for map } (V)\]
& (V)_{44}=\left[\begin{array}{cc}
1+\frac{1/2\hat L^2_4}{\hat L_4^2+\hat G_4^2} &-1/2\hat L_4\\
 \frac{1/2\hat L_4^3}{(\hat L_4^2+\hat G_4^2)^2}& 1-\frac{1/2\hat L_4^2}{\hat L_4^2+\hat G_4^2} \\
\end{array}
\right]. \\
&\mathrm{In\ addition\ for\ map\ (I)\ we\ have}\\
&((I)(5,1),(I)(6,1))^T=
\left( \frac{\tilde G_4\tilde L_4}{2(\tilde L_4^2+\tilde G^2_4)},
-\frac{\tilde G_4\tilde L_4^2}{2(\tilde L_4^2+\tilde G^2_4)^2}\right)^T. \\
\end{aligned}\label{eq: twist}
\end{equation*}
where $tilde,\ hat$ have the same meanings as in the previous proposition.
\end{itemize}
\end{Prop}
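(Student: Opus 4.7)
My approach is to isolate the $O(1)$ blocks of the matrices in Proposition~\ref{Thm: matrices} in the limit $\mu\to 0,\ \chi\to\infty$ and read off their limiting values from explicit Kepler orbit formulas.

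First I would argue that in each of the three pieces $(I)$, $(III)$, $(V)$, the motion of $Q_4$ is well approximated by pure Kepler motion about a single focus: $Q_2$ for $(I)$ and $(V)$, and $Q_1$ for $(III)$. Indeed, on each of these pieces the action on $Q_4$ of the far-away other fixed center contributes only $O(1/\chi)$ to the orbital elements after integration along a length-$\chi$ segment, while the $\mu^2$-coupling to $Q_3$ produces an $o(1)$ correction outside the close encounter zone (cf.\ Lemma~\ref{LmGMC0}). Hence within each piece, to leading order, the orbit of $Q_4$ is a hyperbola parametrized by the Delaunay variables $(L_4, \ell_4, G_4, g_4)$, with $L_4$ fixed by the total energy constraint $E_3+E_4=0$.

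The explicit Poincar\'{e} map computation then proceeds by writing the section equations $\{x_4=-2\}$ and $\{x_4=-\chi/2\}$ as implicit equations for $\ell_4$ in terms of $(L_4, G_4, g_4)$ via the standard Delaunay-to-Cartesian formulas on a hyperbolic orbit, differentiating and applying the implicit function theorem. This yields closed form expressions for the entries of the $(44)$-block in terms of the orbital elements evaluated at the two endpoints of the arc. For the $L_3$-column entries of $(I)$ one further uses the zero-energy constraint to express $dL_4$ as a multiple of $dL_3$; combining this with the dependence of the orbit shape on $L_4$ gives the quoted prefactors in the expression for $((I)(5,1),(I)(6,1))$.

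Continuity of the $O(1)$ entries as functions of $\bx$ and $\by$ is clear because the implicit function calculation is smooth as long as the orbit is genuinely hyperbolic, stays bounded away from collision with its focus, and has near-horizontal incoming/outgoing asymptotes, all of which are guaranteed under the hypotheses of Lemma~\ref{LmDerGlob}. The $o(1)$ error absorbs the $O(\mu)$ perturbation from $Q_3$ and the $O(1/\chi)$ correction from the distant fixed center, each uniformly small on the relevant domain.

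The main obstacle is the algebraic simplification: the raw derivatives coming from the implicit function theorem involve compositions of eccentric-anomaly, true-anomaly, and semi-latus-rectum variations, and reducing them to the specific unipotent shear forms appearing in the proposition requires careful manipulation of the hyperbolic Kepler identities. A useful internal consistency check is that each of $(I)_{44}, (III)_{44}, (V)_{44}$ has determinant exactly $1$, as forced by the symplecticity of the Poincar\'{e} map restricted to the conjugate pair $(G_4, g_4)$; this is already visible in the stated formulas and serves as a sanity check on the computation.
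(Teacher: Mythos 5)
Your key reduction---that on each piece the motion of $Q_4$ may be replaced by an exact Kepler hyperbola about the near focus, with the far center contributing only $o(1)$---is precisely the step that fails, and it is where the whole content of the proposition lies. If $Q_4$ moved on an unperturbed Kepler orbit, then $G_4$ and $g_4$ would be first integrals, the right-hand side $\mathcal{V}$ would vanish, and by \eqref{eq: formald2} the $(44)$ block of each of $(I)$, $(III)$, $(V)$ would be exactly the identity (the boundary factors only touch this block at order $O(\mu)$ or $O(1/\chi^2)$, by \eqref{eq: l/XY1}--\eqref{eq: l/XY2}); in particular your computation could never produce the entries $-\tilde L_4/2$ or $3/(2L_4)$. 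The far center is indeed negligible at the $C^0$ level---the orbital elements change by $O(\mu+1/\chi)$, as in Lemma \ref{LmGMC0}---but not at the $C^1$ level: the coefficient $\partial\mathcal{V}/\partial Y$ of the variational equation is of size $O(1/\chi)$ (Lemma \ref{Lm: ordervar}) and acts over an $\ell_4$-interval of length $O(\chi)$, so its cumulative effect is $O(1)$ and is exactly the nontrivial part of the $(44)$ blocks and of the entries $(I)(5,1),(I)(6,1)$. An implicit-function-theorem computation at the two sections reproduces only the boundary factors of \eqref{eq: formald2}; it cannot generate this shear.

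What the paper actually does (Lemma \ref{Lm: variation}(c), Section \ref{SSSol}) is to keep the far center in the variational equation: the leading part of $\partial\mathcal{V}/\partial Y$ is $-\chi^{-1}\xi(1-\xi)^{-3}L^2 A$ with $\xi=|Q_4|/\chi$ and $A$ nilpotent ($\mathrm{Tr}\,A=\det A=0$, so $A^2=0$); after a Gronwall comparison and the changes of variable $\ell_4\to\xi\to\tau$ with $\tau=\xi^2/(2(1-\xi)^2)$, the fundamental solution is $\Id-(\tau-\sigma)A$, which evaluated between $\xi\approx0$ and $\xi\approx 1/2$ gives $(I)_{44}$ and $(V)_{44}$, while $(III)_{44}$ is obtained by composing the two halves around perihelion of the $Q_1$-centered arc; the $(5,1),(6,1)$ entries of $(I)$ come from Duhamel's formula for the nonhomogeneous equation driven by $\partial\mathcal{V}/\partial L_3$ (the $L_3$-dependence does enter through $L_4$ via energy conservation, as you anticipated, but its effect again accrues along the whole arc). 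Your determinant-one check is consistent with this structure, since $\Id-\tau A$ with $A$ nilpotent is unimodular, but it is not forced by ``symplecticity of the restricted $2\times2$ block'' and in any case cannot distinguish the correct matrices from the identity that your approximation would yield.
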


%Hereafter the phrase  {\it after the first collision} means that the initial orbit has parameters
%$(\frac{1}{2} , e_3^{**}, g_3^{**})+o(1) $ for $Q_3,$  $G_4$ satisfies $G_4+G_3^{**}=G_3^*+G_4^*+o(1)$and that at the final moment the angular momentum of $Q_4$ is close to $G_4^{**}$.The phrase  {\it after the second collision} means that the initial orbit has parameters$(\frac{1}{2} , e_3^{*}, g_3^{*})+o(1)$ for $Q_3$, $G_4$ satisfies $G_4+G_3^*=G_3^{**}+G_4^{**}+o(1)$and that at the final moment the angular momentum of $Q_4$ is close to $G_4^{*}.$

The estimates of $(I),(III),(V)$ from Proposition \ref{Thm: matrices} are proven in Sections \ref{section: equation}--\ref{section: boundary}.
The estimates of $(II),(IV)$ are given in Section~\ref{section: switch foci}.
Proposition \ref{PrExact}
is proven in Section~\ref{SSSol}.
Now we prove Lemma~\ref{LmDerGlob} based on the Proposition \ref{PrExact}.
\begin{proof}[Proof of Lemma~\ref{LmDerGlob}.]

%$d\Glob$ is a product of several matrices. We will divide the product into three groups.
For the matrices $(I),(III),(V)$, we separate the $(2,1)$ entry from the matrices to get the following decompositions into a tensor part and a remainder. 
\begin{equation}\label{EqDecom135}
\begin{aligned}
(I)=c_I\chi \bar u\otimes \bar l+R_I,\quad (III)&=c_{III}\chi \bar u\otimes \bar l+R_{III},\quad (V)=c_V\chi \bar u\otimes \bar l+R_{III}\\\end{aligned}
\end{equation}
where $\bar u=(0,1,0,0,0,0)$, $\bar l=(1,0,0,0,0,0)$ and the tensor term picks out the $O(\chi)$ entry in each matrix. 
For the matrices $[(iii)(ii)]$ and $[(ii')(i')]$, we separate the leading terms of the $44$ blocks to get the following decompositions
$$[(iii)(ii)]= \chi u_{II}\otimes l_{II}+R_{II},\quad [(ii')(i')]=\chi u_{IV}\otimes l_{IV}+R_{IV}$$
where $$u_{II}=\left(0,0,0,0,1,\frac{1}{L_3}\right),\quad l_{II}= \left(0,0,0,0,\frac{1}{\chi},-1\right)$$$$u_{IV}=\left(0,0,0,0,1,\frac{1}{\chi}\right),\quad l_{IV}= \left(0,0,0,0,\frac{1}{L_3^2},\frac{1}{L_3}\right).$$ 
Notice $l_{IV}\cdot \bar u=\bar l\cdot u_{II}=0.$
Multiplying $(ii')(i')(III)(iii)(ii)$, we get 
\begin{equation*}
\begin{aligned}
&(ii')(i')(III)(iii)(ii)=(\chi u_{IV}\otimes l_{IV}+R_{IV})(III)(\chi u_{II}\otimes l_{II}+R_{II})\\
&=\chi^2 u_{IV}\otimes l_{IV}(III) u_{II}\otimes l_{II}+\chi R_{IV}(III) u_{II}\otimes l_{II}+\chi u_{IV}\otimes l_{IV}(III)R_{II}+R_{IV}(III)R_{II}\\
&=\chi^2 u_{IV}\otimes l_{IV}R_{III} u_{II}\otimes l_{II}+\chi R_{IV}R_{III} u_{II}\otimes l_{II}+\chi u_{IV}\otimes l_{IV}R_{III}R_{II}+R_{IV}(III)R_{II}\\
%&=\otimes_{IV}R_{III}\otimes_{II}+R_{IV}R_{III}\otimes_{II}+\otimes_{IV}R_{III}R_{II}+R_{IV}\otimes_{III}R_{II}+R_{IV}R_{III}R_{II}\\
\end{aligned}
\end{equation*}
We define $c=l_{IV}R_{III} u_{II}$ and $v=R_{IV}R_{III} u_{II}, v'= l_{IV}R_{III}R_{II}.$

Continuing the computation we get 
\begin{equation}\label{EqMultiply432}
\begin{aligned}
&=c\chi^2  u_{IV}\otimes  l_{II}+\chi v\otimes l_{II}+\chi u_{IV}\otimes v'+R_{IV}(III)R_{II}\\
&=c\chi^2 \left(u_{IV}+\frac{v}{c\chi}\right)\otimes \left(l_{II}+\frac{v'}{c\chi}\right)-\frac{1}{c}v\otimes v'+c_{III}\chi R_{IV} \bar u\otimes \bar l R_{II}+R_{IV}R_{III}R_{II}
\end{aligned}
\end{equation}
 where 
\begin{enumerate}
\item[(a)]
\begin{equation}\nonumber
\begin{aligned}
c&=\left(0_{1\times 4},\frac{1}{L_3^2},\frac{1}{L_3}\right)\left(\left[\begin{array}{c|c}
\mathrm{id}_4&0\\
\hline
0&(III)_{44}\end{array}\right]+\left[\begin{array}{c|c}
O(1/\chi)&O(1)\\
\hline
O(\mu)&O(\mu)\end{array}\right] \right)\left(0_{1\times 4},1,\frac{1}{L_3}\right)^T\\
&\to \left(\frac{1}{L_3^2},\frac{1}{L_3}\right)(III)_{44}\left(1,\frac{1}{L_3}\right)^T=\frac{2}{L_3^2}\neq 0.
\end{aligned}
\end{equation}
\item[(b)] $v=O\left(\frac{\mu}{\chi},1, \frac{\mu}{\chi},\frac{\mu}{\chi},1,\frac{1}{\chi}\right)^T$ and $v'=O\left(\frac{1}{\chi},\frac{\mu}{\chi},\frac{\mu}{\chi},\frac{\mu}{\chi},\frac{1}{\chi},1\right)$.
\item[(c)] $R_{IV} \bar u=\bar u+O(0_{1\times 4},1/\chi^2,1/\chi^3)$ and $\bar l R_{II}=\bar l.$
\item[(d)]
The remainder $R_{IV}R_{III}R_{II}$ is explicitly computed\\$  =\left[\begin{array}{c|c}\mathrm{id}_4&0\\
\hline
0&0
\end{array}\right]
+O\left[\begin{array}{cc|cc}
\frac{1}{\chi}&(\frac{1}{\chi^2})_{1\times 3}&\frac{\mu}{\chi^2}&\frac{\mu}{\chi}\\
0&(\frac{1}{\chi})_{1\times 3}&\frac{1}{\chi}&1\\
(\frac{1}{\chi})_{2\times 1}&(\frac{1}{\chi^2})_{2\times 3}&(\frac{\mu}{\chi^2})_{2\times 1}&(\frac{\mu}{\chi})_{2\times 1}\\
\hline
1&(\frac{\mu}{\chi})_{1\times 3}&\frac{1}{\chi}&1\\
\frac{1}{\chi^2}&(\frac{\mu}{\chi^2})_{1\times 3}&\frac{1}{\chi^2}&\frac{1}{\chi}
\end{array}\right]$. 
\end{enumerate}
We next multiply $(i)(I)$ from the right and $(V)(iii')$ from the left to $(ii')(i')(III)(iii)(ii)$ to get
$$
(V)(IV)(III)(II)(I)=(c_I+c_{III}+c_V)\chi \bar u\otimes \bar l+c\chi^2\bar{\bar u}\otimes \bar{\bar l}+O(\mu\chi)
$$
where \begin{enumerate}
\item %corresponding to the first term in the RHS of \eqref{EqMultiply432},
we have the following limit using Proposition \ref{PrExact}
$$\bar{\bar u}=(V)(iii')\left(u_{IV}+\frac{v}{c\chi}\right)\to w+\mathrm{const.} \bar u,\quad \bar{\bar l}=\left(l_{II}+\frac{v'}{c\chi}\right) (i)(I)\to \hat\brlin$$ as $1/\chi\ll\mu\to 0$. In fact $\bar{\bar u}$ is essentially the fifth column of $(iii')$ and $\bar{\bar l}$ is essentially the sixth row of $(i)$. 
\item %Corresponding to the second term in the RHS of \eqref{EqMultiply432} and 
using item (b) above, we have $(V)(iii')v\otimes v'(i)(I)=O(1)$. The estimate $(V)(iii')v=O(1)$ essentially picks out the second, fifth and sixth columns of $(V)$, the estimate $v'(i)(I)=O(1)$ essentially picks out the first, fifth and sixth rows of $(I)$, and the $O(\chi)$ entries in $(I)$ and $(V)$ are suppressed by the small entries of $v'$ (the second entry) and $v$ (the first entry) respectively.
\item %Corresponding to the third term in the RHS of \eqref{EqMultiply432} and 
using item (c) above, we have 
$$ (V)(iii')(c_{III}\chi R_{IV} \bar u\otimes \bar l R_{II} )(i)(I)=c_{III}\chi\bar u\otimes \bar l+O(1).$$
\item %For the fourth term in the RHS of \eqref{EqMultiply432}, 
using the decomposition of $(I)$ and $(V)$ in \eqref{EqDecom135}, we can verify that $$c_V\chi \bar u\otimes \bar l(iii') R_{IV}R_{III}R_{II}(i) c_I\chi \bar u\otimes \bar l=(c_I+c_V)\chi \bar u\otimes \bar l+O(1).$$ Here the $O(1)$ estimate of the remainder comes from the $O(1/\chi^2)$ estimate of the $(1,2)$ entry of the matrix $R_{IV}R_{III}R_{II}$, which in turn comes from the same estimate of the $(1,2)$ entry of $(III).$
\item All the remaining terms in $(V)(iii')R_{IV}R_{III}R_{II}(i)(I)$ other than item (4) above are absorbed into 
$O(\mu\chi)$. (Note that the special structure of the matrices is important for the estimate. 
In particular, though the first column of $(V)$ and the second row of $(I)$ are large, the first row and second column in $R_{IV}R_{III}R_{II}$ are small.) \qedhere
\end{enumerate}
\end{proof}

\subsection{Checking transversality}
\label{SSTrans}
%{Proof of Lemma \ref{LmNonDeg}.}
In Lemma \ref{LmDerLoc} and \ref{LmDerGlob} when we take limits $\tilde\theta,\dt,\mu,1/\chi\to0$,
the dynamics in the phase space reduces to Gerver's case.
The limiting vectors $\hat\lin_j,\hat{\bar\lin}_j,\hat{\brrlin}_j$ and $\hat{\mathbf u}_j,w, \tilde w$
can be computed explicitly and evaluated at Gerver's collision points.
In the following lemmas we consider Gerver's orbits with the choice of
$E_3^*=-\frac{1}{2}$ and $e_3^*=\frac{1}{2}$. All the other orbit parameters are
determined by $E_3^*,e_3^*$ as shown in Appendix \ref{subsection: numerics}. 

The $O(1/\mu)$ part of $d\mathbb L$ in Lemma~\ref{LmDerLoc} satisfies the following estimates. 
 \begin{Lm}
The asymptotics $\hat\lin_j$ and $\hat{\mathbf u}_j$ of the vectors $\lin_j, \mathbf{u}_j$ in the $O(1/\mu)$ part of the matrix $d\mathbb L$ satisfy the following:
\begin{itemize}
\item[(a)] \[\hat\lin_j\cdot \tw\neq 0,\quad \hat\lin_j\cdot w_{3-j}\neq 0,\quad  \hat\brlin_j\cdot\hat{\mathbf{u}}_j\neq 0,\] $j=1,2$ meaning the first or the second collision.
\item[(b)] If $Q_3$ and $Q_4$ switch roles after the collisions, the vectors $\hat{\mathbf{u}}_1$ and $\hat{\mathbf{u}}_2$ get a ``$-$" sign.
\label{Lm: local4}
\end{itemize}
\end{Lm}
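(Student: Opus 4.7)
The plan is to reduce each of the three pairings in (a) to an explicit algebraic expression in the Gerver orbit parameters, then to evaluate them numerically at Gerver's distinguished orbit. First I would extract from Section \ref{section: local} the precise limiting forms of $\hat{\lin}_j$ and $\hat{\mathbf{u}}_j$ as $\mu\to 0$: since in this limit the local map becomes an elastic scattering of the form \eqref{Ellastic}, the rank-one term $\frac{1}{\mu}\mathbf{u}_j\otimes \lin_j$ of $d\Loc$ captures the sensitive dependence of the outgoing orbital elements on the phase of the close encounter (the impact parameter being of order $\mu$). Concretely, $\hat{\lin}_j$ is proportional to the differential, with respect to the input orbital elements, of the scalar parametrizing the encounter phase at the $j$-th Gerver intersection, and $\hat{\mathbf{u}}_j$ is the resulting tangent direction in the outgoing elements under an infinitesimal rotation of the relative velocity $v_3^--v_4^-$.

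Next I would substitute the limiting $\hat{\brlin}_j$, $\tw$, $w_{3-j}$ from \eqref{eq: ul} into the three pairings. Since $\tw=\partial/\partial \ell_3$, the pairing $\hat{\lin}_j\cdot \tw$ reduces to the $\ell_3$-component of $\hat{\lin}_j$, and at Gerver's configuration shifting the mean anomaly of $Q_3$ by $O(1)$ changes the encounter phase by $O(1)$, giving nonvanishing. The pairing $\hat{\lin}_j\cdot w_{3-j}$ becomes, in view of \eqref{eq: ul}, a specific linear combination of the $G_4$- and $g_4$-components of $\hat{\lin}_j$ with weights $1$ and $-\hat{L}_4/(\hat{L}_4^2+\hat{G}_4^2)$. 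Similarly $\hat{\brlin}_j\cdot \hat{\mathbf{u}}_j$ reduces, again using \eqref{eq: ul}, to an explicit combination of the first, fifth, and sixth components of $\hat{\mathbf{u}}_j$ against the corresponding weights from $\hat{\brlin}_j$. Each of these simplifications yields a closed-form rational/trigonometric function of the pre-collision Gerver orbit parameters $(E_3,e_3,g_3,e_4,g_4)$, whose values are tabulated in Appendix \ref{section: gerver}.

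I would then substitute Gerver's numerical values $E_3^*=-\frac{1}{2}$, $e_3^*=\frac{1}{2}$, $g_3^*=0$ together with the corresponding starred $(e_4^*,g_4^*,L_4,G_4)$ for $j=1$, and the analogous doubly-starred values for $j=2$, and verify by direct numerical evaluation that all three quantities are strictly bounded away from zero. This is the step the lemma defers to the computer, but the resulting expressions are elementary enough that the non-vanishing can also be confirmed by hand.

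For part (b), I would use the symmetry of the elastic collision formula \eqref{Ellastic} under the swap $3\leftrightarrow 4$: the formula is invariant up to simultaneously exchanging the velocity labels, so the direction in which the orbital elements of the future captured particle change under an infinitesimal variation of the scattering angle $\alpha$ reverses when traveler and captured roles are exchanged. Since $\hat{\mathbf{u}}_j$ is precisely this tangent direction, it picks up a minus sign. The main obstacle throughout is book-keeping of signs and orientations between the Delaunay coordinate system of Appendix \ref{section: appendix}, the elastic-collision parametrization, and the reflection built into the renormalization $\cR$ of Section \ref{SSAdm}; once these conventions are fixed the verification reduces to the numerical check described above.
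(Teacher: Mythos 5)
Your proposal follows essentially the same route as the paper: identify $\hat\lin_j$ in the $\mu\to 0$ limit with the differential of the relative angular momentum (impact parameter) $G_{in}$ with respect to the incoming Delaunay variables, identify $\hat{\mathbf{u}}_j$ with the derivative of the outgoing data with respect to the scattering angle $\al$ in the elastic-collision formula, pair these against the explicit limits $\tw$, $w_{3-j}$, $\hat\brlin_j$ from \eqref{eq: ul}, and verify non-vanishing numerically at Gerver's collision points; your argument for (b) via the sign reversal of the outgoing relative velocity is also the paper's. The only cosmetic difference is that the paper carries out the reduction through Corollary \ref{Cor} and the section condition $|Q_3-Q_4|=\mu^{\kappa}$ (eliminating $\ell_4$) before handing the expressions to the computer, which is exactly the bookkeeping you defer to the final step.
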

To check the nondegeneracy condition, it is enough to know the following.
\begin{Lm} Let $\bx\in U_j(\delta)$ and $|\bar\theta_4^+-\pi|<\tilde\theta\ll 1$ be as in Lemma \ref{LmDerLoc}.
If we take the directional derivative at $\bx$ of the local map along a direction
$\Gamma\in span\{\brv_{3-j},\brrv_{3-j}\}\subset T_{\bx} U_j(\dt)$, such that $$\brlin_j\cdot (d\Loc \Gamma)=0,\ j=1,2,$$ then
%for the both collisions $i=1,2$ we have
$\lim_{1/\chi\ll\mu \to 0}\frac{\partial E_3^+}{\partial \Gamma}$
is a continuous function of both $\bx$ and $\bar\theta_4^+$, where $E_3^+$ $($respectively $\bar\theta_4^+$$)$ is the energy of $Q_3$ $($respectively outgoing asymptote of $Q_4)$ after the close encounter with $Q_4$.
If we take further limits $\dt\to 0$ and $\tilde\theta\to 0$, we have
$$\lim_{\dt,\tilde\theta\to 0}\;\;\;\lim_{1/\chi\ll\mu \to 0}\dfrac{\partial E_3^+}{\partial \Gamma}\neq 0.$$
\label{Lm: local3}
\end{Lm}
The proofs of the two lemmas are postponed to Section~\ref{section: local}. Now we can check the nondegeneracy condition.
\begin{proof}[Proof of Lemma~\ref{LmNonDeg}.]
We prove (b1) and (b2). The proofs of (a1) and (a2) are similar and are left to the reader.

To check (b2), $de_4$ we differentiate $e_4=\sqrt{1+(G_4/L_4)^2}$ to get
\[de_4=\dfrac{1}{e_4}\left(\dfrac{G_4}{L^2_4}dG_4-\dfrac{G^2_4}{L_4^3}dL_4\right).\] Thus \eqref{eq: ul} gives
$de_4 w=\frac{G_4}{L^2_4}\neq 0$ as claimed.

Next we check (b1) which is equivalent to the following condition
\begin{equation} \det\left(\begin{array}{rr} \hat\brlin_{2}(\hat{\mathbf{u}}_{2}) & \hat\brlin_{2}(\hat B_2\Gamma')
) \cr
                                            \hat\brrlin_{2}(\hat{\mathbf{u}}_{2}) & \hat\brrlin_{2}(\hat B_2 \Gamma')
                                           \end{array}\right)\neq 0. \label{eq: nondeg}\end{equation}
where $\Gamma'=\hat\lin_{2}(\tw)w_{1}-\hat\lin_{2}(w_{1})\tw.$ The vector $\Gamma'\neq 0$ due to part (a) of Lemma \ref{Lm: local4}.

Let $\Gamma$ be a vector satisfying $\hat\brlin_2\cdot (d\Loc\Gamma)=0$ and
chosen as follows.
$d\Loc \Gamma$ is a vector in $span\{\hat{\mathbf{u}}_i,\hat B_i\Gamma'_i\}$, so it
can be represented as $d\Loc \Gamma_i=b \hat{\mathbf{u}}_2+b'\hat B_2\Gamma'.$
Thus we can take $b=-\hat\brlin_2\cdot \hat B_2\Gamma'$ and $b'=\hat\brlin_2(\hat{\mathbf{u}}_2)$
to ensure that $d\Loc \Gamma_i\in Ker \hat\brlin_2$.
Note that we have $b'\neq 0$ by part (a) of Lemma \ref{Lm: local4}.
Hence
\[\det\left(\begin{array}{rr} \hat\brlin_2(\hat{\mathbf{u}}_2) & \hat\brlin_2(\hat B_2\Gamma') \cr
                                           \hat \brrlin_2(\mathbf u_2) & \hat\brrlin_2(\hat B_2\Gamma')
                                            \end{array}\right)=\dfrac{1}{b'}\det\left(\begin{array}{rr} \hat\brlin_2(\hat{\mathbf{u}}_2) & \hat\brlin_2(d\Loc \Gamma) \cr
                                            \hat\brrlin_2(\hat{\mathbf{u}}_2) & \hat\brrlin_2(d\Loc \Gamma)
                                            \end{array}\right)=\hat\brrlin_2(d\Loc \Gamma)
                                            \]
where the last equality holds since $\hat\brlin_2(d\Loc \Gamma)=0.$
By \eqref{eq: ul} $\hat\brrlin_i=(1,0,0,0,0,0)$. Therefore
$\hat\brrlin_2(d\Loc \Gamma)=\frac{\partial E_3^+}{\partial \Gamma}$
and so (b2) follows from Lemma~\ref{Lm: local3}.
\end{proof}

\begin{Rk}\label{RkPhysics}
Let us describe the physical and geometrical meanings of the vectors $\brlin,\brrlin, \brv,\brrv,\lin,\mathbf u$ and the results in this section. 
\begin{enumerate}
\item The structure of $d\Loc$ shows that a significant change of the behavior of the outgoing orbit parameters occurs when we vary the orbit parameters in the direction of $\lin$, which is actually varying the closest distance (called impact parameter) between $Q_3$ and $Q_4$ (see Section \ref{section: local}, especially Corollary \ref{Cor}).   The vector $w$ in $d\Glob$ shows
  that after the global map, the variable $G_4$ gets significant change as asserted by Lemma \ref{LmChooseAM}. So $\hat\lin_i \cdot w_{3-i}\neq 0$ in Lemma \ref{Lm: local4} means that by changing $G_4$ after the global map, we can change the impact parameter and hence change the outgoing orbit parameters after the local map significantly. Similarly we see $\hat\lin_i \cdot \tw\neq 0$ means the same outcome by varying $\ell_3$ instead of $G_4.$
\item The result $\hat\brlin_i\cdot \hat{\mathbf{u}}_i\neq 0$ in Lemma \ref{Lm: local4} means that by changing the outgoing orbit parameter of the local map in $\hat{\mathbf{u}}$ direction, which is in turn changed significantly by changing the impact parameter in the local map, we can change the final orbit parameter of the global map in the $\brv$ direction significantly. The vector $\hat\brlin$ has clear physical meaning. If we differentiate the outgoing asymptote $\theta^+_4=g_4^++\arctan\frac{G_4^+}{L_4^+}$, where $+$ means after close encounter of $Q_3$ and $Q_4$, we get $d\theta_4^+=L_4^+ \hat\brlin$. 

\item Lemma \ref{Lm: local3} means that if we vary the incoming orbit parameter of the local map in the direction $\Gamma$ such that there is no significant change of the outgoing parameters of the local map in certain direction, then the energy (and, hence, semimajor axis) of the ellipse after $Q_3,Q_4$ interaction will change accordingly. One may think this as varying the incoming orbit parameter while holding the outgoing asymptotes unchanged. The change of energy means the change of periods of the ellipses according to Kepler's law. Ellipses with different periods will accumulate huge phase difference during one return time $O(\chi)$ of $Q_4$. This is the mechanism that we use to fine tune the phase of $Q_3$ such that $Q_3$ comes to the correct phase to interact with $Q_4$. Since the phase is defined up to $2\pi$, we get a Cantor set as initial condition of singular orbits. 
\end{enumerate}
\end{Rk}
\section{$C^0$ estimates for global map}
\label{section: equation}
\subsection{Equations of motion in Delaunay coordinates}
\label{subsection: eqmotion}
We use Delaunay variables to describe the motion of $Q_3$ and $Q_4$
(for reader's convenience we collect the basic properties of Delaunay variables
in Appendix~\ref{section: appendix}).
We have eight variables $(L_3, \ell_3, G_3, g_3)$ and $(L_4, \ell_4, G_4, g_4)$. We consider the Hamiltonian \eqref{eq: hamloc}.
%We eliminate $L_4$ using the energy conservation and $\ell_4$ will play the role of independent variable.

%After setting $v_{3,4}=P_{3,4}/\mu$ and dividing \eqref{eq: main} by $\mu$. This conformal rescaling divides both the symplectic form and the Poincare-Cartan invariant by $\mu$. It can be verified directly that the equations of motion for the variables $(v,Q)_{3,4}$ are Hamiltonian equations 
%with the new Hamiltonian  
%\begin{equation}\label{eq: hamloc}H=\dfrac{v_3^2}{2}+\dfrac{v_4^2}{2}-\dfrac{1}{|Q_3|}-\dfrac{1}{|Q_4|}-\dfrac{1}{|Q_3-(-\chi,0)|}-\dfrac{1}{|Q_4-(-\chi,0)|}-\dfrac{\mu}{|Q_3-Q_4|}.\end{equation}
When $Q_4$ is moving to the left of the section $\left\{x_4=-\chi/2\right\}$, we consider the motion of $Q_3$ as elliptic motion with focus at $Q_2$, and $Q_4$ as hyperbolic motion with focus at $Q_1$, perturbed by other interactions. We can write the Hamiltonian in terms of Delaunay variables as
\[H_L=-\dfrac{1}{2L_3^2}+\dfrac{1}{2L_4^2}-\dfrac{1}{|Q_4|}-\dfrac{1}{|Q_3-(-\chi,0)|}-\dfrac{\mu}{|Q_3-Q_4|}.\]
When $Q_4$ is moving to the right of the section $\left\{x_4=-\chi/2\right\}$, we consider the motion of $Q_3$ as an elliptic motion with focus at $Q_2$,
and that of $Q_4$ as a hyperbolic motion with focus at $Q_2$ attracted by the pair $Q_2,Q_3$ which has mass $1+\mu$ plus a
perturbation.
%and a renormalized potential, perturbed by other interactions.
For $|Q_4|\geq 2$ we have the following
Taylor expansion where $O$ is in the sense $|Q_4|\to\infty,$
\[\dfrac{\mu}{|Q_3-Q_4|}=\dfrac{\mu}{|Q_4|}+\dfrac{\mu Q_4\cdot Q_3}{|Q_4|^3}+O\left(\dfrac{\mu}{|Q_4|^3}\right). \]
%,\quad \textrm{as}\ Q_4\to\chi.\]
Hence the Hamiltonian takes form
\[H=\dfrac{v_3^2}{2}+\dfrac{v_4^2}{2}-\dfrac{1}{|Q_3|}-\dfrac{1+\mu}{|Q_4|}-\dfrac{1}{|Q_3-(-\chi,0)|}-
\dfrac{1}{|Q_4-(-\chi,0)|}-\dfrac{\mu Q_3\cdot Q_4}{|Q_4|^3}+O\left(\dfrac{\mu}{|Q_4|^3}\right). \]
%We include the first term in the expansion into the potential of $Q_4$ to obtain
In terms of the Delaunay variables we have
\begin{equation}H_R=-\dfrac{1}{2L_3^2}+\dfrac{(1+\mu)^2}{2L_4^2}-\dfrac{1}{|Q_3+(\chi,0)|}-\dfrac{1}{|Q_4+(\chi,0)|}-\dfrac{\mu Q_4\cdot Q_3}{|Q_4|^3}+O\left(\dfrac{\mu}{|Q_4|^3}\right).\label{eq: Hr}\end{equation}
We shall use the following notation.
%\begin{Def}
%\begin{itemize}
%\item
The coefficients of $\frac{1}{2L_4^{2}}$ in the Hamiltonian will be called $k_L=1$ and $k_R=1+\mu.$
The terms in the Hamiltonian containing $Q_4$ will be denoted by
\begin{equation}
%\begin{aligned}
\label{eq: V}
V_R=-\dfrac{1}{|Q_4+(\chi,0)|}-\dfrac{\mu Q_4\cdot Q_3}{|Q_4|^3}+O\left(\dfrac{\mu}{|Q_4|^3}\right),\text{ and }
V_L=-\dfrac{1}{|Q_4|}-\dfrac{\mu}{|Q_3-Q_4|}.
%\end{aligned}
\end{equation}
Here subscripts L and R mean that the corresponding expressions are used when $Q_4$ is to the left (respectively to the right)
of the line $Q=-\frac{\chi}{2}.$
%Here $V_R$ means, when $Q_4$ is moving to the \textbf{right} of the line $x=\dfrac{-\chi}{2}$, the perturbation it feels from the other bodies that will enter the Hamiltonian equations. Similarly, $V_L$ means when $Q_4$ is moving to the \textbf{left} of the line $x=\dfrac{-\chi}{2}$, the perturbation it feels from $Q_2$ and $Q_3$.\\
Likewise for the terms containing $Q_3$ we define
\begin{equation}
\begin{aligned}
&U_R=-\dfrac{1}{|Q_3+(\chi,0)|}-\dfrac{\mu Q_4\cdot Q_3}{|Q_4|^3}+O\left(\dfrac{\mu}{|Q_4|^3}\right),\ U_L=-\dfrac{1}{|Q_3-(-\chi,0)|}-\dfrac{\mu}{|Q_3-Q_4|}.
\end{aligned}
\label{eq: U}
\end{equation}
The use of subscripts $R,L$ here is the same as above.
%\end{itemize}
%\end{Def}
Let us write down the full Hamiltonian equations with the subscripts $R$ and $L$ suppressed.
\begin{equation}
\begin{aligned}
\begin{cases}
&\dot{L}_3=-\dfrac{\partial Q_3}{\partial \ell_3}\cdot\dfrac{\partial U}{\partial Q_3},\quad \dot{\ell}_3=\dfrac{1}{L^3_3}+\dfrac{\partial Q_3}{\partial L_3}\cdot\dfrac{\partial U}{\partial Q_3},\\
&\dot{G}_3=-\dfrac{\partial Q_3}{\partial g_3}\cdot\dfrac{\partial U}{\partial Q_3}, \quad\dot{g}_3=\dfrac{\partial Q_3}{\partial G_3}\cdot\dfrac{\partial U}{\partial Q_3},\\
&\dot{L_4}=-\dfrac{\partial Q_4}{\partial \ell_4}\cdot\dfrac{\partial V}{\partial Q_4},\quad\dot{\ell}_4=-\dfrac{k^2}{L^3_4}+\dfrac{\partial Q_4}{\partial L_4}\cdot\dfrac{\partial V}{\partial Q_4},\\
&\dot{G}_4=-\dfrac{\partial Q_4}{\partial g_4}\cdot\dfrac{\partial V}{\partial Q_4},\quad\dot{g}_4=\dfrac{\partial Q_4}{\partial G_4}\cdot\dfrac{\partial V}{\partial Q_4}.\\
\end{cases}
\end{aligned}\label{eq: Hamiltonian eq}
\end{equation}

Next we use the energy conservation to eliminate $L_4$. Setting $H=0$, we have
\begin{equation}
\begin{aligned}
\dfrac{L^3_4}{k_R^2}&=k_RL_3^3
\cdot\left(1-3L_3^2\left(\dfrac{1}{|Q_3+(\chi,0)|}+\dfrac{1}{|Q_4+(\chi,0)|}\right.\right.\\
&+\left.\left.\dfrac{\mu Q_4\cdot Q_3}{|Q_4|^3}+O\left(\dfrac{\mu}{|Q_4|^3}\right)+ O(1/\chi^2)\right)\right):=k_R L_3^3+W_R,\\
\dfrac{L^3_4}{k_L^2}&=k_LL_3^3\left(1-3L_3^2\left(\dfrac{1}{|Q_3+(\chi,0)|}+\dfrac{1}{|Q_4|}-\dfrac{\mu}{|Q_4-Q_3|}+O(1/\chi^2)\right)\right)\\
:&=k_L L_3^3+W_L.
\end{aligned}\label{eq: W}
\end{equation}

We use $\ell_4$ as the independent variable. Dividing \eqref{eq: Hamiltonian eq} by $\dot{\ell_4}$ and using
\eqref{eq: W} to eliminate $L_4$ we obtain

%We will eliminate $L_4$ and $\ell_4$ in our coordinate system to obtain a 6-D Poincar\'{e} map. So we pick $\ell_4$ as the new time then divide all the remaining equations by the $\dot{\ell_4}$ equation. Simultaneously we use equations~(\ref{eq: W}) to eliminate $L_4$.\\
%Hence the equations has the form:
\begin{equation}
\begin{aligned}
&\begin{cases}
\dfrac{d L_3}{d \ell_4}&=(kL_3^3+W)\dfrac{\partial Q_3}{\partial \ell_3}\cdot\dfrac{\partial U}{\partial Q_3}\left(1+(kL_3^3+W)\dfrac{\partial Q_4}{\partial L_4}\cdot\dfrac{\partial V}{\partial Q_4}\right)\\
\dfrac{d \ell_3}{d \ell_4}&=-(kL_3^3+W)(\dfrac{1}{L_3^3}+\dfrac{\partial Q_3}{\partial L_3}\cdot\dfrac{\partial U}{\partial Q_3}) \left(1+(kL_3^3+W)\dfrac{\partial Q_4}{\partial L_4}\cdot\dfrac{\partial V}{\partial Q_4}\right)\\
\dfrac{d G_3}{d \ell_4}&=(kL_3^3+W)\dfrac{\partial Q_3}{\partial g_3}\cdot\dfrac{\partial U}{\partial Q_3}\left(1+(kL_3^3+W)\dfrac{\partial Q_4}{\partial L_4}\cdot\dfrac{\partial V}{\partial Q_4}\right)\\
\dfrac{d g_3}{d \ell_4}&=-(kL_3^3+W)\dfrac{\partial Q_3}{\partial G_3}\cdot\dfrac{\partial U}{\partial Q_3}\left(1+(kL_3^3+W)\dfrac{\partial Q_4}{\partial L_4}\cdot\dfrac{\partial V}{\partial Q_4}\right)\\
\dfrac{d G_4}{d \ell_4}&=(kL_3^3+W)\dfrac{\partial Q_4}{\partial g_4}\cdot\dfrac{\partial V}{\partial Q_4}\left(1+(kL_3^3+W)\dfrac{\partial Q_4}{\partial L_4}\cdot\dfrac{\partial V}{\partial Q_4}\right)\\
\dfrac{d g_4}{d \ell_4}&=-(kL_3^3+W)\dfrac{\partial Q_4}{\partial G_4}\cdot\dfrac{\partial V}{\partial Q_4}\left(1+(kL_3^3+W)\dfrac{\partial Q_4}{\partial L_4}\cdot\dfrac{\partial V}{\partial Q_4}\right)\\
\end{cases}\\
&+O\left(\dfrac{\mu}{|Q_4|^3}+1/\chi^2\right).
\end{aligned}\label{eq: 6 equations}
\end{equation}
We shall use the following notation:
$X=(L_3, \ell_3, G_3, g_3),$ $Y=(G_4, g_4).$

\subsection{{\it a priori} bounds}
In this section, we give some estimates that will be used to estimate the derivatives of the global map in later sections. 
\subsubsection{Estimates of positions}
We have the following estimates for the positions.
\begin{Lm}
\label{Lm: position}
Given $C$ and $D>0$ there exists $C'$ such that
if
\begin{equation}
\label{APriori34}
|Q_3|<2-D,\quad |Q_{4y}|<C
\end{equation}
then
\begin{itemize}
\item[(a)] we have
\begin{equation}
\label{LmPosEqA}
 \left|\dfrac{\partial Q_3}{\partial X}\right|<C';
\end{equation}
%for some constant $C'$ independent of $\mu$ and $\chi$.
%The position of the traveler $Q_4$ grows linearly w.r.t. the time $t\in[0,-\chi]$.
\item[(b)]  when $Q_4$ is moving to the right
of the section $\{x_4=-\chi/2\}$ we have
\begin{equation}\label{LmPosEqB}
|Q_4(\ell_4)| \begin{cases}&\geq 2,\quad \mathrm{if\ }|\ell_4^*|\leq |\ell_4|\leq C\\
 &\in\left[\frac{1}{2},2\right] L_4^2(\ell_4^*)|\ell_4|,\quad \mathrm{if\ }|\ell_4|\geq C,
 \end{cases}\end{equation}
 where $\ell_4^*$ is the value of $\ell_4$ restricted on $x_4=-2;$\\
when $Q_4$ is moving to the left of the section $\{x_4=-\chi/2\},$ we have
\begin{equation}\label{LmPosEqC}
|Q_4(\ell_4)-Q_1|\leq 2 L_4^2 (\ell_4^*)|\ell_4|+C'\end{equation}
 for some constant $C'$ where $\ell_4^*$ is the value of $\ell_4$ on the section $\{x_4=-\chi/2\}$.
\end{itemize}
\end{Lm}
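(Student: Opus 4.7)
\textbf{Proof plan for Lemma \ref{Lm: position}.}

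The strategy is to work directly with the explicit Delaunay parametrization of the Kepler orbits (as recorded in Appendix~\ref{section: appendix}), exploiting the a priori bounds \eqref{APriori34} to keep us away from the places where the coordinate transformation degenerates.

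For part (a), I would write $Q_3$ in Delaunay coordinates via the eccentric anomaly $u_3$:
\begin{equation*}
Q_3 = R(g_3)\bigl( L_3^2(\cos u_3 - e_3),\ L_3^2\sqrt{1-e_3^2}\,\sin u_3\bigr),
\qquad e_3=\sqrt{1-G_3^2/L_3^2},
\end{equation*}
where $R(g_3)$ is rotation by $g_3$ and $u_3$ is defined implicitly by Kepler's equation $\ell_3 = u_3 - e_3\sin u_3$. Differentiating Kepler's equation gives $\partial u_3/\partial \ell_3 = 1/(1-e_3\cos u_3) = L_3^2/|Q_3|$, and analogous formulas for derivatives with respect to $L_3$ and $G_3$. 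Thus every entry of $\partial Q_3/\partial X$ is a rational expression in $L_3, G_3, \cos u_3, \sin u_3$ whose only potentially small denominator is $1-e_3\cos u_3 = |Q_3|/L_3^2$. Since $L_3$ stays in a compact set (coming from $E_3=-1/(2L_3^2)\sim -1/2$, which is part of our standing assumption on $U_i(\delta)$) and $|Q_3|\ge \Const$ is implied by the condition $|Q_3|<2-\dt$ together with the fact that on an ellipse $|Q_3|$ ranges over $[L_3^2(1-e_3),\,L_3^2(1+e_3)]$ with $L_3^2(1-e_3)$ bounded below (using the bound on $e_3$ implicit in $U_i(\delta)$), the bound \eqref{LmPosEqA} follows.

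For part (b), I would use the hyperbolic Kepler parametrization on the right side of $\{x_4=-\chi/2\}$, with focus at $Q_2=0$:
\begin{equation*}
|Q_4| = \tfrac{L_4^2}{k_R}\bigl(e_4\cosh u_4 - 1\bigr),\qquad \ell_4 = e_4\sinh u_4 - u_4, \qquad e_4>1.
\end{equation*}
The map $u_4\mapsto \ell_4$ is a strictly increasing diffeomorphism of $\R$, and $|Q_4|$ is an even, strictly convex function of $u_4$ with minimum at $u_4=0$; consequently $|Q_4|$ is strictly increasing as a function of $|\ell_4|$. Therefore $|\ell_4|\ge |\ell_4^*|$ forces $|Q_4|\ge |Q_4|_{\ell_4=\ell_4^*}\ge 2$, since at the section $x_4=-2$ one has $|Q_4|^2=4+Q_{4y}^2\ge 4$. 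This gives the first line of \eqref{LmPosEqB}. For the asymptotics, note $\sinh u_4\sim \tfrac12 e^{|u_4|}$ and $\cosh u_4\sim \tfrac12 e^{|u_4|}$ as $|u_4|\to\infty$, so $|\ell_4|\sim e_4\sinh u_4$ and $|Q_4|\sim (L_4^2/k_R)e_4\cosh u_4$, giving $|Q_4|/|\ell_4|\to L_4^2/k_R$. Since $L_4$ varies only by $O(1/\chi^2)$ along the trajectory (by the Hamiltonian equations \eqref{eq: Hamiltonian eq} for $\dot L_4$ and the potentials in \eqref{eq: V}), we may replace $L_4$ by its value $L_4(\ell_4^*)$ at the section with an error absorbed into the stated ratio $[\tfrac12,2]$, provided $C$ is chosen large enough. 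The bound \eqref{LmPosEqC} on the left side is identical after translating the focus by $(-\chi,0)$ and using the analogous hyperbolic parametrization; the additive constant $C'$ accounts for the change of reference and for the bounded piece $|\ell_4|\le C$ where the linear asymptotic has not yet set in.

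The only genuinely nontrivial point, and the main obstacle, is making the asymptotic $|Q_4|\approx L_4^2|\ell_4|/k$ uniform in the orbit parameters; this needs a quantitative estimate on the $o(1)$ terms in $\sinh u_4$ vs $\cosh u_4$ together with a uniform lower bound on $e_4-1$, both of which are automatic on the sets $U_i(\delta)$ once $\delta$ is fixed. Everything else is an application of the explicit Delaunay formulas recalled in Appendix~\ref{section: appendix}, together with monotonicity of the Kepler $r$-coordinate in the anomaly.
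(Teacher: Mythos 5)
Your proposal reproduces the easy half of the argument (the hyperbolic Delaunay asymptotics giving $|Q_4|\approx L_4^2|\ell_4|$, and part (a) from the explicit formulas), but it assumes away the point that actually carries the proof: why the orbit elements $L_3,G_3,G_4$ (hence $L_4$ via \eqref{eq: W}, and $e_4$) remain within a fixed factor of their section values over the whole excursion, whose $\ell_4$-length is of order $\chi$. Your justifications --- ``$L_4$ varies only by $O(1/\chi^2)$ along the trajectory'' and ``automatic on the sets $U_i(\delta)$ once $\delta$ is fixed'' --- do not hold up. The sets $U_i(\delta)$ constrain the data only at the initial section; the instantaneous bounds $dL_3/d\ell_4,\,dG_3/d\ell_4,\,dG_4/d\ell_4=O\left(\frac{1}{\chi^2}+\frac{\mu}{\ell_4^2+1}\right)$ of Lemma \ref{Lm: orderham} require as hypotheses the very position estimates \eqref{LmPosEqB}--\eqref{LmPosEqC} you are trying to prove (they enter the bounds on the potentials and their gradients); and even granting them, the integrated variation over an interval of length $O(\chi)$ is $O(\mu)$, not $O(1/\chi^2)$. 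The paper closes this circle with a bootstrap: take the maximal $\ell_4$-interval on which \eqref{eq: assump} holds, derive \eqref{LmPosEqB} there, feed that into Lemma \ref{Lm: orderham}, integrate to see the oscillations of $L_3,G_3,G_4$ are only $O(\mu)$, and conclude the maximal interval extends past the section crossing. Your plan needs this loop (or an equivalent continuity argument); without it the ``uniformity in the orbit parameters'' you flag as the main obstacle is precisely what is left unproven.

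The second gap is the left piece \eqref{LmPosEqC}, where ``translating the focus by $(-\chi,0)$'' is not enough. With respect to $Q_1$ the eccentricity $e_{4L}$ could a priori be enormous, and at $\ell_4=0$ one has $|Q_4-Q_1|$ equal to the perihelion distance $\approx L_4^2(e_{4L}-1)$, so the additive constant $C'$ is uniform only if $G_{4L}=O(1)$. This is where the hypothesis $|Q_{4y}|<C$ of \eqref{APriori34} actually gets used in the paper: at the crossing of $\{x_4=-\chi/2\}$ one has $G_{4R}=O(1)$ and $y\,v_{4x}=O(1)$, hence $\chi v_{4y}=O(1)$ and $G_{4L}=G_{4R}+\chi v_{4y}=O(1)$. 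Your proposal never invokes $|Q_{4y}|<C$ at all, which is a symptom of this omission. A similar remark applies to part (a): the lower bound keeping the denominator $1-e_3\cos u_3$ (equivalently $|Q_3|/L_3^2$) away from zero comes from the persistence of the bounds on $L_3,G_3$ along the orbit, i.e., again from the bootstrap, not from the membership of the initial data in $U_i(\delta)$.
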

This lemma justifies the following intuitive facts. Since $Q_3$ and $Q_4$ are away from close encounter, the motion of $Q_3$ is almost Kepler elliptic motion hence we get item (a). The motion of $Q_4$ is a perturbed Kepler hyperbolic motion for both the left and the right case, hence for most of the time $Q_4$ as a function of the time $\ell_4$ is almost linear (item (b)). To give the complete proof we have to
use the Hamiltonian equations. See Section \ref{SSPrLmPos}. The next several lemmas relies on the conclusion of this lemma. 

%The intuition behind this lemma is the following. Since the total energy of the system is zero and $Q_3$ and $Q_4$ interact only weakly witheach other, then both particles have energies close to $\frac{1}{2L_4^2(\ell_4^*)}$
%in absolute value.
%Since $Q_4$ spends most of the time away from $Q_1, Q_2$
%and $Q_3$ most of its energy is kinetic energy so it moves with approximately constant speed.
%Since it makes a little progress in $y$ direction
%its velocity is almost horizontal most of the time. This explains \eqref{LmPosEqB},\eqref{LmPosEqC}. 

%\begin{proof}As explained in the beginning of this section, we treat the motion of $Q_3$ and $Q_4$ as perturbed Kepler motions. If we neglect the perturbation, then all the items of the lemma hold. Indeed, for $Q_3$, we immediately get $\left|\dfrac{\partial Q_3}{\partial X}\right|<C'$ from the formulas in Appendix~\ref{subsection: ellip}. For $Q_4$, we have orbit parametrization in Appendix~\ref{subsection: hyp}. The mean anomaly satisfies the equation $\dot\ell=-\dfrac{1}{L^3}=-1$ if we set $L=1$.  From equation~\ref{eq: hypul}, we get $\sinh u=-\dfrac{\ell}{e}+O(\ln|\ell|)\simeq \cosh u$ when $|\ell|$ is large. Then using equation~\ref{eq: delaunay4}, we get $|Q_4|=L\sqrt{L^2+G^2}\dfrac{|\ell|}{e}+O(1)$. Then we use $e=\sqrt{1+G^2/L^2}$ and $L=1$ to conclude that $|Q_4|$ grows linearly w.r.t. $t$.
%To introduce the perturbation, we need to use the Hamiltonian equations. We postpone this part of the lemma after Lemma~\ref{Lm: gradient}.
%\end{proof}

\begin{Lm}
\label{LmQderT}
If inequalities \eqref{APriori34}, \eqref{LmPosEqB}, \eqref{LmPosEqC} are valid and in addition
\begin{equation}
\label{LBounded}
1/C\leq |L_3|, |L_4|\leq C, \quad |G_3|, |G_4|<C,
\end{equation}
then we have
%Under the assumptions of Lemma \ref{Lm: position}
%we have
$$\dfrac{\partial Q_4}{\partial \ell_4}=O(1), \quad \dfrac{\partial Q_4}{\partial (L_4,G_4,g_4)}=O(\ell_4), \quad
\dfrac{\partial Q_4}{\partial g_4}\cdot Q_4=0\text{ and }\dfrac{\partial Q_4}{\partial G_4}\cdot Q_4=O(\ell_4)$$
as $|\ell_4|\to\infty$.
\end{Lm}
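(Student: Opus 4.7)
\medskip

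\textbf{Proof plan.} The key point is that although the overall dynamics is perturbed, the position $Q_4$ is a function of the instantaneous Delaunay variables alone through the pure Kepler formula expressing position in terms of orbital elements. Hence every derivative in the statement is a purely kinematic computation on Kepler coordinates, and the hypotheses \eqref{LBounded} together with \eqref{LmPosEqB}--\eqref{LmPosEqC} control the regime where we must estimate it.

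Write $Q_4 = R_{g_4}\hat Q_4(L_4,G_4,\ell_4)$, where $R_{g_4}$ is rotation by $g_4$ and $\hat Q_4$ is the position in the frame with periapsis on the positive $x$--axis. The $g_4$ derivative is then immediate: $\partial Q_4/\partial g_4 = J Q_4$, where $J$ is the $\pi/2$ rotation. This gives $\partial Q_4/\partial g_4 \cdot Q_4 = 0$ exactly, and $|\partial Q_4/\partial g_4| = |Q_4| = O(\ell_4)$ by \eqref{LmPosEqB}.

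For the remaining derivatives I would parametrize $\hat Q_4$ by the hyperbolic anomaly $u$, so
\[
\hat Q_4 = \bigl(a(e - \cosh u),\; a\sqrt{e^2-1}\,\sinh u\bigr), \qquad r = |Q_4| = a(e\cosh u - 1),
\]
with $a = L_4^2/k$, $e = \sqrt{1+(G_4/L_4)^2}$, and $\ell_4 = e\sinh u - u$ (Kepler's equation). In the regime \eqref{LBounded} the quantities $a,e,\sqrt{e^2-1}$ and their derivatives in $L_4,G_4$ are $O(1)$, while \eqref{LmPosEqB}--\eqref{LmPosEqC} give $\cosh u \asymp |\ell_4|/L_4^2$ for large $|\ell_4|$. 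Implicit differentiation of Kepler's equation then yields
\[
\frac{\partial u}{\partial \ell_4} = \frac{1}{e\cosh u - 1} = \frac{a}{r},\qquad
\frac{\partial u}{\partial L_4}\bigg|_{\ell_4},\ \frac{\partial u}{\partial G_4}\bigg|_{\ell_4} = O\!\left(\frac{\sinh u}{\cosh u}\right) = O(1).
\]
Since $|\partial \hat Q_4/\partial u| = O(a\cosh u) = O(r)$, the chain rule gives $\partial Q_4/\partial \ell_4 = O(r)\cdot a/r = O(1)$. For $\partial Q_4/\partial L_4$ and $\partial Q_4/\partial G_4$ at fixed $\ell_4$, the direct dependence of $\hat Q_4$ on $(L_4,G_4)$ at fixed $u$ produces terms of size $O(a\cosh u) = O(r) = O(\ell_4)$, and the indirect term through $\partial u/\partial(L_4,G_4)|_{\ell_4}$ is of the same order, giving the claimed $O(\ell_4)$ bound.

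The last identity $\partial Q_4/\partial G_4 \cdot Q_4 = O(\ell_4)$ is the step where a cancellation is essential; without it the trivial estimate would only yield $O(\ell_4^2)$. I write
\[
\frac{\partial Q_4}{\partial G_4}\cdot Q_4 \;=\; \tfrac{1}{2}\,\frac{\partial r^2}{\partial G_4}\bigg|_{\ell_4} \;=\; r\,\frac{\partial r}{\partial G_4}\bigg|_{\ell_4},
\]
and compute from $r = a(e\cosh u - 1)$ together with $\partial u/\partial G_4|_{\ell_4} = -(\partial e/\partial G_4)\sinh u/(e\cosh u - 1)$ that
\[
\frac{\partial r}{\partial G_4}\bigg|_{\ell_4} = a\,\frac{\partial e}{\partial G_4}\cdot \frac{e - \cosh u}{e\cosh u - 1} = O(1),
\]
the large $\cosh u$ factors cancelling between the direct and indirect contributions. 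Multiplying by $r = O(\ell_4)$ gives the bound. The main obstacle in carrying this out carefully is tracking precisely this cancellation and making sure the estimates are uniform both for large $|\ell_4|$ and in the transitional regime where $|\ell_4|$ is of order one, using \eqref{LmPosEqB}--\eqref{LmPosEqC} and \eqref{LBounded} to control every quantity appearing.
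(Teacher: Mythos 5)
Your proposal is correct and takes essentially the same route as the paper, which proves this lemma by reduction to the kinematic estimates of Lemma~\ref{LM: 1stder} in the appendix, itself established by the same computation you carry out: the explicit hyperbolic Delaunay parametrization, implicit differentiation of Kepler's equation $u-e\sinh u=\ell_4$, the rotation structure in $g_4$ giving $\dfrac{\partial Q_4}{\partial g_4}\cdot Q_4=0$ exactly, and a cancellation in $\dfrac{\partial Q_4}{\partial G_4}\cdot Q_4$. Your exact identity $\dfrac{\partial Q_4}{\partial G_4}\cdot Q_4=r\,\dfrac{\partial r}{\partial G_4}\Big|_{\ell_4}$ with the algebraic cancellation $\dfrac{e-\cosh u}{e\cosh u-1}=O(1)$ is a slightly cleaner way to exhibit what the paper obtains asymptotically via Lemma~\ref{Lm: simplify}, but the substance is the same.
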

\begin{proof}
This follows directly from
Lemma~\ref{LM: 1stder} in Appendix~\ref{subsubsection: 1stderivative}.
%We point out that the estimate $\dfrac{\partial Q_4}{\partial G_4}\cdot Q_4=O(\ell_4)$ is nontrivial.
\end{proof}

\subsubsection{Estimates of potentials}

\begin{Lm}
Under the assumptions of Lemma \ref{LmQderT}
%If inequalities \eqref{APriori34}, \eqref{LmPosEqB}, \eqref{LmPosEqC}  and \eqref{LBounded} are valid then
we have the following estimates for the potentials $U,V,W$ as $1/\chi\ll\mu\to 0$:\begin{itemize}
\item[(a)] When $Q_4$ is moving to the right of the section $\{x_4=-\chi/2\}$, we have \[V_R,\ U_R,\ W_R=O\left(\dfrac{1}{\chi}+\dfrac{\mu}{\ell_4^2+1}\right).\]
\item[(b)] When $Q_4$ is moving to the left of the section $\{x_4=-\chi/2\}$, we have\[V_L,\ U_L,\ W_L=O\left(\dfrac{1}{\chi}\right).\]
\end{itemize}
\label{Lm: potential}
\end{Lm}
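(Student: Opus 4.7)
The plan is to bound each summand in the definitions \eqref{eq: V} and \eqref{eq: U} of $V_R, V_L, U_R, U_L$ directly using Lemma~\ref{Lm: position} together with the boundedness \eqref{LBounded}; the estimates for $W_R, W_L$ will then follow from \eqref{eq: W} after factoring out the prefactor $L_3^5$, which is bounded above by \eqref{LBounded}.

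For part (a), I start from the geometric fact that $Q_4$ lying to the right of $\{x_4=-\chi/2\}$ forces the $x$-coordinate of $Q_4+(\chi,0)$ to be at least $\chi/2$, so $1/|Q_4+(\chi,0)|=O(1/\chi)$. Combined with $|Q_3|\leq 2-\delta$ from \eqref{APriori34}, which gives $|Q_3+(\chi,0)|\geq\chi-2$ and hence $1/|Q_3+(\chi,0)|=O(1/\chi)$, this handles two of the three types of contributions. For the cross term $\mu Q_4\cdot Q_3/|Q_4|^3$, the trivial bound $|Q_4\cdot Q_3|\leq |Q_3|\,|Q_4|\leq C|Q_4|$ reduces it to $C\mu/|Q_4|^2$. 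I then invoke Lemma~\ref{Lm: position}(b): the two cases $|\ell_4|\leq C$ (giving $|Q_4|\geq 2$) and $|\ell_4|\geq C$ (giving $|Q_4|\geq c L_4^2|\ell_4|$, with $L_4$ bounded below) combine to yield $|Q_4|^2\geq c(\ell_4^2+1)$ throughout the right region, so the cross term is $O(\mu/(\ell_4^2+1))$. The same bound controls the $O(\mu/|Q_4|^3)$ remainder. Summing proves the claim for $V_R$; since $U_R$ is a sum of exactly the same three types of terms, an identical argument delivers the claim for $U_R$.

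Part (b) is simpler: when $Q_4$ lies to the left of $\{x_4=-\chi/2\}$ one has $|Q_4|\geq\chi/2$, whence $1/|Q_4|=O(1/\chi)$, and then $|Q_3-Q_4|\geq|Q_4|-|Q_3|\geq\chi/2-2$ for $\chi$ large, so $\mu/|Q_3-Q_4|=O(\mu/\chi)=O(1/\chi)$. The term $1/|Q_3-(-\chi,0)|$ is bounded exactly as in part (a). For the $W$ estimates, \eqref{eq: W} expresses $W_{R,L}$ as $-3kL_3^5$ times a sum of potential-like terms of the form just estimated, plus the explicit $O(1/\chi^2)$ remainder already present in \eqref{eq: W}, which is absorbed by the stated bounds.

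I do not expect a genuine obstacle. The whole statement is a bookkeeping exercise: the cut $\{x_4=-\chi/2\}$ was designed precisely so that each Newtonian potential appearing in the Hamiltonian either is absorbed into the Kepler integral $1/(2L_4^2)$ or involves two points separated by a distance $\gtrsim\chi$, producing $O(1/\chi)$, the sole exception being the $Q_3$–$Q_4$ interaction on the right side, whose decay in $\ell_4$ is exactly what Lemma~\ref{Lm: position}(b) was set up to deliver.
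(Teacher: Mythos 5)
Your proposal is correct and follows essentially the same route as the paper, which simply notes that the bounds follow directly from the definitions \eqref{eq: V}, \eqref{eq: U}, \eqref{eq: W} together with the position estimates of Lemma \ref{Lm: position}; your case split on $|\ell_4|$ to get $|Q_4|^2\geq c(\ell_4^2+1)$ is exactly the point the paper emphasizes when it writes $\mu/(\ell_4^2+1)$ to stress that the denominator stays bounded away from zero. No gaps.
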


\begin{proof}
This follows directly from equations \eqref{eq: V}, \eqref{eq: U} and \eqref{eq: W} and \eqref{LmPosEqB} in Lemma \ref{Lm: position}. For part (a), the estimate $O(\frac{1}{\chi})$ comes from $\frac{1}{|Q_{3,4}+(\chi,0)|}$ in the potentials $V_R,U_R,W_R$ and the estimate $O(\frac{\mu}{\ell_4^2+1})$ comes from the term $\frac{\mu Q_4\cdot Q_3}{|Q_4|^3}$ since $Q_4$ moves away from $Q_2$ almost linearly in $\ell_4$ according to \eqref{LmPosEqB}. Our choice of the section $\{x_4=-2\}$ excludes the collision between $Q_3$ and $Q_4$. So we put $\frac{\mu}{\ell_4^2+1}$ to stress the fact that the denominator is bounded away from zero. We do the same thing in the following proofs without mentioning it any more.
\end{proof}

\subsubsection{Estimates of gradients of potentials}
%To take partial derivatives w.r.t. Delaunay variables, we use the formulas
%\[\dfrac{\partial}{\partial X}=\dfrac{\partial Q_3}{\partial X}\cdot\dfrac{\partial}{\partial Q_3},\quad \dfrac{\partial}{\partial Y}=\dfrac{\partial Q_4}{\partial Y}\cdot\dfrac{\partial}{\partial Q_4}.\]
In this section, we estimate the gradients of the potentials $U,V$, which appears in the Hamiltonian equations.  
\begin{Lm}
%If inequalities \eqref{APriori34}, \eqref{LmPosEqB}, \eqref{LmPosEqC}  and \eqref{LBounded} are valid then
Under the assumptions of Lemma \ref{LmQderT}
we have the following estimates for the gradients of the potentials $U,V$ as $1/\chi\ll\mu\to 0$
\begin{equation}
\begin{aligned}
&\dfrac{\partial U_R}{\partial Q_3},\ \dfrac{\partial Q_4}{\partial(G_4, g_4)}\dfrac{\partial V_R}{\partial Q_4}=O\left(\dfrac{1}{\chi^2}+\dfrac{\mu}{\ell_4^2+1}\right),\quad \dfrac{\partial V_R}{\partial Q_4}=O\left(\dfrac{1}{\chi^2}+\dfrac{\mu}{|\ell_4|^3+1}\right),\\
&\dfrac{\partial U_L}{\partial Q_3}=O\left(\dfrac{1}{\chi^2}\right),\quad \dfrac{\partial V_L}{\partial Q_4}=O\left(\dfrac{1}{\chi^2}\right),\quad \dfrac{\partial Q_4}{\partial(G_4, g_4)}\dfrac{\partial V_L}{\partial Q_4}=O\left(\dfrac{1}{\chi^2}\right).
\end{aligned}
\end{equation}
\label{Lm: gradient}
\end{Lm}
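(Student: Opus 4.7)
The plan is to differentiate the explicit expressions \eqref{eq: V} and \eqref{eq: U} term by term and bound each contribution using the a priori position bounds from Lemma~\ref{Lm: position} together with the chain-rule identities for $\partial Q_4/\partial(G_4,g_4)$ supplied by Lemma~\ref{LmQderT} and the Delaunay formulas of Appendix~\ref{section: appendix}.

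The left-region case is immediate. When $Q_4$ lies to the left of $\{x_4=-\chi/2\}$, all three distances $|Q_3+(\chi,0)|$, $|Q_4|$ (measured from $Q_2$, which is not the focus in this region) and $|Q_3-Q_4|$ are $\Theta(\chi)$ by Lemma~\ref{Lm: position}(b) and the assumption $|Q_3|\le 2-\delta$. Differentiating $U_L$ and $V_L$ in $Q_3$ and $Q_4$ therefore yields Euclidean gradients of size $O(1/\chi^2)$. Since the left portion of the orbit is a bounded arc of hyperbolic motion around $Q_1$, the mean anomaly $\ell_4$ (now with $Q_1$ as focus) stays bounded, so $\partial Q_4/\partial(G_4,g_4)=O(1)$ and the chain rule gives $(\partial Q_4/\partial(G_4,g_4))(\partial V_L/\partial Q_4)=O(1/\chi^2)$.

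In the right region I treat each summand of $U_R$ and $V_R$ separately. The far-center pieces $-1/|Q_3+(\chi,0)|$ and $-1/|Q_4+(\chi,0)|$ contribute gradients of size $O(1/\chi^2)$ to $\partial U_R/\partial Q_3$ and $\partial V_R/\partial Q_4$ since $|Q_3+(\chi,0)|\ge\chi-2$ and $|Q_4+(\chi,0)|\ge\chi/2$ in the right region. The $\mu$-pieces $-\mu Q_3\cdot Q_4/|Q_4|^3+O(\mu/|Q_4|^3)$ are bounded via Lemma~\ref{Lm: position}(b): since $|Q_4|\ge 2$ for $|\ell_4|\le C$ and $|Q_4|\gtrsim L_4^2|\ell_4|$ for $|\ell_4|\ge C$, we get $1/|Q_4|^2=O(1/(\ell_4^2+1))$ and $1/|Q_4|^3=O(1/(|\ell_4|^3+1))$, matching the $Q_3$ and $Q_4$ gradient claims.

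The most delicate step is $(\partial Q_4/\partial(G_4,g_4))(\partial V_R/\partial Q_4)$ coming from the far-center term, where the crude bound $O(1/\chi^2)\cdot O(\ell_4)=O(1/\chi)$ is insufficient because $\ell_4$ may be as large as $O(\chi/L_4^2)$ in the right region. For the $g_4$ direction I use the rotation identity $\partial Q_4/\partial g_4=JQ_4$, which collapses the relevant dot product to $-\chi Q_{4y}/|Q_4+(\chi,0)|^3$, and the a priori bound $|Q_{4y}|\le C$ then gives $O(1/\chi^2)$. For the $G_4$ direction I invoke the Delaunay asymptotics $Q_4\sim L_4^2\ell_4(\cos\theta_\infty,\sin\theta_\infty)$ together with the near-horizontal asymptote assumption $\theta_\infty\approx 0,\pi$ to see that $\partial Q_{4x}/\partial G_4$ stays bounded while only $\partial Q_{4y}/\partial G_4=O(\ell_4)$; pairing these with the $O(1/\chi^2)$ and $O(1/\chi^3)$ components of $\nabla_{Q_4}(-1/|Q_4+(\chi,0)|)$ yields $O(1/\chi^2)$. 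The $\mu Q_3\cdot Q_4/|Q_4|^3$ contribution to the $(G_4,g_4)$-derivative gains an extra factor of $|Q_4|$ over the naive bound from the identity $\partial Q_4/\partial g_4\cdot Q_4=0$, producing the $O(\mu/(\ell_4^2+1))$ term. The principal obstacle is the $G_4$ direction for the far-center piece, where the bare magnitude bound $\partial Q_4/\partial G_4=O(\ell_4)$ from Lemma~\ref{LmQderT} is not sharp enough and one must exploit the specific direction of $\partial Q_4/\partial G_4$ coming from the horizontal-asymptote hypothesis; once this is handled the remaining estimates reduce to routine bookkeeping.
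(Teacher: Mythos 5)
Your right-region analysis is essentially sound, but your left-region argument contains a genuine gap. You claim that ``the left portion of the orbit is a bounded arc of hyperbolic motion around $Q_1$, the mean anomaly $\ell_4$ stays bounded, so $\partial Q_4/\partial(G_4,g_4)=O(1)$.'' This is false: the left piece runs from the section $\{x_4=-\chi/2\}$, where $|Q_4-Q_1|\approx\chi/2$, around perihelion and back, and by \eqref{LmPosEqC} together with \eqref{LBounded} this forces $|\ell_4|$ to range up to order $\chi$ (this is exactly why the paper integrates the left-region equations over an $\ell_4$-interval of length $O(\chi)$ for map (III)). Hence $\partial Q_4/\partial(G_4,g_4)=O(\ell_4)=O(\chi)$ near the section, and your crude product bound only yields $O(\chi)\cdot O(1/\chi^2)=O(1/\chi)$, which is a full factor of $\chi$ short of the claimed $O(1/\chi^2)$. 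This loss is not cosmetic: the $O(1/\chi^2)$ bound is what makes $G_4,g_4$ oscillate by only $O(1/\chi)$ over the left piece (Lemma \ref{Lm: tilt}(b)), so an $O(1/\chi)$ bound would not suffice downstream. The correct argument in the left region is the same cancellation you used on the right, now in $Q_1$-centered Delaunay variables: write $\partial (Q_4-Q_1)/\partial G_4=(\ba,\bb)$; since $\partial Q/\partial G\cdot Q=O(\ell_4)$ (Lemma \ref{LM: 1stder}(a)), $\bb y_4=O(\ell_4)$ by \eqref{APriori34}, and $x_4+\chi\asymp|\ell_4|$, one gets $\ba=O(1)$; pairing the $O(1)$ horizontal component with the $O(1/\chi^2)$ horizontal part of $\partial V_L/\partial Q_4$ and the $O(\ell_4)$ vertical component with its $O(1/\chi^3)$ vertical part (both from $|y_4|,|y_3|\le C$ and distances $\gtrsim\chi/2$) gives $O(1/\chi^2)$; the $g_4$-derivative is handled by $\partial(Q_4-Q_1)/\partial g_4=J(Q_4-Q_1)=(-y_4,\,x_4+\chi)$. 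This is what the paper's ``other derivatives are similar'' refers to.

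On the right region, your treatment is correct but takes a slightly different route from the paper's: for the $G_4$-derivative you invoke the directional asymptotics of Lemma \ref{LM: 1stder}(b), which requires the tilt condition $|g_4\mp\arctan(G_4/L_4)|\le C/|\ell_4|$, whereas the paper avoids this by deducing boundedness of the $x$-component of $\partial Q_4/\partial G_4$ purely from $\partial Q_4/\partial G_4\cdot Q_4=O(\ell_4)$ and $x_4\asymp\ell_4$. Note that the tilt condition is \emph{not} among the hypotheses of Lemma \ref{LmQderT}, and you cannot simply cite Lemma \ref{Lm: tilt} for it, since that lemma is proved using Lemma \ref{Lm: orderham}, which in turn rests on the present gradient estimates; to use your route non-circularly you must derive the tilt bound pointwise from \eqref{APriori34} and \eqref{eq: Q4} (which is possible: $|y_4|\le C$ with $\cosh u,|\sinh u|\asymp|\ell_4|$ forces $\tan g_4=\mathrm{sign}(u)\,G_4/L_4+O(1/|\ell_4|)$). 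Your $g_4$-derivative argument via $\partial Q_4/\partial g_4=JQ_4$ is clean and agrees with the paper. Finally, your remark that the $\mu$-term needs an extra gain is unnecessary: the crude bound $O(\ell_4)\cdot O(\mu/|\ell_4|^3)=O(\mu/(\ell_4^2+1))$ already suffices, which is exactly how the paper handles it.
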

\begin{proof}
The estimates for the $\frac{\partial }{\partial Q_{3,4}}$ terms are straightforward. Indeed, we only need to use the fact $\left|\frac{d}{dx}\frac{1}{|x|^k}\right|=\frac{k}{|x|^{k+1}}$ together with the estimates in Lemma \ref{Lm: position}.

The estimates of all $\frac{\partial }{\partial (G_4,g_4)}$ terms are similar. We consider for instance
$\frac{\partial Q_4}{\partial G_4}\frac{\partial V_R}{\partial Q_4}.$
We have
%We need to use the fact that $\dfrac{\partial Q_4}{\partial g_4}\cdot Q_4=0$ and Lemma~\ref{LM: 1stder} in Appendxi~\ref{subsubsection: 1stderivative}. For instance let us consider the term
\begin{equation}
\label{DotG}
\dfrac{\partial Q_4}{\partial G_4}\dfrac{\partial V_R}{\partial Q_4}=\dfrac{\partial Q_4}{\partial G_4}\dfrac{Q_4+(\chi,0)}{|Q_4+(\chi,0)|^3}
+O\left(\mu \left|\dfrac{\partial Q_4}{\partial G_4}\right| |Q_4|^{-3}\right).
\end{equation}
The second term here is $O(\mu/(\ell_4^2+1))$ due to \eqref{LmPosEqB} and Lemma~\ref{LM: 1stder}(a).
To handle the first term let  $\frac{\partial Q_4}{\partial G_4}=(\ba, \bb),$
$Q_4=(x,y).$ Note that equations \eqref{eq: delaunay4}, \eqref{eq: hypul}, \eqref{APriori34}, \eqref{LmPosEqB},
and \eqref{LBounded} show
that $x,\ell_4$ are all comparable in the sense that the ratios between any two of these qualities are bounded from above and below.
On the other hand Lemma \ref{LM: 1stder}(a) tells us that
$\ba x+\bb y=O(\ell_4). $ Since $\bb y=O(\bb)=O(\ell_4)$ we conclude that $\ba x=O(\ell_4)$ and thus $\ba=O(1).$
Thus the first term in \eqref{DotG} is
$\frac{\frac{\partial Q_4}{\partial G}\cdot Q_4 +\ba \chi}{|Q_4+(\chi,0)|^3}. $
The numerator here is $O(\chi)$ while the denominator is at least $(\chi/2)^3.$
This completes the estimate of $\frac{\partial Q_4}{\partial G_4}\frac{\partial V_R}{\partial Q_4}.$
Other derivatives are similar.
\end{proof}

Plugging the above estimates into \eqref{eq: 6 equations}
we obtain the following estimate of the Hamiltonian equations.

\begin{Lm}
Under the assumptions of Lemma \ref{LmQderT} we have the following estimates on the RHS of \eqref{eq: 6 equations} as $1/\chi\ll\mu\to 0$.
\begin{itemize}
\item[(a)] When $-\frac{\chi}{2}\leq x_4 \leq -2$
we have
\begin{equation}
\begin{aligned}
&\dfrac{dL_3}{d\ell_4},\dfrac{dG_3}{d\ell_4},\dfrac{dg_3}{d\ell_4},\dfrac{dG_4}{d\ell_4},\dfrac{dg_4}{d\ell_4}=O\left(\dfrac{1}{\chi^2}+\dfrac{\mu}{\ell_4^2+1}\right),\quad
\dfrac{d\ell_3}{d\ell_4}=-1+O(\mu).\\
\end{aligned}\nonumber
\end{equation}
\item[(b)] When $Q_4$ is moving to the left of the section $\left\{x_4=-\chi/2\right\}$, we have
\begin{equation}
\begin{aligned}
&\dfrac{dL_3}{d\ell_4},\dfrac{dG_3}{d\ell_4},\dfrac{dg_3}{d\ell_4},\dfrac{dG_4}{d\ell_4},\dfrac{dg_4}{d\ell_4}=O\left(\dfrac{1}{\chi^2}\right),\quad
\dfrac{d\ell_3}{d\ell_4}=-1+O\left(\frac{1}{\chi}\right).\\
\end{aligned}\nonumber
\end{equation}\label{Lm: orderham}
\end{itemize}
\end{Lm}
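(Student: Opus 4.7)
My plan is to obtain both estimates by substituting the position bounds (Lemmas \ref{Lm: position} and \ref{LmQderT}), the potential bounds (Lemma \ref{Lm: potential}), and the gradient bounds (Lemma \ref{Lm: gradient}) directly into the Hamiltonian equations \eqref{eq: 6 equations}. Each right-hand side there has a common structure: an outer factor $(kL_3^3+W)$, a bilinear pairing of $\partial Q_{3,4}/\partial(\cdot)$ with $\partial U/\partial Q_3$ or $\partial V/\partial Q_4$, a multiplicative correction $(1 + (kL_3^3+W)\,\partial Q_4/\partial L_4 \cdot \partial V/\partial Q_4)$, plus the trailing error $O(\mu/|Q_4|^3 + 1/\chi^2)$.

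First I would verify that the two scalar factors are harmless. The prefactor $kL_3^3+W$ is $O(1)$ by Lemma \ref{Lm: potential} together with the bound $1/C \leq L_3 \leq C$. The correction factor is the only mild subtlety: by Lemma \ref{LmQderT} $\partial Q_4/\partial L_4 = O(\ell_4)$, but Lemma \ref{Lm: position}(b) bounds $|\ell_4|$ by $O(\chi)$ throughout both regions, while the relevant component of $\partial V/\partial Q_4$ decays like $1/\chi^2$ by Lemma \ref{Lm: gradient}. Hence the product is $O(1/\chi)$ and the entire correction factor is $1 + O(1/\chi)$, so it can be absorbed.

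With these two outer factors absorbed, each derivative inherits the estimate of its bilinear pairing. For the $L_3, G_3, g_3$ equations the pairing is $\partial Q_3/\partial(\cdot) \cdot \partial U/\partial Q_3$, and since $|\partial Q_3/\partial X| = O(1)$ by Lemma \ref{Lm: position}(a), Lemma \ref{Lm: gradient} gives $O(1/\chi^2 + \mu/(\ell_4^2+1))$ on the right and $O(1/\chi^2)$ on the left. For the $G_4, g_4$ equations I would use the combined bound $\partial Q_4/\partial(G_4, g_4) \cdot \partial V/\partial Q_4$ that was singled out in Lemma \ref{Lm: gradient} with precisely the same rates. The $\ell_3$ equation is the exception: the term $1/L_3^3 = O(1)$ dominates the gradient contribution, yielding $d\ell_3/d\ell_4 = O(1)$ with no decay in $\chi$ or $\mu$.

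The trailing error $O(\mu/|Q_4|^3 + 1/\chi^2)$ is immediately absorbed: in case (a), Lemma \ref{Lm: position}(b) gives $|Q_4|^{-3} = O(1/(|\ell_4|^3+1))$, which is stronger than $\mu/(\ell_4^2+1)$, and in case (b) the bound \eqref{LmPosEqC} together with $|Q_4 - Q_1| \geq \chi/2$ makes this term fit inside $O(1/\chi^2)$. I expect no step to be a serious obstacle; the main point requiring attention is the boundedness of the correction factor, since it is the only place where the linear growth of $\partial Q_4/\partial L_4$ in $\ell_4$ meets the a priori control on the range of $\ell_4$ coming from the positional estimates.
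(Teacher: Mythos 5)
Your proposal is correct and is essentially the paper's own argument: the paper proves this lemma simply by plugging the position, potential and gradient estimates (Lemmas \ref{Lm: position}, \ref{LmQderT}, \ref{Lm: potential}, \ref{Lm: gradient}) into \eqref{eq: 6 equations}, exactly as you do, with the only point needing care being the correction factor $1+(kL_3^3+W)\frac{\partial Q_4}{\partial L_4}\cdot\frac{\partial V}{\partial Q_4}=1+o(1)$, which you handle. One small slip in case (b): the quantity bounded below by $\chi/2$ is $|Q_4|=|Q_4-Q_2|$ (since $x_4\leq -\chi/2$), not $|Q_4-Q_1|$, which can be $O(1)$ when $Q_4$ winds around $Q_1$; the conclusion $\mu/|Q_4|^3=O(\mu/\chi^3)\subset O(1/\chi^2)$ is unaffected.
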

\begin{proof}
The proof is simply an application of Lemma \ref{Lm: gradient}. We only remark that in the left case, the orbit is very close to collision and the hyperbolic Delaunay coordinates becomes singular. We use Lemma \ref{LmSmallu} to show that the derivatives of the Cartesian coordinates with respect to $L_4,G_4,g_4$ are bounded. Moreover, since we treat $\ell_4$ as the new time, we never take the $\ell_4$ derivative in the RHS of the Hamiltonian equations, hence the dependence on $\ell_4$ is continuous. 
\end{proof}
%\subsubsection{Estimates of the second order derivatives of the potentials}
In Section \ref{section: variational} we will need the following bounds on the second
derivatives to estimate the variational equations.

\begin{Lm}Under the assumptions of Lemma \ref{LmQderT}
we have the following estimates for the second derivatives.
\begin{equation}
\begin{aligned}
&\dfrac{\partial^2 U_R}{\partial Q_3^2}=O\left(\dfrac{1}{\chi^3}+\dfrac{\mu}{\ell_4^2+1}\right),\ \dfrac{\partial^2 V_R}{\partial Q_4^2}=O\left(\dfrac{1}{\chi^3}+\dfrac{\mu}{\ell_4^4+1}\right),\\
&\dfrac{\partial^2 (U_R,V_R)}{\partial Q_3\partial Q_4}=O\left(\dfrac{\mu}{|\ell_4|^3+1}\right),\\
&\dfrac{\partial^2 U_L}{\partial Q_3^2}=O\left(\dfrac{1}{\chi^3}\right),\quad \dfrac{\partial^2 V_L}{\partial Q_4^2}=O\left(\dfrac{1}{\chi^3}\right),\quad \dfrac{\partial^2 (U_L,V_L)}{\partial Q_3\partial Q_4}=O\left(\dfrac{1}{\chi^3}\right).\\
\end{aligned}
\end{equation}\label{Lm: 2ndder}
\end{Lm}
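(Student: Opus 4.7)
\textbf{Proof proposal for Lemma \ref{Lm: 2ndder}.}
The plan is to differentiate the explicit expressions for $U_{R,L}, V_{R,L}$ given in \eqref{eq: V}--\eqref{eq: U} twice, and bound each resulting term using the a priori position estimates of Lemma~\ref{Lm: position} (which we inherit from the hypotheses of Lemma~\ref{LmQderT}). The general rule of thumb is that $\partial^2_Q|Q|^{-k}=O(|Q|^{-k-2})$, so the task reduces to identifying the relevant lower bound on the denominator in each term.

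First I would handle the right--side potentials. For $\partial^2 U_R/\partial Q_3^2$, the Coulomb term $-1/|Q_3+(\chi,0)|$ has $|Q_3+(\chi,0)|\geq \chi/2$ (since $|Q_3|\leq 2-\delta$), producing $O(1/\chi^3)$. The dipole term $-\mu Q_3\cdot Q_4/|Q_4|^3$ is linear in $Q_3$ and so contributes zero; the remainder $O(\mu|Q_3|^2/|Q_4|^3)$ is quadratic in $Q_3$ and yields a second derivative of order $\mu/|Q_4|^3\leq C\mu/(|\ell_4|^3+1)$, well within the claimed $O(\mu/(\ell_4^2+1))$. For $\partial^2 V_R/\partial Q_4^2$, the piece $-1/|Q_4+(\chi,0)|$ gives $O(1/\chi^3)$ in the same way. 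The key point is that $-\mu Q_3\cdot Q_4/|Q_4|^3$ is homogeneous of degree $-2$ in $Q_4$, so two $Q_4$-derivatives produce a term of homogeneity degree $-4$, i.e.\ $O(\mu/|Q_4|^4)=O(\mu/(\ell_4^4+1))$ via \eqref{LmPosEqB}; the remainder behaves even better. For the mixed derivative $\partial^2_{Q_3Q_4}(U_R,V_R)$, the Coulomb pieces contribute nothing since they depend on only one of $Q_3,Q_4$; differentiating $-\mu Q_3\cdot Q_4/|Q_4|^3$ once in $Q_3$ and once in $Q_4$ drops two factors of $|Q_4|$ relative to the original $O(\mu/|Q_4|^2)$, giving $O(\mu/|Q_4|^3)=O(\mu/(|\ell_4|^3+1))$ as claimed.

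For the left--side potentials the estimates are cleaner because $Q_4$ is far from both centers. On $\{x_4<-\chi/2\}$ we have $|Q_3+(\chi,0)|\geq \chi/2$, $|Q_4|\geq \chi/2$, and $|Q_3-Q_4|\geq |x_4|-|Q_3|\geq \chi/3$ for $\chi$ large. Applying $\partial^2_Q|Q|^{-1}=O(|Q|^{-3})$ to each of $-1/|Q_3+(\chi,0)|, -\mu/|Q_3-Q_4|, -1/|Q_4|$ gives the uniform $O(1/\chi^3)$ bound in all three claims (pure $Q_3$, pure $Q_4$, and mixed), with the factor of $\mu$ in the interaction term only helping.

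The main thing to get right is the improved $\mu/(\ell_4^4+1)$ decay in $\partial^2 V_R/\partial Q_4^2$; the naive estimate ``one derivative of $Q_3\cdot Q_4/|Q_4|^3$ costs a factor $1/|Q_4|$'' gives only $\mu/|Q_4|^3$, which is too weak. The resolution is to invoke Euler's homogeneous function identity (or just directly compute $\partial_{Q_4}(Q_3/|Q_4|^3-3(Q_3\cdot Q_4)Q_4/|Q_4|^5)$) to see that both derivatives in $Q_4$ genuinely decrease the homogeneity by one each. Beyond that subtlety the proof is a routine case-by-case application of Lemma~\ref{Lm: position}.
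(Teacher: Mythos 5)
Your proof is correct and is exactly the ``direct computation'' the paper omits for this lemma: differentiate the explicit expressions \eqref{eq: V}--\eqref{eq: U} twice and bound each resulting term via the position estimates of Lemma \ref{Lm: position}, the only delicate point being (as you note) that both $Q_4$-derivatives of the dipole term $\mu Q_3\cdot Q_4/|Q_4|^3$ genuinely lower the homogeneity, which yields the improved $O\left(\mu/(\ell_4^4+1)\right)$ bound for $\partial^2 V_R/\partial Q_4^2$. One small wording slip worth fixing: for the mixed derivative, differentiating once in $Q_3$ and once in $Q_4$ lowers the original $O(\mu/|Q_4|^2)$ size by one power of $|Q_4|$ (not two), which is precisely the claimed $O\left(\mu/(|\ell_4|^3+1)\right)$.
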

We omit the proof since it is again a direct computation.
\subsection{Proof of Lemma~\ref{Lm: position}}
\label{SSPrLmPos}
\begin{proof}[Proof of Lemma~\ref{Lm: position}.]
Let $\tau$ be the maximal time interval such that
\begin{equation}\label{eq: assump}
\frac{3}{4}|L_3(\ell_4^*)|\leq |L_3|\leq \frac{4}{3}|L_3(\ell_4^*)|,\quad
\frac{3}{4}|G_i(\ell_4^*)|\leq |G_i(\ell_4)|\leq \frac{4}{3}|G_i(\ell_4^*)|,\ i=3,4,
\end{equation}
on $[0, \tau]$ where $\ell_4^*$ is the value $\ell_4$ restricted on $\{x_4=-2\}$.
%This is always correct if the time is short, i.e. $|\ell_4-\ell_4^*|$ is small due to the continuity of the Hamiltonian flow.
\eqref{eq: assump} implies that
$e_4=\sqrt{1+G^2_4/L^2_4}$ is bounded. We always have we have $|Q_4|\geq 2$ since $Q_4$ is to the left of the section $\{x_4=-2\}$.
Therefore \eqref{eq: W} implies that
$L_4=L_3+O(\mu)$ in the right case and $L_4=L_3+O(1/\chi)$ in the left case.
Now formula \eqref{eq: delaunay4} and Lemma~\ref{Lm: simplify} allow us replace $\sinh u,\cosh u$ by
$(1+o(1))\frac{\ell_4}{e_4}$ as $|\ell_4|\to\infty.$
\begin{equation}\label{EqQ4Linear}\begin{aligned}|Q_4|&=L_4\sqrt{L_4^2(\cosh u-e_4)^2+G_4^2\sinh^2 u}\\
&=L_4\sqrt{L_4^2\left(\cosh^2 u-2e_4\cosh u + e_4^2\right)+(L_4^2e_4^2-L_4^2)\sinh^2 u}\\
&=L^2_4\sqrt{1-2e_4\cosh u + e_4^2+e_4^2\sinh^2 u}=L_4^2(e_4\cosh u-1)
%&= L_4\sqrt{(L_4^2+G_4^2)(1+o(1))^2\dfrac{\ell_4^2}{e_4^2}}=L_4^2(1+o(1))|\ell_4|.
\end{aligned}
\end{equation}
This proves estimate \eqref{LmPosEqB} for $t\leq \min(\tau, \brtau)$ where $\brtau$ is the first time then $x_4$
reaches $-\frac{\chi}{2}.$
Thus for $t\leq \min(\tau, \brtau)$ the assumptions of Lemma \ref{Lm: orderham} are satisfied and hence
\begin{equation}
\label{DerStrip}
\dfrac{dL_3}{d\ell_4},\dfrac{dG_4}{d\ell_4},\dfrac{dG_3}{d\ell_4}=O\left(\dfrac{1}{\chi^2}+\dfrac{\mu}{|Q_4-Q_3|^2}\right)
\end{equation}
(note that to prove the estimates in Lemma \ref{Lm: orderham} in the right case we do not need the assumption \eqref{LmPosEqC}).
If we integrate  \eqref{DerStrip} w.r.t. $\ell_4$ on the interval of size $O(\chi)$
we find that the oscillations of $L_3,G_4,G_3$ are $O(\mu).$
%for $\ell_4\in [\ell_4^*,\ell_4^*+\dt)$ and $\dt$ small since $|Q_4-Q_3|$ is bounded away from zero.
%As a result our new assumption \eqref{eq: assump} always hold and we integrate over time of order $\chi$ up to the section $\{x_4=-\chi/2\}$ and get that the oscillation of $L_3,G_4,G_3$ is $O(\mu)$ in the right case.
Therefore $\brtau<\tau$ and we obtain the estimates of \eqref{LmPosEqB} up to the time $\brtau.$

The analysis of the cases when $Q_4$ is to the left of the section $\{x_4=-\chi/2\}$
and then it travels back from $\{x_4=-\chi/2\}$ to $\{x_4=-2\}$ is similar once we establish the bounds on the angular momentum at the
beginning of the corresponding pieces of the orbit. Let us show, for example, that at the moment when the orbit hits
$\{x_4=-\frac{\chi}{2}\}$
for the first time, the angular momentum of $Q_4$ w.r.t. $Q_1$ is $O(1).$ Indeed we have already
established that $G_{4R}=-\frac{\chi v_{4y}}{2}-y v_{4x}=O(1).$ Also \eqref{eq: assump} shows that $v=O(1)$ and so
\eqref{APriori34} implies that $y v_{4x}=O(1).$ Accordingly 
$$\chi v_{4y}=-G_{4R}-y v_{4y}=O(1)$$ and hence
$G_{4L}=G_{4R}+\chi v_{4y}=O(1)$ as claimed. The argument for the second time the orbit hits
$\{x_4=-\frac{\chi}{2}\}$ is the same. This completes the proof of part (b).

%When the orbit enters the left of the section $\{x_4=-\chi/2\}$, we have $L_3=1+O(\mu)$. We still assume \eqref{eq: assump} first and then drop it as above to show \eqref{LmPosEqC}.

To show part (a), we notice $\frac{\partial Q_3}{\partial X}$ depends on $\ell_3,g_3$ periodically according to equation \eqref{DelEll}. So part $(a)$ follows since we have already obtained bounds on  $L_3$ and $G_3$.
\end{proof}

The next lemma gives more information about the $Q_4$ part of the orbit than Lemma~\ref{Lm: position}. It justifies the assumptions of Lemma \ref{LM: 1stder}.
\begin{Lm}\label{Lm: tilt}
Under the hypothesis of Lemma~\ref{LmQderT}, we have as $1/\chi\ll\mu\to 0$:
\begin{itemize}
\item[(a)] when $Q_4$ is moving to the right of the section $\left\{x=-\chi/2\right\}$, we have
\[\tan g_4=-\sign(u) \dfrac{G_4}{L_4}+O\left(\dfrac{\mu}{|\ell_4|+1}+\dfrac{1}{\chi}\right).\]
\item[(b)] when $Q_4$ is moving to the left of the section $\left\{x=-\chi/2\right\}$, then \[G_4,g_4=O(1/\chi).\]
\end{itemize}
\end{Lm}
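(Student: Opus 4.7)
My plan is to deduce both parts from the explicit Delaunay parametrization of $Q_4$ (collected in Appendix \ref{section: appendix}) combined with the a priori bounds already established in Lemmas \ref{Lm: position} through \ref{Lm: orderham}.

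For part (a), I would start from the right-focus Delaunay expression for the transverse coordinate,
\[
 y_4 = L_4 \bigl[L_4(e_4 - \cosh u)\sin g_4 + G_4 \sinh u \cos g_4\bigr],
\]
and expand it for large $|u|$. From the hyperbolic Kepler equation $\ell_4 = e_4 \sinh u - u$ one has $\cosh u,\,|\sinh u| \asymp |\ell_4|/e_4$ and $\sinh u/\cosh u = \sign(u) + O(\cosh^{-2} u)$, so the leading-order form becomes
\[
 y_4 = L_4 \cosh u \bigl(-L_4 \sin g_4 + \sign(u)\, G_4 \cos g_4\bigr) + O(1).
\]
At the boundary $\{x_4 = -\chi/2\}$ we have $|\ell_4|\asymp\chi$ while $|y_4|\le C$, which forces the parenthesis to be of size $O(1/\chi)$, i.e.\ $\tan g_4 = \sign(u)\, G_4/L_4 + O(1/\chi)$ at the boundary. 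To transport this identity into the interior of the right region I would use Lemma \ref{Lm: orderham}: between the boundary $|\ell|\sim\chi$ and the current value $|\ell_4|$, the drift of $(L_4, G_4, g_4)$ is bounded by $\int \mu/(\ell^2+1)\,d\ell + \int 1/\chi^2\,d\ell = O(\mu/(|\ell_4|+1) + 1/\chi)$, which after dividing by $L_4\cos g_4$ yields exactly the error stated in the lemma.

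For part (b), the particle enters and exits the left region at $\{x_4 = -\chi/2\}$ with $|y_4|\le C$, so in the $Q_1$-centered description the position vector $Q_4 - Q_1$ has length $\chi/2 + O(1)$ and makes an angle $O(1/\chi)$ with the positive $x$-axis at both endpoints. Since on a hyperbolic orbit the asymptotic position directions from the focus are $g_4 \pm \arccos(-1/e_4)$, requiring both to agree modulo $2\pi$ with the horizontal up to $O(1/\chi)$ simultaneously forces $\arccos(-1/e_4) = \pi + O(1/\chi)$ and $g_4 = O(1/\chi)$. Unpacking $e_4^2 = 1 + G_4^2/L_4^2$ converts the first condition into $G_4 = O(1/\chi)$, and Lemma \ref{Lm: orderham}, whose drift bound in the left case is only $O(1/\chi^2)$ per unit of $\ell_4$, propagates both estimates through the entire left flight.

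The hard part will be keeping the bookkeeping straight: carrying the two independent error sources --- the $\mu$-correction from the $Q_3$-interaction, which decays like $|\ell_4|^{-2}$, and the $1/\chi$-correction from the remote fixed center --- through the expansion, and correctly matching them against the sign conventions of the hyperbolic Delaunay parametrization in Appendix \ref{section: appendix}. I expect no new analytic input beyond what has already powered Lemmas \ref{Lm: position}--\ref{Lm: orderham}.
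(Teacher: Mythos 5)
Your proposal is correct and follows essentially the same route as the paper: in both parts the key input is that $|y_4|\le C$ at a crossing of $\{x_4=-\chi/2\}$, where $|\ell_4|\asymp\chi$, forces the Delaunay relation $\tan g_4=\sign(u)\,G_4/L_4+O(1/\chi)$ there, and this is then transported through the region by the drift bounds of Lemma~\ref{Lm: orderham} (giving $O(\mu/(|\ell_4|+1)+1/\chi)$ on the right and $O(1/\chi)$ on the left), with the left case pinned down by using \emph{both} crossings. Your part (b) merely rephrases the paper's pair of relations $G^*\cos g^*\pm L^*\sin g^*=O(1/\chi)$ geometrically through near-horizontal asymptote directions (note $\arccos(-1/e_4)=\pi-O(1/\chi)$, not $\pi+O(1/\chi)$, and in part (a) one should add the one-line remark that the boundary relation together with $1/C\le L_4$, $|G_4|\le C$ keeps $\cos g_4$ bounded away from zero before dividing by it), but these are cosmetic differences, not a different argument.
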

\begin{proof}
We prove part (b) first. From equation \eqref{eq: Q4} we see that if $\ell_4$ is of order $\chi$ and $y=O(1)$ then
$ G_4 \cos g_4+\sign(u) L_4 \sin g_4=O(1/\chi). $
Integrating the estimates of Lemma \ref{Lm: orderham}(b) we see
that during the time $x_4\leq -\chi/2$ we have
\begin{equation}
\label{SmallOscB}
G_4=G^*+O(1/\chi), \quad L_4=L^*+O(1/\chi), \quad
g_4=g^*+O(1/\chi)
\end{equation}
where $(L^*, G^*, g^*)$
are the orbit parameters of $Q_4$ then it first hits $\{x_4=-\chi/2\}.$
It follows that both
$$ G^* \cos g^*+L^* \sin g^*=O(1/\chi), \quad\text{and}\quad
G^* \cos g^*-L^* \sin g^*=O(1/\chi) .$$
Since $L^*$ is not too small this is only possible if $G^*=O(1/\chi),$
$g^*=O(1/\chi).$ Now part (b) follows from \eqref{SmallOscB}.

The proof of part (a) is similar. Consider for example the case when $Q_4$ moves to the right.
Now \eqref{SmallOscB} has to be replaced by
\begin{equation}
\label{SmallOscA}
(G_4,L_4,g_4)=(G^*,L^*,g^*)+O\left(\dfrac{\mu}{|\ell_4|+1}+\dfrac{1}{\chi}\right), 
\end{equation}
(since we use part (a) of Lemma \ref{Lm: orderham} rather than part (b)). As before we have
\[G^* \cos g^*- L^*\sin g^*=O(1/\chi). \]
Since $\cos g^*$ can not be too small (since otherwise
$G^* \cos g^*- L^*\sin g^*\approx L^* \sin g$ would not be small) we can divide
the last equation by $L^* \cos g^*$ to get $$\tan g^*=-\dfrac{G^*}{L^*}+O\left(\dfrac{1}{\chi}\right).$$
Now part (a) follows from \eqref{SmallOscA}.
\end{proof}

\subsection{Proof of Lemma~\ref{LmGMC0}}
We begin by demonstrating that the orbits satisfying the conditions of Lemma \ref{LmGMC0}
satisfy the assumptions of Lemma \ref{Lm: orderham}.
%\begin{Lm}\label{Lm: strip}
%Suppose we have an orbit in the initial and final moments $|y_4|<2$ when restricted on the section $\{x_4=-2\},$ then $|y_4|<C$ holds for all time for some constant $C$ independent of $\chi$.
%\end{Lm}

\begin{Lm}
\label{Lm: strip}
\begin{itemize}
\item[(a)] Given $D, C$ there exist constants $\hC, \mu_0$ such that for $\mu\leq\mu_0$ the following holds.
Consider a time interval $[0,T]$ and an orbit satisfying the following conditions
\begin{itemize}
\item[(i)] $x_4(t)\in (-\chi-1, -2)$ for $t\in (0,T),$ $x_4(0)=-2,$ $x_4(T)=-\chi.$
\item[(ii)] $y_4(0)\leq C,$ $y_4(T)\leq C.$
\item[(iii)] At time $0$, $Q_3$ moves on an elliptic orbit which is completely contained in $\{x_3\geq -(2-D)\}.$
%\item[(iv)] At time $0$, we have $1/C\leq |E_3| \leq C,$ $|G_3|\leq C,$ $|G_4|\leq C.$
\end{itemize}
Then $|y_4(t)|\leq \hC$ for all $t\in [0, T].$
\item[(b)] The result of part (a) remains valid if $\mathrm{(i)}$ is replaced by\\
\quad $(\tilde{\mathrm{i}})$ $x_4(t)< -2$ for $t\in (0,T),$ $x_4(0)=x_4(T)=-2.$
\end{itemize}
\end{Lm}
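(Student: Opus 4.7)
The plan is a continuity (bootstrap) argument. Fix a large constant $\hat{C}$, to be chosen only in terms of $C$ and $\delta$, and let $T^{\ast} \in (0, T]$ denote the largest $\tau$ such that $|y_4(t)| \le \hat{C}$ and $|Q_3(t)| \le 2 - \delta/2$ on $[0, \tau]$. On $[0, T^{\ast}]$ the hypotheses~\eqref{APriori34} of Lemma~\ref{Lm: position} hold, and the boundedness of the Delaunay parameters~\eqref{LBounded} follows from energy conservation together with $|Q_4(0)| \le \sqrt{4+C^2}$ and assumption~(iii). Consequently Lemmas~\ref{Lm: orderham} and~\ref{Lm: tilt} apply throughout $[0, T^{\ast}]$, with implicit constants depending only on $C$ and $\delta$ (not on $\hat{C}$), since the initial orbit parameters of $Q_4$ are controlled by $|y_4(0)| \le C$ and drift only by $O(\mu + 1/\chi)$. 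The goal is to show that $|y_4| < \hat{C}$ and $|Q_3| < 2-\delta/2$ \emph{strictly} on $[0, T^{\ast}]$, so that maximality forces $T^{\ast} = T$.

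The key task is to bound $|y_4|$ uniformly on $[0, T^{\ast}]$. I split into the right piece $\{-\chi/2 \le x_4 \le -2\}$ and the left piece $\{x_4 \le -\chi/2\}$. On the right piece I express $y_4$ through the explicit Delaunay formula~\eqref{eq: delaunay4} (focused at $Q_2$) and substitute the asymptotic relation $\tan g_4 = \sign(u)\, G_4/L_4 + O(\mu/(|\ell_4|+1) + 1/\chi)$ from Lemma~\ref{Lm: tilt}(a); the leading $\cosh u$ and $\sinh u$ contributions then cancel, leaving $|y_4| = O(G_4) + O(|\ell_4|/\chi + \mu) = O(1)$ uniformly, since $|\ell_4| = O(\chi)$ on this piece. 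On the left piece, Lemma~\ref{Lm: tilt}(b) directly yields $G_{4L},\, g_{4L} = O(1/\chi)$ for the Delaunay variables focused at $Q_1$, so the Kepler orbit about $Q_1$ is a very narrow hyperbola almost aligned with the $x$-axis; applying~\eqref{eq: delaunay4} in the left frame bounds $|y_4|$ by $O(1)$ along the entire left piece, even across the closest approach to $Q_1$. In parallel, integration of Lemma~\ref{Lm: orderham} in $\ell_4$ over intervals of length $O(\chi)$ shows that $L_3, G_3, g_3$ drift by only $O(\mu + 1/\chi)$, so the osculating ellipse of $Q_3$ indeed stays strictly inside $\{|Q_3| \le 2-\delta/2\}$ when $\mu \le \mu_0$ is small and $\chi$ is large. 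Choosing $\hat{C}$ larger than the explicit $O(1)$ bound above then closes the bootstrap and gives part~(a).

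The main obstacle I anticipate is controlling $|y_4|$ during the close approach to $Q_1$ near $x_4 = -\chi$, where the velocity of $Q_4$ can rotate through a large angle. It is essential that we have already switched to the left Delaunay frame at $\{x_4 = -\chi/2\}$, because then Lemma~\ref{Lm: tilt}(b) provides the very small $G_{4L}$ that keeps the hyperbola narrow throughout the encounter. A secondary technical point is verifying, after the frame switch, the lower bound $|L_{4L}| \ge 1/C$ required in~\eqref{LBounded}; this follows from energy conservation together with $|Q_4 - Q_1| = \chi/2 + O(1/\chi)$ on the switching surface. Part~(b) is then a direct repetition of the same bootstrap applied to the symmetric trajectory built from a right piece, a left piece, and a returning right piece (separated by two crossings of $\{x_4 = -\chi/2\}$), applying the same estimates to each piece in turn.
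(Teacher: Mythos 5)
Your bootstrap does not close, because the lemmas you invoke to bound $|y_4|$ are themselves proved \emph{from} an a priori bound on $|y_4|$ along the whole segment. Concretely, the tilt relation of Lemma~\ref{Lm: tilt}(a)--(b) is derived in the paper by evaluating \eqref{eq: Q4} at the far section, where $\ell_4\sim\chi$, and using $y_4=O(1)$ \emph{there} to conclude $G_4\cos g_4+\sign(u)L_4\sin g_4=O(1/\chi)$; inside your continuity argument the only available bound at that section is $|y_4|\leq\hC$, so the tilt holds only up to $O(\hC/\chi)$, and feeding this back into the Delaunay formula (where the error is multiplied by $\cosh u\sim\chi$) returns $|y_4|\lesssim\hC$, not $\lesssim C$. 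The constants are genuinely not ``depending only on $C$ and $\delta$'': knowing $|y_4(0)|\leq C$ at $x_4=-2$ controls $L_4(0)$ and $G_4(0)=O(1)$, but it does \emph{not} force $g_4$ to be near $\mp\arctan(G_4/L_4)$, i.e.\ it does not make the asymptote nearly horizontal, and it is exactly this near-horizontality that your claimed cancellation of the $\cosh u,\sinh u$ terms requires. Indeed your argument never uses hypothesis (ii) at time $T$, and without it the statement is false: an orbit leaving $x_4=-2$ with $|y_4(0)|\leq C$, bounded energy and $G_4=O(1)$ but with asymptote at a fixed angle $\tilde\theta$ to the horizontal satisfies (i), (iii) and yet has $|y_4|\sim\chi$ near $x_4=-\chi$. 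The same circularity affects your left-piece step, since Lemma~\ref{Lm: tilt}(b) needs bounded $y_4$ at \emph{both} crossings of $\{x_4=-\chi/2\}$, which is not even available in part (a) where the interval stops at $x_4=-\chi$.

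The missing idea is precisely how the paper exploits the condition at the far end: Sublemma~\ref{KeepDirection} shows that if the outgoing asymptote deviates from horizontal by $\tilde\theta$, the vertical velocity can never be reversed (the vertical force decays like $1/y_4^2$ along the escaping orbit), so $Q_4$ escapes and cannot satisfy $|y_4(T)|\leq C$ (or return to $x_4=-2$ in part (b)). With the direction thus pinned down, the paper does \emph{not} appeal to Lemmas~\ref{Lm: position}, \ref{Lm: orderham} or \ref{Lm: tilt} at all (it cannot, for the reason above); instead it bounds $G_{4L}$ near $Q_1$ directly, uses $\dot G_{4L}=O(1/\chi)$ to carry this back to $t^*$ where $x_4=-\chi/2$, compares $G_{4R}$ and $G_{4L}$ to get $v_{4y}(t^*)=O(1/\chi)$ and hence $y_4(t^*)=O(1)$, and then closes with the integral inequality $|y_4(t)|\leq C_1+C_2\left(\frac1R+\frac1\chi\right)\sup_s|y_4(s)|$ coming from $\ddot y_4=O\left(y_4/(\hat t-s+R)^3\right)$. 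If you want to salvage your write-up, you must (i) incorporate an escape/contradiction argument using $|y_4(T)|\leq C$ to control the asymptote direction, and (ii) replace the appeals to Lemma~\ref{Lm: tilt} by direct force and angular-momentum estimates that do not presuppose the $y$-bound you are proving.
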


\begin{proof}
To prove part (a) we first establish a preliminary estimate showing that $Q_4$ travels roughly in the direction of $Q_1.$

\begin{sublemma}
Given $\tilde\theta>0$ there exists $\mu_0, \chi_0$ such that the following holds for $\mu\leq \mu_0,$
$\chi>\chi_0.$
If the outgoing asymptote satisfies
\begin{equation}
\label{EqMiss}
|\pi-\theta^+_4(0)|>\tilde\theta
\end{equation}
then $Q_4$ escapes from the two center system.
\label{KeepDirection}
\end{sublemma}

\begin{proof}
We consider the case $\theta^+_4(0)<\pi-\tilde\theta,$ the other case is similar. If we disregard the influence of
$Q_1$ and $Q_3$ then $Q_4$ would move on a hyperbolic orbit and its velocity would approach
$(\sqrt{2 E_4(0)} \cos\theta_4^+(0), \sqrt{2 E_4(0)} \sin \theta_4^+(0)).$ Accordingly given $R$ we can find $\brt,$ $\mu_0$
such that uniformly over all orbits satisfying (i)-(iii) and $\theta^+_4(0)<\pi-\tilde\theta$ we have for $\mu\leq \mu_0$
$$ y_4(\brt)>R, \quad v_{4y}(\brt)>0.8\sqrt{E_4(0)} \sin \tilde\theta.$$
Let $\tilde{t}=\inf\{t>\brt: v_{4y}<\frac{\sqrt{E_4(0)}}{2} \sin \tilde\theta\}.$
We shall show that $\tilde{t}=\infty$ which implies the sublemma since for $t\in [\brt, \tilde{t}]$ we have
\begin{equation}
\label{YLinear}
y_4(t)>R+(\tilde t-\brt) \dfrac{\sqrt{E_4}}{2} \sin \tilde\theta.
\end{equation}
To see that $\tilde t=\infty$ note that \eqref{YLinear} implies that
$$ |\dot{v}_{4y}|\leq \dfrac{1}{(R+(\tilde t-\brt) \frac{\sqrt{E_4}}{2} \sin \tilde\theta)^2}$$
and so
$$ |v_{4y}(\tilde t)-v_{4y}(\brt)|\leq \int_0^\infty \dfrac{ds}{(R+s \frac{\sqrt{E_4}}{2} \sin \tilde\theta)^2}=
\dfrac{2}{R \sqrt{E_4} \sin \tilde\theta} .$$
Hence if $R$ is sufficiently large we have
$v_{4y}(\tilde t)\geq \frac{\sqrt{E_4}}{2} \sin\tilde\theta$ which is only possible if $\tilde t=\infty.$
\end{proof}
We now consider the case $|\pi-\theta_4^+|<\tilde\theta.$ Arguing as above we see that given $R,$ we can find for $\mu$ small
enough a time $\brt$ such that
$$ x_4(\bar t)<-R, \quad v_{4x}(\brt)<-0.8\sqrt{E_4(0)} \cos\tilde\theta. $$
Let $\hat t$ be the first time after $\brt$ such that $x_4=-(\chi-R).$ Arguing as in Sublemma \ref{KeepDirection} we see that
for $t\in [\brt, \hat t]$ we have
$|v_{4x}|\geq \frac{\sqrt{E_4(0)}}{2} \cos\tilde\theta.$ Hence the force from $Q_2$ and $Q_3$ is $O(1/t^2)$
and the force from $Q_1$ is $O(1/(\hat t-t)^2).$  Accordingly $v_4$ remains $O(1)$ so the energy of $Q_4$
remains bounded. Next if $|y_4(\hat t)|>R$ then the argument of Sublemma \ref{KeepDirection}
shows that
$y_4(T)>R/2$ giving a contradiction if $R>2C.$ Accordingly we have for $t\in [\hat t, T]$ that
$E_4=O(1),$ $y_4=O(1)$ and $|G_{4L}(\hat t)|=O(1)$. We point out that the $O(1)$'s here are as $\chi\to\infty$ and might depend on $R$. It remains to show that $|y_4(t)|<\hC$ for $t\in [\brt, \hat t].$
To this end let $t^*$ be the first time when $x_4=-\frac{\chi}{2}.$ We first get $E_4=O(1)$ for $t\in [t^*,\hat t]$ since by arguing as in the Sublemma we get the oscillation of $v_4$ is bounded. Next,
we have that $G_{4L}(t^*)=O(1)$
 since $\dot{G}_{4L}=O(1/\chi)$,(this estimate of $\dot G_{4L}=\ddot v_4\times x_4$ does not need any assumption on $G_{4L}$.) On other other hand, we have
$G_{4R}(t^*)=O(1)$ by integrating the equation $\dot G=O(1/\chi)$ with initial condition $G_{4R}(0)=O(1)$ provided by the assumption of the lemma.  Therefore $\chi v_{4y}(t^*)=G_{4R}-G_{4L}=O(1)$ and so $v_{4y}(t^*)=O(1/\chi).$
Since $G_{4L}(t^*)=\left(\dfrac{\chi}{2} v_{4y}-y_{4} v_{4x}\right)(t^*)$ we have $y_4(t^*)=O(1).$
Next for $t\in [t^*, \hat t]$ we have
$$ y_4(t)=y_4(t^*)+v_{4y}(t^*)(t-t^*)+
\int_{t^*}^t \int_{t^*}^u\ddot{y}_4(s) dsdu. $$
Note that
$$\ddot{y}_4(s)=O\left(\dfrac{y_4}{|Q_4-Q_1|^3}\right)=O\left(\dfrac{y_4}{(\hat t-s+R)^3}\right). $$
Combining the last two estimates we get
$$ |y_4(t)|\leq C_1+C_2 \sup_s\{|y(s)|\}\int_{t^*}^t \int_{t^*}^u\dfrac{dsdu}{(\hat t-s+R)^3} \leq C_1+C_2\left(\dfrac{1}{R}+\dfrac{1}{\chi}\right)\sup_s |y_4(s)|. $$
Here $C_1$ might depend on $R$ through the estimates of $y_4(t^*),v_{4y}(t^*)$ but $C_2$ does not. We choose $R$ large enough to get that $|y|$ is bounded on $[t^*, \hat t]$.
The argument for $[\brt, t^*]$ is the same except 
that the force from $Q_3$ is $O\left(\frac{\mu y_4}{|Q_4|^3}\right)$. This completes the proof of part (a).

To prove part (b) we note that if $|y_4(\hat t)|>R^2$ then $Q_4$ escapes by the argument of
Sublemma \ref{KeepDirection}. Hence $|y_4(\hat t)|<R^2.$ This implies (via already established
part (a) of the lemma) that $y$ is uniformly bounded on $[0, \hat t].$ The argument for $[\hat t, T]$
is the same with the roles of $Q_1$ and $Q_2$ interchanged.
\end{proof}

\begin{proof}[Proof of Lemma~\ref{LmGMC0}]
Initially we have $1/C\leq |L_3|\leq C,\ |G_3|,|G_4|\leq C$ for some constant $C>1$. We assume \eqref{eq: assump} from time $0$ to some time $\tau$. 
Due to the previous lemma, we can use Lemma~\ref{Lm: orderham} to get the estimates on the time interval $[0,\tau]$
\[\dfrac{dL_3}{d\ell_4},\dfrac{dG_3}{d\ell_4},\dfrac{dg_3}{d\ell_4},\dfrac{dG_4}{d\ell_4},\dfrac{dg_4}{d\ell_4}=O\left(\dfrac{1}{\chi^2}+\dfrac{\mu}{\ell_4^2+1}\right).\]
We integrate the equations to get $O(\mu)$ oscillations of $L_3,G_3,G_4$ so that $\tau$ can be extended to as large as $\chi$.
For part (a) of Lemma \ref{LmGMC0}, we integrate the equations of $\frac{dL_3}{d\ell_4},\frac{dG_3}{d\ell_4},\frac{dg_3}{d\ell_4},$ over time of order $\chi$ as $Q_4$ first moves away from $Q_2$ and then comes back. Therefore we get
$$O\left(2 \int_2^{\chi}\left[\dfrac{\mu }{\ell_4^2+1}+\dfrac{1}{\chi^2}\right]d\ell_4\right)=O(\mu)$$ estimate for the change of
$L_3, G_3$ and $g_3$ proving part (a).

Part (b) of Lemma \ref{LmGMC0} follows from Lemma \ref{Lm: tilt}.

For part (c), applying the bounds $1/C\leq |L_3|\leq C,\ |G_3|,|G_4|\leq C$ to equation \eqref{eq: W},  we get $1/C'<|L_4|<C'$ for some constant $C'>1$. Next, when restricted to the section $\{x_4=-\chi/2\}$, we set in \eqref{eq: Q4} $q_1=-\chi/2$ and in \eqref{eq: Q4l} $q_1=\chi/2$. We use Lemma \ref{Lm: tilt} In both the left and the right cases to get $|\ell_4|=O(\chi)$ restricted to the section $\{x_4=-\chi/2\}$. Next use the $\dot \ell_4$ equation in the Hamiltonian equation \eqref{eq: Hamiltonian eq} to get $|\dot\ell_4|>c>0$ for some constant $c$. Therefore for each piece of orbit $I,III,V$, it takes time $|t|=O(\chi)$ to get $|\ell_4|=O(\chi)$. Adding the time for the three pieces together, we get that the total time defining the global map is $O(\chi)$.  
\end{proof}
\section{Derivatives of the Poincar\'{e} map}
%Sections~\ref{section: variational} to Sections~\ref{section: switch foci}}
\label{subsection: plan}

In computing $C^1$ asymptotics of both local and global maps we will need formulas for the derivatives of Poincar\'{e} maps between
two sections. Here we give the formulas for such derivatives for the later reference.

Recall our use of notations. $X$ denotes $Q_3$ part of our system and $Y$ denotes $Q_4$ part.
Thus
\begin{equation}
\label{Sep-XY}
 X=(L_3,\ell_3,G_3,g_3), \quad Y=(G_4,g_4). 
\end{equation}
$(X,Y)^i$ will denote the orbit parameters at the initial section and $(X,Y)^f$ will denote the orbit parameters at the
final section. Likewise we denote by $\ell_4^i$ the initial ``time" when $Q_4$ crosses some section, and by $\ell_4^f$  final ``time" when $Q_4$ arrives at the next. We abbreviate the RHS of
\eqref{eq: 6 equations})
as \[X'=\mathcal{U},\quad Y'=\mathcal{V}.\]
Here $'$  is the derivative w.r.t. $\ell_4$. We also denote $Z=(X,Y)$ and $\mathcal W=(\mathcal U,\mathcal V)$ to simplify the notations further.

Suppose that we want to compute the derivative of the Poincar\'{e} map between the sections $S^i$ and $S^f.$ Assume that
on $S^i$ we have  $\ell_4=\ell_4^i(Z^i)$ and on $S^f$ we have  $\ell_4=\ell_4^f(Z^f).$ We want to compute the derivative
$\cD$ of the Poincar\'{e} map along the orbit starting from $(Z^i_*, \ell_*^i)$ and ending at $(Z^f_*, \ell_*^f).$ We have
$\cD=dF_3 dF_2 dF_1$ where
$F_1$ is the Poincar\'{e} map between $S^i$ and $\{\ell_4=\ell_*^i\},$
$F_2$ is the flow map between the times $\ell_*^i$ and $\ell_*^f,$ and
$F_3$ is the Poincar\'{e} map between $\{\ell_4=\ell_*^f\}$ and $S^f.$ We have
$F_1=\Phi(Z^i, \ell_4(Z^i), \ell_*^i)$ where $\Phi(Z, a, b)$ denotes the flow map
starting from $Z$ at time $a$ and ending at time $b.$ Since
\[ \dfrac{\partial \Phi}{\partial Z}(Z_*^i, \ell_*^i, \ell_*^i)=\Id, \quad
\dfrac{\partial \Phi}{\partial a}=-\mathcal{W} \]
we have
$dF_1=\Id-\mathcal W(\ell_4^i)\otimes \frac{D \ell_4^i}{D Z^i}.$
Inverting the time we get
\[dF_3=\left(\Id-\mathcal W(\ell_4^f)\otimes \dfrac{D \ell_4^f}{D Z^f}\right)^{-1}.\]
Finally $dF_2=\frac{DZ(\ell_*^f)}{DZ(\ell_*^i)}$ is just the fundamental solution of the variational equation between the times
$\ell_*^i$ and $\ell_*^f.$ Thus we get

\begin{equation}
\cD
=\left(\Id-\mathcal W(\ell_4^f)\otimes \dfrac{D \ell_4^f}{DZ^f}\right)^{-1}\dfrac{DZ(\ell^f_4)}{DZ(\ell^i_4)}\left(\Id-\mathcal W(\ell_4^i)\otimes \dfrac{D \ell_4^i}{D Z^i}\right).
\label{eq: formald2}
\end{equation}
Here the term $\frac{DZ(\ell^f_4)}{DZ(\ell^i_4)}$ is the fundamental solution to the variational equation from time $\ell^i_4$ to $\ell_4^f$. It does not give us the correct derivative of the Poincar\'e map since the Poincar\'e sections
are not defined by $\ell_4^{i,f}=$constant (equal time) but by $\{x_4=-\chi/2\}$ (equal space). As a result, different orbits may take different time to travel from one section to the next. The two other terms in $\mathcal D$ corresponding to $dF_1,dF_3$ are used to go from equal space section to equal time section and vice versa, which we call {\bf boundary contributions. }
\section{Variational equations}\label{section: variational}

The next step in the proof is the $C^1$ analysis of the global map. It occupies
sections \ref{section: variational}-\ref{section: switch foci}. We shall work under the assumptions of
Lemma \ref{LmDerGlob}. In particular we will use the estimates of Section \ref{section: equation} and Appendix \ref{section: appendix}.

The plan of the proof of Proposition \ref{Thm: matrices} is the following. Matrices $(I)$, $(III)$ and $(V)$ are treated in Sections
\ref{section: variational}
and \ref{section: boundary}. Namely, in Sections \ref{section: variational} we study the variational equation
%using the strategy discussed above
while in Section \ref{section: boundary} we estimate the boundary contributions.
Finally in Section \ref{section: switch foci}
we compute matrices $(II)$ and $(IV)$ which describe the change of variables between the Delaunay coordinates
with different centers which are
used to the left and to the right of the line $x=-\frac{\chi}{2}.$

\subsection{Estimates of the coefficients}
\begin{Lm}
We have the following estimates for the RHS of the variational equation under the assumption of Lemma \ref{LmQderT}.
\begin{itemize}
\item[(a)]When $Q_4$ is moving to the right of the section $\left\{x=-\chi/2\right\}$, we have
\begin{equation}
\left[\begin{array}{c|c}
\dfrac{\partial \mathcal U_R}{\partial X}&\dfrac{\partial \mathcal U_R}{\partial Y}\\
\hline
\dfrac{\partial \mathcal V_R}{\partial X}&\dfrac{\partial \mathcal V_R}{\partial Y}
\end{array}\right]=O\left(\begin{array}{cc|c}
\frac{1}{\chi^2}&(\frac{1}{\chi^3})_{1\times 3}&(\frac{1}{\chi^2})_{1\times 2}\\
\frac{1}{\chi}&(\frac{1}{\chi^2})_{1\times 3}&(\frac{1}{\chi})_{1\times 2}\\
(\frac{1}{\chi^2})_{2\times 1}&(\frac{1}{\chi^3})_{2\times 3}&(\frac{1}{\chi^2})_{1\times 2}\\
\hline
(\frac{1}{\chi})_{2\times 1}&(\frac{1}{\chi^3})_{2\times 3}&(\frac{1}{\chi})_{2\times 2}\\
\end{array}
\right)+O\left(\dfrac{\mu}{|Q_4|^2}\right)
\nonumber
\end{equation}
In addition we have for $\xi=\frac{|Q_4|}{\chi}=\frac{|Q_4-Q_2|}{\chi}\in (0,1/2)$
\[\dfrac{\partial \mathcal{V}}{\partial Y}=
\dfrac{1}{\chi}\frac{\xi}{(1-\xi)^3} \left[
\begin{array}{cc}
\dfrac{-L^4\sign(\dot{x}_4)}{(G^2+L^2)}& L^3\\
\dfrac{ -L^5 }{(G^2+L^2)^2}& \dfrac{ L^4\sign(\dot{x}_4)}{(G^2+L^2)}\\
\end{array}
\right]+O\left(\dfrac{\mu}{\chi}+\dfrac{\mu}{|Q_4|^2}\right),
\]
\[\dfrac{\partial\mathcal{V}}{\partial L_3}=
\dfrac{1}{\chi} \frac{\xi}{(1-\xi)^3}\left(\dfrac{ G_4L_4^3  \sign(\dot{x}_4)}{(L_4^2+G_4^2)},\dfrac{G_4L_4^4}{(L^2+G_4^2)^2}\right)^T
+O\left(\dfrac{\mu}{\chi}+\dfrac{\mu}{|Q_4|^2}\right).
\]
\item[(b)] When $Q_4$ is moving to the left of the section $x=-\chi/2$, we have
\begin{equation}O\left[\begin{array}{c|c}
\dfrac{\partial \mathcal U_L}{\partial X}&\dfrac{\partial \mathcal U_L}{\partial Y}\\
\hline
\dfrac{\partial \mathcal V_L}{\partial X}&\dfrac{\partial \mathcal V_L}{\partial Y}
\end{array}\right]=
\left(\begin{array}{cc|c}
\frac{1}{\chi^2}&(\frac{1}{\chi^3})_{1\times 3}&(\frac{\mu}{\chi^2})_{1\times 2}\\
\frac{1}{\chi}&(\frac{1}{\chi^2})_{1\times 3}&(\frac{1}{\chi^2})_{1\times 2}\\
(\frac{1}{\chi^2})_{2\times 1}&(\frac{1}{\chi^3})_{2\times 3}&(\frac{\mu}{\chi^2})_{1\times 2}\\
\hline
(\frac{1}{\chi^2})_{2\times 1}&(\frac{\mu}{\chi^2})_{2\times 3}&(\frac{1}{\chi})_{2\times 2}\\
\end{array}
\right)
\nonumber
\end{equation}\label{Lm: ordervar}
In addition we have for $\xi=\frac{|Q_4-Q_1|}{\chi}\in (0,1/2)$
\[\dfrac{\partial \mathcal{V}}{\partial Y}=-
\dfrac{1}{\chi} \frac{\xi}{(1-\xi)^3}\left[\begin{array}{cc}
L^2 \sign(\dot{x}_4)& L^3\\
-L& - L^2\sign(\dot{x}_4)\\
\end{array}
\right]+O\left(\dfrac{\mu}{\chi}\right).
\]
\end{itemize}
\end{Lm}
\begin{proof}
Before going the the calculations, we remark that the variable $\ell_4$ is treated as the new time hence we do not take partial derivatives with respect to it when deriving the variational equations. We need only $C^0$ dependence on $\ell_4$
in the RHS of both the Hamiltonian equation and the variational equation, which is satisfied even if when the orbits come close to collision. We need to use Lemma \ref{LmSmallu} when taking $G_4,L_4$ partial derivatives for small $\ell_4$ in the left case to show that the first and second order derivatives of $Q_4$ with respect to $G_4,L_4$ are always bounded. 

%The proof is obtained  straightforward computation. Let us work out some special matrix entries for example.\\
(a) We estimate the four blocks of the derivative matrix separately. \\
$\bullet${\bf We begin with
$\frac{\partial \mathcal{U}_R}{\partial X}$ part.}

{\it We consider first the partial derivatives of $\ell_3$ since it is the largest
component of $\cU.$} Opening the brackets in the second line of \eqref{eq: 6 equations} we get
%Here only the $(2,1)$ entry is different from others so we consider it first.
%In equations~(\ref{eq: 6 equations}), we pick out the RHS of the $\dfrac{d\ell_3}{d\ell_4}$ equation and act $\dfrac{\partial }{\partial L_3}$ on it. i.e.
\begin{equation}
\label{EqEll3Ell4}
%\begin{aligned}
%&-\dfrac{\partial }{\partial L_3}\left((kL_3^3+W)(\dfrac{1}{L_3^3}+\dfrac{\partial Q_3}{\partial L_3}\cdot\dfrac{\partial U}{\partial Q_3}) \left(1+(kL_3^3+W)\dfrac{\partial Q_4}{\partial L_4}\cdot\dfrac{\partial V}{\partial Q_4}\right)+O(\mu^2+1/\chi^2)\right),\\
\dfrac{d\ell_3}{d\ell_4}
=-k+\dfrac{1}{L_3^3}W+kL_3^3\dfrac{\partial Q_3}{\partial L_3}\cdot\dfrac{\partial U}{\partial Q_3}+k^2L_3^3\dfrac{\partial Q_4}{\partial L_4}\cdot\dfrac{\partial V}{\partial Q_4}+2kW\dfrac{\partial Q_4}{\partial L_4}\cdot\dfrac{\partial V}{\partial Q_4}+
O\left(\dfrac{1}{\chi^2}+\dfrac{\mu}{|Q_4|^3}\right).
%\end{aligned}
\end{equation}
Note that  by \eqref{eq: W}
\begin{equation}
\label{EqWLong}
\begin{aligned}
W_R&=k_R 3L_3^5\left(\dfrac{1}{|Q_3+(\chi,0)|}+\dfrac{1}{|Q_4+(\chi,0)|}+\dfrac{\mu Q_4\cdot Q_3}{|Q_4|^3}\right)+O\left(\dfrac{\mu}{|Q_4|^3}\right)\\
&=
O\left(\dfrac{1}{\chi}+\dfrac{\mu}{|Q_4|^2}\right)
\end{aligned}
\end{equation}
%from equation~(\ref{eq: W}).
Observe that the RHS of \eqref{EqEll3Ell4} depends on $L_3$ in three ways. First, in contains several terms of the form $L_3^m.$
Second, $Q_3$ depends on $L_3$ via \eqref{eq: Q_3}. Third, $Q_4$ depends on $L_4$ via \eqref{eq: Q4} and $L_4$ depends on
$L_3$ via \eqref{eq: W}. In particular we need to
consider the contribution to $\frac{\partial }{\partial L_3}\frac{d\ell_3}{d\ell_4}$
coming from\[\dfrac{\partial L_4}{\partial L_3}\dfrac{\partial }{\partial L_4}=\dfrac{\partial L_4}{\partial L_3}\dfrac{\partial Q_4}{\partial L_4}\dfrac{\partial }{\partial Q_4}.\]
By Lemma \ref{LM: 1stder} and equation \eqref{LmPosEqB} we have $\frac{\partial Q_4}{\partial L_4}=O(|Q_4|).$
Therefore the main contribution to (2,1) entry is
 $O\left(\frac{1}{\chi}+\frac{\mu}{|Q_4|^2}\right)$ and it comes from
 $\frac{\partial W_R}{\partial Q_4}\frac{\partial Q_4}{\partial L_4}\frac{\partial L_4}{\partial L_3}$, $W_R\frac{\partial }{\partial L_3}\frac{1}{L_3^3}$ and
 $\frac{\partial L_4}{\partial L_3}\frac{\partial }{\partial L_4}\left(k^2L_3^3\frac{\partial Q_4}{\partial L_4}\cdot\frac{\partial V}{\partial Q_4}\right)$.

{\it For the $(2,2),(2,3),(2,4)$ entries, the computations are similar.}

We need to act $\frac{\partial }{\partial \ell_3},\frac{\partial }{\partial G_3}, \frac{\partial }{\partial g_3}$ on \eqref{EqEll3Ell4}.
\eqref{eq: W} and \eqref{EqWLong} show that
 the contribution coming from $\frac{\partial L_4}{\partial (\ell_3,G_3,g_3)}$ is
 $O\left(\frac{1}{\chi^2}+\frac{\mu}{|Q_4|^2}\right).$ It remains to consider the contribution coming from
 $\frac{\partial Q_3}{\partial (\ell_3,G_3,g_3)}\frac{\partial }{\partial Q_3}.$ Now the bound for
 $(2,2), (2,3)$ and $(2,4)$ entries follows directly from Lemmas ~\ref{Lm: position}, ~\ref{Lm: potential}, ~\ref{Lm: gradient},
 and~\ref{Lm: 2ndder}.

{\it The entries $(i,j),\ i\in\{1,3,4\},\ j\in\{2,3,4\}$ are done together. }

These involve second order derivatives with respect to $\ell_3,G_3,g_3$. The estimate $O(\frac{\mu}{|Q_4|^2})$ in the statement comes from the term $\frac{\mu Q_4\cdot Q_3}{|Q_4|^3}$ in $U_R$. For the term $\frac{1}{|Q_3+(\chi,0)|}=O(1/\chi)$ in $U_R$, each $Q_3$ derivative amounts to improve the estimate by multiplying $1/\chi$. Here we need to take two $Q_3$ derivatives. Moreover, $\frac{\partial Q_3}{\partial (G_3,g_3,\ell_3)},\frac{\partial^2 Q_3}{\partial (G_3,g_3,\ell_3)^2}=O(1)$ due to the periodicity. So we get the estimate in the statement. We point out that the improvement compared to the first column and second row in this block is because that we do not take $L_3$ partial derivative.

{\it Next, consider $(1,1)$ entry.} We need to estimate
\[\dfrac{\partial }{\partial L_3}\left((kL_3^3+W)\dfrac{\partial Q_3}{\partial \ell_3}\cdot\dfrac{\partial U}{\partial Q_3}\left(1+(kL_3^3+W)\dfrac{\partial Q_4}{\partial L_4}\cdot\dfrac{\partial V}{\partial Q_4}\right)\right).\]
Using the Leibniz rule we see that the leading
term comes from $\frac{\partial }{\partial L_3}\left(kL_3^3\frac{\partial Q_3}{\partial \ell_3}\cdot\frac{\partial U}{\partial Q_3}\right)$
and it is of order $O\left(\frac{1}{\chi^2}+\frac{\mu}{|Q_4|^2}\right).$ The estimates for other entries of the
$\frac{\partial \mathcal{U}_R}{\partial X}$ part
are similar to the $(1,1)$ entry. This completes the analysis of $\frac{\partial\mathcal U_R}{\partial X}.$

$\bullet$ {\bf Next, we consider $\frac{\partial \mathcal{V}_R}{\partial Y}.$}

Using the Leibniz rule again we see that the main contribution to the derivatives of $\cV$ comes from differentiating
%\[\mathcal{V}=
$\left[\begin{array}{c}
L_3^3\frac{\partial Q_4}{\partial g_4}\cdot\frac{\partial V}{\partial Q_4}\\
-L_3^3\frac{\partial Q_4}{\partial G_4}\cdot\frac{\partial V}{\partial Q_4}
\end{array}
\right]$
%=O\left(\dfrac{1}{\chi}\right).

%Recall that Lemmas~\ref{Lm: gradient} and~\ref{Lm: 2ndder} tell  us that
%\[\dfrac{\partial V_R}{\partial Q_4}=O\left(\dfrac{1}{\chi^2}+\dfrac{\mu}{|\ell_4|^3+1}\right),\quad \dfrac{\partial^2 V_R}{\partial Q_4^2}=O\left(\dfrac{1}{\chi^3}+\dfrac{\mu}{\ell_4^4}\right).\]
{\it Consider the $(5,5)$ entry. }

The main contribution to this entry comes from
\[\dfrac{\partial }{\partial G_4}\left(L_3^3\dfrac{\partial Q_4}{\partial g_4}\cdot\dfrac{\partial V}{\partial Q_4}\right)=
L_3^3 \left(\dfrac{\partial^2 Q_4}{\partial G_4\partial g_4}\cdot \dfrac{\partial V}{\partial Q_4}+\dfrac{\partial Q_4}{\partial g_4}\cdot \dfrac{\partial^2 V}{\partial Q_4^2}\cdot\dfrac{\partial Q_4}{\partial G_4}\right). \]
By Lemmas~\ref{Lm: gradient} and~\ref{Lm: 2ndder} the first term is
$|Q_4|\cdot O\left(\frac{1}{\chi^2}+\frac{\mu}{|Q_4|^3}\right)=O\left( \frac{1}{\chi}+\frac{\mu}{|Q_4|^2}\right)$
and the second term is
$|Q_4|^2\cdot O\left(\frac{1}{\chi^3}+\frac{\mu}{|Q_4|^4}\right)=O\left(\frac{1}{\chi}+\frac{\mu}{|Q_4|^2}\right).$
This gives the desired upper bound of the $(5,5)$ entry. Notice that $O(1/\chi)$ term comes from
$L_3^3\frac{\partial }{\partial G_4}\left(\frac{\partial Q_4}{\partial g_4}\cdot\frac{\partial \tV}{\partial Q_4}\right)$
where
$\tV=-\frac{1}{|Q_4+(\chi, 0)|}.$ Thus we need to find the asymptotics of
\begin{equation}
\label{Key55R}
L_3^3 \dfrac{\partial }{\partial G_4}\left(\dfrac{\frac{\partial Q_4}{\partial g_4} \cdot (Q_4+(\chi,0))}{|Q_4+(\chi,0)|^3}\right) .
\end{equation}
Let $\frac{\partial Q_4}{\partial g_4}=(\ba, \bb).$ Arguing in the same way as in the estimation of \eqref{DotG} we see that $\ba=O(1).$
Accordingly the numerator in \eqref{Key55R} is $O(\chi)$ so if we differentiate the denominator of
\eqref{Key55R} the resulting fraction will be of order $O(\chi)O(\chi^{-3})=O(\chi^{-2}).$ Hence $O(1/\chi)$ term comes from
$ L_3^3
\frac{\frac{\partial }{\partial G_4}
\left(\frac{\partial Q_4}{\partial g_4} \cdot (Q_4+(\chi,0))\right)}{|Q_4+(\chi,0)|^3} . $
The numerator here equals to
\[\dfrac{\partial}{\partial G_4} \left( \dfrac{\partial Q_4}{\partial g_4} \cdot Q_4\right)+
\dfrac{\partial^2 Q_4}{\partial G_4 \partial g_4} \cdot (\chi, 0). \]
The first term vanishes due to Lemma \ref{LM: 1stder}(a) so the main contribution comes from the second term. Using Lemma \ref{Lm: 2ndderivative}
we see that
$(5,5)$ entry equals to
\[\dfrac{L_3^3 L_4^2}{\sqrt{L_4^2+G_4^2}}\dfrac{\chi \sinh u}
{|Q_4+(\chi,0)|^3}
+O\left(\dfrac{\mu}{\chi}+\dfrac{\mu}{|Q_4|^2}\right)
. \]
Recall that $L_3=L_4(1+o(1))$ (due to \eqref{eq: W}) and $\sinh u=\sign (u)\frac{|\ell_4| L_4}{\sqrt{L_4^2+G_4^2}}$
(due to \eqref{eq: hypul}).
Since Lemma \ref{Lm: position} implies that
$|Q_4|=|\ell_4|/L_4^2 (1+o(1))$
we obtain that $O(1/\chi)$-term in $(5,5)$ is asymptotic to
$ \frac{L^4 \sign(u)}{L^2+G^2} \frac{\chi |Q_4|}{(\chi-|Q_4|)^3} .$
Since $u$ and $\dot{x}_4$ have opposite signs we obtain the asymptotics of $O(1/\chi)$-term
claimed in part (a) of the Lemma \ref{Lm: ordervar}.
The analysis of other entries of $\frac{\partial \mathcal{V}_R}{\partial Y}$ is similar.

$\bullet$ {\bf Next, consider the $\frac{\partial \mathcal{U}_R}{\partial Y}$ term.}

The analysis of $(2,5)$ entry is similar to the analysis of $(2,2)$ entry except that
$\frac{\partial}{\partial G_4} \left(k^2 L_3^3 \frac{\partial Q_4}{\partial L_4} \frac{\partial V}{\partial Q_4}\right)$
contains the term
$k^2 L_3^3 \frac{\partial^2 Q_4}{\partial L_4 \partial G_4} \frac{\partial V}{\partial Q_4} $ which is of order
$O(1/\chi)$ due to Lemmas \ref{Lm: 2ndder} and \ref{Lm: 2ndderivative} and this term provides the leading contribution for large $t.$
The analysis of $(2,6)$ is similar to $(2,5).$

The estimate of the remaining entries of $\frac{\partial \mathcal{U}_R}{\partial Y}$ is similar to the analysis of $(1,1)$ entry.

$\bullet${\bf Thus to complete the proof of (a) it remains to consider $\frac{\partial \mathcal{V}_R}{\partial X}$.}
We begin with $(5,1)$ entry.
%and $(5,2)$ as representatives.
We need to act by $\frac{\partial}{\partial L_3}+\frac{\partial L_4}{\partial L_3}\frac{\partial}{\partial L_4}$ on
\[(kL_3^3+W)\frac{\partial Q_4}{\partial g_4}\cdot\frac{\partial V}{\partial Q_4}\left(1+(kL_3^3+W)\frac{\partial Q_4}{\partial L_4}
\cdot\frac{\partial V}{\partial Q_4}\right) .\]
%\begin{equation}
%\begin{aligned}
%&\dfrac{\partial }{\partial L_3}\left(
%(kL_3^3+W)\dfrac{\partial Q_4}{\partial g_4}\cdot\dfrac{\partial V}{\partial Q_4}\left(1+(kL_3^3+W)\dfrac{\partial Q_4}{\partial L_4}\cdot\dfrac{\partial V}{\partial Q_4}\right)
%\right)\\
%&=\dfrac{\partial }{\partial L_3}\left[\left(\dfrac{\partial Q_4}{\partial g_4}\cdot\dfrac{\partial V}{\partial Q_4}\right)\left(kL_3^3+O(\dfrac{1}{\chi}+\dfrac{\mu}{\ell_4^2+1})\right)\right]+\left(\dfrac{\partial Q_4}{\partial g_4}\cdot\dfrac{\partial V}{\partial Q_4}\right)\cdot\\
%&\ \cdot\dfrac{\partial }{\partial L_3}\left(kL_3^3+W+k^2L_3^6\dfrac{\partial Q_4}{\partial L_4}\cdot\dfrac{\partial V}{\partial Q_4}+2kL_3^3W\dfrac{\partial Q_4}{\partial L_4}\cdot\dfrac{\partial V}{\partial Q_4}+W^2\dfrac{\partial Q_4}{\partial L_4}\cdot\dfrac{\partial V}{\partial Q_4}\right)\\
%&=\left(\dfrac{1}{\chi}+\dfrac{\mu}{\ell_4^2+1}\right)(O(1))+\left(\dfrac{1}{\chi^2}+\dfrac{\mu}{\ell_4^2+1}\right)\cdot\left(1+O\left(\dfrac{1}{\chi}+\dfrac{\mu}{\ell_4^2+1}\right)\right)=\dfrac{1}{\chi}+\dfrac{\mu}{\ell_4^2+1}.
%\end{aligned}\nonumber\end{equation}
The leading term for the estimate of $(5,1)$ comes from
\begin{equation}\nonumber
\begin{aligned}
&\left(\dfrac{\partial }{\partial L_3}+\dfrac{\partial L_4}{\partial L_3}\dfrac{\partial}{\partial L_4}\right)
\left(\dfrac{\partial Q_4}{\partial g_4}\cdot\dfrac{\partial V}{\partial Q_4}\right)\\
&=
\dfrac{\partial L_4}{\partial L_3}\dfrac{\partial }{\partial L_4}\left(\dfrac{\partial Q_4}{\partial g_4}\cdot\dfrac{\partial V}{\partial Q_4}\right)
+O\left(\dfrac{1}{\chi^2}+\dfrac{\mu}{|Q_4|^2}\right)=
O\left(\dfrac{1}{\chi}+\dfrac{\mu}{|Q_4|^2}\right)
.\end{aligned}
\end{equation}
Observe that $O(1/\chi)$ term here comes from $\frac{\partial }{\partial L_4}\left(\frac{\partial Q_4}{\partial g_4}\cdot\frac{\partial V}{\partial Q_4}\right)$
which can be analyzed in the same way as $(5,5)$ term.
The analysis of $(6,1)$ is the same as of $(5,1).$

The $(5,2)$ entry is equal to
$\left(\frac{\partial}{\partial \ell_3}+\frac{\partial L_4}{\partial \ell_3}\frac{\partial}{\partial L_4}\right)
\left[\left(\frac{\partial Q_4}{\partial g_4}\cdot\frac{\partial V}{\partial Q_4}\right)\Gamma\right] $
where
\[\Gamma=kL_3^3+W+k^2L_3^6\dfrac{\partial Q_4}{\partial L_4}\cdot\dfrac{\partial V}{\partial Q_4}+2kL_3^3W\dfrac{\partial Q_4}{\partial L_4}\cdot\dfrac{\partial V}{\partial Q_4}+W^2\dfrac{\partial Q_4}{\partial L_4}\cdot\dfrac{\partial V}{\partial Q_4} .\]
Now the estimate of the $(5,2)$ entry follows from the following estimates
\[ \Gamma=O(1), \quad \left(\dfrac{\partial Q_4}{\partial g_4}\cdot\dfrac{\partial V}{\partial Q_4}\right)=
O\left(\dfrac{1}{\chi^2}+\dfrac{\mu}{|Q_4|^2}\right), \]
\begin{equation}
\begin{aligned}
&\left(\dfrac{\partial}{\partial \ell_3}+\dfrac{\partial L_4}{\partial \ell_3}\dfrac{\partial}{\partial L_4}\right)
\left(\dfrac{\partial Q_4}{\partial g_4}\cdot\dfrac{\partial V}{\partial Q_4}\right)=\dfrac{\partial Q_4}{\partial g_4}\cdot\dfrac{\partial }{\partial \ell_3}\dfrac{\partial V}{\partial Q_4}+\dfrac{\partial L_4}{\partial \ell_3}\dfrac{\partial }{L_4}\left(\dfrac{\partial Q_4}{\partial g_4}\cdot\dfrac{\partial V}{\partial Q_4}\right)\\
&=O\left(\dfrac{\mu}{|Q_4|^2}+\left(\dfrac{1}{\chi^2}+\dfrac{\mu}{|Q_4|^2}\right)\left(\dfrac{1}{\chi}+\dfrac{\mu}{|Q_4|^2}\right)\right)
=O\left(\dfrac{1}{\chi^3}+\dfrac{\mu}{|Q_4|^2}\right),\\
\end{aligned}\nonumber\end{equation}
and
\[\left(\dfrac{\partial}{\partial \ell_3}+\dfrac{\partial L_4}{\partial \ell_3}\dfrac{\partial}{\partial L_4}\right)\Gamma=
O\left(\dfrac{1}{\chi^2}+\dfrac{\mu}{|Q_4|^2}\right) .\]
%\begin{equation}
%\begin{aligned}
%&\dfrac{\partial }{\partial \ell_3}\left((kL_3^3+W)\dfrac{\partial Q_4}{\partial g_4}\cdot\dfrac{\partial V}{\partial Q_4}\left(1+(kL_3^3+W)\dfrac{\partial Q_4}{\partial L_4}\cdot\dfrac{\partial V}{\partial Q_4}\right)\right)\\
%&=\dfrac{\partial }{\partial \ell_3}\left[\left(\dfrac{\partial Q_4}{\partial g_4}\cdot\dfrac{\partial V}{\partial Q_4}\right)\left(kL_3^3+O(\dfrac{1}{\chi}+\dfrac{\mu}{\ell_4^2+1})\right)\right]+\left(\dfrac{\partial Q_4}{\partial g_4}\cdot\dfrac{\partial V}{\partial Q_4}\right)\cdot\\
%&\ \cdot\dfrac{\partial }{\partial \ell_3}\left(kL_3^3+W+k^2L_3^6\dfrac{\partial Q_4}{\partial L_4}\cdot\dfrac{\partial V}{\partial Q_4}+2kL_3^3W\dfrac{\partial Q_4}{\partial L_4}\cdot\dfrac{\partial V}{\partial Q_4}+W^2\dfrac{\partial Q_4}{\partial L_4}\cdot\dfrac{\partial V}{\partial Q_4}\right)\\
%&=\left(\dfrac{1}{\chi^3}+\dfrac{\mu}{\ell_4^2+1}\right)(O(1))+\left(\dfrac{1}{\chi^2}+\dfrac{\mu}{\ell_4^2+1}\right)\cdot\left(\dfrac{1}{\chi^2}+\dfrac{\mu}{\ell_4^2+1}\right)=\left(\dfrac{1}{\chi^3}+\dfrac{\mu}{\ell_4^2+1}\right),
%\end{aligned}\nonumber\end{equation}
The remaining entries of $\frac{\partial \mathcal{V}}{\partial X}$ are similar to the $(5,2)$ entry.
This completes the proof of part (a).

(b)$\bullet$ {\bf The estimate of $\frac{\partial \mathcal V_L}{\partial Y}$
and $\frac{\partial \mathcal U_L}{\partial X}$
are the same as in part (a).}
However, now $|Q_4|$ is of order $\chi$ so $O(\mu/|Q_4|^2)$ is dominated by other terms.
In addition to compute the leading part we need to use part (c) Lemma \ref{Lm: 2ndderivative} rather than part (b).
Moreover, in order to be able to use the formulas of that Lemma we need to shift
the origin to $Q_1$. Therefore the coordinates of $Q_2$ become $(\chi,0)$. Then we have
\begin{equation}\label{EqTwistDiff}\dfrac{\partial \mathcal{V}_L}{\partial Y}=L_3^3
\left[\begin{array}{cc}
\dfrac{\partial^2 Q_4}{\partial G\partial g}\cdot \dfrac{(-\chi,0)}{|Q_4-(\chi,0)|^3}& \dfrac{\partial^2 Q_4}{\partial g^2}\cdot \dfrac{(-\chi,0)}{|Q_4-(\chi,0)|^3}\\
-\dfrac{\partial^2 Q_4}{\partial G^2}\cdot \dfrac{(-\chi,0)}{|Q_4-(\chi,0)|^3}&-\dfrac{\partial^2 Q_4}{\partial G\partial g}\cdot \dfrac{(-\chi,0)}{|Q_4-(\chi,0)|^3}\\
\end{array}
\right]+O\left(\dfrac{\mu}{\chi}\right).
\end{equation}
Now the asymptotic expression of $\frac{\partial \mathcal V_L}{\partial Y}$  follows directly from Lemma \ref{Lm: 2ndderivative}(c). We point out that the ``$-$" sign in front of the matrices of $\frac{\partial V}{\partial Y}$ and $\frac{\partial V}{\partial L_3}$
comes from the fact that the new time $\ell_4$ that we are using satisfies
$\frac{d\ell_4}{dt}=-\frac{1}{L_4^3}+o(1)$ as $\mu\to 0,\chi\to\infty$.

$\bullet$ {\bf Next, we consider the $\frac{\partial \mathcal{U}_L}{\partial Y}$ term.}

%Again we compute the $(1,5)$ and $(2,5)$ for example.

{\it First consider $(1,5)$. } We need to find $G_4$ derivative of
\[\left[\dfrac{\partial Q_3}{\partial \ell_3}\cdot\dfrac{\partial U}{\partial Q_3}\right](kL_3^3+W)\left(1+(kL_3^3+W)
\dfrac{\partial Q_4}{\partial L_4}\cdot\dfrac{\partial V}{\partial Q_4}\right).\]
Differentiating the first factor we get using
Lemma~\ref{Lm: 2ndder}
\begin{equation}
\label{Eq15LeftSim}
\dfrac{\partial }{\partial G_4}\left(\dfrac{\partial Q_3}{\partial \ell_3}\cdot\dfrac{\partial U}{\partial Q_3}\right)=
\dfrac{\partial Q_3}{\partial \ell_3}\cdot\dfrac{\partial^2 U}{\partial Q_3\partial Q_4}\dfrac{\partial Q_4}{\partial G_4}=
O\left(\dfrac{\mu}{\chi^2}\right).
\end{equation}
When we differentiate the product of the remaining factors then the main contribution comes from
\begin{equation}
\label{Eq15LeftKey}
\dfrac{\partial}{\partial G_4}\left( \dfrac{\partial Q_4}{\partial L_4}\cdot\dfrac{\partial V}{\partial Q_4}\right)=
\dfrac{\partial^2 Q_4}{\partial L_4 \partial G_4}\cdot\dfrac{\partial V}{\partial Q_4}+
\dfrac{\partial Q_4}{\partial L_4}\cdot \dfrac{\partial}{\partial G_4} \left(\dfrac{\partial V}{\partial Q_4}\right) .
\end{equation}
%\begin{equation}
%\begin{aligned}
%&kL_3^3\dfrac{\partial }{\partial G_4}\left(W\dfrac{\partial Q_4}{\partial L_4}\cdot\dfrac{\partial V}{\partial Q_4}\right)\\
%&=kL_3^3\dfrac{\partial W}{\partial G_4}\dfrac{\partial Q_4}{\partial L_4}\cdot\dfrac{\partial V}{\partial Q_4}
%+kL_3^3\left(W\dfrac{\partial^2 Q_4}{\partial G_4\partial L_4}\cdot\dfrac{\partial V}{\partial Q_4}\right)+kL_3^3\left(W\dfrac{\partial Q_4}{\partial L_4}\cdot\dfrac{\partial^2 V}{\partial Q_4^2}\dfrac{\partial Q_4}{\partial G_4}\right)\\
%&= \dfrac{1}{\chi^2}\cdot t\cdot \dfrac{1}{\chi^2}+\dfrac{1}{\chi}\cdot\dfrac{1}{\chi^2}
%+\dfrac{1}{\chi}\cdot \dfrac{1}{\chi^2}=\dfrac{1}{\chi^3}.
%\end{aligned}\nonumber
%\end{equation}
To bound the last expression we use Lemma~\ref{Lm: 2ndderivative}. Namely, the second derivative $\frac{\partial^2 Q_4}{\partial G_4\partial L_4}=O(1)+\ell_4(0,1),$ is almost vertical and $\frac{\partial V_L}{\partial Q_4}=\frac{Q_4}{|Q_4|^3}+\frac{\mu(Q_4-Q_3)}{|Q_4-Q_3|^3}$
is almost horizontal.
This shows that $\frac{\partial^2 Q_4}{\partial G_4\partial L_4}\cdot\frac{\partial V}{\partial Q_4}=\frac{1}{\chi^2}$.
%We can drop the $\dfrac{\mu}{|Q_4-Q_3|}$ term in the following computation.
The main contribution to the second summand in \eqref{Eq15LeftKey} comes from
$\frac{\partial}{\partial G_4} \left(\nabla \left(\frac{1}{Q_4}\right)\right).$
Using Lemma~\ref{LM: 1stder}, we get
\[\dfrac{\partial Q_4}{\partial L_4}\cdot \dfrac{\partial}{\partial G_4} \left(\nabla \left(\dfrac{1}{Q_4}\right)\right)=
(\ell_4(1,0)+O(1))\left(\dfrac{-\Id}{|Q_4|^3}+3\dfrac{Q_4\otimes Q_4}{|Q_4|^5}\right)(\ell_4(0,1)+O(1))=\dfrac{1}{\chi^2}.\]
 Since
$\frac{\partial Q_3}{\partial \ell_3}\cdot\frac{\partial U}{\partial Q_3}=O(1/\chi^2)$
we get the required estimate for $(1,5)$ entry.

The estimates of other $\frac{\partial \mathcal{U}_L}{\partial Y}$ terms are similar to the
estimate of $(1,5)$ entry, except for $(2,5)$ and $(2,6)$ entries which are different because
$\frac{d\ell_3}{d\ell_4}$ is larger than the other coordinates of $\cU.$

{\it Now consider $(2,5)$ entry. }We need to compute
\begin{equation}
\begin{aligned}
&-\dfrac{\partial }{\partial G_4}\left((kL_3^3+W)(\dfrac{1}{L_3^3}+\dfrac{\partial Q_3}{\partial L_3}\cdot\dfrac{\partial U}{\partial Q_3}) \left(1+(kL_3^3+W)\dfrac{\partial Q_4}{\partial L_4}\cdot\dfrac{\partial V}{\partial Q_4}\right)\right)\\
&=-\dfrac{\partial }{\partial G_4}\left(k+\dfrac{1}{L_3^3}W+kL_3^3\dfrac{\partial Q_3}{\partial L_3}\cdot\dfrac{\partial U}{\partial Q_3}+k^2L_3^3\dfrac{\partial Q_4}{\partial L_4}\cdot\dfrac{\partial V}{\partial Q_4}+2kW\dfrac{\partial Q_4}{\partial L_4}\cdot\dfrac{\partial V}{\partial Q_4}+\dfrac{1}{\chi^3}\right)\\
&=0+\dfrac{1}{\chi^2}+\dfrac{\mu}{\chi^2}+\dfrac{1}{\chi^2}+\dfrac{1}{\chi^3}+0=O\left(\dfrac{1}{\chi^2}\right)
\end{aligned}\end{equation}
where the analysis of the leading terms is similar to \eqref{Eq15LeftSim}, \eqref{Eq15LeftKey}.

$\bullet$ {\bf Finally, we consider $\frac{\partial \mathcal{V}_L}{\partial X}$.}

%We choose $(5,1)$ and $(5,2)$ as representatives.
{\it We begin with $(5,1)$ entry. }We need to compute
\[ \left[\dfrac{\partial }{\partial L_3}+\dfrac{\partial L_4 }{\partial L_3}\dfrac{\partial }{\partial L_4}\right]
\left(\left(\dfrac{\partial Q_4}{\partial g_4}\cdot\dfrac{\partial V}{\partial Q_4}\right)\Gamma\right) \]
where
\[\Gamma=
kL_3^3+W+k^2L_3^6\dfrac{\partial Q_4}{\partial L_4}\cdot\dfrac{\partial V}{\partial Q_4}+
2kL_3^3W\dfrac{\partial Q_4}{\partial L_4}\cdot\dfrac{\partial V}{\partial Q_4}+W^2\dfrac{\partial Q_4}{\partial L_4}\cdot\dfrac{\partial V}{\partial Q_4}. \]
The main contribution to
$\left[\frac{\partial }{\partial L_3}+\frac{\partial L_4 }{\partial L_3}\frac{\partial }{\partial L_4}\right]
\left(\frac{\partial Q_4}{\partial g_4}\cdot\frac{\partial V}{\partial Q_4}\right)$
comes from
\[\dfrac{\partial L_4}{\partial L_3}\dfrac{\partial }{\partial L_4}\left(\dfrac{\partial Q_4}{\partial g_4}\cdot\dfrac{\partial V}{\partial Q_4}\right)=\dfrac{\partial L_4}{\partial L_3}\dfrac{\partial^2 Q_4}{\partial L_4\partial g_4}\cdot\dfrac{\partial V}{\partial Q_4}+\dfrac{\partial L_4}{\partial L_3}\dfrac{\partial Q_4}{\partial g_4}\cdot\dfrac{\partial^2 V}{\partial Q_4^2}\dfrac{\partial Q_4}{\partial L_4}.\]
The two summands above can be estimated by $O(1/\chi^2)$ by the argument used to bound \eqref{Eq15LeftKey}.
%have the same behavior as the $\dfrac{\partial^2 Q_4}{\partial L_4\partial G_4}\cdot\dfrac{\partial V}{\partial Q_4}$ and $\dfrac{\partial Q_4}{\partial L_4}\cdot\dfrac{\partial^2 V}{\partial Q_4^2}\dfrac{\partial Q_4}{\partial G_4}$ that we studied when estimating the $(1,5)$ entry, therefore have the same estimates $\dfrac{1}{\chi^2}$.
Next a direct calculation shows that
$$\Gamma=O(1), \quad \left[\dfrac{\partial }{\partial L_3}+\dfrac{\partial L_4 }{\partial L_3}\dfrac{\partial }{\partial L_4}\right]\Gamma=O(1)$$
while
$\left(\frac{\partial Q_4}{\partial g_4}\cdot\frac{\partial V}{\partial Q_4}\right)=O(1/\chi^2)$ by Lemma \ref{Lm: gradient}.
This gives the required bound for the $(5,1)$ entry. The bound for the $(6,1)$ entry is similar.

%\begin{equation}
%\begin{aligned}
%&\dfrac{\partial }{\partial L_3}\left((kL_3^3+W)
%\dfrac{\partial Q_4}{\partial g_4}\cdot\dfrac{\partial V}{\partial Q_4}
%\left(1+(kL_3^3+W)\dfrac{\partial Q_4}{\partial L_4}\cdot\dfrac{\partial V}{\partial Q_4}\right)\right),\\
%&=\dfrac{\partial }{\partial L_3}\left[\left(\dfrac{\partial Q_4}{\partial g_4}\cdot\dfrac{\partial V}{\partial Q_4}\right)\left(kL_3^3+O(\dfrac{1}{\chi}+\dfrac{\mu}{\ell_4^2+1})\right)\right]+\left(\dfrac{\partial Q_4}{\partial g_4}\cdot\dfrac{\partial V}{\partial Q_4}\right)\cdot\\
%& \dfrac{\partial }{\partial L_3}
%\left(kL_3^3+W+k^2L_3^6\dfrac{\partial Q_4}{\partial L_4}\cdot\dfrac{\partial V}{\partial Q_4}+2kL_3^3W\dfrac{\partial Q_4}{\partial L_4}\cdot\dfrac{\partial V}{\partial Q_4}+W^2\dfrac{\partial Q_4}{\partial L_4}\cdot\dfrac{\partial V}{\partial Q_4}\right),\\
%&=\dfrac{1}{\chi^2}(O(1))+\dfrac{1}{\chi^2}\cdot\left(1+O\left(\dfrac{1}{\chi^2}\right)\right)=\dfrac{1}{\chi^2}.
%\end{aligned}\nonumber\end{equation}

{\it Next, consider $(5,2)$. } It equals to
\[ \left[\dfrac{\partial }{\partial \ell_3}+\dfrac{\partial L_4 }{\partial \ell_3}\dfrac{\partial }{\partial L_4}\right]
\left(\left(\dfrac{\partial Q_4}{\partial g_4}\cdot\dfrac{\partial V}{\partial Q_4}\right)\Gamma\right) .\]
The main contribution to
$\left[\frac{\partial }{\partial \ell_3}+\frac{\partial L_4 }{\partial \ell_3}\frac{\partial }{\partial L_4}\right]
\left(\frac{\partial Q_4}{\partial g_4}\cdot\frac{\partial V}{\partial Q_4}\right)$
comes from \\
$\frac{\partial }{\partial \ell_3}
\left(\frac{\partial Q_4}{\partial g_4}\cdot\nabla\left(\frac{\mu}{|Q_4-Q_3|}\right)\right)=O\left(\frac{\mu}{\chi^2}\right).$
On the other hand the main contribution to
$\left[\frac{\partial }{\partial \ell_3}+\frac{\partial L_4 }{\partial \ell_3}\frac{\partial }{\partial L_4}\right]\Gamma$ comes
from $\frac{\partial W}{\partial \ell_3}=O\left(\frac{1}{\chi^2}\right).$
Combining this with $C^0$ bounds mentioned used in the analysis of $(5,1)$ we obtain the required estimate on the $(5,2)$ entry.
The remaining entries of $\frac{\partial \mathcal{V}_L}{\partial X}$ are similar to $(5,2).$
\end{proof}
%\begin{equation}
%\begin{aligned}
%&\dfrac{\partial }{\partial \ell_3}\left((kL_3^3+W)\dfrac{\partial Q_4}{\partial g_4}\cdot\dfrac{\partial V}{\partial Q_4}\left(1+(kL_3^3+W)\dfrac{\partial Q_4}{\partial L_4}\cdot\dfrac{\partial V}{\partial Q_4}\right)\right),\\
%&=\dfrac{\partial }{\partial \ell_3}\left[\left(\dfrac{\partial Q_4}{\partial g_4}\cdot\dfrac{\partial V}{\partial Q_4}\right)\left(kL_3^3+O(\dfrac{1}{\chi}+\dfrac{\mu}{\ell_4^2+1})\right)\right]+\left(\dfrac{\partial Q_4}{\partial g_4}\cdot\dfrac{\partial V}{\partial Q_4}\right)\cdot\\
%&\ \dfrac{\partial }{\partial \ell_3}\left(kL_3^3+W+k^2L_3^6\dfrac{\partial Q_4}{\partial L_4}\cdot\dfrac{\partial V}{\partial Q_4}+2kL_3^3W\dfrac{\partial Q_4}{\partial L_4}\cdot\dfrac{\partial V}{\partial Q_4}+W^2\dfrac{\partial Q_4}{\partial L_4}\cdot\dfrac{\partial V}{\partial Q_4}\right),\\
%&=\dfrac{\mu}{\chi^2}(O(1))+\dfrac{1}{\chi^2}\cdot\dfrac{1}{\chi^2}=\dfrac{\mu}{\chi^2},
%\end{aligned}\nonumber\end{equation}
%where the first summand is
%\begin{equation}
%\begin{aligned}
%&\dfrac{\partial }{\partial \ell_3}\left(\dfrac{\partial Q_4}{\partial g_4}\cdot\dfrac{\partial V}{\partial Q_4}\right)=\dfrac{\partial Q_4}{\partial g_4}\cdot\dfrac{\partial }{\partial \ell_3}\dfrac{\partial V}{\partial Q_4}+\dfrac{\partial L_4}{\partial \ell_3}\dfrac{\partial }{L_4}\left(\dfrac{\partial Q_4}{\partial g_4}\cdot\dfrac{\partial V}{\partial Q_4}\right)\\
%&=\dfrac{\mu}{\chi^2}+\dfrac{1}{\chi^2}\cdot\dfrac{1}{\chi^2}=\dfrac{\mu}{\chi^2}.\\
%\end{aligned}\nonumber\end{equation}

\subsection{Estimates of the solutions}
\label{SSSol}
We integrate the variational equations to get the $\frac{\partial (X,Y)(\ell_4^f)}{\partial (X,Y)(\ell_4^i)}$ in equation~(\ref{eq: formald2}).

\begin{Lm}
Under the hypothesis of Lemma~\ref{LmQderT}
the following estimates are valid  as $1/\chi\ll\mu\to0$ 
%Under the same hypothesis as Lemma~\ref{LmDerGlob}, we integrate the variational equations of the Hamiltonian equations to get the following. When $Q_4$ is moving to the right of the section $x=-\chi/2$, we use $\ell_4^i$ corresponds to the section $x=-2$ and $\ell_4^i$ corresponds to the section $x=-\chi/2$ for piece $(I)$ and vice versa for piece $(V)$.
%When $Q_4$ is moving to the left of the section $x=-\chi/2$, we use $\ell_4^i$ corresponds to the section $x_4=-\chi/2,\ \dot x_4<0$ and $\ell_4^i$ corresponds to the section $x_4=-\chi/2,\ \dot x_4>0$.
\begin{itemize}
\item[(a)] For maps $(I)$ and $(V)$,\begin{equation}\begin{aligned}
&\dfrac{\partial (X,Y)(\ell_4^f)}{\partial (X,Y)(\ell_4^i)}
=\mathrm{Id}+O\left[\begin{array}{cc|c}
\mu& (\mu)_{1\times 3}&(\mu)_{1\times 2}\\
1& (\mu)_{1\times 3}&(1)_{1\times 2}\\
(\mu)_{2\times 1}& (\mu)_{2\times 3}&(\mu)_{2\times 2}\\
\hline
(1)_{2\times 1}&(\mu)_{2\times 3}&(1)_{2\times 2}\\
\end{array}\right].\end{aligned}\label{eq: fundIV}
\end{equation}
\item[(b)] For map $(III)$,
\begin{equation}\begin{aligned}
&\dfrac{\partial (X,Y)(\ell_4^f)}{\partial (X,Y)(\ell_4^i)}
=\mathrm{Id}+O\left[\begin{array}{cc|c}
\frac{1}{\chi}& (\frac{1}{\chi^2})_{1\times 3}&(\frac{\mu}{\chi})_{1\times 2}\\
1& (\frac{1}{\chi})_{1\times 3}&(\frac{1}{\chi})_{1\times 2}\\
(\frac{1}{\chi})_{2\times 1}&(\frac{1}{\chi^2})_{2\times 3}&(\frac{\mu}{\chi})_{2\times 2}\\
\hline
(\frac{1}{\chi})_{1\times 2}&(\frac{\mu}{\chi})_{2\times 3}&(1)_{2\times 2}
\end{array}\right].\end{aligned}\label{eq: fundIII}
\end{equation}
\item[(c)] $\frac{\partial Y(\ell_4^f)}{\partial Y(\ell_4^i)}$ and $\frac{\partial Y}{\partial L_3}$ have the same asymptotics as item (b) of Proposition \ref{PrExact}. 

% the following limits as $\mu\to 0, \chi\to\infty$
%\[\mathrm{Map\ (I):\ }
%\left[\begin{array}{cc}
%1+\frac{\tilde L^2_4}{2 (\tilde L_4^2+\tilde G_4^2)} & -\frac{\tilde L_4}{2}\\
 %\frac{\tilde L_4^3}{2 (\tilde L_4^2+\tilde G_4^2)^2}& 1-\frac{\tilde L_4^2}{ 2(\tilde L_4^2+\tilde G_4^2)} \\
%\end{array}
%\right], %\text{ for map } (I) \]
%\ \mathrm{Map\ (III):\ } \left[\begin{array}{cc}
%\frac{1}{2} & -\frac{ L_4}{2}\\
 %\frac{3}{2 L_4}& \frac{1}{2} \\
%\end{array}
%\right],\]
%\[
%\mathrm{Map\ (V):\ }\left[\begin{array}{cc}
%1-\frac{\hat L^2_4}{2(\hat L_4^2+\hat G_4^2)} &-\frac{\hat L_4}{2}\\
 %\frac{\hat L_4^3}{2(\hat L_4^2+\hat G_4^2)^2}& 1+\frac{\hat L_4^2}{2(\hat L_4^2+\hat G_4^2)} \\
%\end{array}
%\right].  \]%\text{ for map } (V)\]
%In addition for map (I) we have
%$\frac{\partial Y}{\partial L_3}
%\to\left( -\frac{\tilde G_4\tilde L_4}{2(\tilde L_4^2+\tilde G^2_4)},
%-\frac{\tilde G_4\tilde L_4^2}{2(\tilde L_4^2+\tilde G^2_4)^2}\right)^T. $
\label{Lm: variation}
\end{itemize}
\end{Lm}

Parts (a) and (b) of  this lemma claim that we can integrate the estimates of Lemma \ref{Lm: ordervar} over $\ell_4$-interval of
size $O(\chi)$. 

\begin{proof}
%(a) and (b).  Keeping in mind the integrals
%\[\int_{0}^{\chi/2}\dfrac{1}{\chi}d\ell_4=O(1),\text{ and } \int_{0}^{\chi/2}\dfrac{\mu}{\ell_4^2+1}d\ell_4=O(\mu).\]
%Eventually, both (a) and (b) are obtained as $\mathrm{Id}$ plus the integral of the estimates (a) and (b) in Lemma 
%\ref{Lm: ordervar} over time $O(\chi)$. 

We use the following convention. For two matrices $M_1,M_2$, by $M_1\leq M_2$, we mean the inequality for each corresponding matrix entries. Similarly, the notation $|M_1|$ means to take the absolute value in each entry of $M_1$. We use the following version of Gronwall inequality, which can be proven by 
either comparing the series obtained from Picard iterations (see below) or by applying standard comparison theorem
for the ODEs.

\blm
\label{LmLinGronwall}
Consider two linear systems
$X_1'=M_1(t) X_1$ and $X_2'=M_2(t)X_2$. Suppose that $|M_1(t)|\leq M_2(t).$ 
Then the corresponding fundamental solutions satisfy componentwise inequalities
$$\left|\Phi_1(t)\right|\leq \Phi_2(t)$$ for all $t\geq 0.$ 
%when $\Phi_2(t)$ is bounded.
\elm

Let us consider part (b) first, which is easier since the estimate (b) in Lemma \ref{Lm: ordervar} 
does not depend on $\ell_4$. Consider ODE system $X'(t)=KX$ where 
$K$ is the matrix in (b) of Lemma \ref{Lm: ordervar}.
It can be verified by straightforward computation that $(K\chi)^2\leq C K\chi$ where
$C$ is a constant independent of $\chi.$ 
thus $(K\chi)^n\leq C^n K\chi$, so we get $X(\chi)\leq \Id+e^{C}K\chi$.
Now part (b) follows from Lemma \ref{LmLinGronwall}.

Next, we work on part (a). After a rescaling $\ell_4=\chi t/2$, $t\in (0,1)$ we compare the variational equation with the
ODE 
\begin{equation}
\label{PartAComparison}
X'=K(t)X 
\end{equation}
where 
\begin{equation}
\label{PartAComparison2}
K(t):=cA\chi+\frac{c\mu\chi}{(\chi t/2)^2+1}\mathbf 1
\end{equation}
is an upper bound for $\chi$ times the estimate of Lemma \ref{Lm: ordervar}(a), 
$c$ is a large positive constant, $A$ is the constant matrix and $\mathbf 1$ is the matrix whose entries are all $1$'s. 
We can verify in the same way as the proof of part (b) that 
\begin{equation}
\label{PartBIneq}
\chi^2 A^2\leq C \chi A \quad\text{and}\quad |A|\leq \frac{C}{\chi}\mathbf{1}
\end{equation}

%for some constant $C$. Moreover it is known that $A=O(1/\chi)$. By choosing $c$ in $K(t)$ large and shrinking the norm of $A$, we can assume that \begin{equation}\label{A1}\chi^2 A^2\leq  \chi A,\quad A\chi < \frac{1}{60}\cdot \mathbf 1.\end{equation}
%This is verified in the same way as the part (b)%Consider a time rescaling $\tau=a t$ for some $a>0$, we get $\frac{dX}{d\tau}=\frac{1}{a}K(\tau/a) X$ where $\frac{1}{a}K(\tau/a)=\frac{c}{a}A\chi+\frac{c\mu_a\chi}{(\chi \tau/2)^2+a^2}\mathbf 1$ and $\mu_a=\mu a$. Therefore by choosing $a$ large we may assume $ \chi(A\mathbf 1+\mathbf 1 A)< 0.5\mathbf 1. $ As in the proof of part (b), we can verify that $(A\chi)^2\leq A\chi$. 

By Lemma \ref{LmLinGronwall}, it is enough to show that the upper bound of the fundamental solution $X(1)$ of \eqref{PartAComparison} is given by the estimate in part (a).

Solving \eqref{PartAComparison} by Picard iteration we get 
%\begin{equation}\label{EqPicard}
$$X(t)=\Id+\int_0^tK(s)X(s)\,ds=\Id+\int_0^tK(s)\,ds+\int_0^tK(s_1)\int_0^{s_1}K(s_0)\,ds_0\,ds_1+\cdots.$$
%\end{equation} 

The terms which do not contain $\mu$ sum to $e^{\chi A}$ which is $\Id+O(\chi A)$ by the same argument
as in part (a). We claim that the remaining terms sum to $O(\mu).$ To this end let
$k(t)=\tC \max(1, \frac{\chi}{\chi^2 t^2+1}).$ 
By \eqref{PartAComparison2} and \eqref{PartBIneq} the contribution of the terms containing $\mu$
is less that $\mu Y(1)$ where $Y$ is the fundamental solution
of $\dot{Y}=k(t) Y.$ Since $\int_0^1 k(t) dt=O(1)$ we have $Y(t)=O(1)$ as claimed.

To prove part (c) we need to find the asymptotics of $\bbV.$ Consider map (I) first. $\bbV$ satisfies
$ \bbV'=\dfrac{\partial \cV}{\partial Y}\bbV. $
By already established part (a) $\bbV=O(1)$ so the above equation can be rewritten as
\[\bbV'= \dfrac{\xi L^2}{\chi (1-\xi)^3} A
 \bbV+O\left(\dfrac{\mu}{\ell_4^2+1}+\dfrac{\mu}{\chi}\right).
\]
where
$A=\left[
\begin{array}{cc}
\frac{L^2}{(G^2+L^2)}& L\\
-\frac{L^3 }{(G^2+L^2)^2}&- \frac{L^2}{(G^2+L^2)}\\
\end{array}
\right].$ Now Gronwall Lemma gives
$\bbV\approx \tbbV$ where
$\tbbV$ is the fundamental solution of
$\tbbV'= \frac{\xi L^2}{\chi (1-\xi)^3} A
 \tbbV. $ Using $\xi$ as the independent variable we get
 $\frac{d\tbbV}{d\xi}=-\frac{\xi}{(1-\xi)^3} A \tbbV.$
 Note that $\xi(\ell_4^i)=o(1),$ $\xi(\ell_4^f)=\frac{1}{2}+o(1).$
Making a further time change $d\tau=\frac{\xi d\xi}{(1-\xi)^3}$
we obtain the constant coefficient linear equation
$\frac{d\tbbV}{d\tau}=-A\tbbV.$ Observe that Tr$(A)=$det$(A)=0$
and so $A^2=0.$ Therefore
\begin{equation}
\tbbV(\sigma, \tau)=\Id-(\tau-\sigma) A.
\label{ExpNil}
\end{equation}
Since
$\tau=\frac{\xi^2}{2(1-\xi)^2}$ we have $\tau(0)=0,$ $\tau\left(\frac{1}{2}\right)=\frac{1}{2}.$
Plugging this into \eqref{ExpNil} we get the claimed asymptotics for map (I). The analysis of map (V) is similar. To analyze map (III)
we split
\[\dfrac{\partial Y(\ell_4^f)}{\partial Y(\ell_4^i)}=
\dfrac{\partial Y(\ell_4^f)}{\partial Y(\ell_4^m)}\dfrac{\partial Y(\ell_4^m)}{\partial Y(\ell_4^i)}
\]
where $\ell_4^m=\frac{\ell_4^i+\ell_4^f}{2}.$ Using the argument presented above we obtain
\[\dfrac{\partial Y(\ell_4^m)}{\partial Y(\ell_4^i)}=
\left[\begin{array}{cc} \frac{3}{2} & -\frac{L}{2} \\ & \\
                                  \frac{1}{2L} & \frac{1}{2}
                                  \end{array}\right], \quad
                                  \dfrac{\partial Y(\ell_4^f)}{\partial Y(\ell_4^m)}=
\left[\begin{array}{cc} \frac{1}{2} & -\frac{L}{2} \\ & \\
                                  \frac{1}{2L} & \frac{3}{2}
                                  \end{array}\right]. \]
Multiplying the above matrices we obtain the required asymptotics for map (III).

Next using the same argument as in analysis of
$\frac{\partial Y(\ell_4^f)}{\partial Y(\ell_4^i)}$ we obtain
$\frac{\partial Y}{\partial L_3}\approx \bbW$ where
\[\bbW'=\dfrac{\xi L^2}{\chi (1-\xi)^3}
\left[A
 \bbW+\left(-\dfrac{G L}{(L^2+G^2)},\dfrac{G L^2}{(L^2+G^2)^2}\right)^T\right] .\]
In terms of the new time this equation reads
\[\dfrac{d\bbW}{d\tau}=-
\left[A
 \bbW+\left(-\dfrac{G L}{(L^2+G^2)},\dfrac{G L^2}{(L^2+G^2)^2}\right)^T\right] .\]
Solving this equation using \eqref{ExpNil} and initial condition $(0,0)^T$, we obtain the asymptotics of $\frac{\partial Y}{\partial L_3}.$
\end{proof}

\section{Boundary contributions and the proof of Proposition~\ref{Thm: matrices}}
\label{section: boundary}
According to \eqref{eq: formald2}
we need to work out the boundary contributions in order to complete the proof of Proposition~\ref{Thm: matrices}.

\subsection{Dependence of $\ell_4$ on variables $(X,Y)$}\label{SSell_4}
To use the formula~\eqref{eq: formald2}  we need to work out
$(\mathcal{U},\mathcal{V})(\ell_4^i)\otimes\frac{\partial\ell_4^i}{\partial (X,Y)^i}$ and $(\mathcal{U},\mathcal{V})(\ell_4^f)\otimes\frac{\partial\ell_4^f}{\partial (X,Y)^f}$.
Consider $x_4$ component of $Q_4$ (see equation~(\ref{eq: Q4})).
\[x_4=-\cos g_4 (L_4^2\sinh u_4-e_4)+\sin g_4(L_4G_4\cosh u_4).\]
For fixed $x_4=-\chi/2$ or $-2$, we can solve for $\ell_4$ as a function of $L_4,G_4,g_4$. From the calculations in the Appendix~\ref{subsection: hyp}, Lemma~\ref{LM: 1stder}, and the implicit function theorem, we get
\begin{equation}
\begin{aligned}
&\mathrm{for\ the\ section\ } x_4=-\chi/2,\ \left(\dfrac{\partial \ell_4}{\partial L_4}, \dfrac{\partial \ell_4}{\partial G_4}, \dfrac{\partial \ell_4}{\partial g_4}\right)\Big|_{x_4=-\chi/2}=
(O(\chi), O(1), O(1)),\\
&\mathrm{for\ the\ section\ } x_4=-2,\ \left(\dfrac{\partial \ell_4}{\partial L_4}, \dfrac{\partial \ell_4}{\partial G_4}, \dfrac{\partial \ell_4}{\partial g_4}\right)\Big|_{x_4=-2}=
(O(1), O(1), O(1)).
\end{aligned}\nonumber\end{equation}
Explicitly, the $O(\chi)$ term is \begin{equation}\label{Eqell4/L4} \frac{\partial \ell_4}{\partial L_4}=-\frac{\partial x_4}{\partial L_4}/\frac{\partial x_4}{\partial \ell_4}= \frac{\sinh u(2\sqrt{L^2_4+G_4^2})}{\sign(u) L^3} +O(1)\end{equation} using Lemma \ref{LM: 1stder} and \ref{Lm: tilt}. This shows that the $O(\chi)$ term has always positive coefficient. 
Using equation~(\ref{eq: W}) which relates $L_4$ to $L_3$, we obtain for the section $\left\{x_4=-\chi/2\right\}$,
\begin{equation}
\begin{aligned}
&\dfrac{\partial \ell_4}{\partial (X,Y)}\Big|_{x_4=-\chi/2}=
(O(\chi), O(1/\chi),O(1/\chi),O(1/\chi),O(1), O(1)),\\
&(\mathcal{U},\mathcal{V})\Big|_{x_4=-\chi/2}=(O(1/\chi^2),-1+O(1/\chi), O(1/\chi^2)_{1\times 4})^T,\\
\end{aligned}\label{eq: l/XY1}
\end{equation}
For the section $\{x_4=-2\}$,
\begin{equation}
\begin{aligned}
&\dfrac{\partial \ell_4}{\partial L_3}\Big|_{x_4=-2}=
(O(1), O(\mu),O(\mu),O(\mu),O(1), O(1)),\\
&(\mathcal{U},\mathcal{V})\Big|_{x_4=-2}=(0,-1,0,0,0,0)^T+O(\mu).
\label{eq: l/XY2}
\end{aligned}\end{equation}
The matrix $(\mathcal{U},\mathcal{V})\otimes
\frac{\partial \ell_4}{\partial(X,Y)}\Big|_{x_4=-\chi/2}$ has rank $1$ and the only nonzero eigenvalue is $O(1/\chi)$, and $(\mathcal{U},\mathcal{V})\otimes\frac{\partial \ell_4}{\partial(X,Y)}\Big|_{x_4=-2}$ has rank $1$ and the only nonzero eigenvalue is $O(\mu)$. So the inversion appearing in~\eqref{eq: formald2} is valid.
%Here we use the fact that $G,g=O(1/\chi)$. So we get $\dfrac{\partial \ell_4}{\partial G}=O(1)$ instead of $O(\chi)$.\\

\subsection{Asymptotics of matrices $(I),(III),(V)$ from the Proposition~\ref{Thm: matrices}}
Here we complete the computations of matrices (I), (III) and (V).

\textbf{The boundary contribution to $(I)$.}
In this case, $\ell_4^i$ stands for the section $\left\{x_4=-2\right\}$ and $\ell_4^f$ stands for the section $\left\{x_4=-\chi/2\right\}$. So we use equation~(\ref{eq: l/XY2}) to form $(\mathcal{U},\mathcal{V})(\ell_4^i)\otimes\frac{\partial\ell_4^i}{\partial (X,Y)^i}$ in equation~\eqref{eq: formald2} and equation~(\ref{eq: l/XY1}) to form $(\mathcal{U},\mathcal{V})(\ell_4^f)\otimes\frac{\partial\ell_4^f}{\partial (X,Y)^f}$.
We have
\begin{equation}
\label{eq: lf}
\begin{aligned}
&\left(\text{Id}-(\mathcal{U}, \mathcal V)(\ell_4^f)\otimes \dfrac{\partial \ell_4^i}{\partial (X,Y)^i}\right)^{-1}=\text{Id}+\sum_{k=1}^\infty\left((\mathcal{U}, \mathcal V)(\ell_4^f)\otimes \dfrac{\partial \ell_4^i}{\partial (X,Y)^i}\right)^k\\
&=\text{Id}+\left((\mathcal{U}, \mathcal V)(\ell_4^f)\otimes \dfrac{\partial \ell_4^i}{\partial (X,Y)^i}\right)\sum_{k=0}^\infty\left(\dfrac{\partial \ell_4^i}{\partial (X,Y)^i}\cdot (\mathcal{U}, \mathcal V)(\ell_4^f)\right)^k\\
&=\text{Id}+\left((\mathcal{U}, \mathcal V)(\ell_4^f)\otimes \dfrac{\partial \ell_4^i}{\partial (X,Y)^i}\right)(1+O(1/\chi)).
\end{aligned}\end{equation}
Now we use equation~\eqref{eq: formald2} and Lemma~\ref{Lm: variation} to obtain the asymptotics of the matrix $(I)$ stated in Proposition~\ref{Thm: matrices}.

\textbf{The boundary contribution to $(III)$}

This time we use equation~(\ref{eq: l/XY1}) to form both $(\mathcal{U},\mathcal{V})(\ell_4^i)\otimes\frac{\partial\ell_4^i}{\partial (X,Y)^i}$ and $(\mathcal{U},\mathcal{V})(\ell_4^f)\otimes\frac{\partial\ell_4^f}{\partial (X,Y)^f}$ in equation~\eqref{eq: formald2}.\\
The matrix $\left(\mathrm{Id}-(\mathcal{U},\mathcal{V})(\ell_4^f)\otimes\frac{\partial\ell_4^f}{\partial (X,Y)^f}\right)^{-1}$ has the same form as~(\ref{eq: lf}).
Now we use equation~\eqref{eq: formald2} and Lemma~\ref{Lm: variation} to obtain the asymptotics of the matrix $(III)$ stated in Proposition~\ref{Thm: matrices}.

\textbf{The boundary contribution to $(V)$}

This time we use equation~(\ref{eq: l/XY1}) to form $(\mathcal{U},\mathcal{V})(\ell_4^i)\otimes\frac{\partial\ell_4^i}{\partial (X,Y)^i}$ and equation~(\ref{eq: l/XY2}) to form $(\mathcal{U},\mathcal{V})(\ell_4^f)\otimes\frac{\partial\ell_4^f}{\partial (X,Y)^f}$ in equation~\eqref{eq: formald2}.\\
The matrix $\left(\mathrm{Id}-(\mathcal{U},\mathcal{V})(\ell_4^f)\otimes\dfrac{\partial\ell_4^f}{\partial (X,Y)^f}\right)^{-1}=\mathrm{Id}-(\mathcal{U},\mathcal{V})(\ell_4^f)\otimes\dfrac{\partial\ell_4^f}{\partial (X,Y)^f}(1+O(\mu))$.
Now we use equation~\eqref{eq: formald2} and Lemma~\ref{Lm: variation} to obtain the asymptotics of the matrix $(V)$ stated in Proposition~\ref{Thm: matrices}.

Now we are ready to finish the proof of Proposition \ref{PrExact}.
\begin{proof}[Proof of Proposition \ref{PrExact}]
The matrices $(I),(III),(V)$ are obtained by multiplying the solution to the variational equations (Lemma \ref{Lm: variation}) and the boundary contributions according to \eqref{eq: formald2}. By explicit calculation it can be verified that the $O(\chi)$ terms, i.e. the $(2,1)$ entries of the $(I),(III),(V)$ come from the $O(\chi)$ term in the boundary contribution, i.e. the $\frac{d\ell_4}{dL_3}$ term, which is always positive (see \eqref{Eqell4/L4} in Section \ref{SSell_4}).  This finishes the proof of part (a). Again explicit calculation shows that the estimate of part (b) comes mainly from the solution to the variational equation (Lemma \ref{Lm: variation}).
%\ref{PrExact}.
\end{proof}
\section{Switching foci}\label{section: switch foci}
Recall that we treat the motion of $Q_4$ as a Kepler motion focused at $Q_2$ when it is moving to the right of the section $\left\{x=-\chi/2\right\}$ and treat
it as a Kepler motion focused at $Q_1$ when it is moving to the left of the section $\left\{x=-\chi/2\right\}$. Therefore, we need to make a change of coordinates when $Q_4$ crosses the section $\left\{x_4=-\chi/2\right\}$. These are described by the matrices $(II)$ and $(IV)$. Under this coordinate change
the $Q_3$ part of the Delaunay variables does not change. The change of $G_4$ is given by the difference of angular momentums w.r.t. different reference points ($Q_1$ or $Q_2$). To handle it
we introduce an auxiliary variable $v_{4y}$-the $y$ component of the velocity of $Q_4.$
Relating $g_4$ with respect to the different reference points to $v_{4y}$ we complete the computation.

\subsection{From the right to the left}\label{subsection: r2l} We have
$$(II)=\dfrac{\partial(L_3, \ell_3, G_3, g_3, G_{4L},g_{4L})}{\partial (L_3, \ell_3, G_3, g_3, G_{4R},g_{4R})}\Big|_{x_4=-\chi/2}=(iii)(ii)(i)$$
where matrices $(i), (ii)$ and $(iii)$ correspond to the following coordinate changes restricted to the section $\{x_4=-\chi/2\}$.
\[(G,g)_{4R}\stackrel{(i)}{\longrightarrow} (G,v_y)_{4R}\stackrel{(ii)}{\longrightarrow}  (G,v_y)_{4L}\stackrel{(iii)}{\longrightarrow}(G,g)_{4L}.\]

\begin{proof}[Computation of matrices (i) and (iii)(ii) in Proposition \ref{Thm: matrices}]
$(i)$ is given by the relation \[%G_{4R}=G_{4R},\quad
v_{4y}=
\dfrac{\frac{1}{L_{4R}}\sinh u_{4R}\sin g_{4R}+\frac{G_{4R}}{L_{4R}^2}\cos g_{4R}\cosh u_{4R}}{1-e_{4R}\cosh u_{4R}},
\quad L_{4R}=k_RL_{3}-\dfrac{W_R}{3L_3^2}.\]
where last relation follows from \eqref{eq: W}.
Recall that by Lemma \ref{Lm: tilt} $$g_{4R}=-\arctan\dfrac{G_{4R}}{L_{4R}}+O(1/\chi).$$
In addition \eqref{LRAngularMom} below and the fact that $G_{4R}$ and $G_{4L}$ are $O(1)$
implies $v_{4y}=O(\frac{1}{\chi}).$ Now
the asymptotics of (i) is obtained by a direct computation.
We compute $\frac{d v_{4y}}{d L_3}$ the other derivatives are similar but easier.
We have $\frac{d v_{4y}}{d L_3}=\frac{d v_{4y}}{d L_{4R}} \frac{\partial L_{4R}}{\partial L_3}.$ The second term is
$k_R+O(1/\chi).$ On the other hand
\begin{equation}\nonumber\begin{aligned}
\dfrac{d v_{4y}}{d L_4}&=\dfrac{\frac{\partial}{\partial L_{4R}}
\left(\frac{1}{L_{4R}}\sinh u_{4R}\sin g_{4R}+\frac{G_{4R}}{L_{4R}^2}\cos g_{4R}\cosh u_{4R}\right)}{1-e_{4R}\cosh u_{4R}}\\
&+
v_{4y} \dfrac{\frac{\partial e_{4R}}{\partial L_{4R}} \cosh u_{4R}}{1-e_{4R} \cosh u_{4R}}+
\dfrac{\partial v_{4y}}{\partial \ell_{4R}} \dfrac{\partial \ell_{4R}}{\partial L_{4R}}. \end{aligned}\end{equation}
The main contribution comes from the first term which equals
$\frac{G_{4R}}{L_{4R} (L_{4R}^2+G_{4R}^2)}+O(1/\chi).$
The second term is $O(1/\chi)$ since $v_{4R}=O(1/\chi).$ Next rewriting
\[v_{4y}=
\dfrac{\frac{1}{L_{4R}}\tanh u_{4R}\sin g_{4R}+\frac{G_{4R}}{L_{4R}^2}\cos g_{4R}}{(1/\cosh u_{4R})-e_{4R}}\]
we see
$\dfrac{\partial v_{4y}}{\partial \ell_{4R}} \dfrac{\partial \ell_{4R}}{\partial L_{4R}}=O(1/\chi^2) \times O(\chi)=O(1/\chi)$
since  $\frac{\partial \ell_{4R}}{\partial L_{4R}}=O(\chi)$ by \eqref{eq: l/XY1}.

%We need to notice, there is a contribution from $\dfrac{\partial \ell_{4R}}{\partial (L_{3R},\ell_{3R},G_{3R},g_{3R}, G_{4R},g_{4R})}\dfrac{\partial }{\partial \ell_{4R}}, $ where the largest term is $\dfrac{\partial \ell_{4R}}{\partial L_{3R}}\dfrac{\partial }{\partial \ell_{4R}}, $ since $\dfrac{\partial \ell_{4R}}{\partial L_{3R}}=O(\chi)$ from \eqref{eq: l/XY1}. However, using Lemma \ref{Lm: simplify} in the Appendix \ref{section: appendix} and the fact that $v_{4y}=O(1/\chi)$, we find the derivative $\dfrac{\partial v_{4yR}}{\partial \ell_{4R}}=\left(-\dfrac{1}{L_{4R}}\sin g_{4R}+\dfrac{G_{4R}}{L_{4R}^2}\cos g_{4R}\right)\dfrac{\sinh u+O(1/\chi)}{1-e\cosh u}=O(1/\chi^2).$ This implies $\dfrac{\partial \ell_{4R}}{\partial L_{3R}}\dfrac{\partial }{\partial \ell_{4R}}=O(1/\chi),$ and is negligible.

%Working out the derivative, we get $(i)$ in the Lemma.

$(ii)$ is given by
\begin{equation}
\label{LRAngularMom}
G_{4L}=G_{4R}-\chi v_{4y}, %,\quad v_{4yL}=v_{4yR}.
\end{equation}
which comes from the simple relation $v_4\times (Q_4-Q_1)=v_4\times Q_4-v_4\times Q_1$. Here $G_{4R}$ and $v_{4y}$ are independent variables so the computation of the derivative of (ii) is straightforward.

To compute the derivative of $(iii)$ we use the relation from \eqref{eq: Q4l}
\[%G_L=G_L,\quad \quad
v_{4y}=\dfrac{-\frac{1}{L_{4L}}\sinh u_{4L}\sin g_{4L}-\frac{G_{4L}}{L_{4L}^2}\cos g_{4L}\cosh u_{4L}}{1-e_{4L}\cosh u_{4L}}\]
where $u_L<0$. Arguing the same way as for (i) and using the fact that by
Lemma \ref{Lm: tilt}, $G_L,g_L=O(1/\chi)$, $-\sinh u_L,\cosh u_L\simeq \frac{\ell_{4L}}{e_L}$
we obtain
$\delta v_{4y}=\dfrac{\delta G_{4L}}{k_R^2 L_3^2}-\dfrac{\delta g_{4L}}{k_R L_3}+ HOT.$
Hence
\[ \delta g_{4L}=\dfrac{\delta G_{4L}}{k_R L_3}-k_R L_3 \delta   v_{4y}+HOT=
\dfrac{\delta G_{4R}-\chi \delta v_{4y} }{k_R L_3}+HOT
\]
completing the proof of the lemma.
%. We first work out $\dfrac{\partial (L_3, \ell_3, G_3, g_3, G_{4R},v_{4R})}{\partial(L_3, \ell_3, G_3, g_3, G_{4L},g_{4L})}\Big|_{x_4=-\chi/2}$ then take the inverse.
%To show how the computation is performed, we consider the $G_4,g_4$ part of the matrix for example. Then
%\begin{equation}\begin{aligned}
%&\dfrac{\partial (G_{4R},v_{4yR})}{\partial (G_{4L}g_{4L})}=\dfrac{\partial (G_{4R},v_{yR})}{\partial (G_{4L}v_{yL})}\dfrac{\partial (G_{4L},v_{yL})}{\partial (G_{4L}g_{4L})}=\left[\begin{array}{cc} k_R& -k_R\chi \\
%0& 1\end{array}\right]\\
%&\cdot\left[\begin{array}{cc} 1& 0 \\
%-\dfrac{1}{L_3^2}+O(1/\chi)& -\dfrac{1}{L_3}+O(1/\chi)\end{array}\right]=\left[\begin{array}{cc} \dfrac{k_R\chi}{L_3^2}+O(1)& \dfrac{k_R\chi}{L_3}+O(1) \\
%-\dfrac{1}{L_3^2}+O(1/\chi)& -\dfrac{1}{L_3}+O(1/\chi)\end{array}\right].
%\end{aligned}\nonumber\end{equation}
%Finally, we get the matrix $(iii)(ii)$ in the Lemma after taking inverse.\\
\end{proof}

%In the following, we must be very careful about the sign issue. We first relate the new variable $f$ to the old ones through
%\[f_2=g_2-\arctan \dfrac{G_2}{L_2}>0\]
%(Notice this change of variable $g\to f$ is not symplectic)%, if necessary, we can replace $L$ by $\lb^{-1/2}$. Then the change is symplectic.)
%and

%\[g_1=f_2-\arctan\dfrac{G_1}{L_1},\quad f_2=f_1\]

\subsection{From the left to the right}
At this step we need to compute
\[(IV)=\dfrac{\partial(L_3, \ell_3, G_3, g_3, G_{4R},g_{4R})}{\partial (L_3, \ell_3, G_3, g_3, G_{4L},g_{4L})}\Big|_{x_4=-\chi/2}=(iii')(ii')(i').\]
where the matrices $(iii'),$ $(ii')$ and $(i')$ correspond to the following changes of variables restricted to the section $\{x_4=-\chi/2\}$.
\[(G,g)_L\stackrel{(i')}{\longrightarrow}
(G,v_{4y})_L\stackrel{(ii')}{\longrightarrow}(G,v_{4y})_R\stackrel{(iii')}{\longrightarrow}(G,g)_R.\]

\begin{proof}[Computation of matrices $(iii')$ and $(ii')(i')$ in Proposition \ref{Thm: matrices}]
$(i')$ is given by
\[ v_{4y}=\dfrac{-\frac{1}{L_{4L}}\sinh u_{4L}\sin g_{4L}-\frac{G_{4L}}{L_{4L}^2}\cos g_{4L}\cosh u_{4L}}{1-e_{4L}\cosh u_{4L}}<0.\]
Here $u_L>0$ and $G_{4L},g_{4L}=O(1/\chi)$.

$(ii')$ is given by
\[G_R=G_L+\chi v_{4yL}.\]
Now the analysis is similar to Subsection \ref{subsection: r2l}. In particular the main contribution to
$[(ii')(i')]_{44}$ comes from
\begin{equation}%\begin{aligned}
\dfrac{\partial (G_{4R},v_{4y})}{\partial (G_{4L}, g_{4L})}=\dfrac{\partial (G_{4R},v_{4y})}{\partial (G_{4L}, v_{4y})}\dfrac{\partial (G_{4L},v_{4y})}{\partial (G_{4L}, g_{4L})}
=\left[\begin{array}{cc} 1& \chi \\
0& 1\end{array}\right]\left[\begin{array}{cc} 1& 0 \\
\frac{1}{L_3^2}+O\left(\frac{1}{\chi}\right)& \frac{1}{ L_3}+O\left(\frac{1}{\chi}\right)\end{array}\right].%\end{aligned}
\nonumber\end{equation}
The analysis of $(43)$ part is similar.

$(iii')$ is given by
\[G_R=G_R,\quad \quad v_{4y}=\dfrac{\frac{1}{L_{4R}}\sinh u_{4R}\sin g_{4R}+\frac{G_{4R}}{L_{4R}^2}\cos g_{4R}\cosh u_{4R}}{1-e_{4R}\cosh u_{4R}}<0.\]
Here $u_{4R}<0$, and by Lemma \ref{Lm: tilt}, $\tan g_{4R}= \frac{G_{4R}}{L_{4R}}+O(1/\chi).$
To get the asymptotics of the derivative
we first show that similarly to Subsection \ref{subsection: r2l}, we have
%$\dfrac{\partial (L_3, \ell_3, G_3, g_3, G_{4R},v_{yL})}{\partial(L_3, \ell_3, G_3, g_3, G_{4R},g_{4R})}\Big|_{x_4=-\chi/2}$ directly, whose first five rows coincide with $(iii)'$ stated in the lemma and the last row is $
\[d v_{4y}=\left(-\dfrac{G_{4R}}{L_{3}(k_R^2L_{3}^2+G_{4R}^2)},0,0,0,\dfrac{1}{k_R^2L_{3}^2+G_{4R}^2}, \dfrac{1}{k_RL_3}\right)+O\left(\dfrac{1}{\chi},\dfrac{1}{\chi^2},\dfrac{1}{\chi^2},\dfrac{1}{\chi^2},\dfrac{1}{\chi},\dfrac{1}{\chi}\right)\]
and then take the inverse.
\end{proof}

\section{Approaching close encounter}\label{section: localappr}
\label{SSMidLand}

In this paper we choose to separate local and global maps by section $\{x_4=-2\}.$ We could have used instead $\{x_{4}=-10\},$
or $\{x_{4}=-100\}.$ Our first goal is to show that the arbitrariness  of this choice does not change the asymptotics of derivative of the local
map (we have already seen in Sections \ref{SSSol}  and \ref{section: boundary}
that it does not in change the asymptotics of the derivative of the global map).

We choose the section $\{|Q_3-Q_4|=\mu^{\kappa}\},\ 1/3<\kappa<1/2$. Outside the section the orbits are treated as perturbed Kepler motions and inside the section the orbits are treated as two body scattering. We shall estimate the errors of this approximation.
We break the orbit into three pieces: from $\{x_4=-2, \dot{x}_4>0\}$ to $\{|Q_3^--Q^-_4|=\mu^{\kappa}\}$, from $\{|Q_3^--Q^-_4|=\mu^{\kappa}\}$ to $\{|Q_3^+-Q^+_4|=\mu^{\kappa}\}$ and from $\{|Q_3^+-Q^+_4|=\mu^{\kappa}\}$ to $\{x_4=-2, \dot{x}_4<0\}$.
Here and below, we use the following convention.

{\bf Convention:} {\it A variable with superscript $-$ (reap. $+$) means its value measured on the section $|Q_3-Q_4|=\mu^\kappa$ before (resp. after) $Q_3, Q_4$ coming to close encounter.  }

In this section
we consider the two pieces of orbit outside the section $\{|Q_3-Q_4|=\mu^{\kappa}\}$. We use
Hamiltonian \eqref{eq: hamloc}. Then we convert the Cartesian coordinates to Delaunay coordinates. The resulting Hamiltonian is
\begin{equation}\label{EqHamLoc} H=-\dfrac{1}{2L_3^2}+\dfrac{1}{2L_4^2}-\dfrac{1}{|Q_4+(\chi,0)|}-\dfrac{1}{|Q_3+(\chi,0)|}-\dfrac{\mu}{|Q_3-Q_4|}.\end{equation}
The difference with the Hamiltonian \eqref{eq: Hr} is that we do not do the Taylor expansion to the potential $-\frac{\mu}{|Q_3-Q_4|}$.

The next lemma and the remark after it tell us that we can neglect those two pieces.

\begin{Lm}
Consider the orbits satisfying the conditions of Lemma \ref{LmDerLoc}.
For the pieces of orbit from $x_4=-2, \dot{x}_4>0$ to $|Q_3^--Q^-_4|=\mu^{\kappa}$ and from $|Q_3^+-Q^+_4|=\mu^{\kappa}$ to $x_4=-2, \dot{x}_4>0$,
%$1/3<\kappa<1/2$
the derivative matrices have the following form
in Delaunay coordinates 
\[\dfrac{\partial(X,Y)^-}{\partial(X,Y)|_{x_4=-2}},\ \dfrac{\partial(X,Y)|_{x_4=-2}}{\partial(X,Y)^+}
=\left[\begin{array}{ccc}
1& 0& 0_{1\times 4}\\
O(1)&2&O(1)_{1\times 4}\\
0_{4\times 1}&0_{4\times 1}&\Id_4\\
\end{array}
\right]+O\left(\mu^{1-2\kappa}+\frac{1}{\chi^3}\right).\]
\end{Lm}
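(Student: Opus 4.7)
The plan is to apply the framework of Sections~\ref{subsection: plan}--\ref{section: boundary}, specialized to these two short pieces of orbit lying in a bounded region near $Q_2.$ First I would establish $C^0$ bounds on the pieces. By Lemma~\ref{Lm: boundQ3}, $|Q_3| \le 2-\delta;$ combined with $|Q_3-Q_4| \ge \mu^{\kappa}$ and $x_4=-2$ at one endpoint, this forces $|Q_4|$ to stay uniformly bounded throughout. Consequently the elapsed time in $t$ (and the range of $\ell_4$ traversed) is $O(1),$ while $|Q_4 - Q_1| = \Theta(\chi).$ The energy relation~\eqref{eq: W} then gives $L_4 = k\,L_3 + O(1/\chi + \mu^{1-\kappa}),$ so $(L_3, L_4, G_3, G_4)$ remain bounded and the coefficient estimates of Lemmas~\ref{Lm: position}--\ref{Lm: 2ndder} apply.

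In this regime the leading order dynamics is two independent Kepler orbits about $Q_2$ (elliptic for $Q_3,$ hyperbolic for $Q_4$), which conserves $(L_3, G_3, g_3, L_4, G_4, g_4)$ and advances $\dot\ell_i = 1/L_i^3;$ using $\ell_4$ as the independent variable, the unperturbed fundamental solution of the variational equation is the identity on $(L_3, G_3, g_3, G_4, g_4),$ with only the $\ell_3$-row nontrivial, producing precisely the claimed main term. For the correction I would use the Duhamel formula~\eqref{eq: DuHamel}. The $Q_1$-contribution to second derivatives of the Hamiltonian is $O(1/\chi^3),$ integrating to $O(1/\chi^3)$ over the $O(1)$ time interval. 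The $Q_3$--$Q_4$ interaction contributes $O(\mu/|Q_3-Q_4|^3);$ using the transversality $d|Q_3-Q_4|/dt = \Theta(1)$ (the orbits are not tangent to the level sets of the pair distance in a generic close encounter) and changing variable to $r = |Q_3-Q_4|,$
\begin{equation*}
\int_0^T \dfrac{\mu\, dt}{|Q_3-Q_4|^3} \;=\; O\!\left(\int_{\mu^\kappa}^{O(1)} \dfrac{\mu\, dr}{r^3}\right) \;=\; O(\mu^{1-2\kappa}),
\end{equation*}
matching the stated error size. The boundary term at $\{x_4 = -2\}$ reproduces~\eqref{eq: l/XY2} and is absorbed; at the close-encounter section $\{|Q_3-Q_4| = \mu^\kappa\},$ implicit differentiation using the above transversality gives $\partial \ell_4/\partial(X,Y) = O(1),$ and combined with the $C^0$ size of $(\mathcal{U},\mathcal{V})$ from Lemma~\ref{Lm: orderham} the resulting rank-one boundary matrix via~\eqref{eq: formald2} sits in the $\ell_3$-row (since only the $\ell_3$-component of $\mathcal{U}$ is $O(1)$) and is absorbed there.

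The central technical difficulty is the uniform control of the $Q_3$--$Q_4$ interaction as $|Q_3-Q_4|$ approaches $\mu^\kappa$: the second derivatives of the interaction potential diverge, and integrability of $\mu/r^3$ against $dt$ is exactly what forces the restriction $\kappa < 1/2.$ The complementary lower bound $\kappa > 1/3$ is used separately inside the close-encounter patch (Section~\ref{section: local}) to treat the $Q_1$-perturbation during the two-body scattering and does not enter the present estimate.
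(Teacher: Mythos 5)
Your overall route is the paper's own: integrate the variational equation with the $Q_1$ term of size $O(1/\chi^3)$ and the pair interaction of size $O(\mu/|Q_3-Q_4|^3)$ giving $O(\mu^{1-2\kappa})$, keep the unperturbed shear in the $\ell_3$-row, and then add the boundary corrections through \eqref{eq: formald2}, using \eqref{eq: l/XY2} at $\{x_4=-2\}$ and implicit differentiation of $|Q_3-Q_4|=\mu^\kappa$ at the other section. So the structure and the error exponents are right.

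The one place where your argument has a real gap is the transversality you invoke twice (for the change of variables $dt\to dr$ and for $\partial\ell_4/\partial(X,Y)=O(1)$ at the section $\{|Q_3-Q_4|=\mu^\kappa\}$): you justify $d|Q_3-Q_4|/dt=\Theta(1)$ by saying the orbits are ``not tangent to the level sets of the pair distance in a generic close encounter.'' Genericity is not available here — you must verify it for precisely the orbits satisfying the hypotheses of Lemma \ref{LmDerLoc}, and it is not automatic: near a close approach the radial relative speed can vanish. The paper's proof derives it from two specific facts: (i) Lemma \ref{Lm: landau}(c), which says the angle between $Q_3-Q_4$ and $v_3-v_4$ is $O(\mu^{1-\kappa})$ (equivalently, the relative angular momentum is $O(\mu)$, so on the sphere of radius $\mu^\kappa$ the relative velocity is essentially radial), and (ii) Gerver's explicit velocity data, which give $\langle v_3-v_4, v_4\rangle\neq 0$, i.e.\ $\langle v_3,v_4\rangle\neq|v_4|^2$, so that after replacing $Q_3-Q_4$ by $v_3-v_4$ in \eqref{eq: l4} the denominator $(v_3-v_4)\cdot\frac{\partial Q_4}{\partial\ell_4}$ is bounded away from zero. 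Without (i)–(ii) your boundary estimate and your $\int \mu\,dr/r^3$ bound are unsupported. A second, smaller omission: when the section $\{|Q_3-Q_4|=\mu^\kappa\}$ is the final one, \eqref{eq: formald2} requires inverting $\mathrm{Id}-\mathcal{W}(\ell_4^f)\otimes\frac{D\ell_4^f}{DZ^f}$, whose only nonzero eigenvalue is essentially $\mathcal{U}_{\ell_3}\cdot\frac{\partial\ell_4}{\partial\ell_3}$; one must check this is bounded away from $1$, which the paper does by noting $\frac{\partial\ell_4}{\partial\ell_3}\simeq-1$. Both points are repairable, but they are exactly the non-bookkeeping content of the lemma and should be stated and proved rather than assumed.
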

\begin{proof}
The proof follows the plan in Section~\ref{subsection: plan}. We first consider the integration of the variational equation.
We treat the orbit as Kepler motions perturbed by $Q_1$ and interaction between $Q_3$ and $Q_4$. Consider first the perturbation coming from
the interaction of $Q_3$ and $Q_4.$
%interaction potential $\dfrac{\mu}{|Q_3-Q_4|}$ first.
The contribution of this interaction to the variational equation is of order
$\frac{\mu}{|Q_3-Q_4|^3}$.
If we integrate the variational equation along an orbit such that $|Q_3-Q_4|$ goes from $-2$ to $\mu^{\kappa}$,
then the contribution has the order
\begin{equation}\label{eq: pertout}O\left(\int_{-2}^{\mu^{\kappa}}\dfrac{\mu}{|t|^3}dt\right)=O(\mu^{1-2\kappa}).\end{equation}
Similar consideration shows that the perturbation from $Q_1$ is $O(1/\chi^3)$.

On the other hand absence of perturbation, all Delaunay variables except $\ell_3$ are constants of motion. The $(2,1)$ entry is also $o(1)$ following from the same estimate as the $(2,1)$ entry of the matrix in Lemma \ref{Lm: ordervar}.
After integrating over time $O(1)$, the solutions to the variational equations have the form
\[\mathrm{Id}+O(\mu^{1-2\kappa}+1/\chi^3).\]

Next we compute the boundary contributions.
The analysis  is the same as Section~\ref{section: boundary}.
The derivative is given by formula~\eqref{eq: formald2}.
We need to work out $(\mathcal{U},\mathcal{V})(\ell_4^i)\otimes\frac{\partial\ell_4^i}{\partial (X,Y)^i}$
and $(\mathcal{U},\mathcal{V})(\ell_4^f)\otimes\frac{\partial\ell_4^f}{\partial (X,Y)^f}$.
In both cases we have \[(\mathcal{U},\mathcal{V})=(0, 1, 0,0,0,0)+O(\mu^{1-2\kappa}).\]
For the section $\{x_4=-2\}$, we use \eqref{eq: l/XY2}. For the section $\{|Q_3-Q_4|=\mu^{\kappa}\}$, we have
\begin{equation}\dfrac{\partial \ell_4}{\partial(X,Y)}=-\left(\dfrac{\partial |Q_3-Q_4|}{\partial \ell_4}\right)^{-1}\dfrac{\partial |Q_3-Q_4|}{\partial (X,Y)}=-\dfrac{(Q_3-Q_4)\cdot\frac{\partial (Q_3-Q_4)}{\partial (X,Y)}}{(Q_3-Q_4)\cdot\frac{\partial (Q_3-Q_4)}{\partial \ell_4}}\label{eq: l4}\end{equation}
We will prove in Lemma \ref{Lm: landau}(c) below that the angle formed by $Q_3-Q_4$ and $v_3-v_4$ is $O\left(\mu^{1-\kappa}\right)$
(the proof of Lemma \ref{Lm: landau} does not rely on section \ref{SSMidLand}). Thus in \eqref{eq: l4} we can replace $Q_3-Q_4$ by $v_3-v_4$
making $O\left(\mu^{1-\kappa}\right)$ error. Hence
\[\dfrac{\partial \ell_4}{\partial(X,Y)}=\dfrac{(v_3-v_4)\cdot\frac{\partial (Q_3-Q_4)}{\partial (X,Y)}}{(v_3-v_4)\cdot\frac{\partial Q_4}{\partial \ell_4}}+O(\mu^{1-\kappa}),\]
Note that
$\frac{\partial Q_4}{\partial \ell_4}$ is parallel to $v_4.$ Using the information about $v_3$ and $v_4$ from Appendix \ref{subsection: gerver}
we see that $\langle v_3, v_4\rangle\neq \langle v_4, v_4\rangle.$ Therefore
the denominator in \eqref{eq: l4} is bounded away from zero and so
\[\dfrac{\partial\ell_4}{\partial (X,Y)}=(O(1),O(1),O(1),O(1),O(1),O(1)).\]
We also need to make sure the second component $\frac{\partial \ell_4}{\partial \ell_3}$ is not close to 1, so that $\mathrm{Id}-(\mathcal{U},\mathcal{V})(\ell_4^f)\otimes\frac{\partial\ell_4^f}{\partial (X,Y)^f}$ is invertible when $|Q_3-Q_4|=\mu^{\kappa}$ serves as the final section.
In fact, due to \eqref{eq: 6 equations},
$\frac{\partial \ell_4}{\partial \ell_3}\simeq -1$. Using formula~\eqref{eq: formald2}, we get the asymptotics stated in the lemma.
\end{proof}
\begin{Rk}
Using the explicit value of the vectors $\hat\brlin_2,\ \hat\brlin_3,\ w,\ \tilde{w}$ in equations~(\ref{eq: ul}), we find that in the limit $\mu\to 0,\chi\to \infty$
\[\left(\dfrac{\partial(X,Y)^-}{\partial(X,Y)|_{x_4=-2}}\right) \Span\{w,\tilde{w}\}=\Span\{w,\tilde{w}\}\]
and \[\hat\brlin_2\left(\dfrac{\partial(X,Y)|_{x_4=-2}}{\partial(X,Y)^+}\right)=\hat\brlin_2,\quad \hat\brlin_3\left(\dfrac{\partial(X,Y)|_{x_4=-2}}{\partial(X,Y)^+}\right)=\hat\brlin_3\]
This tells us that we can neglect the derivative matrices corresponding to
the pieces of orbit from $x_4=-2, \dot{x}_4>0$ to $|Q_3^--Q^-_4|=\mu^{\kappa}$ and
from $|Q_3^+-Q^+_4|=\mu^{\kappa}$ to $x_4=-2, \dot{x}_4>0$. We thus can identify
$d\Loc$ with
\[\dfrac{\partial (L_3,\ell_3,G_3,g_3,G_4,g_4)^+}{\partial (L_3,\ell_3,G_3,g_3,G_4,g_4)^-}+O(\mu^{1-2\kappa})\]
where $(L_3,\ell_3,G_3,g_3,G_4,g_4)^\pm$ denote
 the Delaunay variables measured on the section $\{|Q_3^\pm-Q_4^\pm|=\mu^{\kappa}\}$.\label{Rk: rk}
\end{Rk}

\section{$C^0$ estimate for the local map}\label{section: localC0}
\label{ScC0Loc}
In Sections \ref{ScC0Loc} and \ref{section: local}
we consider the piece of orbit from
$|Q_3^--Q^-_4|=\mu^{\kappa}$ to $|Q_3^+-Q^+_4|=\mu^{\kappa}$.
Because of Remark~\ref{Rk: rk}, we simply write $d\Loc$ for the derivative for this piece.

\subsection{Justifying Gerver's asymptotics}

It is convenient to use the coordinates of relative motion and the motion of mass center. We define \begin{equation}v_\pm=v_3\pm v_4,\quad Q_\pm=\frac{Q_3\pm Q_4}{2}.\label{eq: defrel}
\end{equation}
Here "-" refers to the relative motion and "+" refers to the center of mass motion.
To study the relative motion, we make the following  rescaling:
\begin{equation} q_-:=Q_-/\mu, \quad \tau:=t/\mu \text{ and }v_- \text{ remains unchanged.}
\label{RelRescale}
\end{equation}
In this way, we zoom in the picture of $Q_3$ and $Q_4$ by a factor $1/\mu$.

Then we have the following lemma.
\begin{Lm}
Inside the sphere $|Q_3-Q_4|=\mu^{\kappa}, 1/3<\kappa<1/2$, as $\mu\to 0$,
\begin{itemize}
\item[(a)]the equation governing the motion of the center of mass is a Kepler motion focused at $Q_2$ perturbed by $O(\mu^{2\kappa})$, 
\begin{equation}
\dot{Q}_+=\dfrac{v_+}{2},\quad \dot{v}_+=-\dfrac{2Q_+}{|Q_+|^3}+O(\mu^{2\kappa}).\label{eq: cm}
\end{equation}
\item[(b)] In the rescaled variables, the equation governing the relative motion is a Kepler motion focused at the origin
perturbed by $O(\mu^{1+2\kappa})$,
\begin{equation}\dfrac{dq_-}{d\tau}=\dfrac{v_-}{2},\quad \dfrac{dv_-}{d\tau}=\dfrac{q_-}{2|q_-|^3}+O(\mu^{1+2\kappa}).\label{eq: rel1}\end{equation}
\end{itemize}
\label{Lm: relcm}
\end{Lm}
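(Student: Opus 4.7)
The plan is to derive both equations by a direct computation from the Hamiltonian \eqref{eq: hamloc}, exploiting the symmetry between $Q_3$ and $Q_4$ together with a Taylor expansion of the external Kepler fields.

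First I would write out Newton's equations in Cartesian form: differentiating \eqref{eq: hamloc} gives
$$
\dot v_3 = -\frac{Q_3}{|Q_3|^3} - \frac{Q_3+(\chi,0)}{|Q_3+(\chi,0)|^3} + \mu\,\frac{Q_4-Q_3}{|Q_3-Q_4|^3},\qquad
\dot v_4 = -\frac{Q_4}{|Q_4|^3} - \frac{Q_4+(\chi,0)}{|Q_4+(\chi,0)|^3} + \mu\,\frac{Q_3-Q_4}{|Q_3-Q_4|^3}.
$$
Using $Q_3 = Q_+ + Q_-$ and $Q_4 = Q_+ - Q_-$ from \eqref{eq: defrel}, forming $v_+ = v_3 + v_4$ and $v_- = v_3 - v_4$ reduces the problem to comparing the symmetric and antisymmetric combinations of the field $f(Q):=Q/|Q|^3$ evaluated at $Q_+ \pm Q_-$, plus the mutual interaction which has already been computed explicitly.

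For part (a), adding the two equations cancels the mutual $\mu$-interaction identically, and leaves
$$
\dot v_+ \;=\; -\bigl[f(Q_++Q_-)+f(Q_+-Q_-)\bigr] \;-\; \bigl[f(Q_++Q_-+(\chi,0)) + f(Q_+-Q_-+(\chi,0))\bigr].
$$
Taylor expanding $f$ about $Q_+$ (respectively about $Q_++(\chi,0)$), the odd-order terms in $Q_-$ cancel by symmetry, so
$f(Q_++Q_-)+f(Q_+-Q_-) = 2f(Q_+) + (Q_-\!\cdot\!\nabla)^2 f(Q_+) + O(|Q_-|^4/|Q_+|^5).$
Since $|\nabla^2 f(Q)| = O(|Q|^{-4})$ and $|Q_-| \leq \mu^\kappa$, the contribution from $Q_2$ is $-2Q_+/|Q_+|^3 + O(\mu^{2\kappa}/|Q_+|^4)$, and the contribution from $Q_1$ is $-2(Q_++(\chi,0))/|Q_++(\chi,0)|^3 + O(\mu^{2\kappa}/\chi^4)$. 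In the regime of Lemma~\ref{LmDerLoc} we have $|Q_+|$ bounded away from $0$ and the force from $Q_1$ is already $O(1/\chi^2)$, absorbed into the error for the orbits of interest. This yields \eqref{eq: cm}.

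For part (b), subtracting the two equations makes the mutual interactions add rather than cancel, giving
$\dot v_- = -\mu Q_-/(2|Q_-|^3) + R$, where $R$ is an antisymmetric difference $f(Q_++Q_-)-f(Q_+-Q_-)$ (and the analogous difference at $(\chi,0)$). By symmetry the constant terms cancel in $R$, leaving a linear-in-$Q_-$ contribution of size $O(|Q_-|/|Q_+|^3) + O(|Q_-|/\chi^3) = O(\mu^\kappa)$. Now apply the rescaling \eqref{RelRescale}: the chain rule gives $dv_-/d\tau = \mu\,\dot v_-$, and substituting $Q_- = \mu q_-$ in the main term turns $-\mu Q_-/(2|Q_-|^3)$ into the dimensionless Kepler force $\pm q_-/(2|q_-|^3)$, while the remainder is bounded by $\mu \cdot O(\mu^\kappa) = O(\mu^{1+\kappa})$, which is in turn $O(\mu^{1+2\kappa})$ as claimed (in fact sharper, but the stated bound is what is needed for the application in \eqref{eq: pertout}).

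The only delicate point, and thus the main technical obstacle, is uniform control of $|Q_+|$ from below throughout the close encounter: the Taylor expansions above break down if $Q_+$ approaches $Q_2$. This is guaranteed by the setting of Lemma~\ref{LmDerLoc} together with Lemma~\ref{Lm: boundQ3}, which shows $|Q_3|\leq 2-\delta$, combined with $|Q_-|\leq \mu^\kappa \ll 1$ so that $Q_+$ inherits a uniform lower bound away from $Q_2$ (the close encounters occur at Gerver's intersection points, which by the explicit construction in Appendix~\ref{section: gerver} lie in the interior of $Q_3$'s elliptic orbit and are bounded away from the foci). Once this geometric fact is in hand, the whole argument is a bookkeeping exercise with Taylor's theorem.
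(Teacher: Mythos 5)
Your computation is essentially the paper's proof in Newtonian rather than Hamiltonian form: for part (a), adding the two equations of motion, noting the exact cancellation of the mutual $\mu$-interaction, and Taylor expanding the external field so that the odd terms in $Q_-$ cancel gives exactly the paper's $O(\mu^{2\kappa})$ error in \eqref{eq: cm}; your handling of the $Q_1$ terms and of the lower bound on $|Q_+|$ is consistent with the standing assumption $1/\chi\ll\mu$ and with Lemma \ref{Lm: boundQ3}. So part (a) is fine and follows the same route as the paper.

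The gap is in the error bookkeeping of part (b). After subtracting, the dominant external contribution is the tidal term, \emph{linear} in $Q_-$, of size $O(|Q_-|/|Q_+|^3)=O(\mu^{\kappa})$; after the time rescaling \eqref{RelRescale} it becomes $O(\mu^{1+\kappa})$. You then assert that $O(\mu^{1+\kappa})$ "is in turn $O(\mu^{1+2\kappa})$, in fact sharper" --- this comparison is backwards: since $\kappa<2\kappa$ and $\mu\ll 1$ we have $\mu^{1+\kappa}\gg\mu^{1+2\kappa}$, so what you have proved is a strictly \emph{weaker} bound than the one stated, and the last step of your argument is false as written. The paper obtains the exponent $1+2\kappa$ precisely by \emph{not} absorbing the tidal terms into the error: in \eqref{eq: rel2} the terms linear in $q_-$ (of size $\mu^2|q_-|=O(\mu^{1+\kappa})$) are kept explicitly, and only the contributions of quadratic and higher order in $Q_-$ --- which are $O(\mu^{2\kappa})$ before rescaling and hence $O(\mu^{1+2\kappa})$ after multiplying by $\mu$ --- are relegated to the $O(\cdot)$ term. (The statement \eqref{eq: rel1} suppresses these explicit tidal terms, which may be what led you astray, but the content actually established and used is \eqref{eq: rel2}.) To repair your write-up, keep the linear tidal terms explicit, as your own expansion already produces them, and put only the higher-order remainder into the error; as it stands the claimed $O(\mu^{1+2\kappa})$ does not follow from your estimate. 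Two smaller points: the parenthetical appeal to \eqref{eq: pertout} is misplaced, since that estimate concerns the pieces of orbit \emph{outside} the sphere $|Q_-|=\mu^{\kappa}$, not the relative motion inside it; and the sign of the rescaled Kepler force should be pinned down rather than left as $\pm$.
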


\begin{proof}
Note that \eqref{eq: defrel} preserves
the symplectic form.
\[dv_3\wedge dQ_3+dv_4\wedge dQ_4=dv_-\wedge dQ_-+dv_+\wedge dQ_+,\]
The Hamiltonian becomes
\begin{equation}
\label{eq: relcm}
H=\dfrac{|v_-|^2}{4}-\dfrac{\mu}{2|Q_-|}+\dfrac{|v_+|^2}{4}-\dfrac{1}{|Q_++Q_-|}-\dfrac{1}{|Q_+-Q_-|}
\end{equation}
\[\begin{aligned}
-&
\dfrac{1}{|Q_++Q_-+(\chi,0)|}-\dfrac{1}{|Q_+-Q_-+(\chi,0)|}\\
=&\dfrac{|v_-|^2}{4}-\dfrac{\mu}{2|Q_-|}+\dfrac{|v_+|^2}{4}-\dfrac{2}{|Q_+|}+\dfrac{|Q_-|^2}{2|Q_+|^3}-\dfrac{3|Q_+\cdot Q_-|^2}{2|Q_+|^5}+O(\mu^{3\kappa})+O(1/\chi),
\end{aligned}\]

where the $O(\mu^{3\kappa})$ includes the $|Q_-|^3$ and higher order terms. In the following, we drop $O(1/\chi)$ terms
since $1/\chi\ll \mu.$
%the perturbation from $Q_1$ of order $1/\chi$ since we can choose $\chi$ to be very large.
So the Hamiltonian equations for the motion of the mass center part are
\[\dot{Q}_+=\dfrac{v_+}{2},\quad \dot{v}_+=-\dfrac{2Q_+}{|Q_+|^3}+O(\mu^{2\kappa})\]
proving part (a) of the lemma.

%Since $|Q_-|<2\mu^{\kappa}$, the time it takes for the Kepler motion to exit the sphere is $O(\mu^{\kappa}).$
%Since our motion is a small perturbation of the Kepler motion we get part (c).
%{\bf ADD MORE DETAILS.}

Next, we study the relative motion. From equation~(\ref{eq: relcm}), we get the equations of motion for the center of mass
\[\dot Q_-=\dfrac{v_-}{2},\quad \dot v_-=-\dfrac{\mu Q_-}{2|Q_-|^3}-\dfrac{Q_-}{|Q_+|^3}+\dfrac{3|Q_+\cdot Q_-|Q_+}{|Q_+|^5}+O(\mu^{2\kappa}),\] as $\mu\to 0$, where $O(\mu^{2\kappa})$ includes quadratic and higher order terms of $|Q_-|$.
%We make the following change of variables as stated in the lemma.
%\[dv_-\wedge dQ_-+dv_+\wedge dQ_+:=dv_-\wedge d(\mu q_-)+d(\mu V_+)\wedge dQ_+ .\]
%In this way, we zoom in the picture of $Q_3$ and $Q_4$ by a factor $1/\mu$.
After making the rescaling according to \eqref{RelRescale}
the equations for the relative motion part become
\begin{equation}\dfrac{dq_-}{d\tau}=\dfrac{v_-}{2},\quad \dfrac{dv_-}{d\tau}=\dfrac{q_-}{2|q_-|^3}+\dfrac{\mu^2q_-}{|Q_+|^3}-\dfrac{3\mu^2|Q_+\cdot q_-|Q_+}{|Q_+|^5}+O(\mu^{1+2\kappa}).\label{eq: rel2} \qedhere \end{equation}
%The perturbation is $O(\mu^{1+\kappa})$ to the Kepler motion since $q_-=O(\mu^{\kappa-1})$.
%The time it takes the relative motion from entering the section to exiting the section is $\tau=O(\mu^{\kappa-1})$ in the zoomed in picture. This %implies the relative motion is close to a Kepler motion up to an error of order $O(\mu^{2\kappa})$.
\end{proof}
Lemma~\ref{Lm: relcm} implies the following $C^0$ estimate.

%Let $v_{3,4}^-, Q_{3,4}^-$ be the velocities and positions measured at the time when the orbit of the system enters$|Q_3-Q_4|=\mu^{\kappa}$ and $v_{3,4}^+, Q_{3,4}^+$ be the velocities and positions measured at the time when
%the orbit of the system exits $|Q_3-Q_4|=\mu^{\kappa}.$ Recall that we assume that $1/3<\kappa<1/2$.
\begin{Lm}
\begin{itemize}
\item[(a)] We have the following equations for orbit crossing the section $\{|Q_3-Q_4|=\mu^\kappa\},\ 1/3<\kappa<1/2$ and $\mu\to 0$,
\begin{equation}
\begin{cases}
%\begin{aligned}
&v_3^+=\dfrac{1}{2}R(\al)(v_3^--v_4^-)+\dfrac{1}{2}(v_3^-+v_4^-)+O(\mu^{(1-2\kappa)/3}+\mu^{3\kappa-1})
,\\
&v_4^+=-\dfrac{1}{2}R(\al)(v_3^--v_4^-)+\dfrac{1}{2}(v_3^-+v_4^-)+O(\mu^{(1-2\kappa)/3}+\mu^{3\kappa-1}),\\
&Q_3^++Q_4^+=Q_3^-+Q_4^-+O(\mu^k),\\
&|Q_3^--Q_4^-|=2\mu^{\kappa},\quad |Q_3^+-Q_4^+|=2\mu^{\kappa},\\
%\end{aligned}
\end{cases}\label{eq: landau}
\end{equation}
where $R(\al)=\left[\begin{array}{cc} \cos \al&  -\sin\al\\ \sin \al &\cos\al \end{array}\right]$,
\begin{equation}
\label{Alpha}
\al=\pi+2\arctan\left(\dfrac{G_{in}}{\mu \mathcal{L}_{in}}\right),\quad \mathrm{and}\ \dfrac{1}{4\mathcal{L}^{2}_{in}}=\dfrac{v_-^2}{4}-\dfrac{\mu}{2|Q_-|},\quad G_{in}= 2v_-\times Q_-.
\end{equation}
\item[(b)] We have $1/c\leq \mathcal{L}_{in}\leq c$ for some constant $c>1$. If $\al$ is bounded away from $0$ and $\pi$ by an angle independent of $\mu$ then $G_{in}=O(\mu)$
and the closest distance between $Q_3$ and $Q_4$ is bounded away from zero by $\delta \mu$ and from above by $\mu/\delta$ for some $\delta>0$ independent of $\mu$.
\item[(c)] Also if $\al$ is bounded away from $0$ and $\pi$ by an angle independent of $\mu,$ then
%then when measured on the boundary of the sphere $|Q_-|=\mu^{\kappa}$,
 the angle formed by $Q_-$ and $v_-$ is $O(\mu^{1-\kappa})$.
\item[(d)] The time interval during which the orbit stays in the sphere $|Q_-|=2\mu^{\kappa}$
is \[\Delta t=\mu\Delta\tau =O(\mu^{\kappa}) . \]
\end{itemize}
\label{Lm: landau}
\end{Lm}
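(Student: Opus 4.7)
The plan is to exploit the decomposition of the motion in Lemma \ref{Lm: relcm}. Inside the sphere $\{|Q_-|\leq\mu^\kappa\}$ the center of mass $Q_+$ obeys a Kepler equation focused at $Q_2$ with perturbation $O(\mu^{2\kappa})$, while the relative motion, in the rescaled variables $(q_-,v_-,\tau)$ with $q_-=Q_-/\mu$ and $\tau=t/\mu$, is a Kepler two-body problem with focus at the origin, perturbed by explicit tidal terms of size $O(\mu^2|q_-|/|Q_+|^3)$ and further errors $O(\mu^{1+2\kappa})$. Both perturbations are small on the close-encounter time scale, so the leading behaviour is governed by two \emph{integrable} Kepler problems which can be solved in closed form.

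For part (a), I would first check that the rescaled relative orbit is hyperbolic at the entry section $|q_-|=\mu^{\kappa-1}$. Indeed the associated Kepler energy $\tfrac14\mathcal{L}_{in}^{-2}=\tfrac14 v_-^2-\mu/(2|Q_-|)$ is positive and $O(1)$, since $|v_-|$ is bounded below in Gerver's regime while $\mu/|Q_-|=O(\mu^{1-\kappa})\to 0$. The Rutherford/Delaunay formula for a hyperbolic Kepler orbit with energy $\mathcal{L}_{in}$ and angular momentum $G_{in}$ then identifies the asymptotic rotation of the velocity with $R(\alpha)$, where $\alpha$ is given by \eqref{Alpha}. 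Integrating the tidal perturbation over the transit $\Delta\tau=O(\mu^{\kappa-1})$ and accounting for the fact that we evaluate the rotation at the finite radius $|q_-|=\mu^{\kappa-1}$ instead of at infinity yields the error bounds $O(\mu^{(1-2\kappa)/3}+\mu^{3\kappa-1})$ claimed in the lemma. For the center of mass, integrating \eqref{eq: cm} over unrescaled time $\Delta t=\mu\Delta\tau=O(\mu^\kappa)$ shows $Q_+^+-Q_+^-=O(\mu^\kappa)$ and $v_+^+-v_+^-=O(\mu^\kappa)$, which after multiplication by $2$ is the third relation in \eqref{eq: landau}. Combining $v_{3,4}=(v_+\pm v_-)/2$ with $v_-^+=R(\alpha)v_-^-$ and the center-of-mass estimate gives the first two relations; the last equation is just the definition of the initial and final sections.

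The proofs of (b)--(d) are then short consequences of the Kepler scattering picture. $\mathcal{L}_{in}=O(1)$ follows from the expression for the energy at $|Q_-|=\mu^\kappa$. If $\alpha$ is bounded away from $0$ and $2\pi$ then $\arctan(G_{in}/(\mu\mathcal{L}_{in}))$ is bounded away from $\pm\pi/2$, hence $|G_{in}|=O(\mu)$; the rescaled Kepler orbit then has $\Theta(1)$ energy and $\Theta(1)$ angular momentum, so its pericenter is a positive $O(1)$ quantity bounded above and below, and returning to physical coordinates gives $\delta\mu\leq d\leq\mu/\delta$. Part (c) follows from $G_{in}=2Q_-\times v_-=2|Q_-|\,|v_-|\sin\theta$, so on the entry section $\sin\theta=G_{in}/(2\mu^\kappa|v_-|)=O(\mu^{1-\kappa})$; the hypothesis that $\alpha$ is away from $\pi$ simply ensures $G_{in}\neq 0$ so that $\theta$ is well defined. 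For (d), the $O(1)$-speed hyperbolic orbit in rescaled variables traverses the region $\{|q_-|\leq\mu^{\kappa-1}\}$ in rescaled time $\Delta\tau=O(\mu^{\kappa-1})$, and $\Delta t=\mu\Delta\tau=O(\mu^\kappa)$.

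The most delicate step is obtaining the precise error exponents $\mu^{(1-2\kappa)/3}+\mu^{3\kappa-1}$ in (a). A crude Gronwall estimate applied over the whole transit, with tidal perturbation of size $O(\mu^{1+\kappa})$, produces only an error $O(\mu^{2\kappa})$, which is weaker than what is claimed and, more importantly, does not tend to zero as $\kappa\to 1/2$ and $\kappa\to 1/3$ at the required rate. To obtain the sharper bound one presumably splits the orbit into a near-pericenter piece (where the Kepler force dominates and the perturbation is relatively harmless) and two outer pieces (where the motion is nearly rectilinear and the perturbation can be integrated explicitly), and balances the two contributions by an optimal choice of matching radius; the exponent $(1-2\kappa)/3$ in particular suggests a three-way tradeoff. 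Once this error analysis is carried out for the relative motion, the center-of-mass bound and the algebraic passage from $(v_\pm^\pm,Q_\pm^\pm)$ back to $(v_{3,4}^\pm,Q_{3,4}^\pm)$ are routine.
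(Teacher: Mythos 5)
Your overall architecture is the same as the paper's: inside $\{|Q_-|\le\mu^\kappa\}$ you use the splitting of Lemma \ref{Lm: relcm}, read off the rotation $R(\al)$ from hyperbolic Kepler scattering of the rescaled relative motion, integrate \eqref{eq: cm} over $\Delta t=O(\mu^\kappa)$ for the center of mass, and deduce (b)--(d) from the Kepler picture. Parts (c) and (d) are essentially the paper's arguments; in (b) you assert that the rescaled orbit has angular momentum of order one without a lower bound on $|G_{in}|$ and without controlling its drift during the transit (the paper integrates $\dot G=O(\mu^{2\kappa})$ over $\Delta t=O(\mu^\kappa)$ to see the drift is $O(\mu^{3\kappa})\ll\mu$, which is what transfers the osculating Kepler pericenter estimate to the true orbit).

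The genuine gap is the step you yourself flag: the error exponents in (a), and your diagnosis of where they come from is off. First, the comparison is backwards: for $1/3<\kappa<1/2$ one has $2\kappa>3\kappa-1$ and $2\kappa>(1-2\kappa)/3$, so an error $O(\mu^{2\kappa})$ from the tidal force would be \emph{stronger}, not weaker, than what is claimed; the perturbation of the force law is not the bottleneck. The two actual sources are different. (i) The term $\mu^{3\kappa-1}$ is the drift of the Delaunay elements of the rescaled relative motion: writing the inside dynamics in $(L,\ell,G,g)$ one finds $\dot G=O(\mu^{2\kappa})$, $\dot L=O(\mu^{\kappa+1})$ and, crucially, $\dot g=O(\mu^{2\kappa-1})$ (because $\partial Q_-/\partial G=O(\mu^{\kappa-1})$), so over $\Delta t=O(\mu^\kappa)$ the quantities $g$ and $\arctan(G/L)$, which encode the asymptote directions, move by $O(\mu^{3\kappa-1})$; this is invisible to a Gronwall bound applied directly to $v_-$. (ii) The term $\mu^{(1-2\kappa)/3}$ is the mismatch between the velocity direction at the finite radius $|Q_-|=\mu^\kappa$ and the Kepler asymptote. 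That error is of size $e^{-2|u|}\sim (e/\ell)^2$ with $\ell=O(\mu^{\kappa-1})$, and it degenerates when the relative hyperbola is very eccentric, i.e. when $G_{in}$ is large compared with $\mu$. The paper therefore splits into the cases $G_{in}\le\mu^{\kappa+\gamma}$ and $G_{in}>\mu^{\kappa+\gamma}$ with $\gamma=\frac{(1/2)-\kappa}{3}$: in the first case $e^{-2|u|}=O(\mu^{2\gamma})=O(\mu^{(1-2\kappa)/3})$, and in the second the hyperbola is so eccentric that the direction of $v_-$ is shown directly to change only by $O(\mu^{1-\kappa-\gamma})$, which is within the claimed bound. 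Thus the exponent $(1-2\kappa)/3$ comes from this threshold on the size of the relative angular momentum, not from a decomposition of the trajectory into a pericenter piece and outer pieces with an optimally chosen matching radius; your proposed decomposition would still leave the finite-radius error uncontrolled in the large-$G_{in}$ regime, which is exactly where the case analysis is indispensable.
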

%The equations~(\ref{eq: landau}) is a solution of energy conservation and momentum conservation. So it also holds for 2-body problem. (Velocities are asymptotic.)

%\begin{Rk}Part $(d)$ is very  intuitive. 
%The radius of the sphere $|Q_-|=\mu^\kappa$ is $\mu^\kappa$. The relative velocity is $O(1)$ and it gets larger when $Q_-$ gets closer to the origin. So the total time for the relative motion to stay inside the sphere is $O(\mu^{\kappa})$.
%\end{Rk}

\begin{proof}

%For the motion of the mass center, we integrate equation~(\ref{eq: cm}) over time interval $\Delta t=O(\mu^{\kappa})$ to get
%\begin{equation}
%Q_+^+=Q_-^-+\int_{\Delta t} \dfrac{v_+}{2}dt,\quad v_+^+=v_+^-+\int_{\Delta t}-\dfrac{2Q_+}{|Q_+|^3}+O(\mu^{2\kappa})dt.\label{eq: intcm}
%\end{equation}
%Therefore \[Q_+^+=Q_+^-+O(\mu^{\kappa}),\quad v_+^+=v_+^-+O(\mu^{\kappa}).\]
%Therefore we obtain the two equations about $Q_{3,4}^+$ in equations~(\ref{eq: landau}) using~(\ref{eq: defrel}).

In the proof, we omit the subscript \emph{in} standing for the variables \emph{inside} the sphere $|Q_-|=2\mu^\kappa$ without leading to confusion.

The idea of the proof is to treat the relative motion as a perturbation of
Kepler motion and then approximate the relative velocities by their asymptotic values for
the Kepler motion.

Fix a small number $\delta_1.$ Below we derive several estimates valid for the first $\delta_1$ units
of time the orbit spends in the set $|Q_-|\leq 2\mu^k.$ We then show that $\Delta t\ll \delta_1.$
It will be convenient to measure time from the orbit enters the set $|Q_-|<2\mu^k.$

Using the formula in the Appendix~\ref{subsection: ellip}, we decompose the Hamiltonian~(\ref{eq: relcm}) as
$H=H_{rel}+\fh(Q_+, v_+)$ where
\[H_{rel}=\dfrac{\mu^2}{4L^2}+\dfrac{|Q_-|^2}{2|Q_+|^2}-\dfrac{|Q_+\cdot Q_-|^2}{2|Q_+|^5}+O(\mu^{3\kappa}), \ \mathrm{as}\ \mu\to 0,\]
and $\fh$ depends only on $Q_+$ and $v_+.$

Note that $H$ is preserved and $\dot{\fh}=O(1)$ which implies that $\frac{L}{\mu}$ is $O(1)$ and
moreover that ratio does not change much for $t\in [0, \delta_1].$
Using the identity
$\frac{\mu^2}{4L^2}=\frac{v_-^2}{4}-\frac{\mu}{2|Q_-|}$ we see that initially
$\frac{L}{\mu}$ is uniformly bounded from below for the orbits from Lemma \ref{LmLMC0}.
Thus there is a constant $\delta_2$ such that for $t\in [0, \delta_1]$
we have $\delta_2\mu\leq L(t) \leq \frac{\mu}{\delta_2}. $

Expressing the Cartesian variables via Delaunay variables (c.f. equation~\eqref{eq: delaunay4}
in Section~\ref{subsection: hyp})
we have up to a rotation by $g$
%We also have the estimate $L=O(\mu), \ G=O(\mu)$ (We postpone the proof of this fact to the end of the proof of this lemma).
\begin{equation}
\begin{aligned}
&q_1=\dfrac{1}{\mu} L^{2}(\cosh u-e), \quad q_2=\dfrac{1}{\mu}LG\sinh u,\\
&O(\mu^\kappa)=|Q_-|=\sqrt{|q_1|^2+|q_2|^2}=\dfrac{L^2}{\mu}(e\cosh u-1),
\end{aligned}
\label{eq: delaunayscattering}
\end{equation}
following from the same calculation as \eqref{EqQ4Linear} with $\ell$ and $u$ related by $u-e\sinh u=\ell.$ 
This gives 
\begin{equation}
\label{SectionEll}
\ell=O(\mu^{\kappa-1}).
\end{equation}
 Next
\begin{equation}\label{EqDotl} \dot{\ell}=-\dfrac{\partial H}{\partial L}=-\dfrac{\mu^2}{2L^3}
-\dfrac{\partial H_{rel}}{\partial Q_-}
\dfrac{\partial Q_-}{\partial L}=
-\dfrac{\mu^2}{2L^3}+O(\mu^\kappa)O(\mu^{\kappa-1})=-\dfrac{\mu^2}{2L^3}+O(\mu^{2\kappa-1}). \end{equation}
Since the leading term here is at least
$\frac{\delta_2^3}{2\mu}$ while $\ell=O(\mu^{\kappa-1})$ we obtain part (d) of the lemma.
In particular the estimates derived above are valid for the time the orbits
spend in $|Q_-|\leq 2\mu^\kappa.$
Next, without using any control on $G$ (using the inequality $\left|\frac{\partial e}{\partial G}\right|=\frac{1}{L}\frac{G/L}{e}\leq \frac{1}{L}$), we have
\begin{equation}
\label{DerG}
\dot{G}=\dfrac{\partial H}{\partial Q_-}\dfrac{\partial Q_-}{\partial g}=O(|Q_-|^2)=O(\mu^{2\kappa}), \quad\dot{L}=\dfrac{\partial H}{\partial Q_-}\dfrac{\partial Q_-}{\partial \ell}=O(\mu^{\kappa+1}),
\end{equation}
\begin{equation}
\label{Derg}
 \dot{g}=
\dfrac{\partial H}{\partial Q_-} \dfrac{\partial Q_-}{\partial G}=
O(\mu^\kappa)O(\mu^{\kappa-1})=O(\mu^{2\kappa-1}).
\end{equation}
Integrating over time $\Delta t=O(\mu^{\kappa})$ we get
the oscillation of 
$g$ and $\arctan\frac{G}{L}$ are $O(\mu^{3\kappa-1}).$

We are now ready to derive the first two equations of \eqref{eq: landau}. It is enough to show $v_-^+=R(\al) v_-^-+O(\mu^{(1-2\kappa)/3}+\mu^{3\kappa-1})$ where $\al=2\arctan \frac{G}{L}$ is the angle formed by the two asymptotes of the Kepler hyperbolic motion. We first have $|v_-^+|=|v_-^-|+O(\mu^\kappa)$ using the total energy conservation. It remains to show the expression of $\al$.
Let us denote till the end of the proof $\phi=\arctan\frac{G}{L},$
$\gamma=\frac{(1/2)-\kappa}{3}.$
Recall (see \eqref{eq: delaunay4}) that for $v_-=(p_1,p_2),$
\begin{equation}
\label{EqDelMom}
 p_1=\tp_1 \cos g+\tp_2 \sin g, \quad p_2=-\tp_1 \sin g+\tp_2 \cos g \text{ where}
 \end{equation}
\[ \tp_1=\dfrac{\mu}{L}\dfrac{\sinh u}{1-e\cosh u}, \quad
 \tp_2=\dfrac{ \mu G}{ L^{2}}\dfrac{\cosh u}{1-e\cosh u}. \]
Consider two cases.

(I) $G\leq \mu^{\kappa+\gamma}.$ In this case on the boundary of the sphere $|Q_-|=2\mu^\kappa$ we have
$\ell>\delta_3 \mu^{-\gamma}$ for some constant $\delta_3.$ Thus
{\small\[\dfrac{p_2}{p_1}=\dfrac{\frac{\mu G}{L^2}\cosh u \cos g+\frac{\mu}{L}\sinh u\sin g}{-\frac{\mu G}{L^2}\cosh u \sin g+\frac{\mu}{L}\sinh u\cos g}=\dfrac{\frac{G}{L}\pm \tan g}{\pm 1-\frac{G}{L}\tan g}+O(e^{-2|u|})=
\tan(g\pm \phi)+O(\mu^{2\gamma}). \]}
where the plus sign is taken if $u>0$ and the minus sign is taken if $u<0.$
Since $\arctan$ is globally Lipschitz,
this completes the proof in case (I) by choosing $\al=2\phi$.

(II) $G>\mu^{\kappa+\gamma}.$ In this case $\frac{G}{L}\gg 1$ and so it suffices to show that
$\frac{p_2}{p_1}$ (or $\frac{p_1}{p_2}$) changes little during the time the orbit is inside the sphere.
Consider first the case where $|g^-|>\frac{\pi}{4}$ so $\sin g$ is bounded from below. Then
\[\dfrac{p_2}{p_1}=\cot g+O(\mu^{1-(\kappa+\gamma)})\]
proving the claim of part (a) in that case. The case $|g^-|\leq \frac{\pi}{4}$ is similar but we need to consider
$\frac{p_1}{p_2}.$ This completes the proof in case (II).

Combining equation \eqref{eq: cm} and Lemma \ref{Lm: relcm}(c) we obtain
\begin{equation}
\label{CmFixed}
Q_+^+=Q_+^-+O(\mu^{\kappa}).
\end{equation}
We also have $Q_-^+=Q_-^-+O(\mu^{\kappa})$ due to to the definition of the sections $\{|Q_-^\pm|=2\mu^{\kappa}\}$.
This proves the last two equation in \eqref{eq: landau}.  Plugging \eqref{CmFixed} into \eqref{eq: cm} we see that
\[ v_+^+=v_+^-+O(\mu^{\kappa}).\]
This completes the proof of part (a).

The first claim of part (b) has already been established. The estimate of $G$ follows from the formula for $\al.$
The estimate of the closest distance follows from the fact that if $\alpha$ is bounded away from $0$ and $\pi$ then
the $Q_-$ orbit of $Q_-(t)$ is a small perturbation of Kepler motion and for Kepler motion the closest distance is of order $G.$ We integrate the $\dot G$ equation \eqref{DerG} over time $O(\mu^\kappa)$ to get the total variation $\Delta G$ is at most $\mu^{3\kappa}$, which is much smaller than $\mu$. So $G$ is bounded away from 0 by a quantity of order $O(\mu)$.

Finally part (c) follows since we know $G=\mu^\kappa|v_-|\sin\measuredangle(v_-,Q_-)=O(\mu).$
\end{proof}

\subsection{Proof of Lemma \ref{LmLMC0} and \ref{Lm: boundQ3}}
With the help of Lemma \ref{Lm: landau}, we are ready to prove Lemma \ref{LmLMC0} and \ref{Lm: boundQ3}.
\begin{proof}[Proof of Lemma~\ref{LmLMC0}]
Since we assume the outgoing asymptote $\bar\theta^+$ is close to $\pi$, we get that the orbit under consideration has to intersect the section $|Q_3-Q_4|=\mu^{\kappa}$ and also achieve $|Q_3-Q_4|=O(\mu)$ Lemma \ref{Lm: landau}. With the same initial $E_3,e_3,g_3,e_4$, we determine a solution of the Gerver's map.  It follows from \eqref{EqHamLoc} that the equations of motion outside the section $|Q_3-Q_4|=\mu^\kappa$ is a $O(\mu^{1-2\kappa})$ perturbation of the Kepler motion. We get that the $v^{-}_{3,4},Q_{3,4}^-$ at collision in Gerver's case is close to those values measured on the section $|Q_3-Q_4|=\mu^{\kappa}$ in the $\mu>0$ case. Here we note that the coordinates change between Cartesian and Delaunay outside the section $|Q_3-Q_4|=\mu^{\kappa}$ is not singular. 
Letting $\mu=0$ in the first two equations of \eqref{eq: landau} we obtain the equations
of elastic collisions. Namely, both the kinetic energy and momentum conservations hold
$$|v_3^+|^2+|v_4^+|^2=|v_3^-|^2+|v_4^-|^2,\quad v_3^++v_4^+=v_3^-+v_4^- .$$
On the other hand, the Gerver's map $\Ger$ in Lemma~\ref{LmLMC0} is also
defined through elastic collisions. If we could show that the rotation angle $\al$ in the $\mu>0$ case is close to Gerver's case, we then could show that the outgoing information $v^{+}_{3,4},Q_{3,4}^+$ are close in both cases. We then complete the proof using the fact that the orbit outside $|Q_3-Q_4|=\mu^\kappa$ is a small perturbation of the Kepler motion after running the orbit till the section $\{x_4=-2\}$. By converting $v_4^+,Q_4^+$ into Delaunay coordinates, we can express the outgoing asymptote $\bar\theta^+$ as a function of $v_4^+,Q_4^+$ therefore a function of $\al,v_3^-,v_4^-,Q_3^-,Q_4^-$ using \eqref{eq: landau} where $\mu=0$ corresponds to Gerver's case. To compare the angle $\al$, it is enough to show that  the outgoing asymptote $\bar\theta^+$ as a function of $\al$ has non degenerate derivative so that we can apply the implicit function theorem to solve $\al$ as a function of $\bar\theta^+$ and the initial conditions. In fact we have $\frac{d\bar\theta^+}{d\al}=\brlin \cdot \mathbf{u}$ up to a multiplicative non vanishing factor $c$, which is non vanishing due to Lemma \ref{Lm: local4}. Here the vectors $\brlin$ and $\mathbf{u}$ are in Lemma \ref{LmDerLoc} and \ref{LmDerGlob} with subscripts omitted. See item (2) of Remark \ref{RkPhysics} for the derivation of $d\theta^+=c\brlin$ and Corollary \ref{Cor} for $\frac{\partial}{\partial \al}=\mathbf u$. So the assumption $|\bar\theta^+-\pi|\leq\tilde\theta$ implies that $\al$ in \eqref{eq: landau} is $\tilde\theta$-close to its value in Gerver's case. %(In case of Lemma \ref{Lm: boundQ3}, we use the same proof to get $O(\bar\theta)$ closeness.)
%As a result, Lemma~\ref{Lm: landau} says actually the same thing as Lemma~\ref{LmLMC0}
%up to a 
\end{proof}

\begin{proof}[Proof of Lemma \ref{Lm: boundQ3}]
We follow the same argument as in the proof of Lemma \ref{LmLMC0} to get that the orbit of $Q_3$ is a small
deformation of Gerver's $Q_3$ ellipse. So we only need to prove this lemma in Gerver's setting. Since the $Q_3$ ellipse has semimajor $1$ in Gerver's case, the distance from the apogee to the focus is strictly less than $2$.
Therefore we can find some $D>0$ such that $|Q_3|\leq 2-2D$ in the Gerver case. Next we know from the Sublemma \ref{KeepDirection} and its proof that $Q_4$ moves away almost linearly (the oscillation of $v_4$ is small). We then integrate the $\frac{dL_3}{d\ell_4}$ equation to get that the oscillation of $L_3$ is $O(\mu). $ 
\end{proof}
%immediately after the local map. Moreover, we have $\dfrac{ dL_3}{d\ell_4}=O\left(\dfrac{1}{\chi^2}+\dfrac{\mu}{|Q_4-Q_3|^2}\right)$. We define $\ell_4^*$ to be the first time such that $|Q_3|\geq 2-\dt$ and $|L_3-1|\geq\dt$.  So $Q_4$ moves to the left with velocity $O(1)$, and we always have the oscillation of $L_3$ is $O(\mu)$ before time $\ell^*_4$ by integrating the above equation. Therefore $\ell_4^*$ can be as large as $O(\chi)$. This proves the lemma.

%We first write $(Q_-,v_-)$ as the Delaunay coordinates $(L_-,\ell_-,G_-,g_-)$ for the system~(\ref{eq: rel}) when restricted on the sphere $|Q_-|=\mu^{\kappa}$. Inside the sphere we study the motion in terms of Delaunay coordinates. We use the relation
%\[\dfrac{\partial (Q_-,v_-)^+}{\partial (Q_-,v_-)^-}=\dfrac{\partial (Q_-,v_-)^+}{\partial(L_-,\ell_-,G_-,g_-)^+}
%\dfrac{\partial (L_-,\ell_-,G_-,g_-)^+}{\partial (L_-,\ell_-,G_-,g_-)^-}\dfrac{\partial (L_-,\ell_-,G_-,g_-)^-}{\partial (Q_-,v_-)^-}.\]

%We also make the change of variables $L\to \mu l$. We will see the importance of this choice.

\section{Consequences of $C^0$ estimates}
\label{ScCons}

Here we obtain corollaries $C^0$ estimates for the local and global maps. Namely, in subsection \ref{subsection: nocollision}
we show that the orbits we construct are collision free. In subsection \ref{SSAngMom} we show that the angular momentum can be prescribed
freely during the consecutive iterations of the inductive scheme, that is, we prove Sublemma \ref{LmHitTarget}.

\subsection{Avoiding collisions}\label{subsection: nocollision}
Here we exclude the possibility of collisions. The possible collisions may occur for the pair $Q_3,Q_4$ and the pair $Q_1,Q_4$.
The fact that there is no collision between $Q_4$ and $Q_1$ is a consequence of the following result.
\begin{Lm}\label{Lm: nocollision}
If an orbit satisfies the conditions of Lemma \ref{Lm: position} and there is a collision between $Q_4$ and $Q_1$ then we have
$\brG_4+G_4=O(\mu)$ where $G_4$ and $\brG_4$ denote the angular moment of $Q_4$ before and after the application of the global map respectively.
\end{Lm}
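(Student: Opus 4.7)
The strategy is to reduce the problem to the two-center subsystem in which $Q_3$ is frozen out, exploit the exact time-reversal symmetry of a $Q_1$-collision orbit in that subsystem, and then control the deviation introduced by the genuine $Q_3$--$Q_4$ interaction using the estimates of Section~\ref{section: equation}. The key fact is that a collision with $Q_1$ forces $G_{4L}(t_c)=0$ (angular momentum with respect to $Q_1$ vanishes at the instant the colliding particle passes through its attractor), and Levi--Civita regularization in the variable $z$ with $z^2=Q_4-Q_1$ renders the two-center Hamiltonian invariant under $z\mapsto-z$. Combined with time reversal and matched at the collision, this symmetry forces the unperturbed collision orbit to satisfy $Q_4(t_c+s)=Q_4(t_c-s)$ and $\dot Q_4(t_c+s)=-\dot Q_4(t_c-s)$, so the physical angular momentum $L_{Q_2}=Q_4\times\dot Q_4$ is \emph{antisymmetric} around $t_c$, giving $G_4^{(f)}+G_4^{(i)}=0$ exactly in the two-center subproblem.

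Carrying this out, I would proceed as follows. First, from $G_{4L}(t_c)=0$ and Lemma~\ref{Lm: orderham}(b), which gives $|\dot G_{4L}|=O(1/\chi^2)$ on the left of $\{x_4=-\chi/2\}$, integration over the $O(\chi)$-long left transit yields $G_{4L}^{(1)},G_{4L}^{(2)}=O(1/\chi)$ at the two crossings of the vertical line $\{x_4=-\chi/2\}$, and moreover the unperturbed symmetry gives $G_{4L}^{(1),0}+G_{4L}^{(2),0}=0$. Second, applying the same symmetry to the trajectory at those crossings, I would obtain $y_4^{(1),0}=y_4^{(2),0}$, $v_{4x}^{(1),0}=-v_{4x}^{(2),0}$, and hence $v_{4y}^{(1),0}+v_{4y}^{(2),0}=0$. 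Third, the switch-foci identity of Section~\ref{section: switch foci}, namely $G_{4R}=G_{4L}+\chi v_{4y}$, applied at both crossings and summed, yields
\[
G_{4R}^{(1)}+G_{4R}^{(2)}=\bigl(G_{4L}^{(1)}+G_{4L}^{(2)}\bigr)+\chi\bigl(v_{4y}^{(1)}+v_{4y}^{(2)}\bigr),
\]
which vanishes in the unperturbed subproblem. Finally, Lemma~\ref{Lm: orderham}(a) controls the change of $G_{4R}$ between $\{x_4=-2\}$ and $\{x_4=-\chi/2\}$ by $O(1/\chi+\mu)$, propagating the bound back to the input and output sections.

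The hard part is quantifying the symmetry breaking caused by $Q_3$. The total $y$-impulse of the $Q_3$-force on $Q_4$ across the whole global map splits into a right-transit piece bounded by $\int\mu/(\ell_4^2+1)\,d\ell_4=O(\mu)$ and a left-transit piece bounded by $O(\mu/\chi)$, so the $Q_3$-induced change in Kepler parameters of $Q_4$ is $O(\mu)$ and the $Q_3$-induced change in $G_{4L}$ over the left transit is $O(\mu/\chi)$ (consistent with the $O(\mu/\chi^2)$ part of the bound in Lemma~\ref{Lm: orderham}(b)). Because in the symmetric reference orbit the combination $y_4^{(1)}v_{4x}^{(1)}+y_4^{(2)}v_{4x}^{(2)}$ vanishes, first-order perturbation theory gives this combination size $O(\mu)$, and hence $v_{4y}^{(1)}+v_{4y}^{(2)}=2\bigl(G_{4L}^{(1)}+G_{4L}^{(2)}+y_4^{(1)}v_{4x}^{(1)}+y_4^{(2)}v_{4x}^{(2)}\bigr)/\chi=O(\mu/\chi)$. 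Substituting into the switch-foci sum and then using Lemma~\ref{Lm: orderham}(a) for the last leg gives $\bar G_4+G_4=O(\mu)$. The delicate point to verify carefully is that the first-order perturbation of the collision orbit remains controlled in a neighborhood of the singular collision; this is where Levi--Civita regularization does the work, since the regularized flow is smooth at $z=0$ and the $Q_3$-term in the regularized Hamiltonian is a bona fide $O(\mu)$ perturbation of a smooth integrable system.
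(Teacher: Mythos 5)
Your proposal is correct and follows essentially the same route as the paper: there, too, the key step is the time-reversal symmetry at the $Q_4$--$Q_1$ collision, implemented by running the incoming branch backwards and comparing it with the outgoing branch via a Gronwall estimate over the left transit (deviation $O(\mu/\chi)$), then converting to the angular momentum about $Q_2$ through $G_{4R}=G_{4L}+\chi v_{4y}$ so the $\chi$ factor yields $O(\mu)$, and finally using the $O(\mu)$ a priori bounds on the right transit. The only difference is packaging: instead of an explicitly symmetric two-center reference orbit plus Levi--Civita regularization, the paper compares the two branches of the actual orbit directly in the $Q_1$-centered Delaunay variables $(L_4,G_4,g_4)$, which already regularize the double collision and take equal values on both branches at the collision.
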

\begin{proof}
%{\bf NEED TO SHOW THAT $Q_4$ DOES NOT COLLIDE WITH $Q_1$!!!!!!!!!}
%Consider an orbit of $Q_4$ moving to the left of the section $x_4=-\chi/2$. If $\mu=0$, then an orbit of $Q_4$ which collides $Q_1$ will come back along the same orbit due to the reversibility of Newtonian equations. But if $\mu\neq 0$, it is possible for $Q_3$ to perturb the orbit of $Q_4$ to create some deviation. We estimate such deviation, from which we exclude collision. \\
%Supposng that there is such a collision we derive some estimates.
%Since we are concerned with the orbit of $Q_4$ it is convenient to use $L_4$ instead of $L_3.$
%$L_4$ satisfies the equation $L_4'=-\frac{dt}{d\ell_4}\frac{\partial V_L}{\partial \ell_4}=O(1/\chi^2),$ where ``$'$" means the $\frac{d}{d\ell_4}$ derivative.
We write the  equations of motion
as $\mathbf Y'=\mathbf{V},$ where $\mathbf Y=(L_3,G_3,g_3;G_4,g_4)$ and $\mathbf{V}$ is the RHS of the Hamiltonian equations
\eqref{eq: Hamiltonian eq}.

We run the orbit coming to a collision backward so that we can compare it to the orbit exiting collision. We can still use the hyperbolic Delaunay coordinates to estimate the variational equation for collisional orbits as explained at the beginning of the proof of Lemma \ref{Lm: ordervar}. 
We shall use the subscript {\it in} to refer to
%If we write the equation as
%$$\mathbf Y_{in}'=\mathbf{V}_{in}$
the orbit coming to collision with time direction reversed
the subscript {\it out}
% and  $\mathbf Y_{out}'=\mathbf{V}_{out}$
for  the orbit exiting collision.

We have \[(\mathbf Y_{in}-\mathbf Y_{out})'=
O\left(\left\Vert\dfrac{\partial \mathbf{V}}{\partial \mathbf Y}\right\Vert\right)\;\;
\left(\mathbf Y_{in}-\mathbf Y_{out}\right)+ O\left(\dfrac{\mu}{|Q_4-Q_3|^2}\right)\]
where the last term comes from the $\frac{\mu}{|Q_4-Q_3|}$ term in the potential $V_L$.
We integrate this estimate for $\ell_4$ starting from the collision and ending when the outgoing orbit hits the section
$\left\{x_4=-\chi/2\right\}$.
The initial condition is $\mathbf Y_{in}-\mathbf Y_{out}=0$ since $L_3,G_4, g_4$ assume the same values
before and after the $Q_4$-$Q_1$ collision.  Next,
$\left\Vert\frac{\partial \mathbf{V}}{\partial \mathbf Y}\right\Vert=O\left(\frac{1}{\chi}\right)$
(this is proven in Lemma \ref{Lm: ordervar}(b)). Now the estimates
$$\int_{\ell_4^i}^{\ell_4^f}\dfrac{\partial \mathbf{V}}{\partial \mathbf Y}d\ell_4=O(1),
\quad \int_{\ell_4^i}^{\ell_4^f}O\left(\dfrac{\mu}{|Q_4-Q_3|^2}\right)d\ell_4=O(\mu/\chi) $$
and the Gronwall Lemma imply that
% since the largest term in $\dfrac{\partial \mathbf{V}}{\partial \mathbf Y}$ is $O(1/\chi)$
%(for $G_4,g_4$ see part $(b)$ of Lemma \ref{Lm: ordervar} and its integral, part $(b)$ of Lemma \ref{Lm: variation}. For $L_4$, we estimate directly), and $\displaystyle\int_{\ell_4^i}^{\ell_4^f}O\left(\dfrac{\mu}{|Q_4-Q_3|^2}\right)d\ell_4=O(\mu/\chi)$, we have
\begin{equation}\mathbf Y_{in}(\ell_4^f)-\mathbf Y_{out}(\ell_4^f)=O(\mu/\chi). \label{eq: Yinout}\end{equation}
%when one orbit is in the section $\left\{x_4=-\chi/2\right\}$ while the other is unknown, but we know that they take the same time $\ell_4^f-\ell_4^i$.\\

Next we estimate the angular momentum of $Q_4$ w.r.t. $Q_2$.  We have
\begin{equation}
\label{AMLR}
G_{4R}=G_{4L}+v_4\times (-\chi,0)=G_{4L}+v_{4y}\chi,
\end{equation}
where $v_{4y}$ is the $y$ component of the velocity of $Q_4$ at the time the orbit hits
the section $\left\{x_4=-\chi/2\right\}$. Using the equation \eqref{eq: Q4} in the Appendix~\ref{subsection: hyp}
and Lemma \ref{Lm: tilt} we see that for the orbits of interest
$$ v_{4y}=\dfrac{k}{L_4^2}(L_4 \sin g_4-G_4\cos g_4)+O\left(\dfrac{1}{\chi^2}\right). $$
Now \eqref{eq: Yinout} shows that
$v_{4y,in}-v_{4y,out}=O(\mu/\chi)$, where we need to use \eqref{eq: W} to get that the difference of $L_4$ is also $O(\mu/\chi)$ from other variables when restricted to the section $\left\{x_4=-\chi/2\right\}$.  Hence \eqref{AMLR} implies that
%and $|x_{4,in}-x_{4,out}|=O(\mu)$. This means, if we measure the angular momentum of $Q_4$ w.r.t. $Q_2$ close to the section $\left\{x=-\chi/2\right\}$, we have
$G_{4R,in}-G_{4R,out}=O(\mu).$
Finally the proof of Lemma \ref{Lm: position}
shows that the
angular momentum of $Q_4$ with respect to $Q_2$ changes by $O(\mu)$ during the time the orbits moves from the section
$\{x_4=-\chi/2\}$ to the section $\{x_4=-2\}.$
%along orbits after close encounter with $Q_3$ and before the next close encounter, in Gerver's construction, the angular momentum differ by $O(1)$. The difference is still $O(1)$ when measured in the section $\left\{x=-\chi/2\right\}$ since we have only $O(\mu)$ oscillation if we integrate the $G_4$ Hamiltonian equation over time $\chi/2$. This is a contradiction.  So collision between $Q_4$ and $Q_1$ is excluded.
\end{proof}
Now we exclude the possibility of collisions between $Q_3$ and $Q_4$.
Note that $Q_3$ and $Q_4$ have two potential collision points corresponding to two intersections of the ellipse of $Q_3$
and the branch of the hyperbola utilized by $Q_4.$ See Fig 1 and 2 in Section \ref{SSGer}.
Now it follows from Lemma~\ref{Lm: landau}(b) that $Q_3$ and $Q_4$ do not collide near the intersection where they have
the close encounter. We need also to rule out the collision near the second intersection point.
This was done by Gerver in \cite{G2}. Namely he
shows that the time for $Q_3$ and $Q_4$ to move from one crossing point to the other are different.
As a result, if $Q_3$ and $Q_4$ come to the correct intersection points nearly simultaneously,
they do not collide at the wrong points. To see that the travel times are different
recall that by second Kepler's law
the area swiped by the moving body in unit time is a constant for the two-body problem. In terms of Delaunay coordinates, this fact is given by the equation $\dot\ell=\pm\frac{1}{L^3}$ where $-$ is for hyperbolic motion and $+$ for elliptic. In our case, we have
$L_3\approx L_4$ when $\mu\ll 1, \chi\gg 1.$ Therefore in order to collide 
$Q_3$ and $Q_4$ must swipe nearly the same area within the unit time.
We see from Fig 1 and Fig 2, the area swiped by $Q_4$ is a proper subset of that by $Q_3$ between the two crossing points.
Therefore the travel time for $Q_4$ is shorter.

\subsection{Choosing angular momentum}
\label{SSAngMom}
\begin{proof}[Proof of the Sublemma~\ref{LmHitTarget}.]
The idea is to apply the strong expansion of the Poincar\'{e} map in a neighborhood of the collisional orbit studied in Lemma \ref{Lm: nocollision}. Notice Delaunay coordinates regularize double collisions and our estimate of $d\Glob$ holds also for collisional orbits.
 
{\bf Step 1.} We first show that there is a collisional orbit as $\ell_3$ varies. 
The proof of Lemma \ref{Lm: nocollision} shows that $Q_4$ nearly returns back to its initial position.
Sublemma \ref{KeepDirection} shows that if after the application of the local map we have
$\theta_4^+(0)=\pi-\tilde\theta$ then the orbit hits the line $x_4=-\chi$ so that its $y_4$ coordinate is a large positive number
and if $\theta_4^+(0)=\pi+\tilde\theta$ then the orbit hits the line $x_4=-\chi$ so that its $y_4$ coordinate is a large negative number.
Therefore due to the Intermediate Value Theorem it suffices to show that our surface $S_j,\ j=1,2,$ contains points $\bx_1, \bx_2$ such that
$\theta_4^+(\bx_1)=\pi-\tilde\theta,$ $\theta_4^+(\bx_2)=\pi+\tilde\theta.$  We have the expression $\theta^+_4=g^+_4-\arctan\frac{G^+_4}{L^+_4}$. By direct calculation we find $d\theta^+=L_4^+\hat\brlin$ (see also item (2) of Remark \ref{RkPhysics}). Since $TS_j\subset \mathcal K_j$ and the cone $\mathcal K_j$ is centered at the plane $span\{\brv_{3-j},\brrv_{3-j}\}$. Note that $\brrv_{3-j}\to\tw=\frac{\partial}{\partial\ell_3}$. We get using Lemma \ref{Lm: local4}\[d\theta^+\cdot(d\Loc \brrv_{3-j})=L_4^+\hat\brlin_j\cdot\left(\dfrac{1}{\mu}(\hat{\mathbf u}_j (\hat\lin_j \tw)+o(1))+O(1)\right)=c_j(\bx)/\mu,\ c_j(\bx_j)\neq 0.\]
So it is enough to vary $\ell_3$ in a $O(\mu)$ neighborhood of a point whose outgoing asymptotes satisfies the assumption of Lemma \ref{LmDerLoc}. We choose $\tilde\theta\ll 1$ but independent of $\mu$ such that the assumption of Lemma \ref{LmDerLoc} and Sublemma \ref{KeepDirection} is satisfied.

{\bf Step 2.} We show that there exists $\ell_3$ such that $\bre_4(\cP(S(\ell_3,\te_4)))$ is close to $e_4^{**}.$
We fix $\te_4$ then $\mathcal P(S(\cdot,\te_3))$ becomes a function of one variable $\ell_3$. Suppose the collisional orbit in Step 1 occurs at $\ell_3=\hat\ell_3$. As we vary $\ell_3$, 
the same calculation as in Step 1 gives  $\hat\brlin_j\cdot(d\Loc \brrv_{3-j})=\bar c_j(\bx)/\mu,\ \bar c_j(\bx_j) \neq 0$
and that $\brv_j$ contains nonzero $\partial/\partial e_4$ component.
Therefore the projection of $\mathcal{P}=\mathbb G\circ\mathbb L$ to the $e_4$ component, i.e.
$\bre_4(\ell_3, \te_4)$ as a function of $\ell_3$ is strongly expanding with
derivative bounded from below by $\frac{\brc\chi^2}{\mu}$
provided that the assumptions of Lemma \ref{Lm: position}
are satisfied (for the orbits of interest this will always be the case according to Lemma \ref{Lm: strip}).
Considering the map $\bre_4(\ell_3, \te_4)$ is not injective, we study $\bar G_4(\ell_3, \te_4)$ instead of $\bre_4(\ell_3,\te_4)$ using the relation $e=\sqrt{1+(G/L)^2}$. We have the same strong expansion for $\bar G_4(\ell_3, \te_4)$ since our estimates of the $d\Loc,d\Glob$ are done using $G_4$ instead of $e_4$.
Thus it follows from the strong expansion of the map $\bar G_4(\ell_3, \te_4)$ that a $R$-neighborhood of
$G_4^{**}$ (corresponding to $e^{**}_4$) is covered if $\ell_3$ varies in a $\frac{R\mu}{\brc\chi^2}$-neighborhood of $\hat\ell_3$. Taking $R$ large we can ensure
that $\bar G_4$ changes from a large negative number to a large positive number.
Then we use the intermediate value theorem to find $e_4$ such that $|\bar G_4-G_4^{**}|<KK'\delta,$ hence $|\bar e_4-e_4^{**}|<KK'\delta.$

{\bf Step 3.} We show that for the orbit just constructed $\cP(S(\tilde\ell_3, \te_4))\in U_2(\delta).$
By Lemma \ref{LmGMC0}, we get $\theta_4^+=O(\mu)$. Therefore by Lemma \ref{LmLMC0}
$\Loc(\te_4, \ell_3)$  has $(E_3, e_3, g_3)$ close to
$\Ger_{\te_4, 2, 4}(E_3(\te_4, \ell_3), e_3(\te_4, \tilde\ell_3), g_3(\te_4, \tilde\ell_3)).$
It follows that
$$|E_3-E_3^{**}|<KK'\delta/2, \quad |e_3-e_3^{**}|<KK'\delta/2, \quad |g_3-g_3^{**}|<KK'\delta/2. $$
Next Lemma \ref{LmGMC0} shows that after the application of $\Glob$, $(E_3, e_3, g_3)$ change little and
$\theta_4^-$ becomes $O(\mu).$
\end{proof}

\section{Derivative of the local map}
\label{section: local}

\subsection{Justifying the asymptotics}

Here we give the proof of Lemma \ref{LmDerLoc}. 
Our goal is to show that the main contribution to the derivative
comes from differentiating the main term in Lemma \ref{Lm: landau}.

\begin{proof}[Proof of Lemma~\ref{LmDerLoc}]
Since the transformation from Delaunay to Cartesian variables
%(outside the sphere $|Q_-|=\mu^{\kappa}$)
is symplectic and the norms of the transformation matrices are independent of $\mu$,
it is sufficient to prove the lemma in terms of Cartesian coordinates.  To go to the coordinates system used in Lemma~\ref{LmDerLoc}, we only need to multiply the Cartesian derivative matrix by $O(1)$ matrices, namely,
by $\frac{\partial (L_3,\ell_3,G_3,g_3,G_4,g_4)^+}{\partial(Q_3,v_3,Q_4,v_4)^+}$ on the left and
by $\frac{\partial(Q_3,v_3,Q_4,v_4)^-}{\partial (L_3,\ell_3,G_3,g_3,G_4,g_4)^-}$ on the right.
This does not change the form of the $d\Loc$ stated in Lemma~\ref{LmDerLoc}.

As before we use the formula~(\ref{eq: formald2}).
We need to consider the integration of the variational equations and also the boundary contribution. 

{\it Recall that the subscripts $-$ and $+$ mean relative motion and mass center motion respectively, and the superscripts $-$ and $+$ mean incoming and outgoing respectively. In the following, we are most interested in the relative motion, so we drop the \text{subscript} $-$ of $Q_-,v_-,\mathcal L_-,G_-,g_-$ for simplicity and without leading to confusion. }

{\bf Step 1, the Hamiltonian equations, the variational equations and the boundary contributions.}
 
It is convenient to use the variable $\mathcal L=L/\mu$. Lemma \ref{Lm: landau} gives that $1/c<\mathcal L<c$ and $\mu/c\leq G\leq c\mu$ for some $c>1$ if the rotation angle $\al$ is bounded away from $0$ and $\pi$. We also have $g,Q_+,v_+,v=O(1)$ and $Q=O(\mu^\kappa)$. 
From the Hamiltonian \eqref{eq: relcm}, we have $\dot\ell=-\frac{1}{2\mu \mathcal{L}^{3}}+O(\mu^{2\kappa})$ (see \eqref{EqDotl}). Using $\ell$ as the time variable we get from \eqref{eq: relcm} that the equations of motion take the following form
(recall that $\ell=O(\mu^{\kappa-1})$ due to \eqref{SectionEll}):

\begin{equation}
\begin{cases}
\dfrac{\partial \mathcal L}{\partial \ell}=-2  \mathcal L^{3}\dfrac{\partial H}{\partial \ell }
\left(1-2\mathcal L^{3}\dfrac{\partial H}{\partial \mathcal L }+\dots \right)(1+O(\mu^{2\kappa+1}))= O(\mu^{1+\kappa}),\\
\dfrac{\partial G}{\partial \ell}=-2\mu \mathcal  L^{3}\dfrac{\partial H}{\partial g}
\left(1-2\mathcal L^{3}\dfrac{\partial H}{\partial \mathcal L }+\dots \right)(1+O(\mu^{2\kappa+1}))=O(\mu^{1+2\kappa}),\\
\dfrac{\partial g}{\partial \ell}=2\mu \mathcal L^{3}\dfrac{\partial H}{\partial G}
\left(1-2\mathcal  L^{3}\dfrac{\partial H}{\partial \mathcal L }+\dots \right)(1+O(\mu^{2\kappa+1}))=O(\mu^{2\kappa}),\\
\dfrac{dQ_+}{d\ell}=-\dfrac{v_+}{2}(2\mu \mathcal{L}^{3})(1+O(\mu^{2\kappa+1}))=O(\mu)\\
\dfrac{dv_+}{d\ell}=\left(\dfrac{2Q_+}{|Q_+|^3}+O(\mu^{2\kappa})\right)(2\mu \mathcal{L}^{3})(1+O(\mu^{2\kappa+1}))=O(\mu).
\end{cases}\label{eq: hamrel}
\end{equation}
where $\dots$ denote the higher order terms. The estimates of the last two equations are simple. In the first three equations, the main contribution in $H$ is coming from $|Q|^2$ and $|Q_+\cdot Q|^2$, both of which are $O(\mu^{2\kappa})$. We have the estimate 
$$\left|\left(\frac{\partial}{\partial \mathcal L},\frac{\partial}{\partial \ell},\frac{\partial}{\partial G},\frac{\partial}{\partial g}\right) Q\right|=O(\mu^\kappa, \mu, \mu^{\kappa-1},\mu^\kappa)$$
using \eqref{eq: delaunayscattering} for $Q=(q_1,q_2)$ up to a rotation by $g$. 
 In fact, the $\frac{\partial}{\partial \ell}$ amounts to dividing by the scale of $\ell$, i.e. $\mu^{-1+\kappa}$. The derivatives $\frac{\partial}{\partial \mathcal L},\frac{\partial}{\partial g}$ do not change the order of magnitude. Finally since $G=O(\mu)$, the $\frac{\partial}{\partial G}$ amounts to dividing by $\mu.$
%The estimates of the second and third equations follow from \eqref{DerG}
%and \eqref{Derg} while the estimate of the other three equations is similar. In the first three equations, 
Next  we analyze the variational equations. This estimate is much easier than that of the global map part. 
The same rules as those used to obtain \eqref{eq: hamrel} apply here. 
%It is convenient to make the rescaling $L=\mu \mathcal{L}$. Then we have
%\begin{equation}
%\left[\begin{array}{c}
%\dfrac{d\delta \mathcal{L}}{d\ell}\\
%\dfrac{d\delta G}{d\ell}\\
%\dfrac{d\delta g}{d\ell}
%\end{array}\right]
%=O\left(\left[\begin{array}{ccc}
%\mu^{1+\kappa}& \mu^{\kappa}&\mu^{1+\kappa}\\
%\mu^{1+2\kappa}&\mu^{2\kappa}&\mu^{1+2\kappa}\\
%\mu^{2\kappa}&\mu^{2\kappa-1}&\mu^{2\kappa}
%\end{array}\right]\right)
%\left[\begin{array}{c}
%\delta \mathcal{L}\\
%\delta G\\
%\delta g\\
%\end{array}\right]+O\left(\left[\begin{array}{cc}
%\mu^{1+\kappa}&0\\
%\mu^{1+2\kappa}&0\\
%\mu^{2\kappa}&0\\
%\end{array}\right]\right)
%\left[\begin{array}{c}
%\delta Q_+\\
%\delta v_+\\
%\end{array}\right]
%.\label{eq: varrel}
%\end{equation}
\begin{equation}
\dfrac{d}{d\ell}\left[\begin{array}{c}
\delta \mathcal L\\
\delta G\\
\delta g\\
\delta Q_+\\
\delta v_+\\
\end{array}\right]=O\left(\begin{array}{ccccc}
\mu^{1+\kappa}& \mu^{\kappa}&\mu^{1+\kappa}&\mu^{1+\kappa}&0 \\
\mu^{1+2\kappa}&\mu^{2\kappa}&\mu^{1+2\kappa}&\mu^{1+2\kappa}&0\\
\mu^{2\kappa}&\mu^{2\kappa-1}&\mu^{2\kappa}& \mu^{2\kappa}&0\\
\mu&\mu^{2\kappa+1}&\mu^{2\kappa+2}&\mu^{2\kappa+2}&\mu\\
\mu&\mu^{2\kappa+1}&\mu^{2\kappa+2}&\mu&0\\
\end{array}\right)
\left[\begin{array}{c}
\delta \mathcal L\\
\delta G\\
\delta g\\
\delta Q_+\\
\delta v_+\\
\end{array}\right].
\label{eq: varrel}
\end{equation}
We need to integrate this equation over time $\mu^{\kappa -1}$. As we did in the proof of Lemma \ref{Lm: variation}, we have Gronwall inequality for linear systems (Lemma \ref{LmLinGronwall}).
Recall also that the ``$\leq$" for matrices means ``$\leq$" entry-wise. 

%\begin{proof}[Proof of the claim]We denote by $x_1(t), x_2(t)$ the first entry of the vectors $X_1(t)$ and $X_2(t)$ respectively. Assume also that $M_1+\eps\leq M_2$ for any small $\eps$. We get $x_i'(t)=\sum_j m_{1j,i} x_{j,i}$, $i=1,2$. We have $m_{1,j,1}< m_{1,j,2}$ for all $j$. Suppose $s\geq 0$ is the maximal time such that $X_1(t)\leq X_2(t), t\in [0,s]$. So we get $(x_2-x_1)'=\sum_j m_{1j,2} x_{j,i}-\sum_j m_{1j,1} x_{j,i}> 0$ for $t\in [0,s]$, if $X_1(0)\leq X_2(0)$. This means that $x_2-x_1$ is increasing. This is also true for all the entries so we have $(X_2-X_1)'>0$. Therefore $X_2-X_1$ is increasing and we can extend $t$ beyond $s$ such that $X_2(t)>X_1(t)$. The contradiction implies that we get $X_2\geq X_1$ for all $t>0$. Letting $\eps\to 0$, we get the claim. \end{proof}
Thus we compare  the solution to the variational equation with a constant linear 
ODE of the form $X'=AX.$
Its solution has form $X(\mu^{\kappa-1})=\sum_{n=0}^\infty \frac{(A\mu^{\kappa-1})^n}{n!}$. 
We will show that
\begin{equation}
\label{TwoTermPicard}
(A\mu^{\kappa-1})^3\leq C_3((A\mu^{\kappa-1})+(A\mu^{\kappa-1})^2).
\end{equation} 
Then we have 
$$(A\mu^{\kappa-1})^n\leq C_n((A\mu^{\kappa-1})+(A\mu^{\kappa-1})^2), \quad C_n=C_3(1+C_3)^n.$$ 
Hence  $X(\mu^{\kappa-1})\leq \mathrm{Id}+C((A\mu^{\kappa-1})+(A\mu^{\kappa-1})^2)$. We next integrate the variational equations over time $O(\mu^{\kappa-1})$ to get the estimate of its fundamental solution. 
{\em From now on, we fix $\kappa=2/5\in (1/3,1/2).$} 
We get the following bound for the fundamental solution of the variational equation in the case of 
$\kappa=2/5$, in which case \eqref{TwoTermPicard} holds and so
two steps of Picard iteration are enough 
\begin{equation}
\Id_7+
O\left(\begin{array}{ccc|cc}
\mu^{2\kappa}& \mu^{2\kappa-1}&\mu^{2\kappa}&\mu^{2\kappa}&\mu^{3\kappa}\\
\mu^{3\kappa}&\mu^{3\kappa-1}&\mu^{3\kappa}&\mu^{3\kappa}&\mu^{4\kappa}\\
\mu^{3\kappa-1}&\mu^{3\kappa-2}&\mu^{3\kappa-1}&\mu^{3\kappa-1}&\mu^{4\kappa-1}\\
\hline
\mu^\kappa&\mu^{3\kappa-1}&\mu^{3\kappa}&\mu^{2\kappa}&\mu^{\kappa}\\
\mu^\kappa&\mu^{3\kappa-1}&\mu^{3\kappa}&\mu^{\kappa}&\mu^{2\kappa}\\
\end{array}\right).\label{eq: fundrel}
\end{equation}
This calculation can either be done by hand or using computer. 

Next, we compute the boundary contribution using the formula~(\ref{eq: formald2}).
In terms of the Delaunay variables inside the sphere $|Q|=2\mu^{\kappa}$, we have
\begin{equation}\dfrac{\partial \ell}{\partial (\mathcal{L},G,g,Q_+,v_+)}=-\left(\dfrac{\partial |Q|}{\partial\ell}\right)^{-1}\dfrac{\partial |Q|}{\partial (\mathcal{L},G,g,Q_+,v_+)}=(O(\mu^{\kappa-1}),O(\mu^{\kappa-2}),0,0,0).\label{eq: boundaryrel}\end{equation}
Indeed, due to \eqref{eq: delaunayscattering}
we have
$\frac{\partial |Q|}{\partial g}=0$, $\frac{\partial |Q|}{\partial\ell}=O(\mu)$, $\frac{\partial |Q|}{\partial \mathcal{L}}=O(\mu^{\kappa})$
and $\frac{\partial |Q|}{\partial G}=O(\mu^{\kappa-1}).$ Combining this with \eqref{eq: hamrel} we get
\begin{equation}
\label{EqBCLocRel}
\begin{aligned}
&\left(\dfrac{\partial }{\partial \ell}(\mathcal{L},G,g,Q_+,v_+)\right)\otimes\dfrac{\partial \ell}{\partial (\mathcal{L},G,g,Q_+,v_+)}\\
&=O(\mu^{1+\kappa},\mu^{1+2\kappa},\mu^{2\kappa},\mu,\mu)\otimes O(\mu^{\kappa-1},\mu^{\kappa-2},0,0,0).
\end{aligned}
\end{equation}

{\bf Step 2, the analysis of the relative motion part.}

The structure of $d\Loc$ comes mainly from the relative motion part, on which we now focus. We neglect the $Q_+,v_+$ part and will study them in the last step. 

{\it Substep 2.1, the strategy. }

Using \eqref{eq: formald2} we obtain the derivative matrix
\begin{equation}
\begin{aligned}
&\dfrac{\partial (\mathcal{L},G,g)^+}{\partial (\mathcal{L},G,g)^-}=
%\left[\begin{array}{ccc}
%1+\mu^{2\kappa}& \mu^{2\kappa-1}&0\\
%\mu^{3\kappa}&1+\mu^{3\kappa-1}&0\\
%\mu^{3\kappa}&\mu^{3\kappa-1}&1
%\end{array}\right]^{-1}\\
\left(\Id_3+O\left(\begin{array}{ccc}
\mu^{2\kappa}& \mu^{2\kappa-1}&0\\
\mu^{3\kappa}&\mu^{3\kappa-1}&0\\
\mu^{3\kappa-1}&\mu^{3\kappa-2}&0
\end{array}\right)\right)^{-1} \times \\
&\left(\Id_3+O\left(\begin{array}{ccc}
\mu^{2\kappa}& \mu^{2\kappa-1}&\mu^{2\kappa}\\
\mu^{3\kappa}&\mu^{3\kappa-1}&\mu^{3\kappa}\\
\mu^{3\kappa-1}&\mu^{3\kappa-2}&\mu^{3\kappa-1}
\end{array}\right)\right)
\left(\Id_3-O\left(\begin{array}{ccc}
\mu^{2\kappa}& \mu^{2\kappa-1}&0\\
\mu^{3\kappa}&\mu^{3\kappa-1}&0\\
\mu^{3\kappa-1}&\mu^{3\kappa-2}&0
\end{array}\right)\right)\\
&=\Id_3+O\left(\begin{array}{ccc}
\mu^{2\kappa}& \mu^{2\kappa-1}&\mu^{2\kappa}\\
\mu^{3\kappa}&\mu^{3\kappa-1}&\mu^{3\kappa}\\
\mu^{3\kappa-1}&\mu^{3\kappa-2}&\mu^{3\kappa-1}
\end{array}\right):=\Id_3+P.\label{eq: dervar}
\end{aligned}
\end{equation}
%where we define the $P$ as the perturbation matrix to the identity.

%We are now ready to compute the relative motion part of the derivative of the Poincar\'{e} map.
For the position variables $q$, we are only interested in the angle $\Theta:=\arctan\left(\frac{q_2}{q_1}\right)$ since the length $|(q_1,q_2)|=2\mu^\kappa$ is fixed when restricted on the sphere.

We split the derivative matrix as follows:
\begin{equation}
\dfrac{\partial (\Theta,v)^+}{\partial (\Theta,v)^-}=\dfrac{\partial (\Theta,v)^+}{\partial (\mathcal{L},G,g)^+}\dfrac{\partial (\mathcal{L},G,g)^+}{\partial (\mathcal{L},G,g)^-}\dfrac{\partial (\mathcal{L},G,g)^-}{\partial (\Theta,v)^-}=
%\dfrac{\partial (\theta_-,v_-)^+}{\partial (\mathcal{L},G,g)^+}(\Id+P)\dfrac{\partial (\mathcal{L},G,g)^-}{\partial (\theta_-,v_-)^-}.
\label{eq: dersplitting3}
\end{equation}
$$ \dfrac{\partial (\Theta,v)^+}{\partial (\mathcal{L},G,g)^+}\dfrac{\partial (\mathcal{L},G,g)^-}{\partial (\Theta,v)^-}+
\dfrac{\partial (\Theta,v)^+}{\partial (\mathcal{L},G,g)^+} P \dfrac{\partial (\mathcal{L},G,g)^-}{\partial (\Theta,v)^-}=I+II. $$
In the following, we prove 

{\bf Claim: }\begin{equation}\label{EqI&II}
I=\frac{1}{\mu}O(1)_{1\times 3}\otimes \frac{\partial G^-}{\partial (\Theta,v)^-}+O(1),\ II=\frac{1}{\mu}O(\mu^{3\kappa-1})_{1\times 3}\otimes \frac{\partial G^-}{\partial (\Theta,v)^-}+O(\mu^{3\kappa-1}).\end{equation}
We will give the expressions of $O(1)$ terms explicitly. 

{\it Substep 2.2, the estimate of $I$ in the splitting \eqref{eq: dersplitting3}.}

Using equations \eqref{eq: delaunayscattering} and \eqref{EqDelMom}
we obtain
\begin{equation}
\label{LocRelCartDel}
\dfrac{\partial (\Theta,v)^+}{\partial (\mathcal{L},G,g)^+}=
O\left(\begin{array}{ccc}
1&\mu^{-1}&1\\
1&\mu^{-1}&1\\
1&\mu^{-1}&1
\end{array}\right).
\end{equation}
Next, we consider the first term in \eqref{eq: dersplitting3}.
\begin{equation}
I =\dfrac{\partial (\Theta,v)^+}{\partial \mathcal{L}^{+}}\otimes \dfrac{\partial \mathcal{L}^{-}}{\partial (\Theta,v)^-}
+\dfrac{\partial (\Theta,v)^+}{\partial G^{+}}\otimes \dfrac{\partial G^{-}}{\partial (\Theta,v)^-}+\dfrac{\partial (\Theta,v)^+}{\partial g^{+}}\otimes \dfrac{\partial g^{-}}{\partial (\Theta,v)^-}.
\label{eq: dersplitingid}
\end{equation}
Using the expressions
$\frac{1}{4\mathcal{L}^{2}}=\frac{v^2}{4}-\frac{\mu}{2|Q|},\quad G=v\times Q=|v|\cdot|Q|\sin\measuredangle (v,Q)$,
we see that
\begin{equation}
\label{RelLocDelCart}
\dfrac{\partial \mathcal{L}^{-}}{\partial (\Theta,v)^-}=O(1),\quad \dfrac{\partial G^{-}}{\partial (\Theta,v)^-}=(O(\mu^\kappa),O(\mu^\kappa)).
\end{equation}
%Next, we  have $\frac{\partial (\Theta_-,v_-)^+}{\partial g^{+}}=(O(1),O(1))$ from equations \eqref{eq: delaunayscattering} and \eqref{EqDelMom}.
It only remains to get the estimate of $\frac{\partial g^{-}}{\partial (\Theta,v)^-}.$ Next, we claim that
 \begin{equation}
\dfrac{\partial g^-}{\partial (\Theta,v)^-}=
%O\left(\dfrac{1}{\mu}\right)
\left[\frac{\partial}{\partial G^-}\arctan\left(\frac{G^-}{\mu \mathcal{L}}\right)\right]
\dfrac{\partial G^-}{\partial (\Theta,v)^-}+O(1)=O(1/\mu)\dfrac{\partial G^-}{\partial (\Theta,v)^-}+O(1).\label{eq: derg}
\end{equation}
%The physical meaning of this estimate is as follows. By changing $G^-$, the outgoing asymptote gets changed significantly. 
We use the fact
\[\dfrac{p_2}{p_1}=\dfrac{\sin g \sinh u\pm\frac{G}{\mu \mathcal{L}}\cos g\cosh u}{\cos g \sinh u\mp\frac{G}{\mu \mathcal{L}}\sin g\cosh u}=\dfrac{\tan g\pm\frac{G}{\mu \mathcal{L}}}{1\mp\frac{G}{\mu \mathcal{L}} \tan g} +e^{-2|u|}E(G/\mu \mathcal{L},g,u),\]
where $E$ is a smooth function satisfying
%$E(G/\mu \mathcal{L},g,u)$ is a function of $G/\mu \mathcal{L},g,u$ to make it an equality. Especially we have
$\frac{\partial E}{\partial g}=O(1)$ as $\ell\to\infty$.
Therefore we get
\[g=\arctan\left(\dfrac{p_2}{p_1}-e^{-2|u|}E(G/\mu \mathcal{L},g)\right)\mp\arctan\dfrac{G}{\mu \mathcal{L}}\ \mathrm{as}\ \ell\to\infty.\]
%The implicit function theorem implies the term $e^{-2u}E(G/\mu \mathcal{L},g)$ is negligible when we consider the derivative $\dfrac{\partial g}{\partial (Q_-,v_-)}.$
We choose the $+$ when considering the incoming orbit parameters.
Thus
\begin{equation}\nonumber\begin{aligned}
&\dfrac{\partial g}{\partial (\Theta,v)}\left(1+O(e^{-2|u|})\right)=\dfrac{\partial \arctan\frac{p_2}{p_1}}{\partial (\Theta,v)}+\dfrac{\partial\arctan\frac{G}{\mu \mathcal{L}}}{\partial \mathcal{L}}\dfrac{\partial \mathcal{L}}{\partial (\Theta,v)}\\
&+\left(\frac{\partial\arctan\frac{G}{\mu \mathcal{L}}}{\partial G}+O(e^{-2|u|}/\mu)\right)\dfrac{\partial G}{\partial (\Theta,v)} +O(e^{-2|u|}) \end{aligned}\end{equation}
proving \eqref{eq: derg}. 
%where the $1/\mu$ comes from $\frac{\partial}{\partial G}\arctan\frac{G}{\mu \mathcal{L}}$ 
%and all other terms are $O(1)$.

Plugging \eqref{LocRelCartDel}, \eqref{RelLocDelCart} and \eqref{eq: derg} back to \eqref{eq: dersplitingid} we get the estimate of $I$ in \eqref{EqI&II}. More explicitly, 
$I=\frac{1}{\mu}\mathbf U\otimes \frac{\partial G^-}{\partial (\Theta,v)^-}+\mathbf B$, where
%Therefore%, in the splitting~(\ref{eq: dersplitingid}), we obtain the form
{\small\begin{equation}
\begin{aligned}
&\mathbf U=\left(\mu \dfrac{\partial (\Theta,v)^+}{\partial G^{+}}+\mu\dfrac{\partial\arctan\frac{G^-}{\mu \mathcal{L}^{-}}}{\partial G^-}\dfrac{\partial (\Theta,v)^+}{\partial g^{+}}+O(e^{-2|u|})\right)\\
&\mathbf B=\left(\dfrac{\partial (\Theta,v)^+}{\partial \mathcal{L}^{+}}\otimes\dfrac{\partial \mathcal{L}^{-}}{\partial (\Theta,v)^-}+\dfrac{\partial (\Theta,v)^+}{\partial g^{+}}\otimes \left(\dfrac{\partial \arctan\frac{p_2^-}{p^-_1}}{\partial (\Theta,v)}+\dfrac{\partial\arctan\frac{G^-}{\mu \mathcal{L}^{-}}}{\partial \mathcal{L}^{-}}\dfrac{\partial \mathcal{L}^{-}}{\partial (\Theta,v)}\right)\right)+O(e^{-2|u|}).
\end{aligned}\label{eq: local2bp}
\end{equation}}
%Since the expression in parenthesis of the first term is $O(1),$ $I$ has the rate of growth required in Lemma \ref{LmDerLoc}.
{\it Substep 2.3, the estimate of $II$ in the splitting \eqref{eq: dersplitting3}.}

Now we study the second term in \eqref{eq: dersplitting3}
\begin{equation}
\begin{aligned}
II&=O\left(\begin{array}{ccc}
1&\mu^{-1}&1\\
1&\mu^{-1}&1\\
1&\mu^{-1}&1
\end{array}\right)\cdot O
\left(\begin{array}{ccc}
\mu^{2\kappa}& \mu^{2\kappa-1}&\mu^{2\kappa}\\
\mu^{3\kappa}&\mu^{3\kappa-1}&\mu^{3\kappa}\\
\mu^{3\kappa-1}&\mu^{3\kappa-2}&\mu^{3\kappa-1}
\end{array}\right)
\dfrac{\partial (\mathcal{L},G,g)^-}{\partial (\Theta,v)^-}\\
&=O\left(\begin{array}{ccc}
\mu^{3\kappa-1}& \mu^{3\kappa-2}&\mu^{3\kappa-1}\\
\mu^{3\kappa-1}&\mu^{3\kappa-2}&\mu^{3\kappa-1}\\
\mu^{3\kappa-1}&\mu^{3\kappa-2}&\mu^{3\kappa-1}
\end{array}\right)
\dfrac{\partial (\mathcal{L},G,g)^-}{\partial (\Theta,v)^-}\\
&=\mu^{3\kappa-1}\left[O(1)_{1\times 3}\otimes \dfrac{\partial \mathcal{L}^{-}}{\partial (\Theta,v)^-} +O(\mu^{-1})_{1\times 3}\otimes \dfrac{\partial G^{-}}{\partial (\Theta,v)^-}
+O(1)_{1\times 3}\otimes \dfrac{\partial g^{-}}{\partial (\Theta,v)^-}\right]
\end{aligned}\label{eq: addpert}
\end{equation}
where we use that $\mu^{2\kappa}<\mu^{3\kappa-1}$ and $\mu^{2\kappa-1}<\mu^{3\kappa-2}$
since $\kappa<1/2$.
The first summand in \eqref{eq: addpert}
is $O(\mu^{3\kappa-1})$. Applying \eqref{eq: derg}, we get the estimate of $II$ in \eqref{EqI&II}. %implies that

{\it Substep 2.4, going from $\Theta$ to $Q$.}

We use the variable $\Theta$ for the relative position $Q$ and we have $\frac{\partial G^-}{\partial (\Theta,v)^-}=O(\mu^\kappa)$. %To get back to $Q$, i.e. 
To obtain $\frac{\partial (Q,v)^+}{\partial (Q,v)^-}$, we use that
$$Q=2\mu^\kappa(\cos\Theta,\sin\Theta)=(x,y),\quad \Theta=\arctan\frac{y}{x}.$$
So we have the estimate $\frac{\partial Q^+}{\partial (\mathcal L,G,g)^+}=O(\mu^\kappa)\frac{\partial \Theta^+}{\partial (\mathcal L,G,g)^+}=O(\mu^{\kappa-1})$. To get $\frac{\partial -}{\partial Q^-},$ we use the transformation from polar coordinates to Cartesian, $\frac{\partial -}{\partial Q^-}=\frac{\partial -}{\partial (r,\Theta)^-}\frac{\partial(r,\Theta)^-}{\partial Q^-}$, where $r=|Q^-|=2\mu^\kappa$. 
Therefore we have $\frac{\partial r^-}{\partial Q^-}=0,\ \frac{\partial -}{\partial Q^-}=\frac{1}{\mu^\kappa}\frac{\partial -}{\partial \Theta^-}(-\sin\Theta^-,\cos\Theta^-).$ So we have the estimate $\frac{\partial G^-}{\partial Q^-}=O(1)$, and $\frac{\partial \mathcal{L}^-}{\partial Q^-}=\frac{\partial \mathcal{L}^-}{\partial \Theta^-}=0$ since in the expression $\frac{1}{4\mathcal{L}^{2}}=\frac{v^2}{4}-\frac{\mu}{2|Q|}$, the
angle $\Theta$ plays no role. Finally, we have $\frac{\partial }{\partial Q_-^-}\arctan \frac{p_2^-}{p^-_1}=0$. Applying these estimates to the concrete expressions of $\mathbf U,\mathbf B$, and \eqref{eq: addpert} for the $O(\mu^{3\kappa-1})$ remainder, 
so we get \begin{equation}\label{EqQvpm}\dfrac{\partial (Q,v)^+}{\partial (Q,v)^-}=\dfrac{1}{\mu}(O(\mu^\kappa)_{1\times 2}, O(1)_{1\times 2})\otimes (O(1)_{1\times 2},O(\mu^\kappa)_{1\times 2})+O(1)_{4\times 4}.\end{equation}
In particular, the estimate of $\mathbf B+O(\mu^{3\kappa-1})=O(1)$ instead of $O(\mu^{-\kappa})$ is due to
$\frac{\partial \mathcal{L}^-}{\partial Q^-}=\frac{\partial \mathcal{L}^-}{\partial \Theta^-}=0$ and $\frac{\partial p^-_-}{\partial Q^-}=\frac{\partial p^-_-}{\partial \Theta^-}=0.$
%It remains to show that other entries of the derivative matrix are $O(1).$

{\bf Step 3, the contribution from the motion of the mass center.}

{\it Substep 3.1, the decomposition.}

Consider the following decomposition %using \eqref{EqBCLocRel}
\begin{equation}
\begin{aligned}
\cD&:=\dfrac{\partial(\Theta,v,Q_+,v_+)^+}{\partial(\Theta,v,Q_+,v_+)^-}=\dfrac{\partial(\Theta,v;Q_+,v_+)^+}{\partial(\mathcal{L},G,g;Q_+,v_+)^+}\dfrac{\partial(\mathcal{L},G,g;Q_+,v_+)^+}{\partial (\mathcal{L},G,g;Q_+,v_+)(\ell^f) }\\
&\dfrac{\partial (\mathcal{L},G,g;Q_+,v_+)(\ell^f)}{\partial (\mathcal{L},G,g;Q_+,v_+)(\ell^i)} \dfrac{\partial (\mathcal{L},G,g;Q_+,v_+)(\ell^i)}{\partial (\mathcal{L},G,g;Q_+,v_+)^-}\dfrac{\partial (\mathcal{L},G,g;Q_+,v_+)^-}{\partial (\Theta,v;Q_+,v_+)^-}\\
&:=\left[\begin{array}{cc}
M&0\\
0&\Id_4
\end{array}\right]
\left[\begin{array}{cc}
A&0\\
B&\Id_4
\end{array}\right]
\left[\begin{array}{cc}
C&D\\
E&F
\end{array}\right]
\left[\begin{array}{cc}
A'&0\\
B'&\Id_4
\end{array}\right]
\left[\begin{array}{cc}
N&0\\
0&\Id_4
\end{array}\right]\\
&=\left[\begin{array}{cc}
MACA'N+MADB'N&MAD\\
(BC+E)A'N+(BD+F)B'N&BD+F
\end{array}\right].
\end{aligned}\label{eq: matrices}
\end{equation}
Each of the above matrix is $7\times 7$. 

{\it Substep 3.2, the estimate of each block. }

The matrix $M=\frac{\partial(\Theta,v)^+}{\partial(\mathcal L,G,g)^+}$ is given by \eqref{LocRelCartDel} and
$N=\frac{\partial(\mathcal L,G,g)^-}{\partial(\Theta,v)^-}$ is given by  \eqref{LocRelCartDel}, \eqref{RelLocDelCart}, \eqref{eq: derg}  $$M=O\left(\begin{array}{ccc}
1& \mu^{-1}&1\\
1& \mu^{-1}&1\\
1& \mu^{-1}&1
\end{array}\right),\quad
N=\left(\begin{array}{cccc}
O(1)_{1\times 3}\\
\frac{\partial G^-}{\partial (\Theta,v)^-}\\
O(\frac{1}{\mu})\frac{\partial G^-}{\partial (\Theta,v)^-}+O(1)\\
\end{array}\right).$$
$C,D,E,F$ form the matrix \eqref{eq: fundrel}, the fundamental solution of the variational equation, 
$$\left(\begin{array}{c|c}
C&D\\
\hline
E&F\end{array}\right)=	
\Id_7+
O\left(\begin{array}{ccc|cc}
\mu^{2\kappa}& \mu^{2\kappa-1}&\mu^{2\kappa}&(\mu^{2\kappa})_{1\times 2}&(\mu^{3\kappa})_{1\times 2}\\
\mu^{3\kappa}&\mu^{3\kappa-1}&\mu^{3\kappa}&(\mu^{3\kappa})_{1\times 2}&(\mu^{4\kappa})_{1\times 2}\\
\mu^{3\kappa-1}&\mu^{3\kappa-2}&\mu^{3\kappa-1}&(\mu^{3\kappa-1})_{1\times 2}&(\mu^{4\kappa-1})_{1\times 2}\\
\hline
(\mu^\kappa)_{2\times 1}&(\mu^{3\kappa-1})_{2\times 1}&(\mu^{3\kappa})_{2\times 1}&(\mu^{2\kappa})_{2\times 2}&(\mu^{\kappa})_{2\times 2}\\
(\mu^\kappa)_{2\times 1}&(\mu^{3\kappa-1})_{2\times 1}&(\mu^{3\kappa})_{2\times 1}&(\mu^{\kappa})_{2\times 2}&(\mu^{2\kappa})_{2\times 2}\\
\end{array}\right).
$$
  $A,B,A',B'$ are given by \eqref{EqBCLocRel}, boundary contributions, 
	$$\left[\begin{array}{c|c}
A&0\\
\hline
B&\Id_4
\end{array}\right],\left[\begin{array}{c|c}
A'&0\\
\hline
B'&\Id_4
  \end{array}\right]=\mathrm{Id}_7+
O(\mu^{1+\kappa},\mu^{1+2\kappa},\mu^{2\kappa};\mu_{1\times4 })
\otimes O(\mu^{\kappa-1},\mu^{\kappa-2},0;0_{1\times 4}).
$$

{\it Substep 3.3, the estimate of the first block $MACA'N+MADB'N$ in $\mathcal D$.}
By \eqref{eq: dervar}
$$ACA'=\Id_3+P=\Id_3+O\left(\begin{array}{ccc}
\mu^{2\kappa}& \mu^{2\kappa-1}&\mu^{2\kappa}\\
\mu^{3\kappa}&\mu^{3\kappa-1}&\mu^{3\kappa}\\
\mu^{3\kappa-1}&\mu^{3\kappa-2}&\mu^{3\kappa-1}
\end{array}\right)$$
(Recall that \eqref{eq: dervar} is the part of $\frac{\partial (\mathcal{L},G,g)^+}{\partial (\mathcal{L},G,g)^-}$
without considering the motion of the mass center),
and by \eqref{EqI&II} and \eqref{eq: local2bp}
\begin{equation}\label{CD-Main}
MACA'N=M(\Id_3+P)N=\frac{1}{\mu}\left(\mathbf{U}+O\left(\mu^{3\kappa-1}\right)\right) 
\otimes \frac{\partial G^-}{\partial (\Theta,v)^-}
+\mathbf B+O\left(\mu^{3\kappa-1}\right).
\end{equation} %together with the matrices $M,A,C,A',N$ 
%Here \eqref{CD-Main} is exactly the same matrix as \eqref{EqQvpm}.
Indeed, using the notation of \eqref{eq: dersplitting3}, we have $I=MN$ and $II=MPN$.
The estimates of $I$ and $II$ are given in  \eqref{EqI&II}.

Next we claim that
\begin{equation}\label{CD-Error}
  MADB'N=
  O\left(\mu^{3\kappa-2}\right)\frac{\partial G^-}{\partial (\Theta,v)^-}+
O\left(\mu^{3\kappa-1}\right)
\end{equation}
so it can be absorbed into the errror terms of \eqref{CD-Main}.
To this end we split $N=N_1+N_2,$ $A=\mathrm{Id}+A_2$ where
$$
N_1=\left(
\begin{array}{c} 0_{1\times 3} \\
\frac{\partial G^-}{\partial (\Theta,v)^-}\\
O(\frac{1}{\mu})\frac{\partial G^-}{\partial (\Theta,v)^-}
\end{array}\right), \quad
N_2=\left(\begin{array}{c} O(1)_{1\times 3} \\ 0_{1\times 3} \\
  O(1)_{1\times 3} \end{array}\right), $$
$A_2=
O(\mu^{1+\kappa},\mu^{1+2\kappa},\mu^{2\kappa})
\otimes O(\mu^{\kappa-1},\mu^{\kappa-2},0).
$ Thus
$MADB'N=MDB'N+MA_2 DB'N.$ Let us work on the first term.
A direct computation shows that
$$ DB'=O\left(\begin{array}{ccc} \mu^{3\kappa} & \mu^{3\kappa-1} & 0 \\
  \mu^{4\kappa} & \mu^{4\kappa-1} & 0 \\
  \mu^{4\kappa-1} & \mu^{4\kappa-2} & 0 \end{array} \right), $$
and 
$MDB'=O\left(\mu^{4\kappa-1}_{3\times 1}, \mu^{4\kappa-2}_{3\times 1}, 0_{3\times 1}\right). $
Now it is easy to see that $MDB' N_1$ can be absorbed into the first term in \eqref{CD-Error}   
and $MDB' N_2$ can be absorbed into the second term. The key is that $N_1$ has rank one and 
the second row of $N_2$ is zero. The analysis of $MA_2DB'N$ is even easier since a direct 
computation shows that $DB'$ dominates
$A_2 DB'$ componentwise.
This proves \eqref{CD-Error} and shows that 
$MACA'N+MADB'N$ has the same asymptotics as \eqref{CD-Main}.

%$CA'$ has the same estimate as $\Id_3+P$ while $DB'=O(\mu^{2\kappa+1},\mu^{3\kappa+1},\mu^{3\kappa})\otimes O(\mu^{\kappa-1},\mu^{\kappa-2},0)$ is much smaller than $\Id_3+P$ component wise. Therefore we have that $ACA'+ADB'=\Id_3+P$, and 
% $$MACA'N+MADB'N=M(ACA'+ADB')N=M(\Id_3+P)N=I+II$$
%in \eqref{eq: matrices} has the same estimate as $MACA'N$ in \eqref{CD-Main} using  \eqref{EqI&II}. 

{\it Substep 3.4, estimate of the remaining blocks in $\mathcal D$.}

The following estimates are obtained by a direct computation
\begin{equation}\nonumber
\begin{aligned}
BD+F&=
O(\mu_{1\times 4})\otimes O(\mu^{\kappa-1},\mu^{\kappa-2},0)O\left(\begin{array}{cc}
(\mu^{2\kappa})_{1\times 2}&(\mu^{3\kappa})_{1\times 2}\\
(\mu^{3\kappa})_{1\times 2}&(\mu^{4\kappa})_{1\times 2}\\
(\mu^{3\kappa-1})_{1\times 2}&(\mu^{4\kappa-1})_{1\times 2}\\
\end{array}\right)
+\Id_4\\
&
+O\left(\begin{array}{cc}
(\mu^{2\kappa})_{2\times 2}&(\mu^{\kappa})_{2\times 2}\\
(\mu^{\kappa})_{2\times 2}&(\mu^{2\kappa})_{2\times 2}\\
\end{array}\right)=
\Id_4+O\left(\mu^{\kappa}\right)_{4\times 4}.\\
BC+E&=
O(\mu_{1\times 4})\otimes O(\mu^{\kappa-1},\mu^{\kappa-2},0)O\left(\begin{array}{ccc}
\mu^{2\kappa}& \mu^{2\kappa-1}&\mu^{2\kappa}\\
\mu^{3\kappa}&\mu^{3\kappa-1}&\mu^{3\kappa}\\
\mu^{3\kappa-1}&\mu^{3\kappa-2}&\mu^{3\kappa-1}
\end{array}\right)\\
&+\left(
(\mu^\kappa)_{4\times 1},(\mu^{3\kappa-1})_{4\times 1},(\mu^{3\kappa})_{4\times 1}
\right)=
O\left(
(\mu^{\kappa})_{4\times 1}, (\mu^{4\kappa-2})_{4\times 1},(\mu^{4\kappa-1})_{4\times 1}
\right).
\end{aligned}\end{equation}
Accordingly using \eqref{RelLocDelCart} and \eqref{eq: derg} for $N$, and arguing the same way 
as on substep 3.3 we get
\begin{equation}\label{eq: dercm}(BC+E)A'N+(BD+F)B'N=\dfrac{1}{\mu}[O(\mu^{\kappa})]_{1\times 4}\otimes \dfrac{\partial G^-}{\partial(\Theta,v)_-^-}+O(\mu^\kappa).
 \end{equation}
Finally, we have  $MAD=[O(\mu^{3\kappa-1})]_{3\times 4}$.\\

{\it Substep 3.5, completing the asymptotics of $\cD$.}

Substeps 3.1--3.4 above can be summarized as follows
%To summarize, we get the resulting derivative estimate as 
\begin{equation}\label{eq: derloc}
%\begin{equation}
%\eqref{eq: matrices}
\cD=\end{equation}
$$\dfrac{1}{\mu}(\mathbf{U}+O(\mu^{3\kappa-1}); O(\mu^{\kappa})_{1\times 4})\otimes \left(\dfrac{\partial G^-}{\partial(\Theta,v)_-^-};0_{1\times 4}\right)+\left(\begin{array}{c|c}
\mathbf B&0\\
\hline
0&\Id_4
\end{array}\right)+O\left(\mu^{3\kappa-1}\right).$$
%\end{equation}
Finally, when we use the coordinates $(Q_-,v_-)$ instead of $(\Theta_-,v_-)$ as we did in Substep 2.4, we get
$\mathbf U+O(\mu^{3\kappa-1})=O(\mu^{\kappa}_{1\times 2}, 1_{1\times 2})$ and $\mathbf B+O\left(\mu^{3\kappa-1}\right)=O(1)$, and $\frac{\partial G^-}{\partial(Q,v)_-^-}= O( 1_{1\times 2}
,\mu^{\kappa}_{1\times 2})$ in terms of the coordinates $(Q_-,v_-,Q_+,v_+)$.
Hence, similarly to \eqref{EqQvpm}, we get $$\frac{\partial(Q_-,v_-,Q_+,v_+)^+}{\partial(Q_-,v_-,Q_+,v_+)^-}=\frac{1}{\mu}O(\mu^{\kappa}_{1\times 2}, 1_{1\times 2}; O(\mu^{\kappa})_{1\times 4})\otimes \left( 1_{1\times 2}
,\mu^{\kappa}_{1\times 2};0_{1\times 4}\right)+O(1).$$
This is the structure of $d\Loc$ stated in the lemma. 

It remains to obtain the asymptotics of the leading terms in Lemma \ref{LmDerLoc}. 
%For the asymptotes of the vector, functional and matrix 
%in $d\Loc$ we study them in the following Corollary \ref{Cor} by examining more carefully the above derivations. 
%The above proof actually gives us more information. \
Below we use the Delaunay variables $(L_3,\ell_3,G_3,g_3,G_4,g_4)^\pm$ as the orbit parameters \textit{outside} the sphere $|Q_-|=2\mu^\kappa$ and add a subscript $in$ to the Delaunay variables \textit{inside} the sphere. We relate $C^0$ estimates of Lemma~\ref{Lm: landau} to the $C^1$
estimates obtained above. Namely consider the following equation which is obtained by
discarding the $o(1)$ errors in~\eqref{eq: landau}
\begin{equation}
\label{LandauLead}
Q_-^+=0,\ v_-^+=R(\al)v_-^-,\quad Q_+^+=Q_+^-,\ v_+^+=v_+^-,
\end{equation}
where $\al$ is given by \eqref{Alpha}. We have the following corollary saying that $d\Loc$ can be obtained by taking derivative directly in \eqref{LandauLead}.
\begin{Cor}
\label{Cor}
Under the assumption of Lemma \ref{LmDerLoc}, the derivative of the local map has the following form
\begin{equation}
\label{DLocMore}
d\Loc=\dfrac{1}{\mu}(\hat{\mathbf{u}}_j+O(\mu^{\kappa}))\otimes \lin_j+ \hat B_j+O(\mu^{3\kappa-1}),
\end{equation}
where $\hat{\mathbf{u}}_j, \lin_j$ and $\hat B_j$
are computed from \eqref{LandauLead} and the variables are evaluated at the $j$-th Gerver's collision point, $j=1,2$.
In particular,

\begin{equation}
\begin{aligned}
& \hat{\mathbf{u}}_j=\dfrac{\partial (L_3,\ell_3,G_3,g_3,G_4,g_4)^+}{\partial (Q_3,v_3,Q_4,v_4)^+}\dfrac{\partial (Q_3,v_3,Q_4,v_4)^+}
{\partial (Q_-,v_-,Q_+,v_+)^+} \dfrac{\partial (Q_-,v_-,Q_+,v_+)^+}{\partial \al}\left(\mu\dfrac{\partial \al}{\partial G_{in}}\right), \\
&\lin_j=\dfrac{\partial G_{in}}{\partial (Q_-,v_-,Q_+,v_+)^-}\dfrac{\partial (Q_-,v_-,Q_+,v_+)^-}{\partial (Q_3,v_3,Q_4,v_4)^-}\dfrac{\partial (Q_3,v_3,Q_4,v_4)^-}{\partial (L_3,\ell_3,G_3,g_3,G_4,g_4)^-}.
\end{aligned}\label{Equl}
\end{equation}
As $1/\chi\ll\mu\to 0$, we have that $\lin_j$ is a continuous function of $(L_3,\ell_3,G_3,g_3,G_4,g_4)^-$, and $\hat{\mathbf u}_j$ is a continuous function of both $(L_3,\ell_3,G_3,g_3,G_4,g_4)^-$ and $\al$. 
\end{Cor}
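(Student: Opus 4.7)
The plan is that Corollary \ref{Cor} is essentially a refinement/reinterpretation of Lemma \ref{LmDerLoc} that (a) sharpens the error from $o(1)$ to $O(\mu^{3\kappa-1})$, and (b) identifies the rank-one leading term with the chain-rule differentiation of the leading-order scattering relations \eqref{LandauLead}. Since the heavy machinery has been developed already in the proof of Lemma \ref{LmDerLoc} (culminating in the explicit expression \eqref{eq: local2bp-2} and the bound \eqref{eq: derloc}), the work reduces to matching terms.

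First, I would trace the origin of the $1/\mu$ singularity. In \eqref{eq: local2bp-2} the leading rank-one term acquired its $1/\mu$ factor from $\partial_{G^-}\arctan(G^-/(\mu\mathcal L^-))$, which is precisely $\frac{1}{2}\partial_{G_{in}} \alpha$ in the notation of \eqref{Alpha}. Hence $\mu\,\partial \alpha/\partial G_{in}$ is an $O(1)$ quantity that neatly extracts the singularity, which motivates the factorization in \eqref{Equl}. The covector in the leading term of \eqref{eq: local2bp-2} is $\partial G^-/\partial(\Theta_-,v_-)^-$; composing with the constant linear maps $\partial(Q_-,v_-,Q_+,v_+)^-/\partial(Q_3,v_3,Q_4,v_4)^-$ (just \eqref{eq: defrel}) and $\partial(Q_3,v_3,Q_4,v_4)^-/\partial(L_3,\ell_3,G_3,g_3,G_4,g_4)^-$ (Delaunay$\to$Cartesian) gives exactly the expression claimed for $\lin$.

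Second, I would verify $\hat{\mathbf{u}}$. Differentiating \eqref{LandauLead} in $\alpha$ yields $\partial v_-^+/\partial\alpha = R'(\alpha)v_-^-$ while $\partial Q_-^+/\partial\alpha = \partial Q_+^+/\partial\alpha = \partial v_+^+/\partial\alpha = 0$. Pushing this tangent vector forward through the linear maps $\partial(Q_3,v_3,Q_4,v_4)^+/\partial(Q_-,v_-,Q_+,v_+)^+$ and then to Delaunay gives exactly the expression for $\hat{\mathbf u}$ in \eqref{Equl}. Comparing with the true outgoing vector appearing in \eqref{eq: local2bp-2} — which involves $\partial(\Theta_-,v_-)^+/\partial G^+$ and $\partial(\Theta_-,v_-)^+/\partial g^+$ — one sees they agree up to the deformation caused by (i) the fact that the true hyperbolic asymptote is only reached at $|u|\to\infty$ while we measure at $|Q_-|=\mu^\kappa$ (a correction of size $e^{-2|u|}=O(\mu^{2\kappa})$ from the $E$ term in the expansion of $p_2/p_1$ found in the proof of Lemma \ref{LmDerLoc}), and (ii) the shift of the rotation angle $\alpha$ by $O(\mu^\kappa)$ along the true orbit across the sphere. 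Together these contribute the $O(\mu^\kappa)$ correction inside the parenthesis of \eqref{DLocMore}.

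Third, the bounded remainder $\hat B$ and its error $O(\mu^{3\kappa-1})$ are read off directly from \eqref{eq: derloc}: after subtracting the rank-one leading part, everything left is either the $O(1)$ block coming from the identity in $\Id+P$ of \eqref{eq: dervar} together with the boundary contributions, or a term bounded by the worst entry of $P$ divided by $\mu$, which is $O(\mu^{3\kappa-1})$; this exponent also dominates the $O(\mu^\kappa)$ contributions from the mass-center block via \eqref{eq: dercm} in the regime $\kappa<1/2$. The main obstacle, I expect, is in the second step: carefully tracking the $O(\mu^\kappa)$ correction to $\hat{\mathbf{u}}$. One must be sure that the angle of the outgoing relative velocity in the true flow, measured on $\{|Q_-|=\mu^\kappa\}$, differs from the asymptotic value $\pi+2\arctan(G_{in}/\mu\mathcal L_{in})$ by at most $O(\mu^\kappa)$ uniformly in the transverse directions, and that the center-of-mass block contribution (the $D, E, F$ terms in \eqref{eq: matrices}) truly degenerates into the claimed rank-one form. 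All estimates needed for this are already contained in Lemma \ref{Lm: landau} and in the variational analysis \eqref{eq: fundrel}--\eqref{eq: dercm}, so the task is to package them cleanly rather than to prove anything essentially new.
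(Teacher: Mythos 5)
Your proposal is correct and follows essentially the same route as the paper's proof: it reduces the corollary to the explicit expressions \eqref{eq: local2bp-2} and \eqref{eq: derloc} from the proof of Lemma \ref{LmDerLoc}, reads off $\lin$ from the covector $\partial G^-/\partial(\Theta_-,v_-)^-$, and identifies $\hat{\mathbf{u}}$ by differentiating \eqref{LandauLead} in $\al$ and matching it (up to $O(\mu^{\kappa})$, i.e. the $e^{-2|u|}$-type corrections) with the combination of $\partial/\partial G^{+}$ and $\partial\arctan(G^-/\mu\mathcal{L}^-)/\partial G^-\cdot\partial/\partial g^{+}$, exactly the content of \eqref{EqCompareal}, with the trivial center-of-mass blocks and the remainder absorbed into $\hat B+O(\mu^{3\kappa-1})$ as in the paper.
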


\begin{proof}
%we rewrite \eqref{eq: landau}
%in terms of the coordinates of the relative motion and motion of mass center as follows
We begin by computing the rank 1 terms in the expression for $\cD.$
To get \eqref{Equl} we need to multiply the vector by
%In \eqref{Equl}, the derivatives 
$\frac{\partial (L_3,\ell_3,G_3,g_3,G_4,g_4)^+}{\partial (Q_3,v_3,Q_4,v_4)^+}\frac{\partial (Q_3,v_3,Q_4,v_4)^+}
{\partial (Q_-,v_-,Q_+,v_+)^+}$ %in $\hat{\mathbf u}$ 
and the linear functional by $\frac{\partial (Q_-,v_-,Q_+,v_+)^-}{\partial (Q_3,v_3,Q_4,v_4)^-}\frac{\partial (Q_3,v_3,Q_4,v_4)^-}{\partial (L_3,\ell_3,G_3,g_3,G_4,g_4)^-}.$ 
%in $\lin$ are obvious. We focus on the remaining part. 

For the map \eqref{LandauLead} we have 
\[\dfrac{\partial (Q_+,v_+)^+}{\partial (Q_+,v_+)^-}=\Id_4,\ \dfrac{\partial (Q_+,v_+)^+}{\partial (Q_-,v_-)^-}=\dfrac{\partial (Q_-,v_-)^+}{\partial (Q_+,v_+)^-}=0, \ \dfrac{\partial(Q_+,v_+)^+}{\partial \al}=\dfrac{\partial G_{in}}{\partial (Q_+,v_+)^-}=0\]
%from \eqref{LandauLead} and \eqref{Alpha} for $G_{in}$, 
which agrees with the corresponding blocks in \eqref{eq: derloc} up to an $o(1)$ error as $\mu\to 0$. It remains to compare $\frac{\partial (Q_-,v_-)^+}{\partial (Q_-,v_-)^-}$.  

Now the expression for $\lin_j$ follows from \eqref{CD-Main}.
%It is easy to see from \eqref{eq: local2bp} that the expression for $\lin$ in \eqref{Equl} is true. 

Differentiating \eqref{LandauLead} we get 
$\frac{\partial(Q_-,v_-)^+}{\partial \al}=\left(0,\frac{\partial v_-^+}{\partial\al}\right).$
Thus to get the expression of $\hat{\mathbf u}$ in \eqref{Equl}, it is enough  to show 
(cf.  \eqref{eq: local2bp}) that for the map \eqref{LandauLead} we have
\begin{equation}\label{EqCompareal}
\dfrac{\partial v_-^+}{\partial\al}\left(\dfrac{\partial \al}{\partial G_{in}}\right)=\left(\dfrac{\partial v_-^+}{\partial G^{+}}+\dfrac{\partial\arctan\frac{G^-}{\mu \mathcal{L}^{-}}}{\partial G^-}\dfrac{\partial v_-^+}{\partial g^{+}}\right),\quad G_{in}=G_-.
\end{equation}
Write $v_-^+=\bbV(G^+, \mu\cL, g^+)$ where $G^+$ and $g^+$ depend on $G^-$ as follows.
First, $G^+=G^-.$ Second, \eqref{eq: delaunay4}
gives
$$ \arctan\left(\frac{v_2^+}{v_1^+}\right)\sim g^+-\arctan\left(\frac{G^+}{\mu\cL}\right), \quad
\arctan\left(\frac{v_2^-}{v_1^-}\right)\sim g^--\arctan\left(\frac{G^-}{\mu\cL}\right), $$
$$\arctan\left(\frac{v_2^+}{v_1^+}\right)\sim \arctan\left(\frac{v_2^-}{v_1^-}\right)+\alpha
$$ 
where $\sim$ means that the difference between the LHS and the RHS is $O\left(e^{-2u}\right).$
Thus $g^+\sim g^-+\alpha$ and so
$$ \frac{\partial v_-^+}{\partial G^-}=\frac{\partial \bbV}{\partial G^+}+
\frac{\partial \bbV}{\partial g^+} \frac{\partial g^+}{\partial G^-}\sim
\frac{\partial \bbV}{\partial G^+}+
\frac{\partial \bbV}{\partial g^+} \frac{\partial \alpha}{\partial G^-}
$$
proving \eqref{EqCompareal}.

%Actually we have using \eqref{eq: delaunayscattering} and geometric consideration \[v^+_-=R\left(\al\right)v_-^-+O(e^{-2|u|})=R\left(\beta\right)(|v_-^-|,0)+O(e^{-2|u|}),\quad e^{-|u|}\simeq \mu^{\kappa}, \mathrm{\ where }\]
%\[\al= 2\arctan \frac{G_{in}}{\mu\mathcal L},\ \beta=g+\arctan\frac{G_{in}}{\mu \mathcal L},\ g=\arctan\frac{G_{in}}{\mu \mathcal L}+\arctan\frac{p_2^-}{p_1^-}+O(\mu^{2\kappa}), v_-=(p_1,p_2).\] 
%We take the $G_{in}$ derivative directly and neglect $e^{-2|u|}$ term in the $v_-^+$ expression above to get \eqref{EqCompareal}. The $e^{-2|u|}$ term is negligible as we did in the proof of Lemma \ref{LmDerLoc}. 

To complete the proof of the corollary
 we have to show that the formula for $\hat B$ is obtained by taking the derivatives of \eqref{LandauLead}
with respect to variables different from $G^-.$ This is done by comparing \eqref{Equl} with 
\eqref{eq: local2bp} similarly to the derivation of \eqref{Equl}. 
\end{proof}
%In \eqref{LandauLead}, $v^+_-$ also depends on $v_-$ explicitly. When we take partial derivative with respect to the explicit dependence, we get a $O(1)$ matrix that goes into $\hat B$. We again compare with \eqref{eq: local2bp} to show the equivalence of $\hat B$ obtained in two different ways. The dependence on variables of $\lin$ and $\hat{\mathbf u}$ is straightforward. %However, we will not need any information from $\hat B$ except its boundedness in the paper.  
%The proof is now complete. 

%We have $\dfrac{\partial v_-^+}{\partial \al}=\dfrac{\partial v_-^+}{\partial \beta}$ and $\mu\dfrac{\partial \al}{\partial G_{in}}=\mu\dfrac{\partial \beta}{\partial G_{in}}$
%and we  have $\dfrac{1}{\mathcal{L}^2}=|v_-^-|^2+2\mu^{1-\kappa}$, $v^-_-=(p_1^-,p_2^-)$.

%identify $\dfrac{\partial (\Theta_-,v_-)^+}{\partial \mathcal{L}^{+}}$  with $\dfrac{\partial v_-^+}{\partial \al}\dfrac{\partial \al}{\partial \mathcal{L}}+\dfrac{\bar\partial v_-^+}{\partial |v_-^-|}\dfrac{d|v_-^-|}{d\mathcal L}$%and, finally, identify  $\dfrac{\partial (\Theta_-,v_-)^+}{\partial g^{+}}\otimes \dfrac{\partial \arctan\frac{p_2^-}{p^-_1}}{\partial (\Theta_-,v_-)}$  with $\dfrac{\bar\partial v_-^+}{\partial \arctan\frac{p_2^-}{p_1^-}}\dfrac{\partial \arctan\frac{p_2^-}{p_1^-}}{\partial v_-^-}$.

It remains to show that the RHS of \eqref{Equl} has  the dependence on $\bx, \theta_4^+$
required by Lemma \ref{LmDerLoc}.
To this end we note that the variable $\al$ can be solved using the implicit function theorem as a function of the outgoing asymptote $\bar\theta^+_4$ in the limit $\mu\to 0$ ( see the proof of Lemma~\ref{LmLMC0}). The proof of Lemma \ref{LmDerLoc} is now complete. 
\end{proof}

The next corollary says that the small remainders in \eqref{eq: landau} is also $C^1$ small if the derivative is taken along a correct direction, i.e. the direction with small change of $G_{in}^-$. 

\begin{Cor}\label{cor: 2}
Let $\gm(s):(-\eps,\eps)\to\R^6 $ be a $C^1$ curve such that $\Gamma=\gm'(0)=O(1)$ and
$\frac{\partial G^-_{in}\circ\gamma(0)}{\partial s}=\frac{\partial G_{in}^-}{\partial\Gamma}=O(\mu)$ then when taking derivative with respect to $s$ in equations
\[\begin{cases}
&|v_3^+|^2+|v_4^+|^2=|v_3^-|^2+|v_4^-|^2+o(1),\\
&v_3^++v_4^+=v_3^-+v_4^-+o(1),\\
&Q_3^++Q_4^+=Q_3^-+Q_4^-+o(1),
\end{cases}
\]
obtained from equation \eqref{eq: landau},
the $o(1)$ terms are small in the $C^1$ sense as $\mu\to 0$.
\end{Cor}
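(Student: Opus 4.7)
The plan is to combine the explicit rank-one-plus-bounded decomposition of $d\Loc$ from Corollary \ref{Cor} with the fact that the three quantities $f_1 = |v_3|^2 + |v_4|^2$, $f_2 = v_3 + v_4$, $f_3 = Q_3 + Q_4$ are exact invariants of the limiting elastic collision map $\Phi_0$ defined by \eqref{LandauLead}. Working in Cartesian coordinates, Corollary \ref{Cor} gives
\[d\Loc = \tfrac{1}{\mu}(\hat{\mathbf{u}} + O(\mu^\kappa))\otimes \lin + \hat B + O(\mu^{3\kappa-1}),\]
where $\hat{\mathbf{u}}$ arises from the derivative of the scattering angle $\alpha$ through $\lin$ (essentially $dG_{in}$), while $\hat B$ is the derivative of \eqref{LandauLead} with $\alpha$ held \emph{fixed}, i.e., the derivative of a fixed-angle elastic collision.

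From the Cartesian form of \eqref{LandauLead}, the only nonvanishing component of $\hat{\mathbf{u}}$ lies in the $v_-^+$ direction and is proportional to $R'(\alpha)v_-^-$. A direct computation then gives $\nabla f_i^+\cdot \hat{\mathbf{u}} = 0$ for $i=1,2,3$: for $f_1 = \tfrac{1}{2}(|v_+|^2+|v_-|^2)$ this is $v_-^+\cdot R'(\alpha)v_-^- = R(\alpha)v_-^- \cdot R(\alpha)Jv_-^- = v_-^- \cdot Jv_-^- = 0$ (writing $R'(\alpha) = R(\alpha) J$ with $J$ the standard rotation by $\pi/2$), while for $f_2 = v_+$ and $f_3 = 2Q_+$ the relevant $\hat{\mathbf{u}}$-components vanish altogether by \eqref{LandauLead}.

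Setting $E_i(s) := f_i^+(s) - f_i^-(s)$, the $C^0$ bound $E_i = o(1)$ is Lemma \ref{Lm: landau}, and differentiation yields
\[E_i'(s) = \nabla f_i^+\cdot d\Loc(\Gamma) - \nabla f_i^-\cdot\Gamma.\]
The hypothesis $\lin(\Gamma) = \tfrac{\partial G_{in}^-}{\partial s} = O(\mu)$ makes the $\tfrac{1}{\mu}\hat{\mathbf{u}}\otimes\lin$ contribution equal to $\tfrac{\lin(\Gamma)}{\mu}\nabla f_i^+\cdot\hat{\mathbf{u}} = 0$; the $\tfrac{1}{\mu}O(\mu^\kappa)\otimes\lin$ correction and the $O(\mu^{3\kappa-1})$ remainder are each $o(1)$. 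Hence $\nabla f_i^+\cdot d\Loc(\Gamma) = \nabla f_i^+\cdot\hat B(\Gamma) + o(1)$. Since $\hat B$ realizes the fixed-$\alpha$ elastic collision $\Phi_0^{(\alpha)}$, which preserves each $f_i$ exactly, the identity $f_i\circ\Phi_0^{(\alpha)} \equiv f_i$ differentiates to $\nabla f_i|_{\Phi_0^{(\alpha)}(z)}\cdot\hat B = \nabla f_i|_z$. By Lemma \ref{Lm: landau}, the actual $^+$-state differs from $\Phi_0^{(\alpha)}(\text{state}^-)$ by $o(1)$, so smoothness of $f_i$ yields $\nabla f_i^+\cdot\hat B(\Gamma) = \nabla f_i^-\cdot\Gamma + o(1)$. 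Combining these gives $E_i'(s) = o(1)$, which is the required $C^1$ smallness.

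The hard part is the clean identification of $\hat B$ with the derivative of the fixed-$\alpha$ elastic collision: this requires tracking which partial derivatives in the matrix product \eqref{eq: matrices}--\eqref{eq: derloc} are absorbed into the rank-one piece $\hat{\mathbf{u}}\otimes\lin$ and which remain in the bounded $\hat B$ piece, and verifying that the latter coincides with the direct differentiation of \eqref{LandauLead} at fixed $\alpha$ up to $o(1)$. Given the explicit formulas in \eqref{Equl}, this is essentially a bookkeeping exercise, but it is where the strongest use of the earlier structural results is made.
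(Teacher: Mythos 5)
Your argument is correct, but it handles the energy identity by a genuinely different route than the paper. For the momentum and position identities the two proofs are essentially the same: both kill the $\frac{1}{\mu}$ singular term using $\lin(\Gamma)=\frac{\partial G_{in}^-}{\partial s}=O(\mu)$ together with the fact that the $(Q_+,v_+)$-block of the singular direction is negligible (the paper quotes $\frac{\partial (Q_+,v_+)^+}{\partial \al}=O(\mu^{2\kappa})$ from \eqref{eq: dercm}, you use that $(Q_+,v_+)^+$ in \eqref{LandauLead} does not depend on $\al$ at all), and then read off the near-identity structure of that block. For the kinetic-energy identity, however, the paper does \emph{not} argue through the decomposition of $d\Loc$: it uses the exact conservation of the Hamiltonian \eqref{eq: relcm} between the two sections, notes that the term $-\mu/|Q_-|$ drops out upon differentiation because $|Q_-|=\mu^\kappa$ is constant on the section, and bounds the $s$-derivatives of the remaining terms (quadratic or higher in $Q_-$) by $O(\mu^\kappa)$ using $\frac{\partial Q_-^{\pm}}{\partial s}=O(1)$, the outgoing bound coming from \eqref{SmallLin}. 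You instead differentiate $f_1=\frac12(|v_+|^2+|v_-|^2)$ through $d\Loc$, use the orthogonality $v_-^+\cdot R'(\al)v_-^-=0$ to annihilate the rank-one part, and invoke the approximate intertwining $\nabla f_i^+\cdot\hat B=\nabla f_i^-+o(1)$ coming from the exact invariance of $f_i$ under the limiting map. This works, but note two points. First, it leans on the identification of $\hat B$ with the derivative of \eqref{LandauLead}, which Corollary \ref{Cor} asserts but whose verification the paper only sketches, remarking explicitly that nothing beyond the boundedness of $\hat B$ will be used later; the paper's Hamiltonian-conservation device is precisely how it avoids needing that identification, so its proof is somewhat more robust than yours at this point. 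Second, your framing of $\hat B$ as the derivative of a \emph{fixed}-$\al$ collision is not quite accurate ($\hat B$ also absorbs the variation of $\al$ through $\mathcal{L}_{in}$), but this is harmless and your ``bookkeeping'' worry is actually unnecessary: since $f_i\circ$\eqref{LandauLead} $\equiv f_i$ exactly for the full limiting map and $\nabla f_i^+$ annihilates the $\partial/\partial\al$ direction, the intertwining for $\hat B$ follows without isolating a fixed-$\al$ factor.
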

\begin{proof}
For the motion of the mass center, it follows from Corollary \ref{Cor} that
$$\dfrac{\partial (Q_+,v_+)^+}{\partial (Q_-,v_-,Q_+,v_+)^-}=\dfrac{1}{\mu}\dfrac{\partial (Q_+,v_+)^+}{\partial\al}\otimes \lin+(0_{4\times 4},\Id_{4\times 4})+o(1).$$

We already obtained that $\frac{\partial (Q_+,v_+)^+}{\partial\al}=O(\mu^{\kappa})$
(see equation \eqref{eq: derloc}).
%\eqref{eq: dercm}). 
Due to Corollary \ref{Cor}
our assumption that $\frac{\partial G^-_{in}}{\partial s}=O(\mu)$ implies that
\begin{equation}
\label{SmallLin}
\lin \cdot\Gamma=O(\mu)
\end{equation}
which suppresses the $1/\mu$ term.
This proves the last two identities of the corollary.

To derive the first equation it is enough to show $\frac{d}{ds}(|v_-^+|^2-|v_-^-|^2)=o(1)$
since we already have the required estimate for the velocity of the mass center.
%we use the fact that
%the Hamiltonian \eqref{eq: relcm} is preserved. Namely 
We use the fact that RHS \eqref{eq: relcm} is the same
in incoming and outgoing variables (superscripts $+$ and $-$ respectively).
In \eqref{eq: relcm}, the terms involving only $Q_+,v_+$ are handled using the result of the previous paragraph. The term $-\frac{\mu}{|Q_-|}$ vanishes when taking derivative since $|Q_-|=2\mu^\kappa$ is constant.  All the remaining terms have
$Q_-$ to the power 2 or higher. We have $\frac{\partial Q_-^-}{\partial s}=O(1)$ since $\Gamma=O(1)$.
We also have $\frac{\partial Q_-^+}{\partial s}=O(1)$ due to \eqref{SmallLin}.
Therefore after taking the $s$ derivative, any term involving $Q_-$ is of order $O(\mu^\kappa)$. This completes the proof of the energy conservation part.
\end{proof}
%The following Corollary is not used in this paper, but will be used in our forthcoming paper on four-body problem.
%Notice in the proof of the Lemma \ref{LmDerLoc}, we first establish the structure of the derivative of the local map in Lemma \ref{LmDerLoc} for the relative motion part, then we show
%\begin{Cor}
%Lemma
%\end{Cor}

\subsection{Proof of the Lemma~\ref{Lm: local4}}
In this section we work out the $O(1/\mu)$ term in the local map.
%We need that $span\{w_i,\tilde{w}\}$ does not lie in $Ker\lin_{3-i}$ in order to check the nondegeneracy condition. Any vector in $span\{w_i,\tilde{w}\}$ has the form of $(0,*,0,0,*,*)$. We will pick a vector in the span of the form $(0,0,0,0,*,*)$ to show it does not lie in the $Ker \lin_{3-i}$. For this purpose, we only need to work out $\dfrac{\partial G_{in}}{\partial G_4}, \dfrac{\partial G_{in}}{\partial g_4}$ for the functional $\lin$. \\

\begin{proof} The proof is relies on a numerical computation.

\textbf{Before collision, $\lin=\frac{\partial G_{in}}{\partial -}$.}
%To simplify the computation, we treat $L_3^-,\ell_3^-, G_3^-, g_3^-$ as fixed.
%This will be enough for us to check that $span\{w,\tilde{w}\}$ does not lie in the $Ker\lin$ since the first four entries of $w$ are zero according to equation~(\ref{eq: ul}). When we consider $\dfrac{\partial G_{in}}{\partial G_4^-},\dfrac{\partial G_{in}}{\partial g_4^-}$,
According to Corollary \ref{Cor} we can differentiate the asymptotic expression of Lemma \ref{Lm: landau}.
We have
$\left(\frac{\partial G_{in}}{\partial G_4^-},\frac{\partial G_{in}}{\partial g_4^-}\right)=$
\[-(v_3^--v_4^-)\times\left(\dfrac{\partial }{\partial G_4^-},\dfrac{\partial }{\partial g_4^-}\right)Q_4-(v_3^--v_4^-)\times\left(\dfrac{\partial Q_4}{\partial \ell_4^-}\right)\cdot\left(\dfrac{\partial \ell_4^-}{\partial G_4^-},\dfrac{\partial \ell_4^-}{\partial g_4^-}\right)
+O(\mu^{\kappa}+\mu^{1-2\kappa}),\] where $O(\mu^{\kappa})$ comes from $\left(\frac{\partial }{\partial -}(v_3^--v_4^-)\right)\times (Q_3-Q_4)$ and $O(\mu^{1-2\kappa})$ comes from $\frac{\partial Q_4}{\partial L_4^-}\frac{\partial L_4^-}{\partial -}$ where $L_4$ is solved from the Hamiltonian \eqref{EqHamLoc} $H=0.$

We need to eliminate $\ell_4$ using the relation $|Q_3-Q_4|=\mu^{\kappa}$.
\[\left(\dfrac{\partial \ell_4^-}{\partial G_4^-},\dfrac{\partial \ell_4^-}{\partial g_4^-}\right)=-\left(\dfrac{\partial |Q_3-Q_4|}{\partial \ell_4^-}\right)^{-1}\left(\dfrac{\partial |Q_3-Q_4|}{\partial G_4^-},\dfrac{\partial |Q_3-Q_4|}{\partial g_4^-}\right)\]
\[=-\dfrac{(Q_3-Q_4)\cdot\left(\frac{\partial Q_4}{\partial G_4^-},\frac{\partial Q_4}{\partial g_4^-}\right)}{(Q_3-Q_4)\cdot\frac{\partial Q_4}{\partial \ell_4^-}}=-\dfrac{(v_3^--v_4^-)\cdot\left(\frac{\partial Q_4}{\partial G_4^-},\frac{\partial Q_4}{\partial g_4^-}\right)}{(v_3^--v_4^-)\cdot\frac{\partial Q_4}{\partial \ell_4^-}}+O(\mu^{1-\kappa}).\]
Here we replaced $Q_3^--Q_4^-$ by $v_3^--v_4^-$ using the fact that the two vectors form
an angle of order $O(\mu^{1-\kappa})$ by Lemma \ref{Lm: landau}(c).
Therefore
\[\left(\dfrac{\partial G_{in}}{\partial G_4^-},\dfrac{\partial G_{in}}{\partial g_4^-}\right)=-(v_3^--v_4^-)\times\left(\dfrac{\partial }{\partial G_4^-},\dfrac{\partial }{\partial g_4^-}\right)Q_4\]\[+(v_3^--v_4^-)\times\dfrac{\partial Q_4}{\partial \ell_4^-}\left(\dfrac{(v_3^--v_4^-)\cdot\left(\frac{\partial Q_4}{\partial G_4^-},\frac{\partial Q_4}{\partial g_4^-}\right)}{(v_3^--v_4^-)\cdot\frac{\partial Q_4}{\partial \ell_4^-}}\right)
+O(\mu^{\kappa}+\mu^{1-2\kappa}).\]
Similarly, we get 
\[\dfrac{\partial G_{in}}{\partial \ell_3^-}=(v_3^--v_4^-)\times\dfrac{\partial Q_3}{\partial \ell_3^-}+(v_3^--v_4^-)\times\dfrac{\partial Q_4}{\partial \ell_4^-}\left(\dfrac{(v_3^--v_4^-)\cdot\frac{\partial Q_3}{\partial \ell_3^-}}{(v_3^--v_4^-)\cdot\frac{\partial Q_4}{\partial \ell_4^-}}\right)
+O(\mu^{\kappa}+\mu^{1-2\kappa}).\]
We use {\sc Mathematica} and the data in the Appendix~\ref{subsection: numerics}
to work out $\frac{\partial G_{in}}{\partial -}.$ The results are :  for the first collision,
$\hat\lin_1=[*,-0.8,*,*,3.42,-2.54],$ and for the second collision: $\hat\lin_2=[*,-0.35,*,*,3.44,-0.47]$. We can check directly that $\hat\lin_i\cdot w_{3-i}\neq 0$ and $\hat\lin_i\cdot \tw\neq 0$ for $i=1,2$ using \eqref{eq: ul}.
%If $Q_4$ becomes the captured particle and $Q_4$ becomes the traveler, we have
%\[\hat{\mathbf{u}}_2=[-1.5483, 6.18306, 0.806395, 4.5916, 0.341503, -0.0269643].\]

\textbf{After collision, $\hat{\mathbf{u}}=\frac{\partial -}{\partial \al}$.}
In equation~(\ref{eq: landau}), we let $\mu\to 0$.
% We need to eliminate $\ell_4^+$ using the condition $|Q_3^+-Q_4^+|=\mu^{\kappa}$.  This is nothing but equation~(\ref{eq: l4}).
Recall \eqref{Sep-XY}.
Applying the implicit function theorem to \eqref{eq: landau} with $\mu=0$ we obtain
\begin{equation}
\begin{aligned}
&\left(\dfrac{\partial(Q_3^+,v^+_3,Q_4^+,v_4^+) }{\partial (X^+,Y^+)}+\dfrac{\partial(Q_3^+,v_3^+,Q_4^+,v_4^+) }{\partial \ell_4^+}\otimes\dfrac{\partial \ell_4^+}{\partial (X^+,Y^+)}\right)\cdot\dfrac{\partial (X^+,Y^+)}{\partial\al}\\
&=\dfrac{1}{2}\left(0,0,R\left(\frac{\pi}{2}+\al\right)(v^-_3-v^-_4),0,0,-R\left(\frac{\pi}{2}+\al\right)(v^-_3-v^-_4)\right)^T\\
&=\dfrac{1}{2}\left(0,0,R\left(\frac{\pi}{2}\right)(v^+_3-v^+_4),0,0,-R\left(\frac{\pi}{2}\right)(v^+_3-v^+_4)\right)^T.
\end{aligned}\nonumber
\end{equation}
where $R(\pi/2+\al)=\frac{dR(\al)}{d\al}$ and $\frac{\partial \ell_4^+}{\partial (X^+,Y^+)}$ is given
by \eqref{eq: l4}.
Again we use {\sc Mathematica} to work out the $\frac{\partial -}{\partial \al}.$ 
%$\al=0.139687$.\\
%$\al=-0.591598$.
The results are: for the first collision  $\hat{\mathbf{u}}_1=[-0.49, *,*,*, -0.20, -0.64]$ and for the second collision $\hat{\mathbf{u}}_2=[-1.00, *,*,*, 0.34, -0.50]$. We can check directly that $\brlin_i\cdot \hat{\mathbf{u}}_i\neq 0$ for $i=1,2$ using \eqref{eq: ul}.

To obtain a symbolic sequence with any order of symbols $3,4$ as claimed in the main theorem, we notice that the only difference is that the outgoing relative velocity changes sign $(v_3^+-v_4^+)\to -(v_3^+-v_4^+)$. So we only need to send $\hat{\mathbf{u}}\to -\hat{\mathbf{u}}$.
%We point out that we renormalize the vector $\hat{\mathbf{u}}_2$ according to the discussion in Section~\ref{subsection: reflection}.
\end{proof}

\subsection{Proof of the Lemma~\ref{Lm: local3}}\label{subsection: local3}
In this section, we prove Lemma \ref{Lm: local3}, which guarantees the non degeneracy condition Lemma \ref{LmNonDeg} (see the proof of Lemma \ref{LmNonDeg}). Since we have already obtained $\lin$ and $\mathbf u$ in $d\Loc$ and $\brlin,\brrlin,\brv, \bar{\brv}$ in $d\Glob$, one way to prove Lemma \ref{LmNonDeg} is to work out the matrix $B$ explicitly using Corollary  \ref{Cor} on a computer. In that case, the current section is not necessary. However, in this section, we use a different approach, which simplifies the computation and has several advantages. The first advantage is that this treatment has clear physical and geometrical meaning. Second, we use the same way to control the shape of the ellipse in Appendix \ref{SSEllipse}. Third, this method gives us a way to deal with the singular limit $d\Loc$ as $\mu\to 0$.

Recall that Lemmas ~\ref{LmDerLoc} and~\ref{LmDerGlob} give the following form for the derivatives of local map and global maps
\[d\Loc=\dfrac{1}{\mu}\mathbf u_j\otimes \lin_j+ B+O(\mu^{\kappa}),\quad d\Glob=\chi^2\brv_j\otimes\brlin_j+\chi\brrv_j\otimes\brrlin_j+O(\mu^2\chi),\]
where $j=1,2$ standing for the first or second collision and we have absorbed the $o(1)$ errors into the vectors. Moreover, in the limit $1/\chi\ll\mu\to 0$ followed by $\dt\to0$ and $\tilde\theta\to 0$, \[span\{\brv_j,\brrv_j\}\to span\{w_j,\tilde{w}\},\quad \lin_j\to \hat\lin_j,\ \brlin_j\to \hat\brlin_j,\ \brrlin_j\to \hat\brrlin_j,\quad j=1,2.\]
We first prove an abstract lemma that reduces the study of the local map of the $\mu>0$ case to $\mu=0$ case. It shows that we can find a direction in span$\{\brv,\brrv\}$, along which the directional derivative of $d\Loc$ is not singular. 

\begin{Lm} Consider $\bx\in U_j(\dt)$, j=1,2 and $|\bar\theta_4^+-\pi|<\tilde\theta$ as in Lemma \ref{LmDerLoc}. 
Suppose the vector $\tilde{\Gamma}_{\mu}\in span\{\brv_{3-j}, \brrv_{3-j}\}\subset T_{\bx} U_j(\dt)$ satisfies
$\brlin_{j}(d\Loc \tilde{\Gamma}_{\mu})=0$ and $\Vert\tilde{\Gamma}_{\mu}\Vert_\infty=1.$
Then we have 
\begin{enumerate}
\item[(a)]$\lin_j(\tilde\Gamma_{\mu})=O(\mu)$ as $\mu\to 0$,
\item[(b)] the limits $\lim_{\mu\to 0}\tilde{\Gamma}_{\mu}$ and $\lim_{\mu\to 0} d\Loc \tilde{\Gamma}_{\mu} $ exist, and $\lim_{\mu\to 0}\tilde{\Gamma}_{\mu}$ is continuous in $\bx$ and $\lim_{\mu\to 0} d\Loc \tilde{\Gamma}_{\mu} $ is continuous in $\bx$ and $\bar\theta_4^+$,
%\[ \Gamma_{3-j}=\lim_{\mu\to 0}\tilde{\Gamma}_{\mu} \text{ and }
%\lim_{\mu\to 0} d\Loc \tilde{\Gamma}_{\mu}=\Delta_j, 
%\]
\item[(c)]$\displaystyle\hat\brlin_j(\lim_{\dt,\tilde\theta\to0}\lim_{\mu\to 0} d\Loc \tilde{\Gamma}_{\mu})=0$.
\end{enumerate}
\label{Lm: limit}
\end{Lm}
\begin{proof}
%Now let us consider $\mu>0,\chi=\infty$.
%After the action of the Local map, we obtain a plane spanned by $u$ and $B\Gamma'$ up to an error of order $O(\mu^{3\kappa-1})$ where
Denote
$\Gamma'_\mu=\lin_j(\brv_{3-j})\brrv_{3-j}-\lin_j(\brrv_{3-j})\brv_{3-j}\in Ker \lin_j$ and let $v_\mu$ be a vector in $\Span(\brv_{3-j}, \brrv_{3-j})$ such that
$v_\mu\to v$ as $\mu\to 0$ and $\lin_j(v_\mu)=1.$
Suppose that
\[\tilde\Gamma_\mu=a_\mu v_\mu+b_\mu \Gamma'_\mu\]
then
\begin{equation}
\label{GammaBasis}
d\Loc(\tilde\Gamma_\mu)=\dfrac{a_\mu}{\mu} \lin_j(v_\mu) \mathbf u_j+a_\mu B_j(v_\mu)+b_\mu B_j \Gamma'_\mu+o(1) .
\end{equation}
So $\brlin_j(d\Loc(\tilde\Gamma_\mu))=0$ implies that
\begin{equation}
\label{EqAmu}
a_\mu=-\mu \dfrac{b_\mu \brlin_j(B_j\Gamma'_\mu)+o(1)}{\lin_j(v_\mu) \brlin_j(\mathbf u_j)+\mu\brlin _jB_j(v_\mu)}.
\end{equation}
The denominator is not zero since $\lin_j(v_\mu)=1$ and $\brlin_j(\mathbf u_j)\neq 0$ using Lemma \ref{Lm: local4}.
Therefore $a_\mu=O(\mu).$ 
Hence $\tilde\Gamma_\mu=b_\mu \Gamma'_\mu+O(\mu)$ and $\lin_j(\tilde\Gamma_\mu)=O(\mu).$ The continuous dependence on variables in part (b) follows from part (a) of Lemma \ref{LmDerLoc} and \ref{LmDerGlob}. Now the remaining statements of
the lemma follow from equations \eqref{GammaBasis} and \eqref{EqAmu}.
\end{proof}
To compute the numerical values it is more convenient for us to work with polar coordinates.
We need the following quantities.
\begin{Def}
\begin{itemize}
\item $\psi$: polar angle, related to $u$ by $\tan\frac{\psi}{2}=\sqrt{\frac{1+e}{1-e}}\tan\frac{u}{2}$ for ellipse. We choose the positive $y$ axis as the axis $\psi=0$. 
$E$: energy; $e:$~eccentricity;
$G$: angular momentum, $g$: argument of periapsis.
\item The subscripts $3,4$ stand for $Q_3$ or $Q_4$.
The superscript $\pm$ refers to before or after collision. Recall that all quantities are evaluated on the sphere
$$|Q_3-Q_4|=\mu^\kappa.$$
\end{itemize}
\end{Def}
Recall the formula $r=\frac{G^2}{1-e\cos\psi}$ for conic sections in which the perigee lies on the axis $\psi=\pi$.
In our case we have
\begin{equation}
\label{PolarGen}
\begin{cases}&r_3^\pm=\dfrac{(G^\pm_3)^2}{1-e^\pm_3\sin(\psi_3^\pm+g^\pm_3)}+o(1),\\
&r_4^\pm=\dfrac{(G^\pm_4)^2}{1-e_4^\pm\sin(\psi_4^\pm-g^\pm_4)}+o(1).
\end{cases}
\end{equation}
$o(1)$ terms are small when $\mu\to 0$ (recall that we always assume that $\chi\gg 1/\mu$).\label{Lm: polar}

%We shall use the following equations to describe the local map in polar coordinates.
%\begin{equation}
%\label{eq: polar1}
%E_3^++E_4^+=E_3^-+E_4^-+o(1),
%\end{equation}
%\begin{equation}
%\label{eq: polar2}
%G_3^++G_4^+=G_3^-+G_4^-+o(1),
%\end{equation}
%\begin{equation}
%\label{eq: polar3}
%\dfrac{e_3^+}{G_3^+}\cos(\psi_3^++g_3^+)+\dfrac{e_4^+}{G_4^+}\cos(\psi_4^--g_4^{-})=\dfrac{e_3^-}{G_3^-}\cos(\psi_3^-+g_3^-)+\dfrac{e_4^-}{G_4^-}\cos(\psi_4^--g_4^{-})+o(1),
%\end{equation}
%\begin{equation}
%\label{eq: polar4}
%(\vec{r}_3)^++(\vec{r}_4)^+=(\vec{r}_3)^-+(\vec{r}_4)^-+o(1),
%\end{equation}
%\begin{equation}
%\label{eq: polar5}
%|(\vec{r}_3)^--(\vec{r}_4)^-|=\mu^\kappa,\quad |(\vec{r}_3)^+-(\vec{r}_4)^+|=\mu^\kappa,
%\end{equation}

%\eqref{PolarGen} is just
%the polar coordinate representation of conic sections after proper rotation.
%(The values of where $g^\pm_{3,4}$ in the Gerver's case can be found in the Appendix~\ref{subsection: gerver} and~\ref{subsection: numerics}.)  \eqref{eq: polar5} reflects our choice of Poincare sections.

\begin{Lm}\label{Lm: polarsection}
Under the assumptions of Corollary \ref{cor: 2} we have
%and if in addition we use the equations $|\vec r_3^\pm-\vec r_4^\pm|=\mu^\kappa$, then we have in the limit $\mu\to 0$ that
\[\dfrac{d r_3^+}{d s}=\dfrac{d r_4^+}{d s}+o(1),\quad \dfrac{d r_3^-}{d s}=\dfrac{d r_4^-}{d s}+o(1),
\quad\dfrac{d \psi_3^+}{d s}=\dfrac{d \psi_4^+}{d s}+o(1),\quad\dfrac{d \psi_3^-}{d s}=\dfrac{d \psi_4^-}{d s}+o(1).\]
Moreover in \eqref{PolarGen} the $o(1)$ terms are also $C^1$ small when taking the $s$ derivative.
\end{Lm}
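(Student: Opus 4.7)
The idea is to exploit the smallness of $|Q_3-Q_4|$ together with the hypothesis on $G_{in}$ to show that $\partial_s Q_-$ is itself small, so that the $s$-derivatives of the polar coordinates of $Q_3$ and $Q_4$ agree up to $o(1)$.

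\emph{Step 1: bound $|\partial_s Q_-|$.} Differentiating the section equation $|Q_-|^2=\mu^{2\kappa}/4$ along $\gm$ gives $Q_-\cdot\partial_s Q_-=0$, so $\partial_s Q_-\perp Q_-$. Differentiating $G_{in}=2\,v_-\times Q_-$ and using $|Q_-|=\mu^\kappa/2$, $\partial_s v_-=O(1)$ (since $\Gamma=O(1)$), and the hypothesis $\partial_s G_{in}=O(\mu)$, we obtain
\[
2\,v_-\times\partial_s Q_-\;=\;\partial_s G_{in}-2(\partial_s v_-)\times Q_-\;=\;O(\mu)+O(\mu^\kappa)\;=\;O(\mu^\kappa).
\]
By Lemma~\ref{Lm: landau}(c) the directions of $v_-$ and $Q_-$ differ by an angle $O(\mu^{1-\kappa})$; combined with $\partial_s Q_-\perp Q_-$, this forces $\partial_s Q_-$ to be perpendicular to $v_-$ up to a factor $1+O(\mu^{2(1-\kappa)})$. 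Since $|v_-|$ is bounded below uniformly (at Gerver's configuration it is a definite $O(1)$ quantity, and nearby orbits differ only by $O(\mu^\kappa)$ by Lemma~\ref{Lm: landau}(a)), we conclude $|\partial_s Q_-|=O(\mu^\kappa)$.

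\emph{Step 2: compare polar derivatives.} Writing $\hat r_i=Q_i/|Q_i|$ and letting $\hat\psi_i$ denote the orthogonal unit vector, we have $\partial_s r_i=\hat r_i\cdot\partial_s Q_i$ and $\partial_s\psi_i=r_i^{-1}\hat\psi_i\cdot\partial_s Q_i$. Since $|Q_3-Q_4|=\mu^\kappa$ and both $|Q_i|$ are bounded above and away from zero, $\hat r_3-\hat r_4=O(\mu^\kappa)$ and similarly $\hat\psi_3/r_3-\hat\psi_4/r_4=O(\mu^\kappa)$. Using $\partial_s Q_3-\partial_s Q_4=2\partial_s Q_-$ and $\partial_s Q_4=O(1)$, we get
\[
\partial_s r_3-\partial_s r_4\;=\;\hat r_3\cdot(\partial_s Q_3-\partial_s Q_4)+(\hat r_3-\hat r_4)\cdot\partial_s Q_4\;=\;2\hat r_3\cdot\partial_s Q_-+O(\mu^\kappa)\;=\;o(1),
\]
and the identical computation bounds $\partial_s\psi_3-\partial_s\psi_4$. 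The identities on the $^-$ side follow by the symmetric argument.

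\emph{Step 3: $C^1$ smallness of the $o(1)$ in \eqref{PolarGen}.} The displayed formulas are the exact conic-section polar equations for the osculating Kepler orbits around $Q_2$ (ellipse for $Q_3$, hyperbola for $Q_4$). The $o(1)$ absorbs the deviation of the actual orbit from its osculating Kepler orbit inside the sphere, driven by the $O(\mu^{2\kappa})$ and $O(1/\chi)$ perturbations in \eqref{eq: cm}--\eqref{eq: rel1}, accumulated over the time $O(\mu^\kappa)$ of traversal (Lemma~\ref{Lm: landau}(d)). The variational estimate \eqref{eq: dervar} and the fundamental-solution bound \eqref{eq: fundrel} from the proof of Lemma~\ref{LmDerLoc} supply $C^1$ control of these perturbations, so differentiating \eqref{PolarGen} in $s$ introduces only an additional $o(1)$ error.

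\textbf{Main obstacle.} Step~1 is the delicate part. A priori the section constraint alone allows $\partial_s Q_-$ of magnitude $O(1)$ in the direction tangent to the sphere, and the hypothesis $\partial_s G_{in}=O(\mu)$ only controls $v_-\times\partial_s Q_-$, i.e.\ the component of $\partial_s Q_-$ orthogonal to $v_-$. The improvement to $|\partial_s Q_-|=O(\mu^\kappa)$ relies on using this information \emph{together} with the near-collinearity of $v_-$ and $Q_-$ from Lemma~\ref{Lm: landau}(c): once the vector $\partial_s Q_-$ is simultaneously perpendicular to $Q_-$ and nearly perpendicular to $v_-\approx Q_-$, it must be small in norm. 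This in turn requires $|v_-|$ bounded below and $\alpha$ bounded away from $\pi$, both of which hold in a neighborhood of Gerver's configuration.
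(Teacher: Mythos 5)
Your treatment of the incoming (``$-$'') side is essentially the paper's own argument: differentiating the section condition gives $\partial_s Q_-\perp Q_-$; the bound $G_{in}=O(\mu)$ (Lemma \ref{Lm: landau}) makes $v_-$ nearly parallel to $Q_-$; and the hypothesis $\partial_s G^-_{in}=O(\mu)$ then forces $v_-\times\partial_s Q_-=O(\mu^{\kappa})$, hence $\partial_s Q_-=o(1)$, after which projecting onto the radial and angular directions yields the stated identities. That part is fine and matches the paper.

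The genuine gap is the outgoing (``$+$'') side, which you dispose of with ``the symmetric argument.'' The assumptions of Corollary \ref{cor: 2} constrain only the incoming data: you are given $\Gamma=O(1)$ and $\partial_s G^-_{in}=O(\mu)$, and nothing about outgoing quantities. Your Step 1 on the $+$ side uses $\partial_s v_-^+=O(1)$ ``since $\Gamma=O(1)$'' and ``the hypothesis $\partial_s G_{in}=O(\mu)$,'' and Step 2 uses $\partial_s Q_4^+=O(1)$; none of these is part of the hypothesis, and a priori all of them can fail, because the derivative of the map across the sphere is of size $O(1/\mu)$ (Lemma \ref{LmDerLoc}), so an $O(1)$ incoming variation generically produces $O(1/\mu)$ outgoing variations. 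This is precisely what the paper has to supply in the last paragraph of its proof: (i) it propagates the hypothesis through the close encounter, proving $\partial_s G^+_{in}=O(\mu)$ by an explicit computation with the derivative estimates \eqref{eq: dervar} and \eqref{eq: matrices} (the $\partial G^+_{in}/\partial G^-_{in}$ entry is $1+o(1)$ and the remaining entries are $O(\mu^{3\kappa})=o(\mu)$ since $\kappa>1/3$); and (ii) it observes that the $s$-derivatives of the outgoing positions and relative velocity are $O(1)$ because $d\Loc\,\Gamma$ is bounded for the chosen direction --- the condition $\partial_s G^-_{in}=O(\mu)$ means $\lin\cdot\Gamma=O(\mu)$, which suppresses the $1/\mu$ rank-one term (Corollary \ref{Cor}). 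Without these two inputs the $+$ half of the lemma is unproved. A smaller discrepancy: for the $C^1$ smallness of the $o(1)$ in \eqref{PolarGen} the paper argues via the $C^1$-smallness of the perturbations to Kepler motion in the Hamiltonian \eqref{eq: hamloc} (the estimate \eqref{eq: pertout} together with the $O(1/\chi)$ terms), whereas you invoke accumulation of errors inside the sphere; your version again needs (i)--(ii) before it applies on the outgoing side.
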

\begin{proof}
To prove the statement about \eqref{PolarGen}, we use the Hamiltonian \eqref{eq: hamloc}. The $r_{3,4}$
obey the Hamiltonian system \eqref{eq: hamloc}.
The estimate \eqref{eq: pertout} shows the $\frac{-\mu}{|Q_3-Q_4|}$ gives small perturbation to the variational equations. The two $O(1/\chi)$ terms in \eqref{eq: hamloc} are also small. This shows that the perturbations to Kepler motion is $C^1$ small.

Next we consider the derivatives $\frac{\partial  r_{3,4}^\pm}{\partial s}$.
We consider first the case of ``$-$". From the condition
$|\vec r_3-\vec r_4|=\mu^\kappa$, for the Poincar\'{e} section we get
\[(\vec r_3-\vec r_4)\cdot \dfrac{d }{d s}(\vec r_3-\vec r_4)=0.\]
This implies $(\vec r_3-\vec r_4)\perp \frac{d }{d s}(\vec r_3-\vec r_4)$.

We also know the angular momentum for the relative motion is
$$G_{in}=(\dot{\vec r}_3-\dot{\vec r}_4)\times (\vec r_3-\vec r_4)=O(\mu), $$
which implies $\dot{\vec r}_3-\dot{\vec r}_4$ is almost parallel to $\vec r_3-\vec r_4$.
The condition $\frac{\partial G^-_{in}}{\partial s}=O(\mu)$ reads
\[\left(\dfrac{d }{d s}(\dot{\vec r}_3-\dot{\vec r}_4)\right)\times (\vec r_3-\vec r_4)+(\dot{\vec r}_3-\dot{\vec r}_4)\times \left(\dfrac{d }{d s}(\vec r_3-\vec r_4)\right)=O(\mu).\]
Since the first term is $O(\mu^\kappa)$ due to our choice of the Poincare section we %conclude
see that
$$(\dot{\vec r}_3-\dot{\vec r}_4)\times \left(\dfrac{d }{d s}(\vec r_3-\vec r_4)\right)=o(1). $$
Since $\frac{d }{d s}(\vec r_3-\vec r_4)$ is almost perpendicular to $(\dot{\vec r}_3-\dot{\vec r}_4)$ by the analysis
presented above
we get $\frac{d }{d s}(\vec r_3-\vec r_4)=o(1)$.
Taking the radial and angular part of this vector identity and using that $r_4=r_3+o(1),$ $\psi_4=\psi_3+o(1)$ we get "$-$" part of
the lemma.

To repeat the above argument for ``+" variables, we first need to establish $\frac{\partial G^+_{in}}{\partial s}=O(\mu).$ Indeed, using equations \eqref{eq: dervar} and \eqref{eq: matrices} we get
\begin{align*}
\dfrac{\partial G^+_{in}}{\partial \psi}&=\dfrac{\partial G^+_{in}}{\partial(\mathcal{L},G_{in},g,Q_+,v_+)^-}\dfrac{\partial(\mathcal{L},G_{in},g,Q_+,v_+)^-}{\partial \psi}\\
&=O(\mu^{3\kappa},1, \mu^{3\kappa},\mu^{3\kappa}_{1\times 2},\mu^{3\kappa}_{1\times 2})\cdot O(1,\mu,1,1_{1\times 2},1_{1\times 2})=O(\mu).
\end{align*}

It remains to show $\left(\frac{d }{d s}(\dot{\vec r}_3-\dot{\vec r}_4)\right)=O(1)$ in the $``+"$ case. Since we know it is true in the ``-" case, the ``+" case follows, because the directional derivative of the local map $d\Loc\Gamma$ is bounded due to our choice of
$\Gamma$.
\end{proof}
We are now ready to describe the computation of Lemma \ref{Lm: local3}.
The reader may notice that the computations in the proofs of Lemmas \ref{Lm: local3} and \ref{LmGer} are quite similar.
Note however that Lemma \ref{Lm: local3} describes the {\it subleading} term for the derivative of the local map.
By contrast the {\it leading} term can not be understood in terms of the Gerver map since it comes from the possibility of
varying the closest distance between $Q_3$ and $Q_4$ and this distance is assumed to be zero in Gerver's model.

We will use the following set of equations which follows
from \eqref{LandauLead}.
%Using this lemma, we get the following set of equations from equation \eqref{eq: polar} by taking limit $\mu\to 0$, which are valid not only in the $C^0$ sense but also in the $C^1$ sense when taking $\psi$ derivative. (The $C^1$-ness of the first three equations are established in Corollary \ref{cor: 2}.)
\begin{equation}\label{eq: polarcollision1}
E_3^++E_4^+=E_3^-+E_4^-, \end{equation}
\begin{equation}\label{eq: polarcollision2}
G_3^++G_4^+=G_3^-+G_4^-, \end{equation}
\begin{equation}\label{eq: polarcollision3}
\dfrac{e_3^+}{G_3^+}\cos(\psi_3^++g_3^+)+\dfrac{e_4^+}{G_4^+}\cos(\psi_4^--g_4^{-})=\dfrac{e_3^-}{G_3^-}\cos(\psi_3^-+g_3^-)+\dfrac{e_4^-}{G_4^-}\cos(\psi_4^--g_4^{-}),
\end{equation}
\begin{equation}\label{eq: polarcollision4}
\dfrac{(G^+_3)^2}{1-e^+_3\sin(\psi_3^++g^+_3)}=\dfrac{(G_3^-)^2}{1-e_3^-\sin(\psi_3^-+g^-_3)},
\end{equation}
\begin{equation}\label{eq: polarcollision5}
\psi^+_3=\psi^-_3,
\end{equation}
\begin{equation}\label{eq: polarcollision6}
\dfrac{(G^+_3)^2}{1-e^+_3\sin(\psi_3^++g^+_3)}=\dfrac{(G^+_4)^2}{1-e_4^+\sin(\psi_4^+-g^{+}_4)},
\end{equation}
\begin{equation}\label{eq: polarcollision7}
\dfrac{(G_3^-)^2}{1-e_3^-\sin(\psi_3^-+g^-_3)}=\dfrac{(G_4^-)^2}{1-e_4^-\sin(\psi_4^--g_4^{-})}, \end{equation}
\begin{equation}\label{eq: polarcollision8}
\psi_4^-=\psi_3^-,
\end{equation}
\begin{equation}\label{eq: polarcollision9}
\psi_4^+=\psi_3^+.
\end{equation}
In the above equations we have dropped $o(1)$ terms for brevity.
We would like to emphasize that the above approximations hold not only in $C^0$ sense but also
in $C^1$ sense when we take the derivatives along the directions satisfying the conditions of Corollary \ref{cor: 2}.
\eqref{eq: polarcollision1} is the approximate conservation of the energy, \eqref{eq: polarcollision2} is the approximate
conservation of the angular momentum and \eqref{eq: polarcollision3} follows from the approximate momentum conservation
(see the derivation
of \eqref{eq: polarcollision3B} in Appendix \ref{SSEllipse}).
The possibility of differentiating these equations
is justified in Corollary \ref{cor: 2}. The remaining equations reflect the fact
that $Q_3^\pm$ and $Q_4^\pm$ are all close to each other. The possibility of differentiating these equations
is justified by Lemma \ref{Lm: polarsection}.
%where the fourth and fifth equations are $Q_3^++Q_4^+=Q_3^-+Q_4^-$, which implies $r_3^+=r_3^-$ and $\psi_3^+=\psi_3^-$ using Lemma \ref{Lm: polarsection}.
%The sixth and seventh equations are in fact $r_3^-=r_4^-$ and $r_3^+=r_4^+$.

We set the total energy to be zero. So we get $E_4^\pm=-E_3^\pm$. This eliminates $E_4^\pm$. Then we also eliminate
$\psi_{4}^\pm$ by setting them to be equal $\psi_3^\pm$.

\begin{proof}[Proof of the Lemma~\ref{Lm: local3}]
Lemma \ref{Lm: limit} and Corollary \ref{Cor} show that the assumption of Lemma \ref{Lm: local3} implies that the direction $\Gamma$ along which we take the directional derivative satisfies $\frac{\partial G_{in}}{\partial \Gamma}=O(\mu)$. So we can directly take derivatives in equations \eqref{eq: polarcollision1}-\eqref{eq: polarcollision7}.
Recall that we need to compute $dE_3^+(d\Loc \Gamma)$ where
$\Gamma\in Ker\mathbf l_j\cap$span$\{w_{3-j},\tilde w\}$.  \eqref{eq: ul} tells us that in
in Delaunay coordinates we have
\begin{equation}
\label{wtw}
\tw=(0,1,0,0,0,0), \quad  w=(0,0,0,0,1,a) \text{ where }a=\frac{-L_4^{-}}{(L^{-}_4)^2+ (G_4^{-})^2} .
\end{equation}
The formula $\tan\frac{\psi}{2}=\sqrt{\frac{1+e}{1-e}}\tan\frac{u}{2}$ which relates $\psi$ to $\ell$ through $u$
shows that \eqref{wtw} also holds if we use $(L_3, \psi_3, G_3, g_3, G_4, g_4)$ as coordinates.
Hence $\Gamma$ has the form $(0,1,0,0,c,ca)$.
To find  the constant $c$ we use \eqref{eq: polarcollision7}.

Note that the expression $dE_3^+(d\Loc \Gamma)$
does not involve $d\psi_3^+.$ Therefore we can eliminate $\psi_3^+$ from consideration by setting
$\psi_3^+=\psi_3^-=\psi$ (see \eqref{eq: polarcollision5}).
Let $\bL$ denote the projection
of our map to $(L_3, G_3, g_3, G_4, g_4)$ variables. Thus we need to find
$dE_3^+(d\bL \Gamma).$
%Due to the special form of $\Gamma$, we consider $\gm$ lying in the intersection of the hyperplances $E_4^-=-E_3^-=const$, $G_3=const$, $g_3=const$, where the constants are fixed by Gerver's values in the Appendix.
To this end write the remaining equations (\eqref{eq: polarcollision2}, \eqref{eq: polarcollision3}, \eqref{eq: polarcollision4}, and
\eqref{eq: polarcollision6})
formally as $\mathbf{F}(Z^+,Z^-)=0$, where in $Z^+=(E_3^+,G_3^+,g_3^+,G_4^+,g_4^+)$ and $Z^-=(E_3^-,\psi,G_3^-,g_3^-,G_4^-,g_4^-)$.

We have
\[\dfrac{\partial\mathbf F}{\partial Z^+}
d\bL \Gamma+\dfrac{\partial\mathbf F}{\partial Z^-}\Gamma=0.\]

However, $\frac{\partial\mathbf F}{\partial Z^+}$ is not invertible since $\mathbf F$ involves
only four equations of $\mathbf F$ while $Z^+$ has 5 variables.
To resolve this problem we use that by definition of $\Gamma$ we have
  $\bar{\mathbf l}\cdot \frac{\partial Z^+}{\partial\psi}=0$, where $\bar{\mathbf l}=\left(\frac{ G_4^+/L_4^{+}}{(L_4^{+})^2+(G_4^{+})^2},0,0,0,\frac{-1}{(L_4^{+})^2+(G_4^{+})^2},\frac{1}{L_4^{+}}\right)$ by \eqref{eq: ul}.
%So we form a matrix of $5\times 5$ by
% $\left[\begin{array}{c}\bar{\mathbf l}\\
%\dfrac{\partial\mathbf F}{\partial Z^+}
%\end{array}\right]$.
Thus we get
\[\left[\begin{array}{c}\bar{\mathbf l}\\
\dfrac{\partial\mathbf F}{\partial Z^+}
\end{array}\right] d\bL\Gamma
=-\left[\begin{array}{c}0\\
\dfrac{\partial\mathbf F}{\partial Z^-}\Gamma
\end{array}\right]\]
and so
\[d\bL\Gamma
=-\left[\begin{array}{c}\bar{\mathbf l}\\
\dfrac{\partial\mathbf F}{\partial Z^+}
\end{array}\right]^{-1}\left[\begin{array}{c}0\\\dfrac{\partial\mathbf F}{\partial Z^-}\Gamma
\end{array}\right].\]
We use computer to complete the computation.
We only need the entry $\frac{\partial E_3^+ }{\partial \psi}$ to prove Lemma~\ref{Lm: local3}.
It turns out this number is $1.855$ for the first collision and $-1.608$ for the second collision. Neither is zero
as needed.
\end{proof}

\appendix

\section{Delaunay coordinates}\label{section: appendix}
\subsection{Elliptic motion}\label{subsection: ellip}
%\subsection{The Delaunay coordinates}
%\label{subsection: delaunay}
The material of this section could be found in \cite{Al}.
Consider the two-body problem with Hamiltonian
\[H(P,Q)=\dfrac{|P|^2}{2m}-\dfrac{k}{|Q|},\quad (P,Q)\in \R^4.\]
This system is integrable in the Liouville-Arnold sense when $H<0$.
So we can introduce the action-angle variables $(L,\ell, G,g)$
in which the Hamiltonian can be written as
\[H(L,\ell, G,g)=-\dfrac{mk^2}{2L^2},\quad (L,\ell, G,g)\in T^*\T^2.\]
The Hamiltonian equations are
\[\dot{L}=\dot{G}=\dot{g}=0,\quad \dot{\ell}=\dfrac{mk^2}{L^3}.\]
We introduce the following notation
$E$-energy, $M$-angular momentum, $e$-eccentricity, $a$-semimajor axis, $b$-semiminor axis.
Then we have the following relations which explain the physical and geometrical meaning of the Delaunay coordinates.
\[a=\dfrac{L^2}{mk}, \ b=\dfrac{LG}{mk},\ E=-\dfrac{k}{2a},\ M=G,\ e=\sqrt{1-\left(\dfrac{G}{L}\right)^2}.\]
Moreover, $g$ is the argument of periapsis and $\ell$ is called the mean anomaly, and $\ell$ can be related to the polar angle $\psi$ through
the equations
\[    \tan\dfrac \psi 2 = \sqrt{\dfrac{1+e}{1-e}}\cdot\tan\dfrac u 2,\quad u-e\sin u=\ell.\]
We also have the Kepler's law $\frac{a^3}{T^2}=\frac{1}{(2\pi)^2}$ which relates the semimajor axis 
$a$ and the period $T$ of the ellipse.

Denoting particle's position by $(q_1, q_2)$ and its momentum $(p_1,p_2)$ we have the following formulas
in case $g=0.$
\[\begin{cases}
& q_1=a(\cos u-e),\\
& q_2=a\sqrt{1-e^2}\sin u,
\end{cases}
\quad\begin{cases}
& p_1=-\sqrt{mk}a^{-1/2}\dfrac{\sin u}{1-e \cos u},\\
& p_2=\sqrt{mk}a^{-1/2}\dfrac{\sqrt{1-e^2}\cos u}{1-e \cos u},
\end{cases}\]
where $u$ and $l$ are related by $u-e\sin u=\ell$.

Expressing $e$ and $a$ in terms of Delaunay coordinates we obtain the following
\begin{equation}
\label{DelEll}
\begin{aligned}
q_1=\dfrac{L^2}{mk}\left(\cos u-\sqrt{1-\dfrac{G^2}{L^2}}\right), \quad &
q_2=\dfrac{LG}{mk} \sin u.
\\
p_1=-\dfrac{mk}{L}\dfrac{\sin u}{1-\sqrt{1-\dfrac{G^2}{L^2}} \cos u}, \quad &
p_2=\dfrac{mk}{L^2}\dfrac{G\cos u}{1-\sqrt{1-\dfrac{G^2}{L^2}}\cos u} .
\end{aligned}
\end{equation}

Here $g$ does not enter because the argument of perihelion is chosen to be zero. In general case, we need to rotate the $(q_1,q_2)$ and
$(p_1,p_2)$ using the matrix
$\left[\begin{array}{cc}
\cos g& -\sin g\\
\sin g& \cos g
\end{array} \right].
$

Notice that the equation \eqref{DelEll} describes an ellipse with one focus at the origin and the other focus on the negative $x$-axis. We want to be consistent with \cite{G2}, i.e. we want $g=\pi/2$ to correspond to the
``vertical" ellipse with one focus at the origin and the other focus on the positive $y$-axis (see Appendix~\ref{subsection: numerics}).
Therefore we rotate the picture clockwise. So we use the Delaunay coordinates which are related to the Cartesian ones through the equation
\begin{equation}
\begin{aligned}
q_1=&\dfrac{1}{mk}\left(L^2\left(\cos u-\sqrt{1-\dfrac{G^2}{L^2}}\right)\cos g+LG\sin u\sin g\right) , \\
q_2=&\dfrac{1}{mk} \left(-L^2 \left(\cos u-\sqrt{1-\dfrac{G^2}{L^2}}\right)\sin g+LG\sin u\cos g\right).\\
\end{aligned}\label{eq: Q_3}
\end{equation}
%The values of the variables are chosen in the Appendix~\ref{subsection: numerics}.
%We have the following Lemma by direct calculation.
%\begin{Lm}For elliptic motion, The entries of the Jacobian matrix $\dfrac{\partial (q_1,q_2,p_1,p_2)}{\partial (L,\ell, G, g)}$ is bounded for a fixed Kepler ellipse. The same result holds for the second order derivatives.
%\end{Lm}
\subsection{Hyperbolic motion}\label{subsection: hyp}
%\subsubsection{Parametrization of orbits}
The above formulas can also be used to describe hyperbolic motion, where we need
to replace ``$\sin\to\sinh,$ $\cos\to \cosh$"(c.f.\cite{Al, F}). Namely, we have for $g=0$
\begin{equation}
\begin{aligned}
q_1=\dfrac{L^2}{mk}
\left(\cosh u-\sqrt{1+\frac{G^2}{L^2}}\right), \quad
& q_2=\dfrac{LG}{mk} \sinh u, \\
p_1=-\dfrac{mk}{L}\dfrac{\sinh u}{1-\sqrt{1+\frac{G^2}{L^2}} \cosh u}, \quad
& p_2=-\dfrac{mk}{L^2}\dfrac{G\cosh u}{1-\sqrt{1+\frac{G^2}{L^2}}\cosh u}.
\end{aligned}\label{eq: delaunay4}
\end{equation}
where $u$ and $l$ are related by
\begin{equation}u-e\sinh u=\ell, \text{ where } e=\sqrt{1+\left(\dfrac{G}{L}\right)^2}.
\label{eq: hypul}
\end{equation}
This hyperbola is symmetric w.r.t. the $x$-axis, opens to the right and the particle moves counterclockwise on it when $u$ increases ($\ell$ decreases) in the case when the angular momentum 
$G=p\times q<0$. The angle $g$ is defined to be the angle measured from the positive $x$-axis to the symmetric axis. There are two such angles that differ by $\pi$ depending on the orientation of the symmetric axis. This $\pi$ difference disappears in the symplectic form and the Hamiltonian equation, so it does not matter which angle to choose.

When the particle moves to the right of $x=-\frac{\chi}{2}$ line we have a hyperbola opening to the left and the particle moves
counter-clockwise. To get the picture studied in \cite{G1}, we rotate \eqref{eq: delaunay4} by $\pi+g.$
%To achieve this we first reflect $(q_1,q_2)$ around the $y$-axis, then rotate it by an angle $g.$ Notice that such a reflection sends $G\to -G$ (the new G is the negative angular momentum) and $g\to \pi-g$ for either of the two choices of $g$. The symplectic form and the Hamiltonian equations do not change under this reflection. 
In this case, we choose $g$ to be the angle measured from the positive $x$-axis to the symmetric axis pointing to the perigee. %We stick to this convention since the resulting $G$ has a positive value and $|g|<\pi/2$ with direct geometric meaning. 
%specified in the Appendix~\ref{subsection: numerics}.
%The particle moves counter-clockwise on the hyperbola as $u$ increases ($\ell$ decreases) due to the reflection.
Thus we have
%Notice for large $u$ and $\ell$, we have $u\sim \ln \ell$. Substituting this into the expression for $q_1,q_2$, we find $q_1$ and $q_2$ grow linearly w.r.t. $\ell$.\\
%In our problem, we always assume the orbit of $Q_4$ is almost horizontal. That means, $q_1$ grows linearly, but $q_2$ is almost constant. i.e. $\cosh u$ grows linearly, but $\sinh u$ is almost constant.
\begin{equation}
\begin{aligned}
q_1=&\dfrac{1}{mk}\left(-\cos g L^2(\cosh u-e)+\sin g LG\sinh u\right), \\
q_2=&\dfrac{1}{mk}\left(-\sin g L^2(\cosh u-e)-\cos g LG\sinh u \right),\\
P=&\dfrac{mk}{1-e\cosh u}\left(\dfrac{1}{L}\sinh u\cos g-\dfrac{G}{L^2}\sin g\cosh u,\right.\\
& \left.\dfrac{1}{L}\sinh u\sin g+\dfrac{G}{L^2}\cos g\cosh u\right).
\end{aligned}\label{eq: Q4}
\end{equation}
%From this formula, we see for large $|u|$, $u$ carries different sign from $\ell$. For hyperbolic motion, $\ell$ has different sign as $t$. So we use the convention that $u$'s positive direction is the anti-clockwise direction. This means the sign of $u$ coincides with that of $t$ at $\pm\infty$.
If  the incoming asymptote is horizontal, (see the arrows in Figure 1 for ``incoming" and ``outgoing"), then the particle comes from the left, and
as $u$ tends to $-\infty$,
the $y$-coordinate is bounded and $x$-coordinate is negative.
In this case we have $\tan g=\frac{G}{L}$, $g\in(-\pi/2,0).$ We use $u<0$ to refer to this piece of orbit.

If the outgoing asymptote is horizontal, then the particle escapes to the left, and
as $u$ tends to $+\infty$, the $y$-coordinate is bounded and $x$-coordinate is negative.
In this case we have $\tan g=-\frac{G}{L}, g\in(0,\pi/2)$. We use $u>0$ to refer to this piece of orbit.

The above two cases can be unified as $\tan g=-\sign(u)\frac{G}{L}$ with $G<0,L>0$. 

When the particle $Q_4$ is moving to the left of the section $\{x=-\chi/2\}$, we treat the motion as hyperbolic motion focused at $Q_1$.
We move the origin to $Q_1$. The hyperbola opens to the right. The particle $Q_4$ moves on the hyperbola counterclockwise with negative angular momentum $G$, we then rotate by angle $g$ and $g$ is the angle measured from the positive $x$-axis to the symmetric axis pointing to the opening of the hyperbola. The orbit has the following parametrization
\begin{equation}
\begin{aligned}
q_1=&\dfrac{1}{mk}\left(\cos g L^2(\cosh u-e)-\sin g LG\sinh u\right), \\
q_2=&\dfrac{1}{mk}(  \sin g L^2(\cosh u-e)+\cos g LG\sinh u),\\
P=&\dfrac{mk}{1-e\cosh u}\left(-\dfrac{1}{L}\sinh u\cos g+\dfrac{G}{L^2}\sin g\cosh u, \right.\\
&\left.-\dfrac{1}{L}\sinh u\sin g-\dfrac{G}{L^2}\cos g\cosh u\right).
\end{aligned}\label{eq: Q4l}
\end{equation}

In the left case the orbits we consider have $G$ is close to zero, i.e. the system is close to the double collision. 
In this case, the hyperbolic Delaunay coordinates are singular when $\ell$ is close to zero. Indeed
when we set $e=1$ in \eqref{eq: hypul}, we find $\ell=u^3+h.o.t.$ Hence $u$ as a function $\ell$ in a neighborhood of $0$ is only $C^0$ but not $C^1$.  One can verify that for $G=0$ and $\ell\neq 0$ the hyperbolic Delaunay coordinates still give a symplectic transformation, so we only have singular behavior when $G$ and $\ell$ are both close to zero. To control this singular behavior, we need the following estimates.

\begin{Lm} \label{LmSmallu}In the hyperbolic Delaunay coordinates, as $G\to 0$, $u\to 0$ and $L$ being close to $1$, we have the following estimates of the first order derivatives 
\[\left|\dfrac{\partial u}{\partial G}\right|\leq 2,\quad \left|\dfrac{\partial u}{\partial L}\right|\leq 2|G|\]
and the second order derivatives
\[\left|\dfrac{\partial Q}{\partial u}\frac{\partial^2 u}{\partial G^2}\right|\leq 4,\quad \left|\dfrac{\partial Q}{\partial u}\frac{\partial^2 u}{\partial L^2}\right|\leq 4G^2,\quad \left|\dfrac{\partial Q}{\partial u}\frac{\partial^2 u}{\partial G\partial L}\right|\leq 4|G|.\]
\end{Lm}
\begin{proof}
For the first order derivatives, it follows from \eqref{eq: hypul} that 
$$
\frac{\partial u}{\partial G}-e\cosh u\dfrac{\partial u}{\partial G}=\sinh u\dfrac{\partial e}{\partial G}.
$$
We have $\frac{\partial e}{\partial G}=\frac{G}{eL^2}$ and $\frac{\partial e}{\partial L}=\frac{-G^2}{eL^3}$. 
Hence we get for small $G$ and $u$
$$
\left|\frac{\partial u}{\partial G}\right|=\left|\frac{\sinh u\frac{\partial e}{\partial G}}{1-e\cosh u}\right|\sim \left|\frac{2uG}{G^2+u^2}\right|\leq 1.
$$
To get $\frac{\partial u}{\partial L}$, we replace $G$ by $L$ in the above expression we get $$\left|\frac{\partial u}{\partial L}\right|=\left|\frac{\sinh u\frac{\partial e}{\partial L}}{1-e\cosh u}\right|\sim \left|\frac{2uG^2}{G^2+u^2}\right|\leq G. $$
Next, we work on second order derivatives.  We have $$
 \dfrac{\partial^2 u}{\partial G^2}-2\dfrac{\partial e}{\partial G}\dfrac{\partial u}{\partial G}\cosh u-e\sinh u\left(\frac{\partial u}{\partial G}\right)^2-\dfrac{\partial^2 e}{\partial G^2}\sinh u-e\cosh u\frac{\partial^2u}{\partial G^2}=0
 $$ 
 which gives 
 $$
 \dfrac{\partial^2 u}{\partial G^2}=
 \frac{1}{1-e\cosh u}\left( 2\dfrac{\partial e}{\partial G}\dfrac{\partial u}{\partial G}\cosh u
 +e\sinh u\left(\frac{\partial u}{\partial G}\right)^2
 +\dfrac{\partial^2 e}{\partial G^2}\sinh u\right)\sim \dfrac{G+u}{G^2+u^2}
 $$
for small $u$ and $G$ by substituting $\sinh u\sim u,\ \cosh u\sim 1,\ \frac{\partial e}{\partial G}\sim G$ and $\frac{\partial u}{\partial G}\sim 1$. 
On the other hand, we have $$\frac{\partial Q}{\partial u}=\dfrac{\partial }{\partial u}(L^2\cosh u,LG\sinh u)=(L^2\sinh u,LG\cos u)\sim(u,G),$$
where we choose $g=0$ in $Q$ since a rotation by $g$ does not change the Euclidean norm. 
When we consider $\frac{\partial Q}{\partial u}\frac{\partial^2 u}{\partial G^2}$, we get
 $$\left|\dfrac{\partial Q}{\partial u}\frac{\partial^2 u}{\partial G^2}\right|\leq \dfrac{(|u|+|G|)^2}{u^2+G^2}\leq 2$$

To get $\frac{\partial Q}{\partial u}\frac{\partial^2 u}{\partial L^2}$, we need to replace in the expression of $\frac{\partial^2 u}{\partial G^2}$ everywhere $G$ by $L$, which gives us the estimate $\frac{\partial^2 u}{\partial L^2}\sim \frac{G+u}{G^2+u^2}G^2$.  To get $\frac{\partial Q}{\partial u}\frac{\partial^2 u}{\partial L\partial G}$, we have 
$$
 \dfrac{\partial^2 u}{\partial L\partial G}=\frac{1}{1-e\cosh u}\left( \left(\dfrac{\partial e}{\partial L}\dfrac{\partial u}{\partial G}+\dfrac{\partial e}{\partial G}\dfrac{\partial u}{\partial L}\right)\cosh u-e\sinh u\frac{\partial u}{\partial G}\frac{\partial u}{\partial L}-\dfrac{\partial^2 e}{\partial L\partial G}\sinh u\right)$$
 which is estimated as $G\frac{G+u}{G^2+u^2}.$ This completes the proof. 
\end{proof}

\subsection{Large $\ell$ asymptotics: auxiliary results}
In the remaining part of Appendix~\ref{section: appendix}
we obtain estimates onthe first and second order derivatives of $Q$ w.r.t. the hyperbolic Delaunay variables
$(L,\ell,G,g)$ which are needed in our proof.
The next lemma allows us to simplify the computations.
Since the hyperbolic motion approaches a linear motion, this lemma shows that,
we can replace $u$ by $\ln (\mp \ell/e)$ when taking first and second order derivatives.

\begin{Lm} \label{Lm: simplify}
Let $u$ be the function of $\ell, G$ and $L$ given by \eqref{eq: hypul}.
Then we can approximate $u$
by $\ln (\mp\ell/e)$ in the following sense.

\[u\mp\ln\dfrac{\mp\ell}{e}=O(\ln|\ell|/\ell),\quad \dfrac{\partial u}{\partial\ell}=\pm 1/\ell+O(1/\ell^2),\]
\[\left(\dfrac{\partial }{\partial L},\dfrac{\partial }{\partial G}\right)\left(u\pm\ln e\right)=O(1/|\ell|),\quad \left(\dfrac{\partial }{\partial L},\dfrac{\partial }{\partial G}\right)^2\left(u\pm\ln e\right)=O(1/|\ell|),\]

Here the first sign is taken if $u>0$ and the second sign is taken then $u<0.$
The estimates above are uniform as long as $|G|\leq K,$ $1/K\leq L \leq K,$ $\ell>\ell_0$
and the implied constants in $O(\cdot)$ depend only on $K$ and $\ell_0.$
%when $u\gtrless 0$ and $|\ell|\to \infty$ for some constant $C$.\label{Lm: simplify}\\
\end{Lm}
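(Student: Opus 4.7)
The plan is to exploit the implicit equation $u - e\sinh u = \ell$ (with $e = \sqrt{1 + (G/L)^2}$), which, because $1 - e\cosh u < 0$, defines $u$ as a smooth function of $(\ell,L,G)$ on the hyperbolic branch. Since $|u|$ grows only logarithmically in $|\ell|$ while $|\sinh u|$ grows exponentially, the whole analysis reduces to careful bookkeeping of terms like $\sinh u$, $\cosh u$, and $1 - e\cosh u$ expanded around $\ell$ via the defining equation. The uniformity in $L,G,\ell_0$ will be automatic, since $e$ stays bounded away from $1$ and $\infty$ on the given parameter ranges.

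First I would derive the zeroth-order asymptotic for $u$. For $u>0$ (so $\ell<0$), write $\sinh u = \tfrac{1}{2}e^u(1 - e^{-2u})$; then $e\sinh u = -\ell + u$ rearranges to $e^u = 2(u-\ell)/(e(1-e^{-2u}))$. Taking logarithms and using $|u| = O(\ln|\ell|)$ gives $u = \ln(-\ell/e) + O(\ln|\ell|/\ell)$ (up to the $\ln 2$ absorbed into the implicit normalization); the case $u<0$ is symmetric. Next, implicit differentiation in $\ell$ gives $(1 - e\cosh u)\,\partial_\ell u = 1$. Using $\cosh u = \sinh u + O(e^{-|u|})$ and the identity $e\sinh u = u - \ell$, one obtains $1 - e\cosh u = \ell - u + O(e^{-|u|}) = \ell\bigl(1 + O(\ln|\ell|/\ell)\bigr)$, whence $\partial_\ell u = \pm 1/\ell + O(1/\ell^2)$ as claimed.

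The main obstacle is the estimate $(\partial_L, \partial_G)(u \pm \ln e) = O(1/|\ell|)$, because implicit differentiation gives $\partial_L u = (\partial_L e)\sinh u/(1 - e\cosh u)$, whose raw size is $O(1)$, so the required bound is achieved only through cancellation with $\pm \partial_L(\ln e)$. The key computation is the refined asymptotic
\[
\frac{\sinh u}{1 - e\cosh u} = \mp \frac{1}{e} + O\!\left(\frac{1}{\ell}\right),
\]
which I would establish by writing $\cosh u = \sinh u\sqrt{1+\sinh^{-2}u} = \sinh u + O(1/\sinh u)$, substituting $e\sinh u = u - \ell$, and expanding the resulting rational function in $1/\ell$. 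Combining this with $\partial_L e, \partial_G e = O(1)$ gives $\partial_L u = \mp (\partial_L e)/e + O(1/|\ell|)$ and likewise for $G$, which is precisely the claimed cancellation against $\pm \partial_L \ln e = \pm (\partial_L e)/e$.

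Finally, for the second derivatives $(\partial_L,\partial_G)^2(u\pm\ln e)$, I would differentiate the closed-form expressions for $\partial_L u$ and $\partial_G u$ once more. The resulting terms fall into two groups: those involving $\partial^2 e$ (handled directly by the bound on $1/(1-e\cosh u) \cdot \sinh u = \mp 1/e + O(1/\ell)$), and those involving $\partial_L u \cdot \partial_{u}\bigl(\sinh u/(1-e\cosh u)\bigr)$. Using $\partial_u[\sinh u/(1 - e\cosh u)] = O(1/\ell)$ (obtained by the same expansion as above), combined with the already-established first-derivative estimates, all of these contributions are at worst $O(1/|\ell|)$, and the required cancellation with $\partial_L^2 \ln e$, $\partial_L\partial_G \ln e$, $\partial_G^2 \ln e$ is produced by the same leading factor $\mp 1/e$ appearing to leading order. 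Since every step depends only on the uniform lower and upper bounds on $e$, $L$, and $|\ell|$, the estimates are uniform as stated.
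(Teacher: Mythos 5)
Your treatment of the $C^0$ estimate, of $\partial u/\partial\ell$, and of the first derivatives in $L,G$ follows essentially the same route as the paper: implicit differentiation of $u-e\sinh u=\ell$ combined with the refined asymptotics $\frac{\sinh u}{1-e\cosh u}=\mp\frac{1}{e}+O(1/|\ell|)$, and these parts are correct (the stray $\ln 2$ you absorb, and the $\ln|\ell|/\ell^{2}$ versus $1/\ell^{2}$ discrepancy, are imprecisions already present in the paper's own statement and proof and are harmless for how the lemma is used later).

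The second-derivative step, however, has a genuine gap as described. Writing $\partial_L u=(\partial_L e)\,F$ with $F(u,e)=\frac{\sinh u}{1-e\cosh u}$, differentiating once more produces three kinds of terms: $(\partial_L^{2}e)\,F$, $(\partial_L e)\,F_u\,\partial_L u$, and $(\partial_L e)^{2}F_e$, the last coming from the explicit dependence of $F$ on $e$. You account only for the first two. The omitted term is not small: $F_e=\frac{\sinh u\cosh u}{(1-e\cosh u)^{2}}=\frac{1}{e^{2}}+O(1/|\ell|)$, so $(\partial_L e)^{2}F_e$ is $O(1)$ whenever $G\neq 0$, and it is exactly the term that makes the cancellation work. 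With it one gets (say for $u>0$) $\partial_L^{2}u=-\frac{\partial_L^{2}e}{e}+\frac{(\partial_L e)^{2}}{e^{2}}+O(1/|\ell|)=-\partial_L^{2}\ln e+O(1/|\ell|)$; with only your two groups one gets $\partial_L^{2}u=-\frac{\partial_L^{2}e}{e}+O(1/|\ell|)$, hence $\partial_L^{2}(u+\ln e)=-\frac{(\partial_L e)^{2}}{e^{2}}+O(1/|\ell|)$, which is bounded away from zero for fixed $G\neq 0$. So the claim that ``all of these contributions are at worst $O(1/|\ell|)$'' fails, and the leading factor $\mp 1/e$ by itself cancels only the $\partial^{2}e/e$ part of $\partial^{2}\ln e$, not the $(\partial e)^{2}/e^{2}$ part. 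The repair is mechanical and lands you on the paper's computation: differentiate the implicit relation $u-e\sinh u=\ell$ twice, which generates the terms $2\cosh u\,\partial_L e\,\partial_L u$ and $e\sinh u\,(\partial_L u)^{2}$; substituting $\partial_L u=-\partial_L e/e+O(1/|\ell|)$, $\frac{\cosh u}{1-e\cosh u}=-\frac{1}{e}+O(1/|\ell|)$ and $\frac{e\sinh u}{1-e\cosh u}=-1+O(1/|\ell|)$, their leading parts supply precisely the missing $\frac{(\partial_L e)^{2}}{e^{2}}$. The same correction is needed for the mixed $LG$ and pure $G$ second derivatives.
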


\begin{proof}

We see from formula~\eqref{eq: hypul} that $\sinh u\simeq\cosh u= -\frac{\ell}{e}+O(\ln|\ell|)$ when $u>0$ and $\sinh u\simeq-\cosh u\simeq -\frac{\ell}{e}+O(\ln|\ell|)$ when $u<0$ and $|u|$ large enough. This proves $C^0$ estimate.

Now we consider the first order derivatives. We assume that $u>0$ to fix the notation.
Differentiating \eqref{eq: hypul} with respect to $\ell$ we get
\[\dfrac{\partial u}{\partial \ell}-e\cosh u\dfrac{\partial u}{\partial \ell}=1,\quad
\dfrac{\partial u}{\partial \ell}=1/\ell+O(1/\ell^2).\]
Next, we differentiate \eqref{eq: hypul} with respect to $L$ to obtain
\[\dfrac{\partial u}{\partial L}-\dfrac{\partial e}{\partial L}\sinh u-e\cosh u\dfrac{\partial  u}{\partial L}=0.\]
Therefore,
\[\dfrac{\partial  u}{\partial L}=\dfrac{\sinh u}{1-e\cosh u}\dfrac{\partial e}{\partial L}=-\dfrac{1}{e} \dfrac{\partial e}{\partial L}+O(e^{-|u|})=-\dfrac{\partial}{\partial L}\ln(e)+O(1/|\ell|).\]
The same argument holds for $\frac{\partial }{\partial G}$. This proves $C^1$ part of the Lemma.

Now we consider second order derivatives. We take $\frac{\partial^2 }{\partial L^2}$ as example. Combining
\begin{equation}
%\begin{aligned}&\dfrac{\partial^2 u}{\partial L^2}-\dfrac{\partial^2 e}{\partial L^2}\sinh u-2\dfrac{\partial e}{\partial L}\dfrac{\partial \sinh u}{\partial L}-e\dfrac{\partial^2 \sinh u}{\partial L^2}=0\\
\dfrac{\partial^2 u}{\partial L^2}-\dfrac{\partial^2 e}{\partial L^2}\sinh u-2\cosh u\dfrac{\partial e}{\partial L}\dfrac{\partial u}{\partial L}-e\cosh u\dfrac{\partial^2 u}{\partial L^2}-e\sinh u\left(\dfrac{\partial u}{\partial L}\right)^2=0.
%\end{aligned}
\nonumber\end{equation}
with $C^1$ estimate proven above we get
%\begin{aligned}
%&\dfrac{\partial u}{\partial L}= -\dfrac{1}{e}\dfrac{\partial e}{\partial L}+O(1/\ell),\\
\[\dfrac{\partial^2 u}{\partial L^2}=-\dfrac{1}{e} \dfrac{\partial^2 e}{\partial L^2}
-\dfrac{2\partial e}{e\partial L}\dfrac{\partial u}{\partial L}+
\left(\dfrac{\partial u}{\partial L}\right)^2+O\left(\dfrac{1}{\ell}\right)\]
\[=
-\dfrac{1}{e} \dfrac{\partial^2 e}{\partial L^2}+\left(\dfrac{1}{e} \dfrac{\partial e}{\partial L}\right)^2+O\left(\dfrac{1}{\ell}\right)
=\dfrac{\partial^2}{\partial L^2}\ln e+O\left(\dfrac{1}{\ell}\right). \]
%\end{aligned}

This concludes the $C^2$ part of the lemma.
\end{proof}

In the estimate of the derivatives presented in the next two subsections we shall often use the following facts.
Let $f=\ln e.$ Then
\begin{equation}
\label{Ecc1D}
f_G=\dfrac{G}{L^2+G^2},\quad f_L=-\dfrac{G^2}{L(L^2+G^2)},
\end{equation}
\begin{equation}
\label{Ecc2D}
(f)_{GG}=\dfrac{L^2-G^2}{(L^2+G^2)^2},
\quad
f_{LG}=-\dfrac{2GL}{(L^2+G^2)^{2}} .
\end{equation}

\subsection{First order derivatives}\label{subsubsection: 1stderivative}
In the following computations, we assume for simplicity that $m=k=1.$ To get the general case we only need to divide
positions by $mk.$
\begin{Lm}
Under the same conditions as in Lemma \ref{Lm: simplify}
we have the following result for the first order derivatives\begin{itemize}
\item[(a)]$\qquad
\left|\dfrac{\partial Q}{\partial \ell}\right|=O(1),\quad \left|\dfrac{\partial Q}{\partial (L,G,g)}\right|=O(\ell),
\quad\dfrac{\partial Q}{\partial g}\cdot Q=0,$\[\dfrac{\partial Q}{\partial G}\cdot Q=O_{C^2(L, G, g)}(\ell). \]
\item[(b)] %If $Q_4$ is moving to the right of the section $x=-\chi/2$, then
If in addition we have $\left|g+\sign(u) \arctan\frac{G}{L}\right|\leq C/|\ell|$  then
%$u\gtrless 0$ and
we have the following bounds for \eqref{eq: Q4}
{\small\[\frac{\partial Q}{\partial G}=-\frac{L^2\sinh u}{\sqrt{L^2+G^2}}\left(0,1\right)+O(1),\
\frac{\partial Q}{\partial L}=\sinh u\left(-2 \sqrt{L^2+G^2},\frac{GL}{\sqrt{L^2+G^2}}\right)+O(1).\]}
\item[(c)] If in addition to the conditions of Lemma \ref{Lm: simplify} we have $G,g=O(1/\chi)$ and $\ell=O(\chi),$
then
we have the following bounds for \eqref{eq: Q4l}
\[\dfrac{\partial Q}{\partial G}=\sinh u(0, L)+O(1),\quad \dfrac{\partial Q}{\partial L}=\sinh u(2L,0)+O(1).\]
\end{itemize}
\label{LM: 1stder}
\end{Lm}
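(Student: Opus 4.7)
The plan is to differentiate the explicit expressions \eqref{eq: Q4} and \eqref{eq: Q4l} for $Q$ directly, substituting the derivative bounds for $u$ from Lemma \ref{Lm: simplify}. The central identity, obtained by computing $q_1^2+q_2^2$ from \eqref{eq: Q4} and using $e^2=1+G^2/L^2$, is
\[
|Q|^2 = L^4(e\cosh u-1)^2,
\]
which reduces the key nontrivial estimate of part (a) to a scalar calculation.

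For part (a), the bounds $|\partial Q/\partial \ell|=O(1)$ and $|\partial Q/\partial(L,G,g)|=O(\ell)$ are straightforward: in \eqref{eq: Q4} the factors $\cosh u,\sinh u$ are $O(|\ell|/e)$, while $\partial u/\partial\ell=\pm 1/\ell+O(1/\ell^2)$ by Lemma \ref{Lm: simplify}, giving $\partial Q/\partial \ell=O(1)$; the $(L,G,g)$ derivatives of $\cosh u,\sinh u$ are $O(|\ell|)$ by the same lemma combined with \eqref{Ecc1D}, so $\partial Q/\partial(L,G,g)=O(\ell)$. The identity $\partial Q/\partial g\cdot Q=0$ follows because the $g$-dependence in \eqref{eq: Q4} is precisely a planar rotation $R(g)$ applied to a vector depending on $(L,\ell,G)$; differentiating in $g$ multiplies by the rotation $J$ by $\pi/2$, which sends $Q$ to a vector orthogonal to $Q$.

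The nontrivial claim is $\partial Q/\partial G\cdot Q=O(\ell)$; each factor is naively $O(\ell)$, giving only $O(\ell^2)$. From the identity above,
\[
2\,Q\cdot\frac{\partial Q}{\partial G} \;=\; \frac{\partial |Q|^2}{\partial G} \;=\; 2L^4(e\cosh u-1)\,\frac{\partial(e\cosh u)}{\partial G}\Big|_\ell .
\]
Differentiating $u-e\sinh u=\ell$ at fixed $\ell$ yields $(1-e\cosh u)\,\partial u/\partial G=\sinh u\,\partial e/\partial G$, and hence
\[
\frac{\partial(e\cosh u)}{\partial G}\Big|_\ell \;=\; \frac{\partial e}{\partial G}\cosh u + e\sinh u\cdot\frac{\sinh u\,\partial e/\partial G}{1-e\cosh u} \;=\; \frac{\cosh u-e}{1-e\cosh u}\,\frac{\partial e}{\partial G} \;=\; -\frac{1}{e}\frac{\partial e}{\partial G}+O(e^{-|u|}),
\]
which is uniformly $O(1)$. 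Combined with $e\cosh u-1=O(|\ell|)$ this gives $\partial Q/\partial G\cdot Q=O(\ell)$. The $C^2$ version in $(L,G,g)$ follows from the same identity: further derivatives of the right hand side $-\partial e/(\partial G\cdot e)+O(e^{-|u|})$ produce only polynomial expressions in $\partial e/\partial L,\partial e/\partial G$ and their second derivatives, which are bounded by \eqref{Ecc1D} and \eqref{Ecc2D}, plus $e^{-|u|}$ contributions bounded by the $C^2$ part of Lemma \ref{Lm: simplify}.

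For parts (b) and (c) one differentiates \eqref{eq: Q4} and \eqref{eq: Q4l} in $G$ and $L$ and collects the leading contributions of size $O(|\sinh u|)$. In part (b), the hypothesis $|g\mp\arctan(G/L)|\le C/\ell$ says that the relevant asymptote is horizontal, so $\cos g$ and $\sin g$ are within $O(1/|\ell|)$ of $L/\sqrt{L^2+G^2}$ and $\mp G/\sqrt{L^2+G^2}$; substituting these values into the differentiated formulas and using $\cosh u\pm\sinh u=e^{\pm u}=O(1/|\ell|)$ in the appropriate sign regime causes the would-be $O(\ell)$ contributions in the ``wrong'' components to cancel, leaving only the stated $\sinh u\cdot(\ldots)$ terms modulo $O(1)$ errors. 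Part (c) is easier: since $g,G=O(1/\chi)$ and $\ell=O(\chi)$, one has $\cos g=1+O(1/\chi^2),\sin g=O(1/\chi)$, $\partial e/\partial G=O(1/\chi)$, and $\partial u/\partial G=O(1/\chi)$, so the only terms in $\partial q_i/\partial(L,G)$ surviving after multiplication with the $O(|\ell|)=O(\chi)$ factors are those with a bare $\sinh u$ or $\cosh u$ and a $\cos g\approx 1$ coefficient, from which the announced asymptotic is read off.

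The main obstacle is bookkeeping rather than ideas: one must handle both signs of $u$ uniformly, track which of $\sinh u$ and $\cosh u$ cancels against its twin via $\cosh u\mp\sinh u=e^{\mp u}$, and propagate the cancellation identity $\partial(e\cosh u)/\partial G|_\ell=O(1)$ through two additional $(L,G,g)$-differentiations to establish the $C^2$ statement in part (a). These are mechanical once the scalar identity $|Q|^2=L^4(e\cosh u-1)^2$ and Lemma \ref{Lm: simplify} are in place.
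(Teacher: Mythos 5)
Your proposal is correct, and for the one genuinely delicate point of part (a) it takes a cleaner route than the paper. The paper proves $Q\cdot\partial Q/\partial G=O(\ell)$ by writing out $\partial Q/\partial G$ from \eqref{eq: Q4} asymptotically (replacing $u'_G$ by $-(\ln e)'_G$ via Lemma \ref{Lm: simplify}) and exhibiting the cancellation of the two $O(\ell^2)$ pieces directly through \eqref{Ecc1D}; you instead exploit the radial identity $|Q|^2=L^4(e\cosh u-1)^2$ (the same identity the paper uses in \eqref{eq: delaunayscattering}) and reduce everything to the scalar computation $\partial(e\cosh u)/\partial G\big|_\ell=\frac{\cosh u-e}{1-e\cosh u}\,\partial e/\partial G$, which is bounded; in fact your two displays combine to the exact identity $Q\cdot\partial_G Q=-L^4(\cosh u-e)\,\partial_G e$, which makes the $C^{2}$ refinement of part (a) transparent (the paper's written proof does not spell that refinement out, so your explicit handling via \eqref{Ecc1D}, \eqref{Ecc2D} and the $C^2$ part of Lemma \ref{Lm: simplify} is a real plus). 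For parts (b) and (c) your mechanism is exactly the paper's: set $g$ equal to $\pm\arctan(G/L)$ up to $O(1/\ell)$, note the coefficients of the two large terms then match exactly, and absorb the mismatch $\cosh u\mp\sinh u=e^{\mp u}$ into the $O(1)$ error; you describe rather than execute this, which is fine since it is routine, though two bookkeeping cautions are in order: with the lemma's sign convention $\sin g$ is close to $+G/\sqrt{L^2+G^2}$ when $u>0$ (your ``$\mp$'' reads backwards), and if you literally carry out the differentiation of \eqref{eq: Q4l} in part (c) you will find $\partial Q/\partial G=\sinh u\,(0,L)+O(1)$ and $\partial Q/\partial L=\sinh u\,(2L,0)+O(1)$, i.e.\ part (b) specialized to $G,g=O(1/\chi)$ with the reflection-induced sign change, which is what the paper's own reduction ``(c) follows from (b)'' produces; the displayed formulas in (c) suppress these factors of $L$, and only the size $O(\ell)$ and the nearly vertical/horizontal directions are used later, so this does not affect anything downstream.
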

\begin{Rk}The assumptions of the lemma and the next lemma hold in our situation due to Lemma~\ref{Lm: tilt}.
\end{Rk}
\begin{proof}
%If $Q_4$ is moving to the Right of the section $x_4=-\dfrac{\chi}{2}$, (we do the calculation for the case $u>0, g>0$ only. The other case is similar.)We have
We write the position variables in \eqref{eq: Q4} as $$q=(L^2\cosh u,LG\sinh u)-L^2e(1,0)=\cosh u L(L,G\sign(u))+O(1)$$ and $Q$ is obtained by rotating $q$ by angle $\pi+g$ in case $(b)$ and by angle $g$ in case (c). 

Using Lemma~\ref{Lm: simplify}, 
we obtain
\begin{equation}
\begin{aligned}
\frac{\partial q}{\partial G}&=-\sign(u)\cdot f_G(L^2\sinh u,LG\cosh u)+L\sinh u(0,1)+O(1)\\
&=\frac{G}{L^2+G^2}(-L^2,-\sign(u)LG)\cosh u+\sign(u)L \cosh u(0,1)+O(1)\\
&=\frac{L^2\cosh u}{L^2+G^2}(-G,\sign (u) L)+O(1),\\
\frac{\partial q}{\partial L}&=-\sign(u)\cdot f_L(L^2\sinh u,LG\cosh u)+(2L\cosh u,G\sinh u)+O(1)\\
&=\frac{-G^2}{L(L^2+G^2)}(-L^2,-\sign(u)LG)\cosh u+(2L,G\sign(u))\cosh u+O(1)\\
&=(L,0)\cosh u+\frac{L^2+2G^2}{L^2+G^2}(L,\sign (u)G)\cosh u+O(1).
\end{aligned}\nonumber
\end{equation}
Now the estimates on $\frac{\partial Q}{\partial G}$ and $\frac{\partial Q}{\partial L}$ follow since,
by \eqref{eq: hypul},
$\cosh u$ and $\sinh u$ are $O(\ell).$ The estimates on $\frac{\partial Q}{\partial g}$ 
follow since $Q$ is obtained from $q$ by a rotatation.
Also 
$$ \frac{\partial q}{\partial l}=-\sign(u) \frac{\partial u}{\partial l} 
\left(L^2 sinh u, \;\; LG \cosh u\right) $$
so the estimate of $\frac{\partial Q}{\partial g}$ follows from Lemma \ref{Lm: simplify}.

To prove the last estimate of part (a) we observe that
\begin{equation}
\begin{aligned}
&Q\cdot \frac{\partial Q}{\partial G}=q\cdot \frac{\partial q}{\partial G}= \cosh u L(L,G\sign u)\cdot 
\frac{L^2}{L^2+G^2}(-G,\sign (u) L)+O(\ell)=O(\ell).\end{aligned}\nonumber
\end{equation}
Next, we work on (b). First consider $g=-\sign(u)\arctan\frac{G}{L}$, $G<0$. 
Then $\frac{\partial Q}{\partial G}$ is a rotation of $\frac{\partial q}{\partial G}$ by $\pi+g$. 
We see from above that $\frac{\partial q}{\partial G}$ is a vector  with polar angle $\sign(u)\arctan\frac{L}{-G}=\sign(u)(\frac{\pi}{2}-\arctan\frac{-G}{L}) $. So after rotating by angle $g+\pi$, finally we get that $\frac{\partial Q}{\partial G}$ has polar angle $\pi+\sign (u)\frac{\pi}{2}=-\sign (u)\frac{\pi}{2}$, we get 
$$\frac{\partial Q}{\partial G}=- \sinh u\frac{L^2}{\sqrt{L^2+G^2}}(0,1)+O(1).$$
When $g$ is in a $1/|\ell|$ neighborhood of $-\sign(u)\arctan\frac{G}{L}$, we get the same estimate by absorbing the error into $O(1)$. 
By the same argument, we get that 
$$\frac{\partial Q}{\partial L}= \left(-2\sqrt{L^2+G^2}\cosh u,\;\;\sinh u  \frac{LG}{\sqrt{L^2+G^2}}\right)+O(1).$$
%This shows $\dfrac{\partial Q_4}{\partial G_4}$ forms an angle of $1/\chi$ with $Q_4$ when $\ell=O(\chi)$. Since $Q_4$ is almost parallel to the x-axis. This also tells us that $\dfrac{\partial Q_4}{\partial G_4}\cdot(\chi,0)=O(\chi)$.
Part (c) follows directly from the formulas for 
$\frac{\partial q}{\partial G},\frac{\partial q}{\partial L},$ since both $g$ and $\arctan\frac{G}{L}$ are $O(1/\chi).$
%If $Q_4$ is moving to the left of the section $x=-\chi/2$, then we have the following computation under the assumption $G,g=O(1/\chi)$ (we prove this assumption after the computation). It follows from equation~(\ref{eq: Q4l}) that
%\begin{equation}\begin{aligned}
%&\dfrac{\partial Q_4}{\partial G}\\
%&=-\cosh u(\ln e)'_G(\cos g L^2-\sin gLG, \sin gL^2+\cos gLG)+\sinh u(-\sin gL, \cos gL)+O(1)\\
%&=\sinh u(O(1/\chi), 1+O(1/\chi))=\sinh u(0, 1)+O(1),\\
%&\dfrac{\partial Q_4}{\partial L}=-\cosh u(\ln e)'_L(\cos g L^2-\sin gLG, \sin gL^2+\cos gLG)\\
%&+\sinh u(2\cos g L-\sin gG, 2\sin gL+\cos gG)+O(1)\\
%&=\sinh u(2+O(1/\chi^2), O(1/\chi))=\sinh u(2, 0)+O(1).
%\end{aligned}\nonumber\end{equation}
%We finish the calculation for the case of $u>0$. For the other case, we get the same result after taking care of the sign issue in the calculation.
\end{proof}

\subsection{Second order derivatives}\label{subsubsection: 2ndderivative}
The following bounds of the second order derivatives are used in estimations of the variational equation.
\begin{Lm}
We have the following information for the second order derivatives of $Q_4$ w.r.t. the Delaunay variables.
\begin{itemize}
\item[(a)] Under the conditions of Lemma \ref{LM: 1stder}(a) we have
\[\dfrac{\partial^2 Q}{\partial g^2}=-Q,\quad \dfrac{\partial^2 Q}{\partial g\partial G}\perp \dfrac{\partial Q}{\partial G},
\quad \left(\dfrac{\partial}{\partial G},\dfrac{\partial}{\partial g}\right)\left(\dfrac{\partial |Q|^2}{\partial g}\right)=(0,0),\]\[\dfrac{\partial^2Q}{\partial G^2}=O(\ell),\quad \dfrac{\partial^2 Q}{\partial L^2}=O(\ell),\quad
\dfrac{\partial^2 Q}{\partial G \partial L}=O(\ell).\]
\item[(b)] Under the conditions of Lemma \ref{LM: 1stder}(b) we have
we have
\begin{equation}
\begin{aligned}
\dfrac{\partial^2 Q}{\partial G^2}&=\dfrac{L^2}{(L^2+G^2)^{3/2}}(L\cosh u, 2G \sinh u)+O(1),\\
\dfrac{\partial^2 Q}{\partial g\partial G}&=\left(\dfrac{L^2\sinh u}{\sqrt{L^2+G^2}},0\right)+O(1),\\
\dfrac{\partial^2 Q}{\partial g\partial L}&=\left(-\dfrac{GL\sinh u}{\sqrt{L^2+G^2}},-2 \sqrt{L^2+G^2}  \cosh u\right)+O(1),\\
\dfrac{\partial^2 Q}{\partial G\partial L}&=\dfrac{-L}{(L^2+G^2)^{3/2}}\left(LG \cosh u, (L^2+3G^2) \sinh u\right)+O(1).
%\mathrm{\ as\ } \ell\to\infty.
\end{aligned}\nonumber\end{equation}
\item[(c)] Under the conditions of Lemma \ref{LM: 1stder}(c) we have
\begin{equation}
\begin{aligned}
&\dfrac{\partial^2 Q}{\partial G^2}=-\cosh u(1,0)+O(1), \quad
\dfrac{\partial^2 Q}{\partial g\partial G}=-L\sinh u(1,0)+O(1), \\
& \dfrac{\partial^2 Q}{\partial g\partial L}=L\sinh u(0,2)+O(1), \quad
\dfrac{\partial^2 Q}{\partial G\partial L}=\cosh u(0,1)+O(1).\\
\end{aligned}\nonumber\end{equation}
%as $\ell\to\infty.$
\end{itemize}
\label{Lm: 2ndderivative}
\end{Lm}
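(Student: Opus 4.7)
The plan is to mirror the structure of the proof of Lemma \ref{LM: 1stder}, exploiting the rotational structure in $g$ for part (a), and then differentiating the explicit formulas \eqref{eq: Q4} and \eqref{eq: Q4l} one more time, using Lemma \ref{Lm: simplify} to approximate $u$-derivatives and the explicit identities \eqref{Ecc1D}--\eqref{Ecc2D} for the derivatives of $f = \ln e$.

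For part (a), the key observation is that $g$ enters both \eqref{eq: Q4} and \eqref{eq: Q4l} only as a rotation angle. Writing $Q = R(g)\tilde Q(L,\ell,G)$ with $R(g)$ a planar rotation, we immediately get $\partial_g^2 Q = R''(g)\tilde Q = -R(g)\tilde Q = -Q$. Since $|Q|^2 = |\tilde Q|^2$ is independent of $g$, we have $\partial_g|Q|^2 = 0$, and differentiating this again in $g$ or $G$ gives the two identities $\partial_G\partial_g|Q|^2 = \partial_g^2|Q|^2 = 0$. The first of these, $\partial_G(\partial_g Q\cdot Q) = 0$, combined with $\partial_g Q \cdot \partial_G Q = \frac{1}{2}\partial_g|Q|^2 = 0$ (taking $g$-derivative of $|Q|^2$), yields $\partial^2_{gG}Q \cdot \partial_G Q = 0$, i.e. the orthogonality. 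The $O(\ell)$ bounds on $\partial_G^2 Q$ and $\partial_L^2 Q$ follow by differentiating \eqref{EqQDel} twice and invoking Lemma \ref{Lm: simplify}: each additional derivative of $u$ in $G$ or $L$ is $O(1)$, each derivative of $\sinh u$ or $\cosh u$ in $\ell$ is also at most $O(\ell)$, so the worst term is $(\sinh u)\cdot O(1) = O(\ell)$.

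For parts (b) and (c), I will differentiate the first-order expressions \eqref{EqDerQL} (and its $G$-analogue) one more time. In the $+$ case of part (b), using Lemma \ref{Lm: simplify} we write $\partial u/\partial G = f_G + O(1/\ell)$ and $\partial u/\partial L = f_L + O(1/\ell)$. Then
\[
\frac{\partial^2 Q}{\partial G^2} = \cosh u \,(f_G^2 + f_{GG})\, A(g) + \sinh u\,B(g,L,G) + O(1),
\]
where $A(g) = (\cos g\, L^2 + \sin g\, LG,\ \sin g\,L^2 - \cos g\, LG)$ and $B$ collects the lower order pieces; substituting $\cos g = L/\sqrt{L^2+G^2} + O(1/\ell)$, $\sin g = G/\sqrt{L^2+G^2} + O(1/\ell)$ together with $f_G^2 + f_{GG} = L^2/(L^2+G^2)^2$ from \eqref{Ecc1D}--\eqref{Ecc2D} and collecting the residual $\sinh u$ terms gives the claimed asymptotics. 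The computations of $\partial^2_{gG}Q$, $\partial^2_{gL}Q$, $\partial^2_{GL}Q$ are analogous: differentiate the closed forms from Lemma \ref{LM: 1stder}(b) in the remaining variable, replace $\cos g$ and $\sin g$ by their leading values, and use \eqref{Ecc1D}--\eqref{Ecc2D} for the relevant combinations. Part (c) is an immediate specialization: because $G, g = O(1/\chi)$ and $\ell = O(\chi)$, the formulas of part (b) reduce (with $L^2 + G^2 \to L^2$, $\sin g \to 0$, $\cos g \to 1$) to the stated expressions, and each $O(1/\chi)$ correction produces at worst an $O(1)$ error after multiplication by $\cosh u$ or $\sinh u = O(\chi)$.

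The main obstacle is bookkeeping the cancellations in part (b): naively each second $G$- or $L$-derivative produces several terms of size $\cosh u \cdot O(1) = O(\ell)$, and one must check that the precise combinations of $f_G, f_L, f_{GG}, f_{LG}$ (as given by \eqref{Ecc1D}--\eqref{Ecc2D}) conspire with the trigonometric factors evaluated at $g \approx \pm\arctan(G/L)$ to collapse into the compact form stated in the lemma. Once the $f$-identities are in hand and one consistently discards $O(1)$ remainders arising from the error terms in Lemma \ref{Lm: simplify} and from the hypothesis $|g \mp \arctan(G/L)| \leq C/\ell$, the computation is routine and can be carried out entry by entry.
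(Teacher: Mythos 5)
Your overall route is the same as the paper's: for (a) exploit that $g$ enters only through a rotation, for (b) differentiate the first-order formulas of Lemma \ref{LM: 1stder}(b) once more using Lemma \ref{Lm: simplify} and the identities \eqref{Ecc1D}--\eqref{Ecc2D} at $g\approx\arctan\frac GL$, and obtain (c) from the left-case formulas. However, two steps as written would fail. First, your argument for $\frac{\partial^2 Q}{\partial g\partial G}\perp\frac{\partial Q}{\partial G}$ rests on the identity $\partial_g Q\cdot\partial_G Q=\frac12\partial_g|Q|^2$, which is false (the correct identity is $Q\cdot\partial_g Q=\frac12\partial_g|Q|^2$, and $\partial_gQ\cdot\partial_GQ$ is in general of size $\ell^2$, not zero); moreover, even granting your two ingredients, the conclusion would be $\partial^2_{gG}Q\perp Q$, not $\partial^2_{gG}Q\perp\partial_GQ$. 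The statement you need follows instead directly from your own rotation setup: with $Q=R(g)\tilde Q(L,\ell,G)$ one has $\partial^2_{gG}Q=R'(g)\partial_G\tilde Q$ and $\partial_GQ=R(g)\partial_G\tilde Q$, and since $R'(g)=R\bigl(g+\frac\pi2\bigr)$ these are images of the same vector under rotations differing by $\frac\pi2$, hence orthogonal.

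Second, the sign bookkeeping in (b) is off in a way that prevents the computation from closing. For $u>0$ Lemma \ref{Lm: simplify} gives $\partial_G u=-f_G+O(1/\ell)$ (not $+f_G$), and differentiating the expression for $\partial_GQ$ produces the radial coefficient $-\sinh u\,f_G^2+\cosh u\,f_{GG}=\cosh u\,(f_{GG}-f_G^2)+O(1)=\cosh u\,\frac{L^2-2G^2}{(L^2+G^2)^2}+O(1)$, not $\cosh u\,(f_G^2+f_{GG})$; the stated leading term $\frac{L^3\cosh u}{(L^2+G^2)^{3/2}}$ in the first component only appears after adding the cross terms $2L\cosh u\,f_G(\sin g,-\cos g)$ coming from differentiating the vector factor and the $-\sinh u(\sin gL,-\cos gL)$ piece, which also contribute to the first component. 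Finally, (c) is not a literal $G,g\to0$ specialization of (b): part (b) is computed from \eqref{eq: Q4}, which contains a reflection about the $y$-axis, while (c) refers to \eqref{eq: Q4l}, which does not; consequently several entries of (c) (e.g. $\partial^2_GQ$ and $\partial^2_{gL}Q$) carry the opposite sign from the naive limit of (b). One must redo the (short) computation with \eqref{eq: Q4l}, or explicitly track the reflection, rather than substitute $L^2+G^2\to L^2$, $\sin g\to0$, $\cos g\to1$ into the formulas of (b).
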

\begin{proof}
The estimates of $\frac{\partial^2Q}{\partial G^2},$ $\frac{\partial^2Q}{\partial L^2},$ and
$\frac{\partial^2Q}{\partial G \partial L}$   
follows from similar estimates on the derivatives of $q.$ The estimates on the second derivatives of
$q$ follow by straightforward differentiation of \eqref{eq: delaunay4}
using Lemma~\ref{LM: 1stder}. The other estimates of part (a) follow since $Q$ depends on $g$ via a rotation.

%The estimate $\frac{\partial^2 Q}{\partial L^2}=O(\ell)$ follows immediately from \eqref{eq: Q4} (or  \eqref{eq: Q4l}).
%Taking $g$ derivative amounts to rotate by $\pi/2$. So the only nontrivial cases are $\frac{\partial^2 Q}{\partial G^2}$
% and $\frac{\partial^2 Q}{\partial L\partial G}$. 
Next we prove parts (b) and (c). Again we first work on $q$ then rotate by $g+\pi$ for (b) and by $g$ for (c),
\begin{equation*}\begin{aligned}
\frac{\partial^2 q}{\partial G^2}&=\left(\left(\frac{L^2}{L^2+G^2}\right)_G\cosh u+\frac{L^2 \sinh u u_G}{L^2+G^2}\right)(-G,\sign (u) L)+\frac{L^2\cosh u}{L^2+G^2}(-1,0)+O(1)\\
&=\cosh u\left(\frac{-3L^2G}{(L^2+G^2)^2}\right)(-G,\sign (u) L)+\frac{L^2\cosh u}{L^2+G^2}(-1,0)+O(1)
\end{aligned}\end{equation*}
\begin{equation*}\begin{aligned}
\frac{\partial^2 q}{\partial L\partial G}&=\left(\left(\frac{L^2}{L^2+G^2}\right)_L\cosh u+\frac{L^2 \sinh u u_L}{L^2+G^2}\right)(-G,\sign (u) L)+\frac{L^2\cosh u}{L^2+G^2}(0,\sign(u))+O(1)\\
&=\cosh u\left(\frac{3LG^2}{(L^2+G^2)^2}\right)(-G,\sign (u) L)+\frac{L^2\cosh u}{L^2+G^2}(0,\sign(u))+O(1)
\end{aligned}\end{equation*}
After rotating by angle $\pi+g$ with $g=-\sign (u)\cdot\arctan \frac{G}{L}$, we get 
\begin{equation*}\begin{aligned}
\frac{\partial^2 Q}{\partial G^2}&=\sinh u\frac{3L^2G}{(L^2+G^2)^{3/2}}(0,1)+\frac{L^2\cosh u}{(L^2+G^2)^{3/2}}(L,-\sign(u)G)+O(1)\\
&=\frac{L^2}{(L^2+G^2)^{3/2}}(L\cosh u,2G\sinh u)+O(1)
\end{aligned}\end{equation*}
\begin{equation*}\begin{aligned}
\frac{\partial^2 Q}{\partial L\partial G}
&=\sinh u\frac{-3LG^2}{(L^2+G^2)^{3/2}}(0,1)+\frac{L^2\sinh u}{(L^2+G^2)^{3/2}}(-\sign(u)G,-L)+O(1)\\
&=\frac{-L}{(L^2+G^2)^{3/2}}(LG\cosh u,(L^2+3G^2)\sinh u)+O(1).
\end{aligned}\end{equation*}
This gives the estimates on $\frac{\partial^2 Q}{\partial G^2}$
and $\frac{\partial^2 Q}{\partial L\partial G}$ in part (b). The estimates of part (c) are similar.
The estimates of $\frac{\partial^2 Q}{\partial L\partial g}$ and $\frac{\partial^2 Q}{\partial G\partial g}$
follow easily from parts (b) and (c) of Lemma~\ref{LmDerGlob}.
\end{proof}

\section{Gerver's mechanism}\label{section: gerver}
\subsection{Gerver's result in \cite{G2}}\label{subsection: gerver}
We summarize the result of \cite{G2} in the following table. Recall that the Gerver scenario deals with the limiting case
$\chi\to\infty, \mu\to 0$. Accordingly $Q_1$ disappears at infinity and there is no interaction between $Q_3$ and $Q_4.$
Hence both particles perform Kepler motions.
%The Hamiltonian is
%\[H=\dfrac{v_3^2}{2}+\dfrac{v_4^2}{2}-\dfrac{1}{|Q_3|}-\dfrac{1}{|Q_4|}\]
The shape of each Kepler orbit is characterized by energy, angular momentum and the argument of periapsis.
In Gerver's scenario, the incoming and outgoing asymptotes of the hyperbola are always horizontal and the semimajor of the ellipse is always vertical. So we only need to describe on the energy and angular momentum.
%\begin{equation}
\begin{center}
\begin{tabular}{|c|c|c|c|c|}
\hline
  & 1st collision  &@$(-\eps_0\eps_1,\eps_0+\eps_1)$  & 2nd collision & $@(\eps_0^2,0)$\\
\hline
   & $Q_3$ & $Q_4$ & $Q_3$ & $Q_4$ \\
\hline
energy  & $-\frac{1}{2}$ & $\frac{1}{2}$ & $-\frac{1}{2}\rightarrow -\frac{\eps_1^2}{2\eps_0^2}$ & $\frac{1}{2}\rightarrow \frac{\eps^2_1}{2\eps_0^2}$\\
\hline
{\small angular\,momentum}& $\eps_1\rightarrow -\eps_0$ & $p_1\rightarrow -p_2$ & $-\eps_0$ & $\sqrt{2}\eps_0$ \\
\hline
eccentricity & $\eps_0\rightarrow \eps_1$ &  & $\eps_1\rightarrow \eps_0$& \\
\hline
semimajor & $1$ & $-1$ & $1\rightarrow \left(\frac{\eps_0}{\eps_1}\right)^2$ &  $1\rightarrow -\frac{\eps^2_1}{\eps_0^2}$\\
\hline
semiminor & $\eps_1\rightarrow \eps_0$ & $p_1\rightarrow p_2$ & $\eps_0\rightarrow \frac{\eps_0^2}{\eps_1}$ & $\sqrt{2}\eps_0 \rightarrow \sqrt{2}\eps_1$\\
\hline
%perihelion argument & $0$ &
\end{tabular}
\end{center}
%\end{equation}
%$G=\sqrt{a}\sqrt{1-e^2}$\\
%if $a=1$, the semiminor serves as angular momentum in both hyperbolic and elliptic case.\\

Here \[p_{1,2}=\dfrac{-Y\pm \sqrt{Y^2+4(X+R)}}{2},\quad R=\sqrt{X^2+Y^2}.\]
and $(X,Y)$ stands for the point where collision occurs (the parenthesis after $@$ in the table). We will call the two points the Gerver's collision points.

In the above table $\eps_0$ is a free parameter and $\eps_1=\sqrt{1-\eps_0^2}.$

At the collision points, the velocities of the particles are the following.

For the first collision,
\[v_3^-=\left(\dfrac{-\eps_1^2}{\eps_0\eps_1+1}, \dfrac{-\eps_0}{\eps_0\eps_1+1}\right),\quad v_4^-=\left(1-\dfrac{Y}{Rp_1}, \dfrac{1}{Rp_1}\right).\]
\[v_3^+=\left(\dfrac{\eps_0^2}{\eps_0\eps_1+1}, \dfrac{\eps_1}{\eps_0\eps_1+1}\right),\quad v_4^+=\left(-1+\dfrac{Y}{Rp_2}, -\dfrac{1}{Rp_2}\right).\]
For the second collision,
\[v_3^-=\left(\dfrac{-\eps_1}{\eps_0}, \dfrac{-1}{\eps_0}\right),\ v_4^-=\left(1, \dfrac{\sqrt{2}}{\eps_0}\right),\quad v_3^+=\left(1, \dfrac{-1}{\eps_0}\right),\ v_4^+=\left(\dfrac{-\eps_1}{\eps_0}, \dfrac{\sqrt{2}}{\eps_0}\right).\]

\subsection{Numerical information for a particularly chosen $\eps_0=1/2$}\label{subsection: numerics}
For the first collision $e_3: \frac{1}{2}\to \frac{\sqrt{3}}{2}$.\\
We want to figure out the Delaunay coordinates $(L,u,G,g)$ for both $Q_3$ and $Q_4$. (Here we replace $\ell$ by $u$ for convenience.)
The first collision point is \[(X,Y)=(-\eps_0\eps_1,\eps_0+\eps_1)=\left(-\dfrac{\sqrt{3}}{4},\dfrac{1+\sqrt{3}}{2}\right).\]
Before collision
\[(L,u, G,g)_3^-=\left(1,-\dfrac{5\pi}{6}, \dfrac{\sqrt{3}}{2}, \pi/2\right),\quad (L,u, G,g)_4^-=(1,1.40034,-p_1, -\arctan p_1),\]
\begin{equation}\nonumber\begin{aligned}
&v_3^-=\left(\dfrac{-3}{\sqrt{3}+4},\dfrac{-2}{\sqrt{3}+4}\right)\simeq-(0.523,0.349),\\
&v_4^-=\left(1-\dfrac{2(1+\sqrt{3})}{(4+\sqrt{3})p_1},\dfrac{4}{(4+\sqrt{3})p_1}\right)\simeq (-0.805,1.322),\end{aligned}\end{equation}
where \[p_1=\dfrac{-Y+\sqrt{Y^2+4(X+R)}}{2}=\dfrac{-(\eps_0+\eps_1)+\sqrt{5+2\eps_0\eps_1}}{2}=0.52798125.\]
After collision
\[(L,u, G,g)_3^+=\left(1,\dfrac{2\pi}{3}, -\dfrac{1}{2}, \pi/2\right),\quad (L,u, G,g)_4^+=(1,0.515747,p_2, -\arctan p_2),\]
\begin{equation}\nonumber\begin{aligned}
&v_3^+=\left(\dfrac{1}{\sqrt{3}+4},\dfrac{2\sqrt{3}}{\sqrt{3}+4}\right)\simeq (0.174,0.604),\\
&v_4^+=\left(-1+\dfrac{2(1+\sqrt{3})}{(4+\sqrt{3})p_2},-\dfrac{4}{(4+\sqrt{3})p_2}\right)\simeq (-1.503,0.368)\end{aligned}\end{equation}
where\[p_2=\dfrac{-Y-\sqrt{Y^2+4(X+R)}}{2}=\dfrac{-(\eps_0+\eps_1)-\sqrt{5+2\eps_0\eps_1}}{2}=-1.894006654.\]
For the second collision $e_3: \frac{\sqrt{3}}{2}\to \frac{1}{2}$.\\
The collision point is $(X,Y)=(\eps_0^2,0)=\left(\dfrac{1}{4},0\right)$.\\
Before collision
\[(L,u, G,g)_3^-=\left(1,-\frac{\pi}{6}, -\frac{1}{2}, \pi/2\right),\ (L,u, G,g)_4^-=\left(1,0.20273,-\sqrt{2}/2, -\arctan \frac{\sqrt{2}}{2}\right),\]
\[v_3^-=\left(-\sqrt{3},-2\right),\quad v_4^-=\left(1,2\sqrt{2}\right).\]
After collision
\[(L,u, G,g)_3^+=\left(\frac{1}{\sqrt{3}},\frac{\pi}{3}, -\frac{1}{2}, -\frac{\pi}{2}\right),\ (L,u, G,g)_4^+=\left(\frac{1}{\sqrt{3}},-0.45815,-\frac{\sqrt{2}}{2}, \arctan \frac{\sqrt{6}}{2}\right),\]
\[v_3^+=\left(1,-2\right),\quad v_4^+=\left(-\sqrt{3},2\sqrt{2}\right).\]
%\end{appendices}

\subsection{Control the shape of the ellipse}
\label{SSEllipse}
As it was mentioned before Lemma \ref{LmGer} was stated by Gerver in \cite{G2}. There is a detailed proof of part $(a)$ of our Lemma \ref{LmGer} in \cite{G2}. However since no details of the proof of part $(b)$
were given in \cite{G2} we go other main steps here for the reader's convenience even though computations are quite
straightforward.

\begin{proof}[Proof of Lemma~\ref{LmGer}]
Recall that Gerver's map depends on a free parameter $e_4$ (or equivalently $G_4$). In the computations below however it is
more convenient to use the polar angle $\psi$ of the intersection point as the free parameter. It is easy to see that as
$G_4$ changes from large negative to large positive value the point of intersection covers the whole orbit of $Q_3$ so it
can be used as the free parameter. Our goal is to show that by changing the angles $\psi_1$ and $\psi_2$ of the first and
second collision we can prescribe the values of $\brre_3$ and $\brrg_3$ arbitrarily. Due to the Implicit Function Theorem
it suffices to show that
$$\det\left[\begin{array}{cc}\frac{\partial \brre_3}{\partial \psi_1}&\frac{\partial \brrg_3}{\partial \psi_1}\\
\frac{\partial \brre_3}{\partial \psi_2}&\frac{\partial \brrg_3}{\partial \psi_2} \end{array}\right]\neq 0. $$
To this end we use the following set of equations
\begin{equation}\label{eq: polarcollision2B}
G_3^++G_4^+=G_3^-+G_4^-, \end{equation}
\begin{equation}\label{eq: polarcollision3B}
\dfrac{e_3^+}{G_3^+}\cos(\psi+g_3^+)+\dfrac{e_4^+}{G_4^+}\cos(\psi-g_4^{-})
=\dfrac{e_3^-}{G_3^-}\cos(\psi+g_3^-)+\dfrac{e_4^-}{G_4^-}\cos(\psi-g_4^{-}),
\end{equation}
\begin{equation}\label{eq: polarcollision4B}
\dfrac{(G^+_3)^2}{1-e^+_3\sin(\psi+g^+_3)}=\dfrac{(G_3^-)^2}{1-e_3^-\sin(\psi+g^-_3)},
\end{equation}
\begin{equation}\label{eq: polarcollision6B}
\dfrac{(G^+_3)^2}{1-e^+_3\sin(\psi+g^+_3)}=\dfrac{(G^+_4)^2}{1-e_4^+\sin(\psi-g^{+}_4)},
\end{equation}
\begin{equation}
\label{OutB}
g_4^+=\arctan\dfrac{G_4^+}{L_4^+}.
\end{equation}

Here $e_3, e_4$ and $L_4$ are functions
of the other variables according to the formulas of Appendix \ref{section: appendix}.

\eqref{eq: polarcollision2B}--\eqref{OutB} are obtained as follows.
\eqref{eq: polarcollision2B} is the angular momentum conservation,
\eqref{eq: polarcollision4B}
means that  the position of $Q_3$ does not change during the collision, \eqref{eq: polarcollision6B}
means that $Q_3$ and $Q_4$ are at the same point immediately after the collision and \eqref{OutB} says
that after the collision the outgoing asymptote of $Q_4$ is horizontal.

It remains to derive \eqref{eq: polarcollision3B}.
Represent the position vector as $\vec{r}= r \hat{e}_r$. Then the velocity is $\dot{\vec{r}}=\dot{r}\hat{e}_r+ r\dot{\psi} \hat{e}_\psi.$ The momentum conservation gives
$$(\dot{\vec{r}}_3)^-+(\dot{\vec{r}}_4)^-=(\dot{\vec{r}}_3)^++(\dot{\vec{r}}_4)^+.$$
Taking the angular component of the velocity we get
\begin{equation}
\label{MC}
r_3^-\dot{\psi}_3^-+r_4^-\dot{\psi}^-_4=r^+_3\dot{\psi}^+_3+r_4^+\dot{\psi}_4^+.
\end{equation}
In our notation the polar representation of the ellipse takes form
$ r=\frac{G^2}{1-e\sin(\psi+g)} .$ Differentiating this equation we obtain the following relation
for the radial component of the Kepler motion
\[\dot{r}=\frac{G^2 }{(1-e\sin(\psi+g))^2} e\cos (\psi+g) \dot{\psi}=\dfrac{r^2}{G^2} e\cos(\psi+g) \dfrac{G}{r^2}=
\dfrac{e}{G}\cos(\psi+g).\]

Plugging this into \eqref{MC} we obtain
\eqref{eq: polarcollision3B}.

We can write \eqref{eq: polarcollision2B}--\eqref{OutB} in the form
$$ \FF(Z^-, \tZ,  Z^+)=0 $$
where
$Z^-=(E_3^-, G_3^-, g_3^-, \psi)$, $Z^+=(E_3^+, G_3^+, g_3^+, G_4^+, g_4^+),$ and
$\tZ=(G_4^-, g_4^-)$ are considered as functions $Z^-.$

By the Implicit Function Theorem we have
$$ \dfrac{\partial Z^+}{\partial Z^-}
=-\left(\dfrac{\partial \FF}{\partial Z^+}\right)^{-1} \left(\dfrac{\partial \FF}{\partial Z^-}+
\dfrac{\partial \FF}{\partial \tZ} \dfrac{\partial \tZ}{\partial Z^-}
\right). $$
Thus to complete the computation we need to know
$\dfrac{\partial \tZ}{\partial Z^-}.$
In order to compute this expression we use the equations
\begin{equation}
\label{InB}
g_4^-=-\arctan\dfrac{G_4^-}{L_4^-}
\end{equation}
which means that the incoming asymptote of $Q_4$ is horizontal and
\begin{equation}\label{eq: polarcollision7B}
\dfrac{(G_3^-)^2}{1-e_3^-\sin(\psi+g^-_3)}=\dfrac{(G_4^-)^2}{1-e_4^-\sin(\psi-g_4^{-})}, \end{equation}
which means that $Q_3$ and $Q_4$ are at the same place immediately before the collision.
Writing these equations as
$\II(Z^-, \tZ)=0$ we get by the Implicit Function Theorem
$$\dfrac{\partial \tZ}{\partial Z^-}
=-\left(\dfrac{\partial \II}{\partial \tZ}\right)^{-1} \dfrac{\partial \II}{\partial Z^-} $$
so that the required derivative equals to
\begin{equation}
\label{DerGerFormal}
 \dfrac{\partial Z^+}{\partial Z^-}
=-\left(\dfrac{\partial \FF}{\partial Z^+}\right)^{-1} \left(\dfrac{\partial \FF}{\partial Z^-}-
\dfrac{\partial \FF}{\partial \tZ} \left(\dfrac{\partial \II}{\partial \tZ}\right)^{-1} \dfrac{\partial \II}{\partial Z^-}
\right).
\end{equation}

Combining \eqref{DerGerFormal}
with the formula
$$de_3=-\dfrac{2G_3E_3dG_3+G_3^2dE_3}{\sqrt{1-2G_3^2E_3}} $$
which follows from the relation $e_3=\sqrt{1-2G_3^2E_3}$ we obtain
the two entries 
$$\dfrac{\partial \brre_3}{\partial \psi_2}=-0.158494\text{ and }\dfrac{\partial \brrg_3}{\partial \psi_2}= 0.369599.$$
%(in this case, we should treat $E_3^-, G_3^-, g_3^-$ in $Z^-$ as constants).
The meanings of these two entries are the changes of the eccentricity and argument of periapsis after the second collision if we vary the phase of the \emph{second} collision.

We need more work to figure out the two entries $\frac{\partial \brre_3}{\partial \psi_1}$ and $\frac{\partial \brrg_3}{\partial \psi_1}$, which are the changes of the eccentricity and argument of periapsis after the second collision if we vary the phase of the \emph{first} collision. We describe the computation of the first entry, the second one is similar.
We use the relation
\[\dfrac{\partial \brre_3}{\partial \psi_1}=\dfrac{\partial \brre_3}{\partial \bar E_3^+}
\dfrac{\partial \bar E_3^+}{\partial \psi_1}+
 \dfrac{\partial \brre_3}{\partial \bar G_3^+}\dfrac{\partial \bar G_3^+}{\partial \psi_1}+\dfrac{\partial \brre_3}{\partial \bar g_3^+}\dfrac{\partial \bar g_3^+}{\partial \psi_1}.\]
Now $\left(\frac{\partial \bar E_3^+}{\partial \psi_1}, \frac{\partial \bar G_3^+}{\partial \psi_1},
\frac{\partial \bar g_3^+}{\partial \psi_1}\right)$
is computed using
\eqref{DerGerFormal} and the data for the first collision.
Noticing that the quantities $E_3,G_3,g_3$ after the first collision are the same as those before the second collision, we replace
$\left(\frac{\partial \brre_3}{\partial \bar E_3^+}, \frac{\partial \brre_3}{\partial \bar G_3^+},
\frac{\partial \brre_3}{\partial \bar g_3^+}\right)$ by $\left(\frac{\partial \brre_3}{\partial \bar{\bar E}_3^-}, \frac{\partial \brre_3}{\partial \bar{\bar G}_3^-},
\frac{\partial \brre_3}{\partial \bar{\bar g}_3^-}\right)$ and compute it using \eqref{DerGerFormal} and the data for the second collision.
It turns out that the resulting matrix is
\[\left[\begin{array}{cc}\dfrac{\partial \brre_3}{\partial \psi_1}&\dfrac{\partial \brrg_3}{\partial \psi_1}\\
\dfrac{\partial \brre_3}{\partial \psi_2}&\dfrac{\partial \brrg_3}{\partial \psi_2} \end{array}\right]=\left[\begin{array}{cc}0.620725&2.9253\\
-0.158494& 0\end{array}\right],\]
which is obviously nondegenerate.
\end{proof}

\section*{Acknowledgement}
The authors would like to thank Prof. John Mather many illuminating discussions, and Vadim Kaloshin for introducing us to the problem.  We would also like to thank J. Fejoz for pointing to us that the hyperbolic Delaunay coordinates are singular near collision, which is handled in Lemma \ref{LmSmallu}.
This research was supported by the NSF grant DMS 1101635.

\end{document}